\pdfoutput=1
\RequirePackage{ifpdf}
\ifpdf 
\documentclass[pdftex]{sigma}
\else
\documentclass{sigma}
\fi

\usepackage{ae,aecompl}
\usepackage{tikz}
\usepackage{tikz-cd}
\usepackage{ytableau}
\usetikzlibrary{decorations.markings}
\usepackage{microtype}

\numberwithin{equation}{section}

\newcommand\myeq{\mathrel{\stackrel{\makebox[0pt]{\mbox{\normalfont\tiny def}}}{=}}}

\newtheorem{thm}{Theorem}[section]
\newtheorem{lem}[thm]{Lemma}

\newtheorem{dfprop}[thm]{Proposition/Definition}
\newtheorem{prop}[thm]{Proposition}
\newtheorem{cor}[thm]{Corollary}
\newtheorem{claim}[thm]{Claim}

{\theoremstyle{definition}
\newtheorem{df}[thm]{Definition}
\newtheorem{exam}[thm]{Example}
\newtheorem{rem}[thm]{Remark}
}

\newcommand{\C}{\mathbb{C}}

\newcommand{\R}{\mathbb{R}}
\newcommand{\Z}{\mathbb{Z}}
\newcommand{\N}{\mathbb{N}}

\newcommand{\U}{\mathcal{U}}

\newcommand{\AAA}{\mathcal{A}}
\newcommand{\CC}{\mathcal{C}}
\newcommand{\Nu}{\mathcal{N}}

\newcommand{\GT}{\mathbb{GT}}
\newcommand{\GTp}{\mathbb{GT}^+}

\newcommand{\PPP}{\mathcal{P}}

\newcommand{\TT}{\mathbb{T}}
\newcommand{\DD}{\mathcal{D}}
\newcommand{\DDD}{\mathbb{D}}
\newcommand{\Tau}{\mathcal{T}}
\newcommand{\Mart}{\Omega_{q, t}^{\rm Martin}}
\newcommand{\NN}{\mathbf{N}}

\begin{document}

\allowdisplaybreaks

\newcommand{\arXivNumber}{1704.02429}

\renewcommand{\PaperNumber}{001}

\FirstPageHeading

\ShortArticleName{Asymptotic Formulas for Macdonald Polynomials}

\ArticleName{Asymptotic Formulas for Macdonald Polynomials and\\ the Boundary of the $\boldsymbol{(q, t)}$-Gelfand--Tsetlin Graph}

\Author{Cesar CUENCA}

\AuthorNameForHeading{C.~Cuenca}

\Address{Department of Mathematics, Massachusetts Institute of Technology, USA}
\Email{\href{mailto:cuenca@mit.edu}{cuenca@mit.edu}}
\URLaddress{\url{http://math.mit.edu/~cuenca/}}

\ArticleDates{Received April 21, 2017, in f\/inal form December 09, 2017; Published online January 02, 2018}

\Abstract{We introduce Macdonald characters and use algebraic properties of Macdonald polynomials to study them. As a result, we produce several formulas for Macdonald cha\-racters, which are generalizations of those obtained by Gorin and Panova in [\textit{Ann. Probab.} \textbf{43} (2015), 3052--3132], and are expected to provide tools for the study of statistical mechanical models, representation theory and random matrices. As f\/irst application of our formulas, we characterize the boundary of the $(q,t)$-deformation of the Gelfand--Tsetlin graph when $t = q^{\theta}$ and $\theta$ is a positive integer.}

\Keywords{Branching graph; Macdonald polynomials; Gelfand--Tsetlin graph}

\Classification{33D52; 33D90; 60B15; 60C05}

\section{Introduction}\label{introduction}

Macdonald polynomials are remarkable two-parameter $q$, $t$ generalizations of Schur polynomials. They were f\/irst introduced by Ian G.~Macdonald in~\cite{M1}; the canonical reference is his classical book \cite{M2}. The Macdonald polynomials are very interesting objects for representation theory and integrable systems, due to their connections with quantum groups, e.g.,~\cite{EK, N}, double af\/f\/ine Hecke algebras, e.g.,~\cite{Ch, Ki}, etc. More recently, and releveant for us, Macdonald polynomials have been heavily used to study probabilistic models arising in mathematical physics and random matrix theory. The important work~\cite{BC} of Borodin and Corwin showed how to use the algebraic properties of these polynomials to obtain analytic formulas that allow asymptotic analysis of the so-called Macdonald processes. Remarkably, by specialization or degeneration of the parameters~$q$,~$t$ def\/ining Macdonald processes, the paper~\cite{BC} yields tools that can be used to analyze interacting particle systems~\cite{BCS}, beta Jacobi corners processes~\cite{BG}, probabilistic models from asymptotic representation theory~\cite{BBuO}, among others; see the survey~\cite{BP} and references therein.

It should be noted that the special case $t = q$ of Macdonald processes are known as Schur processes and they have the special property of being determinantal point processes, thus allowing much more control over their asymptotics. Schur processes were introduced by Okounkov and Reshetikhin, as generalizations of the classical Plancherel measures, several years before the work of Borodin and Corwin~\cite{Ok1, OR}. The Schur processes, though a very special case of Macdonald processes, produced various applications to statistical models of plane partitions and random matrices, see for example~\cite{J, OR2}. However, most of the physical models that were studied with the Macdonald processes do not have a determinantal structure, and therefore they could not have been analyzed solely by means of the Schur processes machinery.

The conclusion from the story of Schur and Macdonald processes is that studying the more complicated object can allow one to tackle more complicated questions, despite losing some integrability (such as the determinantal structure in the case of Schur processes). We follow this philosophy in our work, by introducing and studying \textit{Macdonald characters}, two-parameter $q$, $t$ generalizations of normalized characters of unitary groups.
The normalized characters of the unitary groups are expressible in terms of Schur polynomials, reason why we will call them Schur characters, whereas the generalization we present involves Macdonald polynomials.

Our main results are asymptotic formulas for Macdonald characters, which are generalizations of those for Schur characters, proved in~\cite{GP} by dif\/ferent methods. As it is expected, the asymptotic formulas for Schur characters are simpler and they involve certain determinantal structure, whereas the formulas for Macdonald polynomials are more complicated and the determinantal structure is no longer present. However the advantage of our work on Macdonald polynomials, much like the advantage of Macdonald processes over Schur processes, is that we are able to access a number of asymptotic questions that are more general than those given in~\cite{GP}. The tools we obtain in this paper are therefore very exciting, given that the formulas for Schur characters have already produced several applications to stochastic discrete particle systems, lozenge and domino tiling models, and asymptotic representation theory~\cite{BuG1, BuG2,BuK,G0, GP,Pa}.

The paper \cite{Cu} is this article's companion, in which the author studies \textit{Jack characters}, a~natural degeneration of Macdonald characters and obtains their asymptotics in the Vershik--Kerov limit regime. The approach to the study of asymptotics of Jack characters is dif\/ferent from the approach we use here to study the asymptotics of Macdonald characters; in particular, it relies heavily on the \textit{Pieri integral formula}, see~\cite{Cu} for further details. The tools from this paper and~\cite{Cu} af\/ford us a very strong control over the asymptotics of Macdonald characters, Jack characters and Bessel functions~\cite{OO0, Op}, if the number of variables remains f\/ixed and the rank tends to inf\/inity. As another application of the developed toolbox, we have studied a~Jack--Gibbs model of lozenge tilings in the spirit of~\cite{BGG, GS}. The author was able to prove the weak convergence of statistics of the Jack--Gibbs lozenge tilings model near the edge of the boundary to the well-known \textit{Gaussian beta ensemble}, see, e.g., Forrester~\cite[Chapter~20]{Ox} and references therein. This result, and its rational limit concerning corner processes of Gaussian matrix ensembles, will appear in a forthcoming publication.

We proceed with a more detailed description of the results of the present paper.

\subsection{Description of the formulas}

The main object of study in this paper are the Macdonald characters, which we def\/ine as follows. For integers $1\leq m\leq N$, a \textit{Macdonald character of rank $N$ and $m$ variables} is a polynomial, with coef\/f\/icients in $\C(q, t)$, of the form
\begin{gather*}
P_{\lambda}(x_1, \dots, x_m; N, q, t) \myeq \frac{P_{\lambda}\big(x_1, \dots, x_m, 1, t, \dots, t^{N-m-1}; q, t\big)}{P_{\lambda}\big(1, t, t^2, \dots, t^{N-1}; q, t\big)},
\end{gather*}
where $P_{\lambda}(x_1, \dots, x_N; q, t)$ is the Macdonald polynomial of $N$ variables parametrized by the signature $\lambda = (\lambda_1 \geq \lambda_2 \geq \dots \geq \lambda_N)\in\Z^N$. Macdonald characters, under the specialization $t = q$, turn into \textit{$q$-Schur characters}, which have appeared previously in \cite{G, GP}. The reason behind the use of the word ``character'' is that $q$-Schur characters turn into normalized characters of the irreducible rational representations of unitary groups, after the degeneration $q \rightarrow 1$.

We make one further comment about terminology. Macdonald characters, as def\/ined here, are two-parameter $q$,~$t$ degenerations of normalized and irreducible characters of unitary groups. One could also consider a two-parameter degeneration of the characters of the symmetric groups, in the spirit of Lassalle's work~\cite{L}, where a one-parameter degeneration of symmetric group characters was considered. Thus a better name for our object would be \textit{Macdonald unitary character}. For convenience, we simply will use the name Macdonald character. We remark that Macdonald symmetric group characters have not been considered yet, to the author's best knowledge. However, there have been many articles studying the structural theory and asymptotics on Jack symmetric group characters, notably several recent works by Maciej Do{\l}\c{e}ga, Valentin F\'{e}ray and Piotr \'{S}niady, see, e.g., \cite{DF13, DF16, DFS, Sn}.

The main theorems of this paper fall into two categories:
\begin{enumerate}\itemsep=0pt
\item[(A)] Integral representations for Macdonald characters of one variable and arbitrary rank~$N$.
\end{enumerate}

The initial idea that led to the integral representations in this paper is due to Andrei Okounkov, see~\cite[remark following Theorem~3.6]{GP}. An example of the integral formulas we prove is the following theorem. Observe that the integrand is a simple expression in terms of $q$-Gamma functions and can be analyzed by well known methods of asymptotic analysis, such as the method of steepest descent or the saddle-point method~\cite{C}.
In this paper, we study the regime in which the signatures grow to inf\/inity, whereas the remaining parameters are f\/ixed.

\begin{thm}[consequence of Theorem~\ref{macdonaldthm2}]
Assume $q\in (0, 1)$ and $\theta > 0$. Let $N\in\N$, $\lambda\in\GT_N$ and $x\in\C\setminus\{0\}$, $|x|\leq q^{\theta(1-N)}$. The integral below converges absolutely and the identity holds
\begin{gather}
P_{\lambda}\big(x; N, q, q^{\theta}\big) = \frac{\ln q}{1 - q}\frac{\big(xq^{\theta}; q\big)_{\infty}}{\big(xq^{1 + \theta(1 - N)}; q\big)_{\infty}}\frac{\Gamma_q(\theta N)}{2\pi\sqrt{-1}}\nonumber\\
\hphantom{P_{\lambda}\big(x; N, q, q^{\theta}\big) =}{} \times\int_{\CC^+}{\big(xq^{\theta(1-N)}\big)^z\prod_{i=1}^N{\frac{\Gamma_q(\lambda_i + \theta(N-i)-z)}{\Gamma_q(\lambda_i + \theta(N-i+1)-z)}}{\rm d}z},\label{macdonaldthm2eqnreform}
\end{gather}
where $\CC^+$ is a certain contour described in Theorem~{\rm \ref{macdonaldthm2}}, and which looks as in Fig.~{\rm \ref{fig:C}}.
In the formula above, we used the $q$-Pochhammer symbol $(z; q)_{\infty}$ and the $q$-Gamma function $\Gamma_q(z)$; see Appendix~{\rm \ref{app:qtheory}}.
\end{thm}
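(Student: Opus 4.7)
My plan is to derive the integral formula by first expressing $P_\lambda(x; N, q, q^\theta)$ as an explicit combinatorial sum (via the branching rule and principal specialization) and then recognizing this sum as a residue sum of the $q$-Gamma integrand on the right-hand side.

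\textbf{Combinatorial sum.} I would begin by applying Macdonald's one-step branching rule
\[
P_\lambda(x,1,t,\dots,t^{N-2};q,t) = \sum_{\mu \prec \lambda} \psi_{\lambda/\mu}(q,t)\, x^{|\lambda|-|\mu|}\, P_\mu(1,t,\dots,t^{N-2};q,t),
\]
where $\mu$ runs over length-$(N-1)$ signatures interlacing $\lambda$ and $\psi_{\lambda/\mu}$ is the explicit horizontal-strip coefficient (Macdonald, Chapter~VI). Parametrizing $\mu$ by $k_i := \lambda_i - \mu_i \in \{0,\dots,\lambda_i - \lambda_{i+1}\}$ and dividing by the principal-specialization denominator $P_\lambda(1,t,\dots,t^{N-1};q,t)$, both evaluations collapse to explicit products of $q$-Pochhammer symbols indexed by boxes of the respective Young diagrams. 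The character $P_\lambda(x;N,q,q^\theta)$ therefore becomes an explicit finite multi-sum over $(k_1,\dots,k_{N-1})$ whose summand is a monomial in $x$ times a product of $q$-shifted factorials.

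\textbf{Residue interpretation.} Setting $t = q^\theta$ and rewriting every $q$-Pochhammer as a ratio of $q$-Gamma functions via $\Gamma_q$, I would recognize the resulting multi-sum as a sum of residues of
\[
F(z) := \big(xq^{\theta(1-N)}\big)^z \prod_{i=1}^N \frac{\Gamma_q(\lambda_i + \theta(N-i) - z)}{\Gamma_q(\lambda_i + \theta(N-i+1) - z)}
\]
at its poles enclosed by $\CC^+$. Since $\Gamma_q(w)$ has simple poles only at $w = 0, -1, -2, \dots$, the candidate poles of $F$ lie at $z = \lambda_i + \theta(N-i) + k$ for $k \ge 0$; a careful cancellation analysis tracks which of these survive the zeros of the reciprocal denominator factors and matches the surviving pole set to the index set $(k_1,\dots,k_{N-1})$ of Step~1. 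The contour $\CC^+$ in Fig.~\ref{fig:C} is drawn to enclose precisely these surviving poles, and Cauchy's residue theorem then converts the combinatorial sum into the stated integral. The overall prefactor $\tfrac{\ln q}{1-q}\cdot \tfrac{(xq^\theta;q)_\infty}{(xq^{1+\theta(1-N)};q)_\infty}\cdot \Gamma_q(\theta N)$ absorbs the universal constants in the residue formula for $\Gamma_q$, the principal-specialization factor of $\lambda$, and the resummation of any infinite tail via a $q$-binomial identity.

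\textbf{Convergence and main obstacle.} Absolute convergence of the integral under the hypothesis $|x| \le q^{\theta(1-N)}$ follows from standard asymptotics of $\Gamma_q$ along vertical lines together with the decay of $|(xq^{\theta(1-N)})^z|$ in the appropriate direction along $\CC^+$. The \textbf{main obstacle} is the Step~2 matching: the horizontal-strip coefficient $\psi_{\lambda/\mu}(q,t)$ is an intricate product over boxes of $\lambda/\mu$ involving $q$-arm and $q$-leg factors, and showing that it combines with the ratio of principal specializations to produce exactly the product of $q$-Gamma residues at the pole $(k_1,\dots,k_{N-1})$ is essentially a non-trivial $q$-hypergeometric identity. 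Careful bookkeeping of every power of $q^\theta$, every $q$-Pochhammer factor, and of the resummation absorbed into the stated prefactor is the delicate technical step; once this identification is in place, the residue theorem completes the proof.
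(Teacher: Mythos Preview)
Your ``Residue interpretation'' step contains a structural mismatch that cannot be fixed by bookkeeping. The branching rule produces a multi-sum indexed by tuples $(k_1,\dots,k_{N-1})$ with $k_i\in\{0,\dots,\lambda_i-\lambda_{i+1}\}$; this set has $\prod_i(\lambda_i-\lambda_{i+1}+1)$ elements. The integrand $F(z)$, on the other hand, is a function of a \emph{single} complex variable, and its residue sum is indexed by pole locations in~$\C$. For $\theta\in\N$ there are exactly $\theta N$ simple poles (at $\lambda_i+\theta(N-i)+j$, $1\le i\le N$, $0\le j<\theta$); these two index sets have different cardinalities and there is no bijection between them, so the phrase ``the pole $(k_1,\dots,k_{N-1})$'' has no meaning. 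Even if you group terms by the power of $x$ (each residue contributes $x^{z_0}$, each branching term $x^{k_1+\cdots+k_{N-1}}$), many tuples share the same total, and collapsing them to a single residue would require a nontrivial summation identity that you never state. For general $\theta>0$ the gap widens: the pole set is infinite and sits at non-integer locations, so the residue sum is an infinite series in non-integer powers of $x$, while the left-hand side is a Laurent polynomial in~$x$.

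The paper takes a completely different route that avoids the branching multi-sum altogether. The key step is the index--argument symmetry
\[
\frac{P_\lambda(q^r t^{N-1},t^{N-2},\dots,1)}{P_\lambda(t^{N-1},\dots,1)}
=\frac{P_{(r)}(q^{\lambda_1}t^{N-1},\dots,q^{\lambda_N})}{P_{(r)}(t^{N-1},\dots,1)},
\]
which reduces the problem to the one-row polynomial $P_{(r)}\propto g_r$. The known generating function $\sum_{r\ge0} g_r(x_1,\dots,x_N;q,t)\,y^r=\prod_i\frac{(tx_iy;q)_\infty}{(x_iy;q)_\infty}$ then yields a single Cauchy integral in $y$; the substitution $y=q^{-z}$ gives the $z$-contour directly. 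This establishes the identity at $x=q^r t^{N}$ for $r\gg0$; since (for $\theta\in\N$) both sides are rational in~$x$, it extends to all admissible~$x$. The case of general $\theta>0$ is obtained by analytic continuation in $\theta$ via Carlson's lemma, and the theorem in the Introduction is then literally the substitution $x\mapsto x\,q^{\theta(1-N)}$ in Theorem~\ref{macdonaldthm2}. Your proposal misses the index--argument symmetry, which is precisely the idea that converts a would-be $(N-1)$-fold sum into a single integral.
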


\begin{enumerate}\itemsep=0pt
\item[(B)] Formulas expressing Macdonald characters of~$m$ variables (and rank~$N$) in terms of Macdonald characters of one variable (and rank~$N$).
\end{enumerate}

These formulas will involve certain $q$-dif\/ference operators. Formulas of this kind will be called \textit{multiplicative formulas}.\footnote{The reason for the name is that analogous formulas for Schur characters were used to prove statements of the form $\lim\limits_{N\rightarrow\infty}{F_{\lambda(N)}(x_1, \dots, x_m)} = \prod\limits_{i=1}^m{\lim\limits_{N\rightarrow\infty}{F_{\lambda(N)}(x_i)}}$, where the functions $F$ are certain normalizations of Schur characters, see, e.g., \cite[Corollaries 3.10 and 3.12]{GP}} One of the simplest multiplicative formulas we prove is the one below that expresses a Macdonald character of two variables in terms of those of one variable; the general formula is given below in Theorem~\ref{macdonaldthm3}.

\begin{thm}[reformulation of Corollary \ref{macdonaldcor3}]
Let $\theta\in\N$, $N\in\N$, $\lambda\in\GT_N$. Then
\begin{gather*}
P_{\lambda}\big(x_1, x_2; N, q, q^{\theta}\big) = \frac{q^{-(N-\frac{3}{2})\theta^2 + \frac{1}{2}\theta} (1 - q)^{\theta} }{\prod\limits_{i=1}^{\theta}{(1 - q^{\theta N - i})}}\frac{1}{\prod\limits_{i=1}^{\theta(N - 1) - 1}{(x_1 - q^{i-\theta})(x_2 - q^{i-\theta})}}\nonumber\\
\qquad{}\times \left( \frac{1}{x_1 - x_2} \circ (D_{q, x_2} - D_{q, x_1}) \right)^{\theta} \left\{ \prod_{i=1}^2{\left( P_{\lambda}\big(x_i; N, q, q^{\theta}\big)\prod_{j=1}^{\theta N-1}{\big(x_i - q^{j-\theta}\big)}\right) }\right\},
\end{gather*}
where $D_{q, x_i}$, $i = 1, 2$, are the linear operators in $\C(q)[x_1, x_2]$ acting on monomials by $D_{q, x_i}(x_1^{m_1}x_2^{m_2})$ $= \frac{1 - q^{m_i}}{1 - q} (x_1^{m_1}x_2^{m_2})$, $i = 1, 2$.
\end{thm}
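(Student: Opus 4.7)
The statement is presented as a reformulation of Corollary~\ref{macdonaldcor3}, which itself is the $m=2$ case of the general multiplicative formula Theorem~\ref{macdonaldthm3} (stated later in the paper). Granting the general formula, the proof is immediate: one specializes to $m=2$, notices that the only pair of variables produces the operator $\bigl(\frac{1}{x_1-x_2}\circ(D_{q,x_2}-D_{q,x_1})\bigr)^{\theta}$, and matches all prefactors. Since the genuine content sits in the general theorem, my plan is to outline how I would establish that.

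Starting from the definition
\[
P_\lambda(x_1,\dots,x_m; N, q, q^\theta) = \frac{P_\lambda\bigl(x_1,\dots,x_m,1,q^\theta,\dots,q^{(N-m-1)\theta}; q, q^\theta\bigr)}{P_\lambda\bigl(1,q^\theta,\dots,q^{(N-1)\theta}; q, q^\theta\bigr)},
\]
the denominator is evaluated once and for all by Macdonald's principal-specialization formula. For the numerator, the idea is to exploit the resonance that occurs when $t=q^\theta$ with $\theta\in\mathbb{N}$: the arguments $1,q^\theta,q^{2\theta},\dots$ form a sparse subsequence of the finer geometric progression with ratio $q$, and this creates an additional symmetry that is implemented, on the level of operators in the $x_i$, by the pairwise symmetric $q$-divided differences $\frac{1}{x_i-x_j}\circ(D_{q,x_j}-D_{q,x_i})$, each raised to the $\theta$-th power. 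The inner polynomial factors $\prod_{j=1}^{\theta N-1}(x_i-q^{j-\theta})$ are chosen so that their $q$-shifted zeros exactly cancel the poles introduced by the divided differences; the outer gauge factor $\prod_{i=1}^{\theta(N-1)-1}(x_1-q^{i-\theta})^{-1}(x_2-q^{i-\theta})^{-1}$ is the reciprocal of what remains after such cancellation; and the numerical constant $\frac{q^{-(N-3/2)\theta^{2}+\theta/2}(1-q)^{\theta}}{\prod_{i=1}^{\theta}(1-q^{\theta N-i})}$ is pinned down by comparing both sides at a convenient specialization (for instance $\lambda=(0,\dots,0)$, where both characters equal~$1$).

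The technical heart of the argument is the symmetrization identity just described, and this is where I expect the main difficulty to lie. I would attack it in one of two ways. The first option is to use the integral representation of one-variable Macdonald characters provided by Theorem~\ref{macdonaldthm2}: the $q$-divided difference operators act cleanly under the integral sign, and after applying them $\theta$ times to the product of two such integrals one should recognize the result as the analogous contour representation for $P_\lambda(x_1,x_2;N,q,q^\theta)$. The second option is a direct algebraic comparison: expand both sides in a basis of symmetric functions and use Macdonald's principal-specialization formula together with the explicit action of $D_q$ on monomials to match coefficients. In either case, the crucial sanity checks are that the formula must reduce to the classical Weyl divided-difference expression in the Schur case $\theta=1$, and that it can be verified directly for one-row partitions $\lambda=(n)$ via the $q$-binomial theorem; these checks would guide the calculation and help isolate any error in the prefactors before one commits to the full bookkeeping.
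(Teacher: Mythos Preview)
There is a genuine gap in your reduction step. You write that once the general multiplicative formula (Theorem~\ref{macdonaldthm3}) is granted, ``the proof is immediate: one specializes to $m=2$, notices that the only pair of variables produces the operator $\bigl(\frac{1}{x_1-x_2}\circ(D_{q,x_2}-D_{q,x_1})\bigr)^{\theta}$.'' But that identification is precisely the content of the Corollary, not a triviality. For $m=2$ the operator $\DD_{q,\theta}^{(2)}$ of Theorem~\ref{macdonaldthm3} is, by definition,
\[
\DD_{q,\theta}^{(2)} \;=\; \frac{1}{(q-1)^{\theta}}\sum_{n=0}^{\theta} C_n^{(q,q^{\theta})}\bigl(x_2^{-1}x_1 q^{-\theta}\bigr)\, T_{q,x_1}^{\,n}T_{q,x_2}^{\,\theta-n},
\]
with the Lassalle--Schlosser coefficients $C_n^{(q,t)}(u)$ as in~(\ref{Ccoeff}). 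The claim that this equals $\bigl(\frac{1}{x_1-x_2}\circ(D_{q,x_2}-D_{q,x_1})\bigr)^{\theta}$ up to the denominator $\prod_{k=0}^{\theta-1}(x_1-q^k x_2)$ is a nontrivial rational-function identity in $x_1,x_2,q$. The paper proves it by induction on~$\theta$: setting $a_n^{(\theta)}=C_n^{(q,q^{\theta})}(x_2^{-1}x_1 q^{-\theta})\big/\prod_{i=0}^{\theta-1}(x_1-q^i x_2)$ and letting $b_n^{(\theta)}$ be the coefficient of $T_{q,x_1}^n T_{q,x_2}^{\theta-n}$ in the expanded $\theta$-fold composition, one checks $a_n^{(1)}=b_n^{(1)}$ via Lemma~\ref{Cprop2}, and then matches the obvious recursion for $b_n^{(\theta)}$ against a recursion for $a_n^{(\theta)}$ established in Lemma~\ref{Cprop3} from the Jing--J\'ozefiak closed form $C_n^{(q,t)}(u)=t^n\frac{(1/t;q)_n}{(q;q)_n}\frac{(u;q)_n}{(qtu;q)_n}\frac{1-q^{2n}u}{1-u}$. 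Nothing in your heuristic about ``resonance'' or cancellation of poles supplies this.

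Separately, your outline of how one might prove Theorem~\ref{macdonaldthm3} itself diverges from the paper and is rather speculative. The paper does not use the one-variable contour integral at all here; it uses the index--argument symmetry (Theorem~\ref{symmetry}) to trade $P_{\lambda}(q^{n_1}t^{N-1},\dots)$ for $Q_{(n_1,\dots,n_m)}(q^{\lambda_1}t^{N-1},\dots)$, then applies the Jacobi--Trudi formula (Theorem~\ref{jacobitrudi}) to expand $Q_{(n_1,\dots,n_m)}$ in products of one-row $g$'s, and finally applies index--argument symmetry a second time to turn each $g_{n_s+\tau_s^+-\tau_s^-}$ back into a one-variable Macdonald character. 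All prefactors are computed via the evaluation identity. Your proposed route of acting by $q$-divided differences under a product of one-variable contour integrals would require a two-variable integral representation of $P_{\lambda}(x_1,x_2;N,q,q^{\theta})$ to compare against, and none is available in the paper; it is not clear such an approach closes.
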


Observe that the multiplicative formula above requires $\theta\in\N$. It is somewhat surprising that all the identities we prove in this paper, even the integral representations, behave better for $\theta\in\N$.

\subsection[The boundary of the $(q, t)$-Gelfand--Tsetlin graph]{The boundary of the $\boldsymbol{(q, t)}$-Gelfand--Tsetlin graph}

As an application of our formulas, we characterize the space of central probability measures in the path-space of the $(q, t)$-Gelfand--Tsetlin graph, when $t = q^{\theta}$ and $\theta$ is a positive integer.
To state our result, we f\/irst introduce a few notions. Assume that $q, t\in (0, 1)$ are generic real parameters for the moment.

The \textit{Gelfand--Tsetlin graph}, or simply GT graph, is an undirected graph whose vertices are the signatures of all lengths $\GT = \bigsqcup\limits_{N\geq 0}{\GT_N}$; we also include the empty signature $\varnothing$ as the only element of $\GT_0$ for convenience. The set of edges is determined by the interlacing constraints, namely the edges in the GT graph can only join signatures whose lengths dif\/fer by $1$ and $\mu\in\GT_N$ is joined to $\lambda\in\GT_{N+1}$ if and only if
\begin{gather*}
\lambda_{N+1} \leq \mu_N \leq \lambda_N \leq \dots \leq \lambda_2 \leq \mu_1 \leq \lambda_1.
\end{gather*}
If the above inequalities are satisf\/ied, we write $\mu \prec \lambda$. If $\mu\in\GT_N$ and $\lambda\in\GT_{N+1}$ are joined by an edge, then we consider the expression $\Lambda^{N+1}_N(\lambda, \mu)$ given by
\begin{gather*}
\Lambda^{N+1}_N(\lambda, \mu) = \psi_{\lambda/\mu}(q, t)\frac{P_{\mu}\big(t^N, \dots, t^2, t; q, t\big)}{P_{\lambda}\big(t^N, \dots, t, 1; q, t\big)},
\end{gather*}
where $\psi_{\lambda/\mu}(q, t)$ is given in the branching rule for Macdonald polynomials, see Theorem~\ref{branchingmacdonald} below. If $\mu\in\GT_N$ is not joined to $\lambda\in\GT_{N+1}$, set $\Lambda^{N+1}_N(\lambda, \mu) = 0$.
One can easily show, see, e.g., Theorem~\ref{evaluation} below,
\begin{gather*}
\Lambda^{N+1}_N(\lambda, \mu) \geq 0, \qquad \forall\, \lambda\in\GT_{N+1}, \quad \mu\in\GT_N,\\
\sum_{\mu\in\GT_N}{\Lambda^{N+1}_N(\lambda, \mu)} = 1, \qquad \forall\, \lambda\in\GT_{N+1}.
\end{gather*}
Thus, for any $N\in\N$, $\lambda\in\GT_{N+1}$, $\Lambda^{N+1}_N(\lambda, \cdot)$ is a probability measure on $\GT_N$. For this reason, we will call the expressions $\Lambda^{N+1}_N(\lambda, \mu)$ \textit{cotransition probabilities}.

Next we def\/ine the \textit{path-space} $\Tau$ of the GT graph as the set of inf\/inite paths in the GT graph that begin at $\varnothing\in\GT_0$: $\Tau = \{\tau = (\varnothing = \tau^{(0)} \prec \tau^{(1)} \prec \tau^{(2)} \prec \cdots) \colon \tau^{(n)}\in\GT_n \ \forall\, n\in\Z_{\geq 0}\}$. Each f\/inite path of the form $\phi = \big(\varnothing = \phi^{(0)} \prec \phi^{(1)} \prec \cdots \prec \phi^{(n)}\big)$ def\/ines a cylinder set $S_{\phi} = \big\{\tau\in\Tau \colon \tau^{(1)} = \phi^{(1)}, \dots, \tau^{(n)} = \phi^{(n)}\big\} \subset \Tau$. We equip~$\Tau$ with the $\sigma$-algebra generated by the cylinder sets~$S_{\phi}$, over all f\/inite paths~$\phi$.
Equivalently, the $\sigma$-algebra of~$\Tau$ is its Borel $\sigma$-algebra if we equip~$\Tau$ with the topology it inherits as a subspace of the product $\prod\limits_{n\geq 0}{\GT_n}$. Each probability measure~$M$ on~$\Tau$ admits a pushforward to a probability measure on~$\GT_m$ via the obvious projection map
\begin{gather*}
\operatorname{Proj}_m\colon \ \Tau\subset\prod_{n\geq 0}{\GT_n} \longrightarrow \GT_N,\\
\hphantom{\operatorname{Proj}_m\colon}{} \ \tau = \big(\tau^{(0)}\prec \tau^{(1)}\prec \tau^{(2)} \prec \cdots\big) \mapsto \tau^{(N)}.
\end{gather*}
We say that a probability measure $M$ on $\Tau$ is a $(q, t)$-\textit{central measure} if
\begin{gather*}
M\big( S\big(\phi^{(0)} \prec \phi^{(1)} \prec \cdots\prec \phi^{(N-1)} \prec \phi^{(N)}\big) \big)\\
\qquad{} =
\Lambda^{N}_{N-1}\big(\phi^{(N)}, \phi^{(N-1)}\big) \cdots \Lambda^{1}_0\big(\phi^{(1)}, \phi^{(0)}\big) M_N\big(\phi^{(N)}\big),
\end{gather*}
for all $N \geq 0$, all f\/inite paths $\phi^{(0)} \prec \cdots \prec \phi^{(N)}$, and for some probability measures $M_N$ on $\GT_N$.
It then automatically follows that $M_N = (\operatorname{Proj}_N)_*M$ are the pushforwards of $M$; moreover, they satisfy the coherence relations
\begin{gather*}
M_N(\mu) = \sum_{\lambda\in\GT_{N+1}}{M_{N+1}(\lambda)\Lambda^{N+1}_N(\lambda, \mu)}, \qquad \forall\, N \geq 0, \quad \forall\, \mu\in\GT_N.
\end{gather*}

We denote by $M_{\rm prob}(\Tau)$ the set of $(q, t)$-central (probability) measures on $\Tau$; it is clearly a~convex subset of the Banach space of all f\/inite and signed measures on $\Tau$. Let us denote by $\Omega_{q, t} = \textrm{Ex}(M_{\rm prob}(\Tau))$ the set of its extreme points. From a general theorem, we can deduce that $\Omega_{q, t} \subset M_{\rm prob}(\Tau)$ is a Borel subset. We call $\Omega_{q, t}$, with its inherited topology, the \textit{boundary of the $(q, t)$-Gelfand--Tsetlin graph}. The theorem stated below, which is our main application, completely characterizes the topological space $\Omega_{q, t}$. Before stating it, let us make a couple of relevant def\/initions.

Consider the set of weakly increasing integers $\Nu = \{\nu = (\nu_1 \leq \nu_2 \leq \cdots)\colon \nu_1, \nu_2, \dots \in\Z\}$ and equip it with the topology inherited from the product $\Z^{\infty} = \Z\times\Z\times\cdots$ of countably many discrete spaces. For each $k\in\Z$, we can def\/ine the automorphism $A_k$ of $\Nu$ by $\nu \mapsto A_k\nu = (\nu_1 + k \leq \nu_2 + k \leq \cdots)$. Clearly $A_k$ has inverse $A_{-k}$. There is a similar automorphism of $\GT$, given by $\lambda \mapsto A_k\lambda = (\lambda_1 + k \geq \lambda_2 + k \geq \cdots)$, $\varnothing \mapsto A_k\varnothing = \varnothing$, which restricts to automorphisms $\GT_m \rightarrow \GT_m$, for each $m\in\Z_{\geq 0}$.

For any $k\in\Z$, one can easily show that $\mu\in\GT_m$ interlaces with $\lambda\in\GT_{m+1}$ if\/f $A_k\mu\in\GT_m$ interlaces with $A_k\lambda\in\GT_{m+1}$, that is, $\mu\prec\lambda$ if\/f $A_k\mu\prec A_k\lambda$.
This allows us to def\/ine automorphisms $A_k$ of $\Tau$ by
\begin{gather*}
A_k\colon \ \Tau \longrightarrow \Tau,\\
\hphantom{A_k\colon}{} \ \tau = \big(\varnothing \prec \tau^{(1)} \prec \tau^{(2)} \prec \cdots\big) \mapsto A_k\tau = \big(\varnothing \prec A_k\tau^{(1)} \prec A_k\tau^{(2)} \prec \cdots\big).
\end{gather*}
One can similarly obtain maps $A_k$ on the set of f\/inite paths of length $n$ by $\phi = (\phi^{(0)} \prec \phi^{(1)} \prec \cdots \prec \phi^{(n)}) \mapsto A_k\phi = (A_k\phi^{(0)} \prec A_k\phi^{(1)}\prec \cdots\prec A_k\phi^{(n)})$. Consequently we can also def\/ine automorphisms on cylinder sets by $A_k S_{\phi} = S_{A_k\phi}$, for all f\/inite paths $\phi = (\phi^{(0)} \prec \phi^{(1)} \prec \cdots \prec \phi^{(n)})$, in the natural way.

We named several maps above by the same letter $A_k$, but there should be no risk of confusion.

For our main theorem, we make the assumption $\theta\in\N$, $t = q^{\theta}$. We believe the theorem can be generalized for any $\theta>0$, but we do not have a proof at the moment.

\begin{thm}\label{thm:mainapplication}
Assume $q\in (0, 1)$, $\theta\in\N$ and set $t = q^{\theta}$.
\begin{enumerate}\itemsep=0pt
	\item[$1.$] There exists a homeomorphism $\NN\colon \Nu \rightarrow \Omega_{q, t}$ sending each $\nu\in\Nu$ to the $(q, t)$-central probability measure $M^{\nu} \in \Omega_{q, t}$ determined by the relations
\begin{gather}
\sum_{\lambda\in\GT_m}{M_m^{\nu}(\lambda)\frac{P_{\lambda}\big(x_1, x_2t, \dots, x_mt^{m-1}; q, t\big)}{P_{\lambda}\big(1, t, \dots, t^{m-1}; q, t\big)}} = \Phi^{\nu}\big(x_1 t^{1-m}, \dots, x_{m-1}t^{-1}, x_m; q, t\big),\nonumber \\
\forall\, m\in\N, \qquad \forall\, (x_1, \dots, x_m)\in\TT^m.\label{eq:macdonaldgenerating}
\end{gather}
In \eqref{eq:macdonaldgenerating}, we denoted by $\{M^{\nu}_m\}_{m\geq 1}$ the corresponding sequence of pushforwards of~$M^{\nu}$ under the projection maps $\operatorname{Proj}_m\colon \Tau\rightarrow\GT_m$.
The left side in~\eqref{eq:macdonaldgenerating} is absolutely convergent on~$\TT^m$, $\mathbb{T} = \{z\in\C \colon |z| = 1\}$, and the functions $\Phi^{\nu}$ in the right side are defined in~\eqref{eqn:Phi1} and~\eqref{def:Phinu}. The probability measure $M^{\nu}$ is determined uniquely by the relations~\eqref{eq:macdonaldgenerating}.

	\item[$2.$] For each $k\in\Z$, the probability measures $M^{\nu}$ and $M^{A_k\nu}$ are related by
\begin{gather}
M^{A_k\nu}(S_{A_k\phi}) = M^{\nu}(S_{\phi}), \quad \textrm{for all finite paths} \quad \phi = \big(\phi^{(0)} \prec \phi^{(1)} \prec \cdots \prec \phi^{(n)}\big).
\end{gather}
Moreover the $(q, t)$-coherent sequences $\{M^{\nu}_m\}_{m \geq 0}$ and $\{M^{A_k\nu}_m\}_{m \geq 0}$ are related by
\begin{gather}\label{eq:shifts2}
M_m^{A_k \nu}(A_k \lambda) = M^{\nu}_m(\lambda), \qquad \forall \, m \geq 0, \qquad \lambda\in\GT_m.
\end{gather}
\end{enumerate}
\end{thm}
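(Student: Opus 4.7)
The plan is to follow the Vershik--Kerov ergodic method, adapted to the $(q, t)$-Gelfand--Tsetlin setting. A general fact about projective limits of simplices implies that a $(q, t)$-central probability measure $M \in M_{\rm prob}(\Tau)$ is determined by its coherent sequence of pushforwards $\{M_N\}_{N \geq 0}$ under $\operatorname{Proj}_N$, and that $M$ is extreme precisely when, for every fixed $m$ and every $(x_1, \dots, x_m) \in \TT^m$, the sum
\begin{gather*}
\sum_{\lambda \in \GT_N} M_N(\lambda)\, \frac{P_{\lambda}(x_1, x_2 t, \dots, x_m t^{m-1}; q, t)}{P_{\lambda}(1, t, \dots, t^{m-1}; q, t)}
\end{gather*}
admits a limit as $N \to \infty$; equivalently, the $M_N$ are asymptotically concentrated on signatures $\lambda(N) \in \GT_N$ along which the normalized Macdonald characters converge. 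Thus the classification of $\Omega_{q, t}$ reduces to identifying all possible pointwise limits of $m$-variable normalized Macdonald characters as $N \to \infty$, together with the check that each such limit is realized by a genuine central measure via the identity~\eqref{eq:macdonaldgenerating}.

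For the one-variable case $m = 1$, I would apply the integral representation of Theorem~\ref{macdonaldthm2}. With $t = q^{\theta}$, $\theta \in \N$, the integrand is a ratio of $q$-Gamma functions with a transparent pole/residue structure. A saddle-point and residue analysis, deforming the contour $\CC^+$ to collect asymptotically dominant poles, shows that $\lim_{N \to \infty} P_{\lambda(N)}(x; N, q, q^{\theta})$ exists and is nontrivial on a neighborhood of $x = 1$ iff, after a suitable uniform shift, the positive and negative tails of $\lambda(N)$ each stabilize. These limiting tails are packaged into an element $\nu \in \Nu$, and the resulting limit is exactly the function $\Phi^{\nu}(x; q, q^{\theta})$ of~\eqref{eqn:Phi1}; absolute convergence on $\TT$ follows from the same integral bounds.

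To propagate to arbitrary $m$, I would invoke the multiplicative formula of Theorem~\ref{macdonaldthm3}, which expresses the $m$-variable normalized Macdonald character as an explicit $q$-difference operator, with $N$-independent structure, applied to a product of one-variable normalized characters times a polynomial prefactor. Because the difference operators and prefactors depend continuously and locally uniformly on $(x_1, \dots, x_m) \in \TT^m$, the one-variable asymptotics propagate through the formula to yield exactly the factored right-hand side of~\eqref{eq:macdonaldgenerating}. This shows simultaneously that $\NN(\nu) \myeq M^{\nu}$ is a well-defined extreme central measure (existence then follows from Kolmogorov extension using the generating function identity together with the established coherence and positivity) and that $\NN$ is injective, since distinct $\nu$'s already produce distinct one-variable limits.

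The homeomorphism statement then follows because the topology on $\Nu$ is pointwise convergence of entries, and each coordinate of $\nu$ affects only finitely many coefficients of the Taylor expansion of $\Phi^{\nu}$ at a reference point, so $\nu \mapsto \Phi^{\nu}$ is continuous; the uniqueness assertion converts convergence of measures on $\Tau$ into convergence of the parameters. Part~2 reduces to a direct calculation: the shift $A_k$ multiplies $P_{\lambda}(x_1, \dots, x_m; N, q, t)$ by $(x_1 \cdots x_m)^k$, inherited from the obvious shift equivariance of Macdonald polynomials, which matches the effect of $\nu \mapsto A_k \nu$ on $\Phi^{\nu}$; combined with the shift-invariance of the cotransition probabilities $\Lambda^{N+1}_N$, this yields both claimed identities. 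The main obstacle will be the rigidity step for $m = 1$: ruling out convergent subsequences whose signatures escape via exotic configurations rather than stabilizing (mod shifts) to some $\nu \in \Nu$. This is where the careful residue analysis of the integral representation is required, and where the restriction $\theta \in \N$ becomes essential, matching the standing hypothesis of the theorem.
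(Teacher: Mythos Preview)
Your overall architecture (Vershik--Kerov ergodic method; one-variable asymptotics via the integral formula; propagation to $m$ variables via the multiplicative formula of Theorem~\ref{macdonaldthm3}; shift equivariance for part~2) matches the paper's strategy. However, two of your steps contain genuine gaps or misconceptions.

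\textbf{The rigidity step is misdiagnosed.} You write that the limit exists iff ``after a suitable uniform shift, the positive and negative tails of $\lambda(N)$ each stabilize,'' and you propose to extract this from a saddle-point/residue analysis of the integral representation. But in the $(q,t)$-deformed setting with $q\in(0,1)$ the answer is one-sided: only the \emph{smallest} parts $\lambda(N)_N,\lambda(N)_{N-1},\dots$ need to stabilize (to $\nu_1,\nu_2,\dots$), while the largest parts may run off to $+\infty$; this is why the boundary is parametrized by a single sequence $\nu\in\Nu$, not a pair of tails. Correspondingly, the paper does \emph{not} obtain rigidity from the integral representation. Instead (Lemmas~\ref{lem:boundedparts0}--\ref{lem:boundedparts}) it computes directly, via the evaluation identity, that
\[
M_m^{\lambda(N)}\big(\lambda(N)_{N-m+1},\dots,\lambda(N)_N\big)\ \ge\ \big((t;t)_\infty\big)^{\theta m}>0,
\]
so weak convergence of $\Lambda^N_m\delta_{\lambda(N)}$ forces boundedness (hence, along a subsequence, stabilization) of the last $m$ parts. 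Your proposed residue/saddle-point mechanism is not what is actually needed here, and your ``two tails'' picture is the wrong heuristic for this regime.

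\textbf{The extremality step is missing.} You assert that ``$\NN(\nu)\myeq M^{\nu}$ is a well-defined extreme central measure'' and justify existence via Kolmogorov extension and the generating-function identity. That only gives $M^{\nu}\in\Mart$; the Vershik--Kerov method yields $\Omega_{q,t}\subseteq\Mart$ but not the reverse. The paper closes this gap by a separate argument: given the Choquet-type decomposition $M^{\nu}=\int_{\Omega_{q,t}} M\,\pi(dM)$ (Proposition~\ref{thm:simplexcons}), it pushes $\pi$ forward to a measure $\widetilde\pi$ on $\Nu$ and uses the support and monotonicity statements of Lemma~\ref{lem:posprob} (in particular part~(3)) to force $\widetilde\pi=\delta_\nu$. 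Without an argument of this type, you have not shown that every $M^{\nu}$ is extreme, so you have not established the bijection $\Nu\cong\Omega_{q,t}$.
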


Another main result of this article is Theorem~\ref{thm:characterization}, where we characterize the \textit{Martin boundary} of the $(q, t)$-Gelfand--Tseltin graph for $t = q^{\theta}$ and $\theta\in\N$. In fact, we f\/irst prove that the Martin boundary is homeomorphic to $\Nu$ and then show that the minimal boundary $\Omega_{q, t}$ coincides with the Martin boundary.
See Sections~\ref{sec:martindef} and~\ref{sec:martinchar} for the def\/inition and characterization of the Martin boundary.

\subsection{Comments on Theorem~\ref{thm:mainapplication} and connections to existing literature}

Our f\/irst comment is that Theorem~\ref{thm:mainapplication} is a generalization of the main theorem in the article of Vadim Gorin~\cite{G}, which is the special case $\theta = 1$ of our theorem, and characterizes the boundary of the \textit{$q$-Gelfand--Tsetlin graph}. Some ideas in the proofs are the same, especially the overall scheme of using the ergodic method of Vershik--Kerov, see~\cite{VK}, but we need many new arguments as well. For example, \cite{G} makes heavy use of the \textit{shifted Macdonald polynomials}, in particular the binomial formula for shifted Macdonald polynomials at $t = q$, \cite{Ok0}, while we do not use them at all. Moreover, in order to prove that the boundary of the $q$-Gelfand--Tsetlin graph is homeomorphic to $\Nu$, \cite{G} made use of the following closed formula for the \textit{shifted-Schur generating function} of $M_N^{\nu, \theta = 1}$, in the case that $\nu_1 \geq 0$:
\begin{gather}\label{eqn:shiftedgenerating}
\sum_{\lambda\in\GTp_N}{M_N^{\nu, \theta = 1}(\lambda)\frac{s_{\lambda}^*\big(q^{N-1}x_1, \dots, q^{N-1}x_N; q^{-1}\big)}{s_{\lambda}^*\big(0, \dots, 0; q^{-1}\big)}} = H^{\nu}(x_1)\cdots H^{\nu}(x_N),
\end{gather}
where
\begin{gather*}
H^{\nu}(x) = \frac{\prod\limits_{i=0}^{\infty}{(1 - q^i t)}}{\prod\limits_{j=1}^{\infty}{(1 - q^{\nu_j + j - 1}t)}}.
\end{gather*}
In the formula above, $s_{\lambda}^*(x_1, \dots, x_N; q)$ is the shifted Macdonald polynomial at $t = q$. In addition to the usefulness of the closed formula~(\ref{eqn:shiftedgenerating}) above, the multiplicative structure is surprising. It would be interesting to f\/ind a closed formula for the shifted Macdonald generating function of the measures~$M_N^{\nu}$, for general $\theta\in\N$, and f\/ind out if the multiplicative structure still holds in this generality.

It is shown in~\cite{G} that their main statement is equivalent to the characterization of certain Gibbs measures on lozenge tilings. A conjectural characterization of positive $q$-Toeplitz matrices is also given in that paper. Finally, it is mentioned that the asymptotics of $q$-Schur functions is related to quantum traces and the representation theory of $U_{\epsilon}(\mathfrak{gl}_{\infty})$. It would be interesting to extend some of these statements to the Macdonald case, especially to connect the asymptotics of Macdonald characters to the representation theory of inductive limits of quantum groups.

Several other ``boundary problems'' have appeared in the literature in various contexts. For instance, in the limiting case $t = q \rightarrow 1$, the problem of characterizing the boundary $\Omega_{q, t}$ becomes equivalent to characterizing the space of extreme characters of the inf\/inite-dimensional unitary group ${\rm U}(\infty) = \lim\limits_{\rightarrow}{{\rm U}(N)}$. The answer also characterizes totally positive Toeplitz matrices \cite{Ed, VK, Voi}. Also in the degenerate case $t = q^2$ or $t = q^{1/2}$ and $q\rightarrow 1$, the boundary problem becomes equivalent to characterizing the space of extreme spherical functions of the inf\/inite-dimensional Gelfand pairs $({\rm U}(2\infty), {\rm Sp}(\infty))$ and $({\rm U}(\infty), {\rm O}(\infty))$, respectively. This question, and in fact a more general one-parameter ``Jack''-degeneration, was solved in~\cite{OO1}. A similar degenerate question in the setting of random matrix theory was studied in~\cite{OlV}. Some of the tools in this paper can be degenerated easily to these scenarios and they may provide an alternative approach to their proof as well; for example, the special case of our toolbox in the case $t = q \rightarrow 1$ was used in \cite{GP} to study the corresponding boundary problem, and in \cite{Cu} we also study ref\/ine the asymptotic result that is needed to solve the boundary problem of \cite{OO1}.

Another similar boundary problem in a somewhat dif\/ferent direction is the following. Assume we consider the \textit{Young graph} instead of the Gelfand--Tsetlin graph, e.g., see~\cite{BOl}. Assume also that the cotransition probabilities coming from the branching rule of Macdonald polynomials are replaced by the cotransition probabilities coming from the \textit{Pieri-rule}. In this setting, the boundary problem has not been solved yet, but it is expected that the answer is given by \textit{Kerov's conjecture}, which characterizes \textit{Macdonald-positive specializations}, see \cite[Section~2]{BC}.

Finally, it was brought to my attention, after I completed the results of this paper, that Grigori Olshanski has obtained a characterization of the extreme set of $(q, t)$-central measures in the \textit{extended Gelfand--Tsetlin graph} for more general parameters $q$,~$t$ by dif\/ferent methods. His work follows the setting of the paper~\cite{GOl} of Gorin--Olshanski, which is some sort of analytic continuation to our proposed boundary problem. Interestingly, new features arise, e.g., two copies of the space $\Nu$ characterize the boundary in his context, one can def\/ine and work with suitable analogues of $zw$-measures, etc. Another related work in the $t = q$ case is his recent article~\cite{Ol0}.

\subsection{Organization of the paper}

The present work is organized as follows. In Section~\ref{macdonaldsection}, we brief\/ly recall some important algebraic properties of Macdonald polynomials that will be used to obtain our main results. We prove integral representations for Macdonald characters of one variable in Section~\ref{macdonaldintegralsec}. Next, in Section~\ref{macdonaldmultiplicativesec}, we obtain multiplicative formulas for Macdonald characters of a given number of variables $m\in\N$ in terms of those of one variable.
By making use of our formulas, in Section~\ref{sec:asymptotics} we obtain asymptotics of Macdonald characters as the signatures grow to inf\/inity in a specif\/ic limit regime. In Sections~\ref{sec:qtpreliminaries} and~\ref{sec:qtboundary}, we def\/ine and characterize the boundary of the $(q, t)$-Gelfand--Tsetlin graph in the case that $\theta\in\N$ and $t = q^{\theta}$. The asymptotic statements of Section~\ref{sec:asymptotics} play the key role in the characterization of the boundary.

In Appendix~\ref{app:qtheory}, we have bundled the necessary language and results of $q$-theory that are used throughout the paper. In Appendix~\ref{app:cfunctions}, we make some computations with expressions that appear in the multiplicative formulas for Macdonald polynomials.

\section{Symmetric Laurent polynomials}\label{macdonaldsection}

A canonical reference for symmetric polynomials is~\cite{M2}. We choose to give a brief overview of the tools that we need from~\cite{M2}, in order to f\/ix terminology and to introduce lesser known objects, such as signatures and Macdonald Laurent polynomials.

\subsection{Partitions, signatures and symmetric Laurent polynomials}

A \textit{partition} is a f\/inite sequence of weakly decreasing nonnegative integers $\lambda = (\lambda_1 \geq \lambda_2 \geq \cdots\geq \lambda_k)$, $\lambda_i\in\Z_{\geq 0}$ $\forall\, i$. We identify partitions that dif\/fer by trailing zeroes; for example, $(4, 2, 2, 0, 0)$ and $(4, 2, 2)$ are the same partition. We def\/ine the \textit{size} of $\lambda$ to be the sum $|\lambda| \myeq \lambda_1 + \dots + \lambda_k$, and its \textit{length} $\ell(\lambda)$ to be the number of strictly positive elements of it.
The \textit{dominance order} for partitions is a partial order given by letting $\mu \leq \lambda$ if $|\mu| = |\lambda|$ and $\mu_1 + \cdots + \mu_i \leq \lambda_1 + \cdots + \lambda_i$ for all $i$. As usual, we let $\mu < \lambda$ if $\mu \leq \lambda$ and $\mu \neq \lambda$.

Partitions can be graphically represented by their \textit{Young diagrams}. The Young diagram of partition $\lambda$ is the array of boxes with coordinates $(i, j)$ with $1\leq j\leq \lambda_i$, $1\leq i\leq\ell(\lambda)$, where the coordinates are in matrix notation (row labels increase from top to bottom and column labels increase from left to right), see Fig.~\ref{youngdiagram1}.

\begin{figure}[h!]
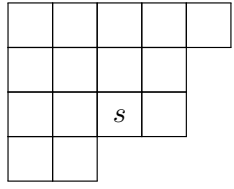
\centering
\ytableausetup{centertableaux}
\begin{ytableau}
\ & & & & \\
 & & & \\
 & & s & \\
&
\end{ytableau}
\caption{Young diagram for the partition $\lambda = (5, 4, 4, 2)$. Square $s = (3, 3)$ has arm length, arm colength, leg length and leg colength given by $a(s) = 1$, $a'(s) = 2$, $l(s) = 0$, $l'(s) = 2$.}\label{youngdiagram1}
\end{figure}

A \textit{signature} is a sequence of weakly decreasing integers $\lambda = (\lambda_1 \geq \lambda_2 \geq \dots\geq \lambda_k)$, $\lambda_i\in\Z$ $\forall\, i$. A \textit{positive signature} is a signature whose elements are all nonnegative. The \textit{length} of a~signature, or positive signature, is the number~$k$ of elements of it. Positive signatures which dif\/fer by trailing zeroes are not identif\/ied, in contrast to partitions; for example, $(4, 2, 2, 0, 0)$ and $(4, 2, 2)$ are dif\/ferent positive signatures, the f\/irst of length $5$ and the second of length $3$. We shall denote~$\GT_N$ (resp.~$\GTp_N$) the set of signatures (resp. positive signatures) of length~$N$. Evidently~$\GTp_N$ can be identif\/ied with the set of all partitions of length $\leq N$. Under this identif\/ication, we are allowed to talk about the Young diagram of a positive signature $\lambda\in\GTp_N$, its size, the dominance order, and other attributes that are typically associated to partitions. Note, however, that length is def\/ined dif\/ferently for partitions and for positive signatures.

Let us now switch to notions pertaining to symmetric (Laurent) polynomials. Fix a positive integer $N$. Consider the f\/ield $F = \C(q, t)$ and recall the algebra $\Lambda_F[x_1, \dots, x_N]$ of symmetric polynomials on the variables $x_1, \dots, x_N$ with coef\/f\/icients in $F$. For any $m\in\Z_{\geq 0}$, recall also the subalgebra $\Lambda_F^m[x_1, \dots, x_N]$ of symmetric polynomials on $x_1, \dots, x_N$ that are homogeneous of degree $m$; then
\begin{gather*}
\Lambda_F[x_1, \dots, x_N] = \bigoplus_{m\geq 0}{\Lambda_F^m[x_1, \dots, x_N]}.
\end{gather*}
We also denote by $\Lambda_F[x_1^{\pm}, \dots, x_N^{\pm}]$ the algebra of symmetric (with respect to the transpositions $x_i \leftrightarrow x_{i+1}$ for $i = 1, \dots, N-1$) Laurent polynomials in the variables $x_1, \dots, x_N$.

The connection between partitions/signatures and symmetric polynomials comes from the observation that $\dim_F \left(\Lambda_F^m[x_1, \dots, x_N]\right)$ is the number of partitions of size $m$ and length $\leq N$, or equivalently the number of positive signatures of size $m$ and length $N$. A basis for the space $\Lambda_F^m[x_1, \dots, x_N]$ is given by the \textit{monomial symmetric polynomials} $m_{\lambda}(x_1, \dots, x_N)$, with $|\lambda| = m$, $\ell(\lambda)\leq N$, def\/ined by
\begin{gather*}
m_{\lambda}(x_1, \dots, x_N) \myeq \sum_{\mu\in S_N\cdot\lambda}{x_1^{\mu_1}\cdots x_N^{\mu_N}},
\end{gather*}
where $S_N\cdot\lambda$ is the orbit of $\lambda$ under the permutation action of $S_N$, and the sum runs over distinct elements $\mu$ of that orbit. It is implied that $\{m_{\lambda}(x_1, \dots, x_N)\colon \ell(\lambda) \leq N\}$ is a basis of $\Lambda_F[x_1, \dots, x_N]$.

\subsection{Macdonald polynomials and Macdonald characters}\label{macdonaldpolysection}

\begin{dfprop}[{\cite[Chapter~VI, Sections~3, 4, 9]{M2}}]\label{def:macdonaldpolys}
The \textit{Macdonald polynomials} $P_{\lambda}(x_1, \dots, x_N; q, t)$, for partitions $\lambda$ with $\ell(\lambda) \leq N$, are the unique elements of $\Lambda_F[x_1, \dots, x_N]$ satisfying the following two properties
\begin{itemize}\itemsep=0pt
	\item \textit{Triangular decomposition}: $\displaystyle P_{\lambda}(x_1, \dots, x_N; q, t) = m_{\lambda} + \sum\limits_{\mu\colon \mu < \lambda}{c_{\lambda, \mu}m_{\mu}}$, for some $c_{\lambda, \mu}\in F$, and the sum is over partitions $\mu$ with $\ell(\mu) \leq N$, and $\mu < \lambda$ in the dominance order.

	\item \textit{Orthogonality relation}: Let $[\cdot]_0 \colon F[x_1^{\pm}, \dots, x_N^{\pm}] \rightarrow F$ be the constant term map
\begin{gather*}
\left[ \sum_{\lambda = (\lambda_1 \geq \cdots \geq \lambda_N) \in \Z^N}{a_{\lambda}x_1^{\lambda_1}\cdots x_N^{\lambda_N}} \right]_0 = a_{(0, \dots, 0)}.
\end{gather*}
The Macdonald polynomials are orthogonal with respect to the inner product $(\cdot, \cdot)_{q, t}$ on $\Lambda_F[x_1^{\pm}, \dots, x_N^{\pm}]$ given by $\displaystyle (f, g)_{q, t} \myeq [f(x_1, \dots, x_N)g\big(x_1^{-1}, \dots, x_N^{-1}\big)\Delta_{q, t}]_0$, where
\[
\Delta_{q, t} \myeq \prod_{1\leq i\neq j\leq N}\prod_{k=0}^{\infty}{\frac{1 - q^{k}x_ix_j^{-1}}{1 - q^{k}tx_ix_j^{-1}}}.
\]
\end{itemize}
\end{dfprop}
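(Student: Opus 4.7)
The plan is to follow Macdonald's original argument, handling uniqueness and existence separately, and exploiting the dominance order as the key structural ingredient.

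For \emph{uniqueness}, I would induct on $\lambda$ in the dominance order. If $P_{\lambda}$ and $P'_{\lambda}$ both satisfy the two conditions, their difference $Q_{\lambda} \myeq P_{\lambda} - P'_{\lambda}$ lies in the span of $\{m_{\mu} : \mu < \lambda\}$. Inductively, the $P_{\mu}$'s with $\mu < \lambda$ form a basis of this same span, so $Q_{\lambda}$ is an $F$-linear combination of them. But the orthogonality requirement forces $Q_{\lambda}$ to be orthogonal to each $P_{\mu}$ with $\mu < \lambda$. Pairing against each $P_{\mu}$ in turn, every expansion coefficient must vanish provided the Gram matrix of the $P_{\mu}$'s is non-singular. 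This reduces to non-degeneracy of $(\cdot, \cdot)_{q, t}$ on the homogeneous component of degree $|\lambda|$ in $\Lambda_F[x_1, \dots, x_N]$, which can be established by computing $(m_{\lambda}, m_{\mu})_{q, t}$ directly and checking that the resulting Gram matrix is triangular in the dominance order with non-vanishing diagonal entries in $F$.

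For \emph{existence}, the cleanest route is via Macdonald's difference operators. One constructs $N$ pairwise commuting $F$-linear operators $D_N^1, \dots, D_N^N$ on $\Lambda_F[x_1, \dots, x_N]$ that preserve each homogeneous component, are self-adjoint with respect to $(\cdot, \cdot)_{q, t}$, act upper-triangularly on the monomial basis in the dominance order, and whose diagonal eigenvalues (as rational functions of $q, t$) separate partitions. Their joint eigenspaces are then one-dimensional, and the unique joint eigenvector with leading coefficient $1$ on $m_{\lambda}$ is the desired $P_{\lambda}$: dominance-triangularity is inherited from the triangular action of the $D_N^r$, and orthogonality follows from the standard fact that eigenvectors of commuting self-adjoint operators with distinct eigenvalues are mutually orthogonal.

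The main obstacle is constructing the Macdonald operators themselves and verifying their three key properties. Concretely, one writes down
\begin{gather*}
D_N^1 = \sum_{i=1}^N \prod_{j \neq i} \frac{t x_i - x_j}{x_i - x_j}\, T_{q, x_i},
\end{gather*}
where $T_{q, x_i} f(x_1, \dots, x_N) = f(x_1, \dots, q x_i, \dots, x_N)$, together with higher operators $D_N^r$ built from analogous $q$-shifts on $r$-subsets of the variables. The non-trivial checks are that each $D_N^r$ preserves $\Lambda_F[x_1, \dots, x_N]$ despite having rational coefficients, that it is self-adjoint with respect to the constant-term pairing weighted by $\Delta_{q, t}$ (which requires careful use of how $\Delta_{q, t}$ transforms under $T_{q, x_i}$), and that its matrix in the monomial basis vanishes outside the dominance cone. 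These are substantive but standard calculations carried out in detail in Chapter VI of Macdonald's book, which I would cite rather than reproduce here.
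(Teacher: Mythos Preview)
Your proposal is correct and follows exactly the route that the paper defers to: the statement is a Proposition/Definition cited directly from Macdonald's book \cite[Chapter~VI, Sections~3, 4, 9]{M2}, with no proof given in the paper itself, and your outline (uniqueness by dominance induction plus nondegeneracy of the Gram matrix, existence via the commuting self-adjoint $q$-difference operators $D_N^r$) is precisely Macdonald's construction. There is nothing to compare; you have reproduced the intended argument.
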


Note that $P_{\varnothing}(q, t) = 1$. If $N < \ell(\lambda)$, we set $P_{\lambda}(x_1, \dots, x_N; q, t) \myeq 0$ for convenience.

When we are talking about Macdonald polynomials and some of their properties which hold regardless of the number $N$ of variables, as long as $N$ is large enough, we simply write $P_{\lambda}(q, t)$ instead of $P_{\lambda}(x_1, \dots, x_N; q, t)$.

From the triangular decomposition of Macdonald polynomials, $P_{\lambda}(q, t)$ is a homogeneous polynomial of degree $|\lambda|$. Moreover $\{P_{\lambda}(x_1, \dots, x_N) \colon \ell(\lambda) \leq N\}$ is a basis of $\Lambda_F[x_1, \dots, x_N]$. Finally, we have the following \textit{index stability} property:
\begin{gather}\label{macdonaldsignatures1}
P_{(\lambda_1 + 1, \dots, \lambda_N + 1)}(x_1, \dots, x_N; q, t) = (x_1\cdots x_N)\cdot P_{\lambda}(x_1, \dots, x_N; q, t).
\end{gather}

As pointed out before, the set of partitions of length $\leq N$ is in bijection with $\GTp_N$. Thus we can index the Macdonald polynomials by positive signatures rather than by partitions: for any $\lambda\in\GTp_N$, we let $P_{\lambda}(x_1, \dots, x_N; q, t)$ be the Macdonald polynomial corresponding to the partition associated to $\lambda$. We can slightly extend the def\/inition above and introduce Macdonald Laurent polynomials $P_{\lambda}(x_1, \dots, x_N; q, t)$ for any $\lambda\in\GT_N$. Let $\lambda\in\GT_N$ be arbitrary. If $\lambda_N \geq 0$, then $\lambda\in\GTp_N$ and $P_{\lambda}(x_1, \dots, x_N; q, t)$ is already def\/ined. If $\lambda_N < 0$, choose $m\in\N$ such that $\lambda_N + m \geq 0$ and so $(\lambda_1 + m, \dots, \lambda_N + m)\in\GTp_N$. Then def\/ine
\begin{gather}\label{macdonaldsignatures11}
P_{\lambda}(x_1, \dots, x_N; q, t) \myeq (x_1\cdots x_N)^{-m}\cdot P_{(\lambda_1 + m, \dots, \lambda_N + m)}(x_1, \dots, x_N; q, t).
\end{gather}
By virtue of the index stability property, the Macdonald (Laurent) polynomial $P_{\lambda}(x_1, \dots, x_N; q, t)$ is well-def\/ined and does not depend on the value of $m$ that we choose. For simplicity, we call $P_{\lambda}(x_1, \dots, x_N; q, t)$ a Macdonald polynomial, whether $\lambda\in\GTp_N$ or not. In a similar fashion, we can def\/ine monomial symmetric polynomials $m_{\lambda}$, for any $\lambda\in\GT_N$.

Recall the def\/initions of the \textit{arm-length, arm-colength, leg-length, leg-colength} $a(s)$, $a'(s)$, $l(s)$, $l'(s)$ of the square $s = (i, j)$ of the Young diagram of $\lambda$, given by $a(s) = \lambda_i - j$, $a'(s) = j - 1$, $l(s) = \lambda_j' - i$, $l'(s) = i - 1$;
we note that $\lambda_j' = |\{ i\colon \lambda_i \geq j \} |$ is the length of the $j$th part of the \textit{conjugate partition} $\lambda'$, see Fig.~\ref{youngdiagram1}.

We use terminology from $q$-analysis, see Appendix~\ref{app:qtheory}; particularly we use the def\/inition of $q$-Pochhammer symbols $(z; q)_n \myeq \prod\limits_{i=0}^{n-1}{(1 - zq^i)}$ and $(z; q)_{\infty} \myeq \prod\limits_{i=0}^{\infty}{(1 - zq^i)}$.

For $\lambda\in\bigsqcup\limits_{N\geq 0}{\GTp_N}$, def\/ine the \textit{dual Macdonald polynomials} $Q_{\lambda}(q, t)$ as the following normaliza\-tion of Macdonald polynomials
\begin{gather}\label{PtoQ}
Q_{\lambda}(q, t) \myeq b_{\lambda}(q, t)P_{\lambda}(q, t), \qquad b_{\lambda}(q, t) \myeq \prod_{s\in\lambda}{\frac{1 - q^{a(s)}t^{l(s) + 1}}{1 - q^{a(s) + 1}t^{l(s)}}}.
\end{gather}

The \textit{complete homogeneous symmetric $($Macdonald$)$ polynomials} $g_0 = 1, g_1, g_2, \dots$ are the one-row dual Macdonald polynomials:
\begin{gather}\label{gtoQ}
g_n(q, t) \myeq Q_{(n)}(q, t) = \frac{(q; q)_n}{(t; q)_n}P_{(n)}(q, t).
\end{gather}
For convenience, we also set $g_n(q, t) \myeq 0$, $\forall\, n < 0$.

Now we come to several important theorems on Macdonald polynomials, which will be our main tools.

\begin{thm}[{index-argument symmetry; \cite[Chapter~VI, Property~6.6]{M2}}]\label{symmetry}
Let $N\in\N$, $\lambda, \mu\in\GTp_N$, then
\begin{gather*}
\frac{P_{\lambda}\big(q^{\mu_1}t^{N-1}, q^{\mu_2}t^{N-2}, \dots, q^{\mu_N}; q, t\big)}{P_{\lambda}\big(t^{N-1}, t^{N-2}, \dots, 1; q, t\big)} = \frac{P_{\mu}\big(q^{\lambda_1}t^{N-1}, q^{\lambda_2}t^{N-2}, \dots, q^{\lambda_N}; q, t\big)}{P_{\mu}\big(t^{N-1}, t^{N-2}, \dots, 1; q, t\big)}.
\end{gather*}
\end{thm}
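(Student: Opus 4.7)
Write $u_\lambda := (q^{\lambda_1}t^{N-1}, q^{\lambda_2}t^{N-2}, \ldots, q^{\lambda_N})$, so that $u_0 = (t^{N-1}, \ldots, t, 1)$ corresponds to $\lambda = (0,\ldots,0) \in \GTp_N$, and set $K_N(\lambda, \mu) := P_\lambda(u_\mu; q, t)/P_\lambda(u_0; q, t)$. The theorem asserts the symmetry $K_N(\lambda, \mu) = K_N(\mu, \lambda)$. The plan is to obtain an explicit closed expression for $K_N(\lambda, \mu)$ and then verify that it is invariant under the transposition $\lambda\leftrightarrow\mu$.

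For the first step, I would invoke Macdonald's first $q$-difference operator
\begin{equation*}
D_N = \sum_{i=1}^N \left(\prod_{j\neq i}\frac{tx_i - x_j}{x_i - x_j}\right) T_{q, x_i},
\end{equation*}
which has $P_\lambda$ as eigenfunction with eigenvalue $e_1(u_\lambda) = \sum_{i=1}^N t^{N-i}q^{\lambda_i}$. Because the $q$-shift $T_{q, x_i}$ sends the specialization point $u_\mu$ to $u_{\mu + e_i}$, evaluating the eigenvalue equation at $x = u_\mu$ and dividing by $P_\lambda(u_0)$ produces the linear recurrence
\begin{equation*}
e_1(u_\lambda)\, K_N(\lambda, \mu) = \sum_{i=1}^N \left(\prod_{j\neq i}\frac{q^{\mu_i}t^{N-i+1} - q^{\mu_j}t^{N-j}}{q^{\mu_i}t^{N-i} - q^{\mu_j}t^{N-j}}\right) K_N(\lambda, \mu + e_i).
\end{equation*}
Combined with the analogous recurrences coming from the higher commuting Macdonald operators $D_N^{(r)}$, $r = 2, \ldots, N$, and with the boundary condition $K_N(\lambda, 0) = 1$, this inductively determines $K_N(\lambda, \mu)$ as an explicit rational expression in $q$, $t$, $q^{\lambda_i}$, $q^{\mu_i}$.

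For the second step, I would repackage the resulting expression into a product of factors indexed by the boxes $s$ of the Young diagram of $\lambda$, using the arm/leg/colength statistics $a, a', l, l'$ of Fig.~\ref{youngdiagram1}. After a reparametrization, the factors can be written to depend symmetrically on $\lambda$ and $\mu$, so that transposing the two partitions merely reorders but does not change the product. At this point the claimed symmetry follows by inspection.

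The main obstacle is executing this combinatorial rearrangement, which is the true content of evaluation symmetry and does \emph{not} come for free from the first-step recurrence: the recurrence in $\mu$ has eigenvalue $e_1(u_\lambda)$, while the swapped recurrence in $\lambda$ would have eigenvalue $e_1(u_\mu)$, so symmetry must be extracted from the explicit form rather than from formal parity of the difference equations. The cleanest routes are either an induction on $|\lambda|$ using Pieri's rule and the branching law of Theorem~\ref{branchingmacdonald}, or a direct appeal to the Cauchy identity $\sum_\nu P_\nu(x; q, t)Q_\nu(y; q, t) = \prod_{i,j}(tx_iy_j; q)_\infty/(x_iy_j; q)_\infty$ combined with the known closed form for $P_\lambda(u_0; q, t)$; both are classical and detailed in \cite{M2}.
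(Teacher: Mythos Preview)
The paper does not give its own proof of Theorem~\ref{symmetry}; it is quoted from \cite[Chapter~VI, (6.6)]{M2} and used as a black box. So the only benchmark is Macdonald's argument.

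Your plan has a genuine gap at the second step. You assert that the family of recurrences coming from the $D_N^{(r)}$ together with $K_N(\lambda,0)=1$ ``inductively determines $K_N(\lambda,\mu)$ as an explicit rational expression'' which can then be ``repackaged into a product of factors indexed by the boxes $s$ of the Young diagram of $\lambda$'' so that symmetry is visible by inspection. No such closed product formula for $K_N(\lambda,\mu)=P_\lambda(u_\mu)/P_\lambda(u_0)$ exists for general $\lambda,\mu$. Only the principal specialization $P_\lambda(u_0)$ factors (Theorem~\ref{evaluation}); for generic $\mu$ the quantity $P_\lambda(u_\mu)$ is a genuinely complicated sum (e.g., over semistandard tableaux via Theorem~\ref{branchingmacdonald}) and symmetry cannot be read off from any explicit expression. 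The recurrences you wrote down do characterize $K_N(\lambda,\cdot)$ uniquely, but that is an implicit characterization, not a formula.

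What actually works---and what Macdonald does, following Koornwinder---uses exactly your first-step recurrence, but not to produce a closed form. One compares the recurrence in $\mu$ obtained by evaluating $D_N P_\lambda=e_1(u_\lambda)P_\lambda$ at $x=u_\mu$ with the recurrence in $\mu$ obtained from the Pieri rule $e_1\cdot P_\mu=\sum_{\nu}\psi'_{\nu/\mu}P_\nu$ evaluated at $x=u_\lambda$. A direct computation with the explicit Pieri coefficients and the evaluation identity shows the two sets of transition coefficients agree, so $\mu\mapsto K_N(\lambda,\mu)$ and $\mu\mapsto K_N(\mu,\lambda)$ satisfy the \emph{same} difference system with the same normalization at $\mu=0$; uniqueness then forces equality. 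This is precisely the ``induction via Pieri'' you mention in your last paragraph, but it is the main argument, not a fallback, and it never passes through any box-product formula for $K_N(\lambda,\mu)$.
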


\begin{thm}[{evaluation identity; \cite[Chapter~VI, equations~(6.11) and~(6.11$'$)]{M2}}]\label{evaluation}
Let $N\in\N$, $\lambda\in\GTp_N$, then
\begin{gather*}
P_{\lambda}\big(t^{N-1}, t^{N-2}, \dots, 1; q, t\big) = t^{n(\lambda)}\prod_{1\leq i<j\leq N}{\frac{\big(q^{\lambda_i - \lambda_j}t^{j-i}; q\big)_{\infty}(t^{j-i+1}; q)_{\infty} }{ \big(q^{\lambda_i - \lambda_j}t^{j-i+1}; q\big)_{\infty} (t^{j-i}; q)_{\infty}}}\\
\hphantom{P_{\lambda}\big(t^{N-1}, t^{N-2}, \dots, 1; q, t\big)}{} = t^{n(\lambda)}\prod_{s\in\lambda}{\frac{1 - q^{a'(s)}t^{N - l'(s)}}{1 - q^{a(s)}t^{l(s)+1}}},
\end{gather*}
where and $n(\lambda) \myeq \lambda_2 + 2\lambda_3 + \cdots + (N-1)\lambda_N$.
The first equality holds, more generally, for any signature $\lambda\in\GT_N$ by virtue of the definition~\eqref{macdonaldsignatures11} of Macdonald Laurent polynomials $P_{\lambda}(x_1, \dots, x_N; q, t)$.
\end{thm}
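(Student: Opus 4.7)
The plan is to establish the first equality by induction on the length $N$ using the branching rule, Theorem~\ref{branchingmacdonald}, and then to derive the hook-content form (the second equality) by reindexing the double product as a product over cells of the Young diagram of $\lambda$. Set $V_N(\lambda) := P_\lambda(t^{N-1}, t^{N-2}, \dots, 1; q, t)$ and let $W_N(\lambda)$ denote the right-hand side of the first equality.

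The base case $V_N(\varnothing) = W_N(\varnothing) = 1$ is immediate. For the inductive step, I would specialize the branching rule at $x_N = 1$ with $(x_1, \dots, x_{N-1}) = (t^{N-1}, \dots, t)$ and invoke the homogeneity $P_\mu(t^{N-1}, \dots, t; q, t) = t^{|\mu|}\, V_{N-1}(\mu)$ to obtain the recursion
\[
V_N(\lambda) \;=\; \sum_{\mu \prec \lambda} \psi_{\lambda/\mu}(q, t)\, t^{|\mu|}\, V_{N-1}(\mu).
\]
It then suffices to verify the same recursion for $W_N(\lambda)$. This is a rational identity in $q$, $t$ that one proves by viewing both sides as rational functions of $t^N$ and comparing residues at the poles $q^{-a(s)}t^{-l(s)-1}$ for $s \in \lambda$; each residue is matched against the unique $\mu \prec \lambda$ for which the corresponding factor in $W_{N-1}(\mu)$ produces that pole. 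Equivalently, one tracks how arm- and leg-lengths change between $\lambda$ and $\mu$ (they change only in columns meeting the horizontal strip $\lambda/\mu$) and matches the resulting telescoping against the explicit product formula for $\psi_{\lambda/\mu}(q,t)$.

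The equivalence of the double product and the cell product in the statement is a bookkeeping identity. For each pair $i < j$ with $\lambda_i \ge \lambda_j$, the Pochhammer ratio
\[
\frac{(q^{\lambda_i-\lambda_j}t^{j-i}; q)_\infty\, (t^{j-i+1}; q)_\infty}{(q^{\lambda_i-\lambda_j}t^{j-i+1}; q)_\infty\, (t^{j-i}; q)_\infty}
\]
collapses to the finite product $\prod_{k=0}^{\lambda_i - \lambda_j - 1}(1 - q^k t^{j-i+1})/(1 - q^k t^{j-i})$. Reorganizing the resulting triple product over $(i, j, k)$ by cells $s = (i, k+1) \in \lambda$, and using that $l'(s) = i - 1$ together with the fact that the number of rows $j > i$ with $\lambda_j \ge k+1$ is exactly $l(s)$, one recovers the cell-indexed factors, while the $t^{n(\lambda)}$ prefactor accounts for the residual $t$-exponents. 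Finally, the extension from $\GTp_N$ to $\GT_N$ is automatic from~\eqref{macdonaldsignatures1}: the shift $\lambda \mapsto (\lambda_1 + m, \dots, \lambda_N + m)$ scales $V_N$ by $(t^{N-1} \cdots 1)^m = t^{m N(N-1)/2}$, and this matches the shift $n(\lambda) \mapsto n(\lambda) + m N(N-1)/2$ on the right, since the differences $\lambda_i - \lambda_j$ are invariant and so the double product is unchanged.

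The main obstacle is the inductive step, i.e.\ verifying that the explicit product $W_N$ satisfies the branching recursion. This is a nontrivial $q$,$t$-rational identity that interlaces arm and leg contributions with the Macdonald branching coefficients $\psi_{\lambda/\mu}(q,t)$; it is the Macdonald-polynomial analogue of the Pieri-type verifications familiar from the Schur and Hall--Littlewood specializations, but requires more delicate bookkeeping. The pole-by-pole residue comparison is, in my experience, the shortest route through this calculation.
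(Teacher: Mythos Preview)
The paper does not prove Theorem~\ref{evaluation}; it is quoted from Macdonald~\cite[Chapter~VI, (6.11)--(6.11$'$)]{M2}, where the argument runs through the Macdonald $q$-difference operators and their eigenvalues (together with the index-argument symmetry, Theorem~\ref{symmetry}) rather than through the branching rule. So there is no proof in this paper to compare yours against, and your route via induction on~$N$ using Theorem~\ref{branchingmacdonald} is genuinely different from the source argument.

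Your auxiliary steps are fine: the recursion $V_N(\lambda)=\sum_{\mu\prec\lambda}\psi_{\lambda/\mu}\,t^{|\mu|}V_{N-1}(\mu)$ follows from the branching rule after using symmetry and homogeneity as you indicate, the reindexing of the double Pochhammer product into the hook-content product over cells is correct bookkeeping, and the extension to $\GT_N$ via~\eqref{macdonaldsignatures1} is exactly right. The gap is the central inductive step, the verification that
\[
W_N(\lambda)\;=\;\sum_{\mu\prec\lambda}\psi_{\lambda/\mu}(q,t)\,t^{|\mu|}\,W_{N-1}(\mu).
\]
Your residue description does not work as written. Regarded as a function of~$t^N$, the hook-content expression $W_N(\lambda)=t^{n(\lambda)}\prod_{s\in\lambda}(1-q^{a'(s)}t^{N-l'(s)})/(1-q^{a(s)}t^{l(s)+1})$ is a \emph{polynomial} in~$t^N$, since its denominator is independent of~$N$; there are simply no poles in~$t^N$ at which to compare residues, and the locations $q^{-a(s)}t^{-l(s)-1}$ you name do not involve~$N$ at all. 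Nor is there any variable in which a single pole isolates a unique $\mu\prec\lambda$: the factors of $\psi_{\lambda/\mu}$ and of $W_{N-1}(\mu)$ share pole loci across many~$\mu$. The ``equivalent'' telescoping you mention --- tracking how arm and leg lengths change across the horizontal strip $\lambda/\mu$ and matching against the product form of~$\psi_{\lambda/\mu}$ --- is in fact the viable path here, but it is a substantial $q,t$-hypergeometric computation of essentially the same depth as the evaluation identity itself, and you have not carried it out. Without that computation (or an appeal to an independent summation formula), the induction does not close.
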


From the second equality of Theorem~\ref{evaluation} and the def\/inition of dual Macdonald polynomials, we obtain

\begin{cor}\label{evaluationcor}
Let $N\in\N$, $\lambda\in\GTp_N$, then
\begin{gather*}
Q_{\lambda}\big(t^{N-1}, t^{N-2}, \dots, 1; q, t\big) = t^{n(\lambda)}\prod_{s\in\lambda}{\frac{1 - q^{a'(s)}t^{N - l'(s)}}{1 - q^{a(s)+1}t^{l(s)}}}.
\end{gather*}
\end{cor}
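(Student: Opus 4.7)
The corollary is an immediate algebraic consequence of combining the definition of $Q_\lambda$ in \eqref{PtoQ} with the product form of the evaluation identity from Theorem~\ref{evaluation}. The plan is therefore short: simply multiply the two expressions and observe a telescoping cancellation.

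More concretely, I would start from the defining relation $Q_\lambda(q,t) = b_\lambda(q,t) P_\lambda(q,t)$, where
\begin{gather*}
b_\lambda(q,t) = \prod_{s\in\lambda} \frac{1 - q^{a(s)} t^{l(s)+1}}{1 - q^{a(s)+1} t^{l(s)}}.
\end{gather*}
Specializing the variables to $(t^{N-1}, t^{N-2}, \dots, 1)$ and applying the second equality of Theorem~\ref{evaluation}, the right side becomes
\begin{gather*}
b_\lambda(q,t) \cdot t^{n(\lambda)} \prod_{s\in\lambda} \frac{1 - q^{a'(s)} t^{N - l'(s)}}{1 - q^{a(s)} t^{l(s)+1}}.
\end{gather*}

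The final step is to observe that the factor $\prod_{s\in\lambda}(1 - q^{a(s)} t^{l(s)+1})$ appearing in the numerator of $b_\lambda$ cancels precisely the identical factor in the denominator of the evaluation formula, since both products run over the same squares $s \in \lambda$ and use the same arm- and leg-lengths. After this cancellation, the surviving denominator is $\prod_{s\in\lambda}(1 - q^{a(s)+1} t^{l(s)})$ coming from $b_\lambda$, and the surviving numerator is $\prod_{s\in\lambda}(1 - q^{a'(s)} t^{N - l'(s)})$ coming from the evaluation formula, multiplied by $t^{n(\lambda)}$. This matches the claimed expression exactly.

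There is no genuine obstacle here: the statement is a direct restatement of Theorem~\ref{evaluation} under the normalization \eqref{PtoQ}. The only point to double-check is that the square-by-square indexing in $b_\lambda$ and in the product form of Theorem~\ref{evaluation} use the same conventions for $a(s)$ and $l(s)$, which they do by the definitions recalled just before Theorem~\ref{symmetry}. Hence the corollary follows by a single line of algebra, and no further input (e.g., $q$-Pochhammer manipulations or properties of $n(\lambda)$) is needed.
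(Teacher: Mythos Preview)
Your proof is correct and follows exactly the approach indicated in the paper: the corollary is stated there as an immediate consequence of the second equality in Theorem~\ref{evaluation} combined with the definition~\eqref{PtoQ} of $Q_\lambda$, and your write-up makes that single cancellation explicit.
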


Since the Macdonald polynomial $P_{\lambda}(x_1, x_2, \dots, x_N; q, t)$ is symmetric in $x_1, x_2, \dots, x_N$, it is also a symmetric polynomial on $x_2, \dots, x_N$; thus it is a linear combination of Macdonald polynomials $P_{\mu}(x_2, \dots, x_N; q, t)$ with coef\/f\/icients in $F[x_1]$. More precisely, we have the so-called \textit{branching rule for Macdonald polynomials}:

\begin{thm}[{branching rule; \cite[Chapter~VI, equation~(7.13$'$), Example~2(b) on p.~342]{M2}}]\label{branchingmacdonald}
Let $N\in\N$, $\lambda\in\GTp_N$, then
\begin{gather*}
P_{\lambda}(x_1, x_2, \dots, x_N; q, t) = \sum_{\mu \in\GTp_{N-1}\colon \mu\prec\lambda}{\psi_{\lambda/\mu}(q, t)x_1^{|\lambda| - |\mu|}P_{\mu}(x_2, \dots, x_N; q, t)},
\end{gather*}
where the branching coefficients are
\begin{gather*}
\psi_{\lambda/\mu}(q, t) \myeq \prod_{1\leq i \leq j\leq N-1}\frac{\big(q^{\mu_i - \mu_j}t^{j-i+1}; q\big)_{\infty}\big(q^{\lambda_i - \lambda_{j+1}}t^{j-i+1}; q\big)_{\infty} }
{\big(q^{\lambda_i - \mu_j}t^{j-i+1}; q\big)_{\infty}\big(q^{\mu_i - \lambda_{j+1}}t^{j-i+1}; q\big)_{\infty} }\\
\hphantom{\psi_{\lambda/\mu}(q, t) \myeq \prod_{1\leq i \leq j\leq N-1}}{} \times \frac{\big(q^{\lambda_i - \mu_j+1}t^{j-i}; q\big)_{\infty}\big(q^{\mu_i - \lambda_{j+1} + 1}t^{j-i}; q\big)_{\infty}}{\big(q^{\mu_i - \mu_j+1}t^{j-i}; q\big)_{\infty}\big(q^{\lambda_i - \lambda_{j+1}+1}t^{j-i}; q\big)_{\infty}} ,
\end{gather*}
and the sum is over positive signatures $\mu\in\GTp_{N-1}$ that satisfy the interlacing constraint
\begin{gather*}
\lambda_N \leq \mu_{N-1} \leq \lambda_{N-1} \leq \cdots \leq \mu_1 \leq \lambda_1,
\end{gather*}
which is written succinctly as $\mu\prec\lambda$.
\end{thm}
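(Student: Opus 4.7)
The plan is to establish the branching rule via the theory of skew Macdonald polynomials, following Macdonald's development in \cite{M2}. I would first introduce $P_{\lambda/\mu}(y_1, \ldots, y_n; q, t)$ implicitly through the two-alphabet decomposition
\begin{gather*}
P_\lambda(x_1, \ldots, x_N, y_1, \ldots, y_n; q, t) = \sum_{\mu}{P_\mu(x_1, \ldots, x_N; q, t)\, P_{\lambda/\mu}(y_1, \ldots, y_n; q, t)},
\end{gather*}
where the sum runs over positive signatures $\mu$ with $\ell(\mu) \le N$. Since $\{P_\mu(x_1, \ldots, x_N; q, t) : \ell(\mu) \le N\}$ is a basis of $\Lambda_F[x_1, \ldots, x_N]$, the coefficients $P_{\lambda/\mu}(y_1, \ldots, y_n; q, t) \in \Lambda_F[y_1, \ldots, y_n]$ are well-defined. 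Specializing to $n = 1$ with a single $y$-variable, which I relabel as $x_1$ (with the remaining $x$-variables relabeled as $x_2, \ldots, x_N$), reduces the branching rule to showing that $P_{\lambda/\mu}(y; q, t) = \psi_{\lambda/\mu}(q, t)\, y^{|\lambda| - |\mu|}$ whenever $\mu \prec \lambda$, and that $P_{\lambda/\mu}(y; q, t) = 0$ otherwise.

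I would next verify the support and degree conditions. Using the monomial-basis triangularity $P_\lambda = m_\lambda + \sum_{\nu < \lambda}{c_{\lambda, \nu} m_\nu}$ from Definition/Proposition~\ref{def:macdonaldpolys}, together with the elementary expansion of each $m_\nu(y, x_2, \ldots, x_N)$ as a finite sum of terms of the form $y^k m_{\nu^{(k)}}(x_2, \ldots, x_N)$, one sees that the indices $\mu$ of the $P_\mu(x_2, \ldots, x_N)$-summands appearing on the right-hand side must arise from $\lambda$ by removing a horizontal strip of size $|\lambda| - |\mu|$; for a single $y$-variable, this horizontal-strip condition is precisely the interlacing $\mu \prec \lambda$. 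Since $P_\lambda$ is homogeneous of total degree $|\lambda|$, each $P_{\lambda/\mu}(y; q, t)$ is forced to be homogeneous of degree $|\lambda| - |\mu|$ in the single variable $y$, hence a scalar multiple $\psi_{\lambda/\mu}(q, t)\, y^{|\lambda| - |\mu|}$.

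The main obstacle is extracting the explicit product formula for $\psi_{\lambda/\mu}(q, t)$. I would attack this via the Cauchy kernel identity
\begin{gather*}
\sum_\nu{P_\nu(x; q, t)\, Q_\nu(y; q, t)} = \prod_{i, j}{\frac{(t x_i y_j; q)_\infty}{(x_i y_j; q)_\infty}},
\end{gather*}
which, combined with the skew decomposition and the normalization in \eqref{PtoQ}, produces a dual Pieri-type identity for multiplication of $P_\mu$ by the one-row dual Macdonald polynomial $g_n = Q_{(n)}$ of \eqref{gtoQ}. The coefficients of the latter Pieri rule can be computed explicitly using Macdonald's orthogonality inner product from Definition/Proposition~\ref{def:macdonaldpolys}, and by the standard adjointness between Pieri and branching coefficients they determine $\psi_{\lambda/\mu}(q, t)$ uniquely. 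The final step, rewriting the resulting product over boxes of $\lambda/\mu$ into the specific form indexed by pairs $1 \le i \le j \le N - 1$ with four $q$-Pochhammer ratios per pair, is essentially combinatorial bookkeeping based on telescoping collapses along the rows of $\lambda$ and $\mu$; this rearrangement is where most of the calculational friction lies.
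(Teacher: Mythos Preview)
The paper does not supply its own proof of this theorem: it is quoted verbatim as a background result from Macdonald's book \cite[Chapter~VI, (7.13$'$) and Example~2(b), p.~342]{M2}, and is used as a black box throughout. Your outline is essentially a sketch of Macdonald's own derivation (skew $P$-functions via the two-alphabet expansion, horizontal-strip support from triangularity, and the Pieri/Cauchy machinery to pin down $\psi_{\lambda/\mu}$), so it is aligned with the source the paper cites; there is nothing further to compare against in this paper.
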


Observe that $\psi_{\lambda/\mu}(q, t) > 0$, whenever $q, t \in (0, 1)$. By applying the branching rule several times, we can deduce the following.

\begin{cor}\label{cor:branchingrule}
The coefficients $c_{\lambda, \mu} = c_{\lambda, \mu}(q, t)$ in the expansion
\begin{gather*}
P_{\lambda}(q, t) = \sum_{\mu}{c_{\lambda, \mu}m_{\mu}}
\end{gather*}
are such that $c_{\lambda, \lambda} = 1$ and $c_{\lambda, \mu} \geq 0$, whenever $q, t \in (0, 1)$. Moreover, $c_{\lambda, \mu} = 0$ unless $\lambda \geq \mu$.
\end{cor}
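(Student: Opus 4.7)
The triangularity part of the statement, namely $c_{\lambda,\lambda}=1$ and $c_{\lambda,\mu}=0$ unless $\mu\le\lambda$, is already built into the defining triangular decomposition in Proposition/Definition~\ref{def:macdonaldpolys}, so no extra argument is needed for it. The only content left to establish is the nonnegativity $c_{\lambda,\mu}(q,t)\ge 0$ for $q,t\in(0,1)$, and for this I would iterate the branching rule (Theorem~\ref{branchingmacdonald}) all the way down from $N$ variables to zero.

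Concretely, fix $N$ with $\ell(\lambda)\le N$ and apply Theorem~\ref{branchingmacdonald} repeatedly, each time splitting off the smallest remaining variable. The result is the expansion
\[
P_\lambda(x_1,\dots,x_N;q,t) \;=\; \sum_{\varnothing=\lambda^{(0)}\prec\lambda^{(1)}\prec\cdots\prec\lambda^{(N)}=\lambda}\; \left(\prod_{k=1}^N \psi_{\lambda^{(k)}/\lambda^{(k-1)}}(q,t)\right) \prod_{k=1}^N x_k^{|\lambda^{(k)}|-|\lambda^{(k-1)}|},
\]
where the outer sum runs over all chains $\varnothing=\lambda^{(0)}\prec\lambda^{(1)}\prec\cdots\prec\lambda^{(N)}=\lambda$ with $\lambda^{(k)}\in\GTp_k$ for every $k$. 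Every branching coefficient $\psi_{\lambda^{(k)}/\lambda^{(k-1)}}(q,t)$ is strictly positive when $q,t\in(0,1)$, as observed immediately after Theorem~\ref{branchingmacdonald}, so each summand is a nonnegative multiple of a monomial.

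To extract $c_{\lambda,\mu}$, recall that since $P_\lambda$ is symmetric, the coefficient of $m_\mu$ in $P_\lambda$ equals the coefficient of the single monomial $x_1^{\mu_1}\cdots x_N^{\mu_N}$ (which appears exactly once in $m_\mu$). Reading this coefficient off the iterated branching expansion above yields
\[
c_{\lambda,\mu}(q,t) \;=\; \sum_{\substack{\varnothing=\lambda^{(0)}\prec\cdots\prec\lambda^{(N)}=\lambda \\ |\lambda^{(k)}|-|\lambda^{(k-1)}|=\mu_k \ \forall\, k}}\; \prod_{k=1}^N \psi_{\lambda^{(k)}/\lambda^{(k-1)}}(q,t),
\]
which is manifestly a sum of nonnegative terms.

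There is no real obstacle here; the corollary is essentially a formal consequence of the positivity $\psi_{\lambda/\mu}(q,t)>0$ combined with iteration. The only bookkeeping care required is to confirm that the iterated branching expansion, which a priori is not in monomial symmetric form, can be read off via the single monomial $x_1^{\mu_1}\cdots x_N^{\mu_N}$, which is justified by the known symmetry of $P_\lambda$.
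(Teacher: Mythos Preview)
Your proposal is correct and follows exactly the approach the paper indicates: the corollary is stated as a direct consequence of iterating the branching rule (Theorem~\ref{branchingmacdonald}), and your expansion over chains $\varnothing=\lambda^{(0)}\prec\cdots\prec\lambda^{(N)}=\lambda$ with weights $\prod_k\psi_{\lambda^{(k)}/\lambda^{(k-1)}}(q,t)\ge 0$ is precisely what that iteration yields. The triangularity and leading-coefficient claims are, as you note, already part of Proposition/Definition~\ref{def:macdonaldpolys}, so nothing further is needed.
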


We come to our f\/inal tool on Macdonald polynomials. It is the main theorem of~\cite{LS}, and is called the \textit{Jacobi--Trudi formula} for Macdonald polynomials. For any $n\in\N$, nonnegative integers $\tau_1, \dots, \tau_n$, variables $u_1, \dots, u_n$, def\/ine the rational functions $C_{\tau_1, \dots, \tau_n}^{(q, t)}(u_1, \dots, u_n)$ by
\begin{gather}
C_{\tau_1, \dots, \tau_n}^{(q, t)}(u_1, \dots, u_n) \myeq \prod_{k=1}^n{t^{\tau_k}\frac{(q/t; q)_{\tau_k}}{(q; q)_{\tau_k}}\frac{(qu_k; q)_{\tau_k}}{(qtu_k; q)_{\tau_k}}}\prod_{1\leq i<j\leq n}{\frac{(qu_i/tu_j; q)_{\tau_i}}{(qu_i/u_j; q)_{\tau_i}}\frac{(tu_i/(q^{\tau_i}u_j); q)_{\tau_i}}{(u_i/(q^{\tau_i}u_j); q)_{\tau_i}}}\nonumber\\
\qquad{} \times\frac{1}{\Delta(q^{\tau_1}u_1, \dots, q^{\tau_n}u_n)}\det_{1\leq i, j\leq n}{\left[ (q^{\tau_i}u_i)^{n-j}\!\left(\! 1 - t^{j-1}\frac{1 - tq^{\tau_i}u_i}{1 - q^{\tau_i}u_i}\prod_{k=1}^n{\frac{u_k - q^{\tau_i}u_i}{tu_k - q^{\tau_i}u_i}} \right) \!\right]},\!\!\!\label{Ccoeff}
\end{gather}
where $\Delta(z_1, \dots, z_n) \myeq \prod\limits_{1\leq i < j\leq n}{(z_i - z_j)} = \det\big[ z_i^{n-j} \big]_{i, j=1}^n$ is known as the Vandermonde determinant.

\begin{thm}[{Jacobi--Trudi formula; \cite[Theorem~5.1]{LS}}]\label{jacobitrudi}
Let $N\in\N$, $\lambda\in\GTp_N$, then
\begin{gather*}
Q_{\lambda}(x_1, \dots, x_N; q, t)\\
 \qquad{} = \sum_{\tau\in M^{(N)}}\left\{ \prod_{s = 1}^{N-1}{C^{(q, t)}_{\tau_{1, s+1}, \dots, \tau_{s, s+1}}\left( u_i = q^{\lambda_i - \lambda_{s+1} + \sum\limits_{j=s+2}^N{(\tau_{i, j} - \tau_{s+1, j})}}t^{s-i}\colon 1\leq i\leq s \right) }\right.\\
\left.\qquad\quad{} \times\prod_{s=1}^N{g_{\lambda_s + \tau_s^+ - \tau_s^-}(x_1, \dots, x_N; q, t)} \right\},
\end{gather*}
where $M^{(N)}$ is the set of strictly upper-triangular matrices with nonnegative entries, and for each $1\leq s\leq N$, the integers $\tau_s^+$, $\tau_s^-$, depend only on the indexing matrix $\tau$ and are defined by
\begin{gather}\label{tauplusminus}
\tau_s^+ \myeq \sum_{i=s+1}^N{\tau_{s, i}}, \qquad \tau_s^- \myeq \sum_{i=1}^{s - 1}{\tau_{i, s}}.
\end{gather}
\end{thm}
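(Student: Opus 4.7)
The statement is the Jacobi--Trudi formula for Macdonald polynomials (Theorem 5.1 of [LS]), which the paper quotes as a known tool. The approach I would take is \emph{induction on $N = \ell(\lambda)$}, combining the Pieri rule (adjoint to the branching rule of Theorem \ref{branchingmacdonald}) with a multi-dimensional residue computation that yields the explicit $C$-coefficients.

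The base case $N=1$ is immediate: $M^{(1)}$ contains only the empty matrix, the product of $C$-factors is empty, both $\tau_1^+$ and $\tau_1^-$ vanish, and the formula reduces to $Q_{(\lambda_1)}(x_1;q,t) = g_{\lambda_1}(x_1;q,t)$, which holds by the definition \eqref{gtoQ}.

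For the inductive step, I would iterate the Pieri rule $g_n \cdot Q_\mu = \sum_\lambda \phi_{\lambda/\mu}(q,t) Q_\lambda$ (dual to Theorem \ref{branchingmacdonald}) to expand $Q_\lambda$ in the basis of products $g_{\mu_1}\cdots g_{\mu_N}$. The matrix entries $\tau_{i,j}$ should encode the transitions made during the iteration: the column sum $\tau_s^- = \sum_{i<s}\tau_{i,s}$ records boxes carried \emph{into} row $s$ from earlier steps, the row sum $\tau_s^+ = \sum_{i>s}\tau_{s,i}$ records boxes carried \emph{out of} row $s$ to later steps, and the net shift of the $s$-th index of $g$ is $\lambda_s + \tau_s^+ - \tau_s^-$. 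The $C$-factor at step $s$ should bundle the Pieri coefficients $\phi$ accumulated at that iteration step; the evaluation point $u_i = q^{\lambda_i - \lambda_{s+1} + \sum_{j > s+1}(\tau_{i,j} - \tau_{s+1,j})} t^{s-i}$ is precisely the spectral parameter entering at the intermediate signature.

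The hard part will be identifying the \emph{closed form} of $C$ given in \eqref{Ccoeff}. The Vandermonde denominator $\Delta(q^{\tau_1}u_1, \ldots, q^{\tau_n}u_n)$ combined with a determinantal numerator strongly suggests that $C$ arises as a multi-dimensional residue: writing an appropriate $n$-variable Cauchy-type kernel for Macdonald polynomials and computing residues at poles indexed by $\tau_1, \ldots, \tau_n$ should produce the determinantal expression, while the subtle factor $1 - t^{j-1}\frac{1 - tq^{\tau_i}u_i}{1 - q^{\tau_i}u_i}\prod_k \frac{u_k - q^{\tau_i}u_i}{tu_k - q^{\tau_i}u_i}$ emerges from a Cauchy-like determinantal identity applied to that residue. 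A useful sanity check before attacking the general case is the Schur specialization $t = q$: the factor $(q/t;q)_{\tau_k}$ forces the $C$-coefficient to vanish unless each $\tau_{i,j} \in \{0, 1\}$, and a short computation confirms that for $N=2$ one recovers $s_{(\lambda_1, \lambda_2)} = h_{\lambda_1}h_{\lambda_2} - h_{\lambda_1+1}h_{\lambda_2-1}$; in general the surviving sum over $0/1$-matrices should collapse, by antisymmetrization, to the classical Jacobi--Trudi determinant $\det[g_{\lambda_i + j - i}]$, which would provide both a roadmap and a consistency check on the combinatorial encoding of the sum by strictly upper-triangular matrices.
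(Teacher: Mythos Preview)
The paper does not prove this statement at all: Theorem~\ref{jacobitrudi} is quoted verbatim from Lassalle--Schlosser \cite[Theorem~5.1]{LS} as a black-box tool, with no argument given. So there is nothing in the paper to compare your proposal against.

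Your sketch is a reasonable outline of how the Lassalle--Schlosser proof actually goes (iterated Pieri rule, with the $C$-coefficients arising from inverting the resulting triangular system via a matrix inversion identity), and your sanity checks at $t=q$ and $N=2$ are correct. But since the paper treats this as an imported result, the appropriate ``proof'' here is simply the citation.
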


\begin{rem}
Observe that, even though $M^{(N)}$ is an inf\/inite set, the only nonvanishing terms in the sum above are those $\tau\in M^{(N)}$ such that $\lambda_s + \tau_s^+ - \tau_s^- \geq 0$ $\forall\, s = 1, \dots, N$. In other words, the sum is indexed by points of the discrete $\frac{N(N-1)}{2}$-dimensional simplex with coordinates $\{\tau_{i, j}\}_{1\leq i<j\leq N}$ satisfying
\begin{gather*}
\tau_{i, j} \geq 0,\qquad \textrm{for all} \quad 1\leq i < j\leq N,\\
\lambda_n + \sum_{i=n+1}^N{\tau_{n, i}} - \sum_{i=1}^{n - 1}{\tau_{i, n}} \geq 0, \qquad \textrm{for all} \quad n = 1, \dots, N.
\end{gather*}
\end{rem}

Let us introduce the last piece of terminology and main object of study in this paper.

\begin{df}
For any $m, N\in\N$ with $1\leq m\leq N$, $\lambda\in\GT_N$, def\/ine
\begin{gather}\label{normalmacdonald}
P_{\lambda}(x_1, \dots, x_m; N, q, t) \myeq \frac{P_{\lambda}\big(x_1, \dots, x_m, 1, t, \dots, t^{N-m-1}; q, t\big)}{P_{\lambda}\big(1, t, t^2, \dots, t^{N-1}; q, t\big)}
\end{gather}
and call $P_{\lambda}(x_1, \dots, x_m; N, q, t)$ the \textit{Macdonald unitary character of rank $N$, number of variables $m$ and parametrized by $\lambda$}.
For simplicity of terminology, we call $P_{\lambda}(x_1, \dots, x_m; N, q, t)$ a \textit{Macdonald character} rather than a Macdonald unitary character. Observe that if $q, t \in \C$ are such that $|q|, |t|\in (0, 1)$, the evaluation identity for Macdonald polynomials, Theorem~\ref{evaluation}, shows that the denominator of~\eqref{normalmacdonald} is nonzero.
\end{df}

\section{Integral formulas for Macdonald characters of one variable}\label{macdonaldintegralsec}

In this section, assume $q$ is a real number in the interval $(0, 1)$. There will also be a parameter $\theta$, typically $\theta > 0$, but we also consider cases when $\theta$ is a complex number with $\Re\theta > 0$. In either case, the parameter $t = q^{\theta}$ satisf\/ies $|t| < 1$.

\subsection{Statements of the theorems}

The simplest contour integral representation is the following, which works only when $t = q^{\theta}$, $\theta\in\N$, and involves a closed contour around f\/initely many singularities.

\begin{thm}\label{macdonaldthm1}
Let $\theta\in\N$, $t = q^{\theta}$, $N\in\N$, $\lambda\in\GT_N$ and $x\in\C \setminus \big\{0, q, q^2, \dots, q^{\theta N - 1}\big\}$. Then
\begin{gather}
\frac{P_{\lambda}\big(x, t, t^2, \dots, t^{N-1}; q, t\big)}{P_{\lambda}\big(1, t, t^2, \dots, t^{N-1}; q, t\big)} \nonumber\\
\qquad {} = \ln(1/q)\prod_{i=1}^{\theta N - 1}{\frac{1 - q^i}{x - q^i}}\frac{1}{2\pi\sqrt{-1}}\oint_{\CC_0}{\frac{x^z}{\prod\limits_{i=1}^N\prod\limits_{j=0}^{\theta -1}{\big(1 - q^{z - (\lambda_i + \theta(N-i) + j)}\big)}}{\rm d}z},\label{macdonaldthm1eqn}
\end{gather}
where $\CC_0$ is a closed, positively oriented contour enclosing the real poles $\{\lambda_i + \theta(N-i) + j \colon i = 1, \dots, N, \, j = 0, \dots, \theta - 1\}$ of the integrand. For instance, the rectangular contour with vertices $-M-r\sqrt{-1}$, $ -M+r\sqrt{-1}$, $M+r\sqrt{-1}$ and $M-r\sqrt{-1}$, for any $-\frac{2\pi}{\ln{q}} > r > 0$ and any $M > \max\{0, -\lambda_N, \, \lambda_1 + \theta N - 1\}$, is a suitable contour.
\end{thm}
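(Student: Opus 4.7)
My plan is to evaluate the contour integral on the right-hand side of \eqref{macdonaldthm1eqn} via the residue theorem, reduce to a Laurent polynomial identity in $x$, and verify this identity by checking equality at infinitely many special values $x = q^{-m}$ using the index-argument symmetry (Theorem~\ref{symmetry}). For the residue computation, inside $\CC_0$ the integrand has simple poles only at the real points $z = a_{i,j} := \lambda_i + \theta(N - i) + j$ for $1 \leq i \leq N$, $0 \leq j \leq \theta - 1$, because the period shifts $z = a_{i,j} + 2\pi k \sqrt{-1}/\ln q$ with $k \neq 0$ lie outside the rectangle by the condition on $r$. Using $\operatorname{Res}_{z = a}(1 - q^{z - a})^{-1} = -1/\ln q$ together with the prefactor cancellation $\ln(1/q) \cdot (-1/\ln q) = 1$, the right-hand side simplifies to
\[
\prod_{i=1}^{\theta N - 1}\frac{1 - q^i}{x - q^i} \sum_{i=1}^N \sum_{j=0}^{\theta - 1} \frac{x^{a_{i,j}}}{\prod_{(i', j') \neq (i, j)} (1 - q^{a_{i,j} - a_{i',j'}})},
\]
and the theorem becomes the identification of this expression with $\Phi_\lambda(x) := P_\lambda(x, t, \ldots, t^{N-1}; q, t)/P_\lambda(1, t, \ldots, t^{N-1}; q, t)$.

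Both sides are Laurent polynomials in $x$ with support in $[\lambda_N, \lambda_1]$. For $\Phi_\lambda(x)$, this follows from the branching rule (Theorem~\ref{branchingmacdonald}) applied to $P_\lambda(x, t, \ldots, t^{N-1})$ in the first variable, combined with homogeneity and the shift identities \eqref{macdonaldsignatures1}--\eqref{macdonaldsignatures11}. For the right-hand side, the apparent simple poles at $x = q^k$ for $k = 1, \ldots, \theta N - 1$ must cancel because the inner sum vanishes at these values: reindexing the $(i, j)$ pairs as $\ell = 1, \ldots, n$ with $n := \theta N$ and setting $u_\ell := q^{a_{i_\ell, j_\ell}}$, evaluation at $x = q^k$ reduces (after using $1 - q^{a - b} = (q^b - q^a)/q^b$ to clear denominators) to a nonzero multiple of the classical Lagrange-interpolation sum $\sum_{\ell=1}^n u_\ell^{k - 1}/\prod_{m \neq \ell}(u_\ell - u_m)$, which vanishes for $1 \leq k \leq n - 1$. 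Hence both sides are Laurent polynomials of the same bounded support.

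Since Laurent polynomials of bounded support are determined by their values on any infinite set, the identity follows from agreement at $x = q^{-m}$ for all $m \in \Z_{\geq 0}$. The index-argument symmetry (Theorem~\ref{symmetry}), extended from $\GTp_N$ to $\GT_N$ via \eqref{macdonaldsignatures11} and homogeneity, applied with $\mu = (0, 0, \ldots, 0, -m)$ yields
\[
\Phi_\lambda(q^{-m}) = q^{-m|\lambda|} \frac{P_{(m^{N-1})}(q^{\lambda_1} t^{N-1}, \ldots, q^{\lambda_N}; q, t)}{P_{(m^{N-1})}(t^{N-1}, \ldots, 1; q, t)},
\]
where $(m^{N-1})$ denotes the rectangular partition with $N - 1$ parts equal to $m$. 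On the right-hand side at $x = q^{-m}$, the partial-fractions identity $\sum_\ell u_\ell^{-m - 1}/\prod_{k \neq \ell}(u_\ell - u_k) = (-1)^{n+1} h_m(u_1^{-1}, \ldots, u_n^{-1})/\prod_\ell u_\ell$ (derived from the Laurent expansion of $1/\prod_k(w - u_k)$ around $w = 0$) collapses the inner sum to $h_m(q^{-a_{1, 0}}, \ldots, q^{-a_{N, \theta - 1}})$ times an explicit $q$-Pochhammer prefactor $q^{m(n - 1)}(q; q)_{n - 1}/(q^{m + 1}; q)_{n - 1}$. The main technical obstacle is matching these two expressions: this requires a closed-form evaluation of the rectangular Macdonald polynomial at the principal-specialization point $(q^{\lambda_i} t^{N - i})_{i = 1}^N$. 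I would approach this by reapplying the index-argument symmetry (swapping the roles of $\lambda$ and $(m^{N - 1})$) to rewrite the ratio as $P_\lambda(q^m t^{N - 1}, \ldots, q^m t, 1)/P_\lambda(t^{N - 1}, \ldots, 1)$, then expanding via the branching rule and invoking the evaluation identity (Theorem~\ref{evaluation}) with careful $q$-Pochhammer bookkeeping.
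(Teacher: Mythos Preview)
Your overall strategy---compute the contour integral by residues, show both sides are Laurent polynomials in $x$ with support in $[\lambda_N,\lambda_1]$ (the Lagrange-interpolation vanishing $\sum_\ell u_\ell^{k-1}/\prod_{m\neq\ell}(u_\ell-u_m)=0$ for $1\le k\le\theta N-1$ is correct and nicely handles the apparent poles), and then verify at infinitely many special points---is sound. The residue computation and the Laurent-polynomial reduction are fine. The gap is in the verification step.

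Your choice of test points $x=q^{-m}$ forces you, via index-argument symmetry with $\mu=(0^{N-1},-m)$, into the \emph{rectangular} ratio $P_{(m^{N-1})}(q^{\lambda_i}t^{N-i})/P_{(m^{N-1})}(t^{N-i})$, and the workaround you propose---reapplying the symmetry to swap $\lambda$ and $(m^{N-1})$---is circular: it returns $P_\lambda(q^m t^{N-1},\dots,q^m t,1)/P_\lambda(t^{N-1},\dots,1)$, which by homogeneity equals $q^{m|\lambda|}\Phi_\lambda(q^{-m})$, exactly where you started. A subsequent branching expansion gives a sum over $\mu\prec\lambda$ with no visible relation to the residue-side expression $h_m(\{q^{-a_{i,j}}\})$. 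The obstacle you flagged is therefore real, and your proposed resolution does not remove it.

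The fix, which is essentially what the paper does, is to test instead at $x=q^{r+\theta N}$ for $r\in\N$ large. Then the symmetry with the \emph{one-row} signature $\mu=(r,0^{N-1})$ gives $\Phi_\lambda(q^{r+\theta N})$ in terms of $P_{(r)}(q^{\lambda_i}t^{N-i})/P_{(r)}(t^{N-i})$, and since $P_{(r)}\propto g_r$, the generating-function identity
\[
\sum_{r\ge 0} g_r\big(x_1,\dots,x_N;q,q^\theta\big)y^r=\prod_{i=1}^N\frac{(q^\theta x_i y;q)_\infty}{(x_iy;q)_\infty}=\prod_{i=1}^N\prod_{j=0}^{\theta-1}\frac{1}{1-x_iq^jy}
\]
shows $g_r(q^{\lambda_i}t^{N-i};q,q^\theta)=h_r(\{q^{a_{i,j}}\})$ in the \emph{forward} arguments, which is precisely what your residue sum produces at $x=q^{r+\theta N}$. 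That is the missing link; at $x=q^{-m}$ the residue sum gives $h_m(\{q^{-a_{i,j}}\})=g_m$ at the reciprocal specialization, and there is no analogous one-line identification with $\Phi_\lambda(q^{-m})$. The paper in fact runs this argument in the other direction---it derives the contour integral from the $g_r$ generating function and then closes the contour---but once you switch the sign of the exponent your route and the paper's coincide.
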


The following two theorems are analytic continuations, in the variable $\theta$, of Theorem~\ref{macdonaldthm1} above.

\begin{thm}\label{macdonaldthm2}
Let $\theta > 0$, $t = q^{\theta}$, $N\in\N$, $\lambda\in\GT_N$ and $x\in\C \setminus \{0\}$, $|x|\leq 1$. The integral below converges absolutely and the equality holds
\begin{gather}
\frac{P_{\lambda}\big(xt^{N-1}, t^{N-2}, \dots, t, 1; q, t\big)}{P_{\lambda}\big(t^{N-1}, t^{N-2}, \dots, t, 1; q, t\big)}\nonumber\\
 \qquad{} = \frac{\ln q}{1 - q}\frac{\big(xt^N; q\big)_{\infty}}{(xq; q)_{\infty}}\frac{\Gamma_q(\theta N)}{2\pi\sqrt{-1}}\int_{\CC^+}{x^z\prod_{i=1}^N{\frac{\Gamma_q(\lambda_i + \theta(N-i)-z)}{\Gamma_q(\lambda_i + \theta(N-i+1)-z)}}{\rm d}z}.\label{macdonaldthm2eqn}
\end{gather}
Contour $\CC^+$ is a positively oriented contour consisting of the segment $[M+r\sqrt{-1}, M-r\sqrt{-1}]$ and the horizontal lines $[M+r\sqrt{-1}, +\infty+r\sqrt{-1})$, $[M-r\sqrt{-1}, +\infty-r\sqrt{-1})$, for some $-\frac{\pi}{2\ln{q}} > r > 0$ and $\lambda_N > M$, see Fig.~{\rm \ref{fig:C}}. Observe that $\CC^+$ encloses all real poles of the integrand $($which accumulate at~$+\infty)$ and no other poles.
\end{thm}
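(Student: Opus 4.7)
The strategy is to derive Theorem~\ref{macdonaldthm2} from Theorem~\ref{macdonaldthm1} by analytic continuation in the parameter $\theta$, which is explicitly advertised in the excerpt. The plan has three stages: (i) rewrite \eqref{macdonaldthm1eqn} in the $q$-Gamma form \eqref{macdonaldthm2eqn} for $\theta \in \N$; (ii) verify that both sides of \eqref{macdonaldthm2eqn} are holomorphic in $\theta$ on the half-plane $\Re\theta > 0$; (iii) extend from $\N$ to all $\theta > 0$ via a Carlson-type uniqueness theorem.

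For stage (i), I would apply the elementary identities $1 - q^{z-b} = -q^{z-b}(1 - q^{b-z})$ and, for $\theta \in \N$, $\prod_{j=0}^{\theta-1}(1 - q^{w+j}) = (1-q)^\theta\,\Gamma_q(w+\theta)/\Gamma_q(w)$. Setting $w = \lambda_i + \theta(N-i) - z$, the denominator $\prod_{i,j}(1 - q^{z - \lambda_i - \theta(N-i) - j})$ of \eqref{macdonaldthm1eqn} transforms into $\prod_i \Gamma_q(\lambda_i + \theta(N-i+1) - z)/\Gamma_q(\lambda_i + \theta(N-i) - z)$ times a factor of the shape $(-1)^{N\theta} q^{\alpha(z)} (1-q)^{N\theta}$ with $\alpha(z)$ affine in $z$. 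Absorbing the monomial $q^{\alpha(z)}$ into $x^z$ via an appropriate rescaling of $x$, and matching the prefactors by invoking the evaluation identity (Theorem~\ref{evaluation}) to produce $\Gamma_q(\theta N)$ and the Pochhammer ratio $(xt^N;q)_\infty/(xq;q)_\infty$, identity \eqref{macdonaldthm1eqn} becomes exactly \eqref{macdonaldthm2eqn} except with the closed rectangular contour $\CC_0$ in place of the horseshoe $\CC^+$. To swap contours I would note that for $\theta \in \N$ the integrand of \eqref{macdonaldthm2eqn} has only the finitely many poles enclosed by $\CC_0$, since the would-be poles $\lambda_i + \theta(N-i) + k$ with $k \geq \theta$ arising from $\Gamma_q(\lambda_i + \theta(N-i) - z)$ are cancelled by zeros of $1/\Gamma_q(\lambda_i + \theta(N-i+1) - z)$; the decay $|x^z|=|x|^{\Re z}$ with $|x|\leq 1$, together with the $\Gamma_q$-ratio estimate in stage (ii), kills the horizontal tails of $\CC^+$ and justifies the closure.

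For stage (ii), the LHS is a rational function of $t = q^\theta$ whose denominator $P_\lambda(t^{N-1}, \ldots, 1; q, t)$ is nonvanishing on $\{\Re\theta > 0\}$ by Theorem~\ref{evaluation}, hence is analytic in $\theta$. For the RHS I would use the asymptotics $\Gamma_q(a-z)/\Gamma_q(a+\theta-z) = O(q^{\theta\,\Re z})$ as $\Re z \to +\infty$, which follows from $\Gamma_q(x)/\Gamma_q(x+\theta) = (1-q)^\theta/(q^x;q)_\theta$ together with $|q^{-z}|\to\infty$. Consequently the integrand decays like $(|x|\,q^{N\theta})^{\Re z}$ along the horizontal tails; for $|x|\leq 1$ this is absolutely summable and locally uniform in $\theta$ on compact subsets of $\{\Re\theta > 0\}$, so the integral defines an analytic function of $\theta$.

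Stage (iii) is the main obstacle, because the positive integers do not accumulate in $\{\Re\theta > 0\}$ and so the identity theorem does not apply directly. I would invoke Carlson's theorem: a function holomorphic on the right half-plane, of exponential type strictly less than $\pi$ in the imaginary direction and bounded on the positive real axis, that vanishes on $\N$ must vanish identically. Verifying Carlson's growth hypothesis for $F(\theta) := \text{LHS} - \text{RHS}$ requires controlling the $q$-Gamma factors and Pochhammer symbols along vertical lines $\Re\theta = \theta_0$; the key input is that $|\Gamma_q(z)|$ and $|(1-q)^{-z}|$ are bounded uniformly in $\Im z$ for fixed $\Re z$, since $q\in(0,1)$ is real and the infinite product defining $\Gamma_q$ has factors of bounded modulus on such vertical lines. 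Once the sub-exponential bound $|F(\theta_0+i\eta)| = O(1)$ as $|\eta|\to\infty$ is established, Carlson's theorem forces $F\equiv 0$ on $\{\Re\theta>0\}$, and in particular on $\R_{>0}$, completing the proof of Theorem~\ref{macdonaldthm2}.
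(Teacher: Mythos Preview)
Your proposal is correct and follows essentially the same route as the paper: reduce the identity to $\theta\in\N$ via Theorem~\ref{macdonaldthm1}, check holomorphicity of both sides in $\theta$ on $\{\Re\theta>0\}$, and conclude by Carlson's theorem. The only notable procedural difference is in how the Carlson hypothesis is verified: the paper uses the weak form (Lemma~\ref{carlsonlemma}) requiring uniform boundedness on a far-right half-plane $\{\Re\theta\ge M\}$, which it obtains by estimating everything for small $|t|=q^{\Re\theta}$, whereas you propose to bound along vertical lines $\Re\theta=\theta_0$ using the $\tfrac{2\pi}{|\ln q|}$-periodicity of $q^{\theta}$ in $\Im\theta$; both work, but the paper's choice avoids tracking the $\eta$-dependence through the contour integral and also includes a preliminary reduction to $|x|<1$, $\lambda\in\GT_N^+$ that streamlines the tail estimates.
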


The reader is referred to Appendix~\ref{app:qtheory} for a reminder of the def\/inition of the $q$-Gamma function, its zeroes and poles.

\begin{thm}\label{macdonaldthm25}
Let $\theta > 0$, $t = q^{\theta}$, $N\in\N$, $\lambda\in\GT_N$ and $x\in\C$, $|x|\geq 1$. The integral below converges absolutely and the equality holds
\begin{gather}
\frac{P_{\lambda}\big(x, t, t^2, \dots, t^{N-1}; q, t\big)}{P_{\lambda}\big(1, t, t^2, \dots, t^{N-1}; q, t\big)} \nonumber\\
\qquad{} = \frac{\ln q}{q - 1}\frac{\big(x^{-1}t^N; q\big)_{\infty}}{(x^{-1}q; q)_{\infty}}\frac{\Gamma_q(\theta N)}{2\pi\sqrt{-1}}\int_{\CC^-}{x^z\prod_{i=1}^N{\frac{\Gamma_q(z -( \lambda_i - \theta i + \theta))}{\Gamma_q(z - (\lambda_i - \theta i))}}{\rm d}z}.\label{macdonaldthm25eqn}
\end{gather}
Contour $\CC^-$ is a positively oriented contour consisting of the segment $[M-r\sqrt{-1}, M+r\sqrt{-1}]$ and the horizontal lines $[M-r\sqrt{-1}, -\infty-r\sqrt{-1})$, $[M+r\sqrt{-1}, -\infty+r\sqrt{-1})$, for some $-\frac{\pi}{2\ln{q}} > r > 0$ and $M > \lambda_1$, see Fig.~{\rm \ref{fig:Cminus}}. Observe that $\CC^-$ encloses all real poles of the integrand $($which accumulate at $-\infty)$ and no other poles.
\end{thm}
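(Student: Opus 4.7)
The plan is to deduce Theorem~\ref{macdonaldthm25} from Theorem~\ref{macdonaldthm2} by the substitution $x \mapsto x^{-1}$, exploiting the inversion symmetry of Macdonald Laurent polynomials. The heart of the argument is the identity
\[
P_\lambda\big(x_1^{-1}, \dots, x_N^{-1}; q, t\big) = P_{\tilde{\lambda}}(x_1, \dots, x_N; q, t), \qquad \tilde{\lambda} \myeq (-\lambda_N, \dots, -\lambda_1) \in \GT_N,
\]
which I would first verify from the characterization of Proposition/Definition~\ref{def:macdonaldpolys}: the map $\iota \colon f(x) \mapsto f(x^{-1})$ sends $m_\mu$ to $m_{-w_0 \mu}$, preserves the weight $\Delta_{q, t}$ (by the relabeling $i \leftrightarrow j$ in the product defining it) and hence the inner product $(\cdot, \cdot)_{q, t}$, and the dominance order on signatures is preserved under $\mu \mapsto -w_0 \mu$ (since partial sums from the left of $-w_0 \mu$ equal $-1$ times partial sums from the right of $\mu$). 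Therefore $\iota P_\lambda$ satisfies the triangular decomposition with leading term $m_{\tilde{\lambda}}$ and the orthogonality, so uniqueness gives $\iota P_\lambda = P_{\tilde{\lambda}}$. The extension to all $\lambda \in \GT_N$ follows from the Laurent definition~\eqref{macdonaldsignatures11}.

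Next, combining the $S_N$-symmetry of $P_\lambda$, the inversion identity, and the homogeneity $P_\mu(c x_1, \dots, c x_N; q, t) = c^{|\mu|} P_\mu(x_1, \dots, x_N; q, t)$ applied with $c = t^{N-1}$, I would show
\[
\frac{P_{\lambda}(x, t, \dots, t^{N-1}; q, t)}{P_{\lambda}(1, t, \dots, t^{N-1}; q, t)} = \frac{P_{\tilde{\lambda}}(y t^{N-1}, t^{N-2}, \dots, 1; q, t)}{P_{\tilde{\lambda}}(t^{N-1}, \dots, 1; q, t)}, \qquad y \myeq x^{-1},
\]
with the factor $t^{(N-1)|\lambda|}$ cancelling in the ratio. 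Since $|x| \geq 1 \iff |y| \leq 1$, Theorem~\ref{macdonaldthm2} applies to the right-hand side with parameters $(\tilde{\lambda}, y)$, yielding both absolute convergence and an explicit integral over $\CC^+$.

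Finally, I would substitute $w = -z$ in the integral. The image $-\CC^+$, with orientation induced from $\CC^+$ via $z \mapsto -z$, coincides with $\CC^-$: the involution $z \mapsto -z$ preserves winding numbers around the images of enclosed points, so positive orientation of $\CC^+$ around poles accumulating at $+\infty$ becomes positive orientation of $-\CC^+$ around poles accumulating at $-\infty$, matching $\CC^-$ exactly. The Jacobian $dz = -dw$ then supplies one sign flip, converting the prefactor $\frac{1}{1-q}$ of Theorem~\ref{macdonaldthm2} into $\frac{1}{q-1}$ of Theorem~\ref{macdonaldthm25}. With $y^z = x^w$ and the reindexing $j = N + 1 - i$ (so that $\tilde{\lambda}_i = -\lambda_j$ and $N - i = j - 1$), the Gamma factors transform as
\[
\frac{\Gamma_q(\tilde{\lambda}_i + \theta(N-i) - z)}{\Gamma_q(\tilde{\lambda}_i + \theta(N-i+1) - z)} \longmapsto \frac{\Gamma_q(w - (\lambda_j - \theta j + \theta))}{\Gamma_q(w - (\lambda_j - \theta j))},
\]
matching the integrand of Theorem~\ref{macdonaldthm25}, while the Pochhammer prefactor $(y t^N; q)_\infty / (y q; q)_\infty$ becomes $(x^{-1} t^N; q)_\infty / (x^{-1} q; q)_\infty$.

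The main subtlety, and essentially the only place where an error can easily slip in, is the contour bookkeeping in the substitution $w = -z$: one must verify that $-\CC^+$ with its induced orientation is precisely $\CC^-$ rather than its reverse. Picturing $\CC^+$ as a contour that ``opens to the right'' and winds $+1$ around the positive real axis, its reflection ``opens to the left'' and winds $+1$ around the negative real axis, which is exactly $\CC^-$. Then the lone sign change from the Jacobian is the correct one, producing the $(q - 1)$ factor in Theorem~\ref{macdonaldthm25}. Absolute convergence of the new integral and the fact that $\CC^-$ avoids the poles of the new integrand follow by unwinding the corresponding statements in Theorem~\ref{macdonaldthm2} through the same substitution.
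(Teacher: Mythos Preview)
Your approach is correct and genuinely different from the paper's. The paper does not give a separate proof of Theorem~\ref{macdonaldthm25}; it simply states that it ``can be proved analogously'' to Theorem~\ref{macdonaldthm2}, meaning one is expected to repeat the entire five-step Carlson's lemma argument (absolute convergence, reduction to $|x|<1$ and $\lambda\in\GTp_N$, verification for $\theta\in\N$ via Theorem~\ref{macdonaldthm1}, holomorphy in $\theta$, and uniform boundedness) with the roles of left and right tails interchanged. Your route is more economical: you deduce Theorem~\ref{macdonaldthm25} directly from the already-proved Theorem~\ref{macdonaldthm2} via the inversion symmetry $P_\lambda(x_1^{-1},\dots,x_N^{-1};q,t)=P_{\tilde\lambda}(x_1,\dots,x_N;q,t)$ and the change of variable $w=-z$, so that no second analytic continuation in $\theta$ is needed. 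The contour bookkeeping is handled correctly: $z\mapsto -z$ is a rotation by $\pi$, hence orientation-preserving, so the image of a positively oriented $\CC^+$ is a positively oriented $\CC^-$ (with vertical abscissa $-M>\lambda_1$ when $M<\tilde\lambda_N=-\lambda_1$), and the single sign from $dz=-dw$ turns $1/(1-q)$ into $1/(q-1)$. The reindexing $j=N+1-i$ in the $\Gamma_q$-product is also checked correctly. The only small point worth making explicit is that the inversion identity for \emph{Laurent} Macdonald polynomials follows either by extending the triangular/orthogonality characterization to $\Lambda_F[x_1^{\pm},\dots,x_N^{\pm}]$ or, as you indicate, by combining the partition case with the index-stability relation~\eqref{macdonaldsignatures11}; either way this is routine.
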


\begin{figure}[t]\centering
\begin{tikzpicture}[decoration={markings,
mark=at position 1.5cm with {\arrow[line width=1pt]{>}},
mark=at position 3.2cm with {\arrow[line width=1pt]{>}},
mark=at position 4.7cm with {\arrow[line width=1pt]{>}},
mark=at position 6.5cm with {\arrow[line width=1pt]{>}},
mark=at position 8.3cm with {\arrow[line width=1pt]{>}},
mark=at position 10cm with {\arrow[line width=1pt]{>}},
mark=at position 11.8cm with {\arrow[line width=1pt]{>}}
}
]
\draw[help lines,->] (-4,0) -- (4,0) coordinate (xaxis);
\draw[help lines,->] (0,-1) -- (0,1) coordinate (yaxis);

\path[draw,line width=1pt,postaction=decorate] (4,0.5) -- (-2,0.5) -- (-2,-0.5) -- (4,-0.5);

\node[below] at (xaxis) {$\Re z$};
\node[left] at (yaxis) {$\Im z$};
\node[below left] {};
\end{tikzpicture}
\caption{Contour $\CC^+$.}\label{fig:C}
\end{figure}

\begin{figure}[t]\centering
\begin{tikzpicture}[decoration={markings,
mark=at position 1.5cm with {\arrow[line width=1pt]{>}},
mark=at position 3.2cm with {\arrow[line width=1pt]{>}},
mark=at position 5cm with {\arrow[line width=1pt]{>}},
mark=at position 6.5cm with {\arrow[line width=1pt]{>}},
mark=at position 8cm with {\arrow[line width=1pt]{>}},
mark=at position 9.8cm with {\arrow[line width=1pt]{>}},
mark=at position 11.7cm with {\arrow[line width=1pt]{>}}
}
]
\draw[help lines,->] (-4,0) -- (4,0) coordinate (xaxis);
\draw[help lines,->] (0,-1) -- (0,1) coordinate (yaxis);

\path[draw,line width=1pt,postaction=decorate] (-4,-0.5) -- (2,-0.5) -- (2,0.5) -- (-4, 0.5);

\node[below] at (xaxis) {$\Re z$};
\node[left] at (yaxis) {$\Im z$};
\node[below left] {};
\end{tikzpicture}
\caption{Contour $\CC^-$.}\label{fig:Cminus}
\end{figure}

\begin{rem}
In the formulas above, $x^z = \exp(z\ln{x})$. If $x \notin (-\infty, 0)$, we can use the principal branch of the logarithm to def\/ine $\ln{x}$, and if $x \in (-\infty, 0)$, then we can def\/ine the logarithm in the complex plane cut along $(-\sqrt{-1} \infty, 0]$ such that $\Im \ln{a} = 0$ for all $a \in (0, \infty)$.
\end{rem}

\begin{rem}
The formulas in Theorems~\ref{macdonaldthm2} and~\ref{macdonaldthm25} probably hold for more general con\-tours~$\CC^+$,~$\CC^-$, but we do not need more generality for our purposes.
\end{rem}

\begin{rem}
When $\theta = 1$, Theorem~\ref{macdonaldthm1} recovers \cite[Theorem~3.6]{GP}.
\end{rem}

\begin{rem}\label{macdonaldthetaNinteger}
The inf\/inite contours in Theorems~\ref{macdonaldthm2} and~\ref{macdonaldthm25} are needed because there are inf\/initely many real poles in the integrand and the contour needs to enclose all of them. When $\theta\in\N$, the integrands have f\/initely many poles, and we can therefore close the contours, obtaining eventually Theorem~\ref{macdonaldthm1}. More generally, if $\theta > 0$ is such that $\theta N \in\N$, a similar remark applies. In fact, we can write the product of $q$-Gamma function ratios appearing in the integrand~\eqref{macdonaldthm2eqn} as
\begin{gather}
\prod_{i=1}^N{\frac{\Gamma_q(\lambda_i + \theta(N-i)-z)}{\Gamma_q(\lambda_i + \theta(N-i+1)-z)}}\nonumber\\
 \qquad{} = \frac{\Gamma_q(\lambda_1 + \theta(N-1) - z)}{\Gamma_q(\lambda_2 + \theta(N-1)-z)}\cdots\frac{\Gamma_q(\lambda_{N-1} + \theta - z)}{\Gamma_q(\lambda_N + \theta-z)}\frac{\Gamma_q(\lambda_N - z)}{\Gamma_q(\lambda_1 + \theta N - z)},\label{ratioqgammas}
\end{gather}
and since $\Gamma_q(t+1) = \frac{1 - q^t}{1 - q}\Gamma_q(t)$, we conclude that the product above is a rational function in $q^{-z}$ with f\/initely many real poles. Thus formula~\eqref{macdonaldthm2eqn} is true if we replaced contour~$\CC^+$ by a closed contour $\CC_0$ containing all f\/initely many real poles of the integrand. Similarly, we can replace $\CC^-$ by a closed contour $\CC_0$ in~\eqref{macdonaldthm25eqn}.
\end{rem}

\subsection{An example}

Before carrying out the proofs of the theorems above in full generality, we prove some very special cases, by means of the residue theorem and the $q$-binomial formula. For simplicity, let $|x| < 1$ be a complex number, and consider the empty partition $\lambda = \varnothing$, or equivalently the $N$-signature $\lambda = \big(0^N\big) = (0, 0, \dots, 0)$. As remarked in Section~\ref{macdonaldpolysection}, we have $P_{(0^N)}(q, t) = 1$, and therefore the left-hand sides of identities~\eqref{macdonaldthm1eqn} and~\eqref{macdonaldthm2eqn} are both equal to~$1$, when $\lambda = (0^N)$. Let us prove that the right-hand sides of~\eqref{macdonaldthm1eqn} and~\eqref{macdonaldthm2eqn} also equal $1$, for $\lambda = (0^N)$.

Let us begin with the case $\theta\notin\N$, i.e., the right-hand side of~\eqref{macdonaldthm2eqn}.
Since the contour~$\CC^-$ encloses all real poles in the integrand in its interior, then the right-hand side of~\eqref{macdonaldthm2eqn} equals
\begin{gather}\label{rhsidentity2}
\frac{\ln{q}}{1 - q}\frac{\big(xt^N; q\big)_{\infty}}{(xq; q)_{\infty}}\times\Gamma_q(\theta N)\times\sum_{n = 0}^{\infty}{x^n\frac{\operatorname{Res}_{z = n}{\Gamma_q(-z)}}{\Gamma_q(\theta N - n)}}.
\end{gather}
From the def\/inition of $q$-Gamma functions, see Appendix~\ref{app:qtheory}, it is evident that, for any $n\in\Z_{\geq 0}$, we have
\begin{gather*}
\operatorname{Res}_{z = n}{\Gamma_q(-z)} = \frac{(-1)^n(1-q)^{n+1}}{\ln{q}}\frac{q^{{n+1 \choose 2}}}{(q; q)_n}.
\end{gather*}
Furthermore, $\Gamma_q(t+1) = \frac{1 - q^t}{1 - q}\Gamma_q(t)$ gives $\frac{\Gamma_q(\theta N)}{\Gamma_q(\theta N - n)} = (1 - q)^{-n}(q^{\theta N - n}; q)_n$, so~\eqref{rhsidentity2} equals
\begin{gather*}
\frac{\big(xt^N; q\big)_{\infty}}{(xq; q)_{\infty}}\times\sum_{n = 0}^{\infty}{\frac{(-1)^nq^{{n+1 \choose 2}}(q^{\theta N - n}; q)_n}{(q; q)_n}x^n}.
\end{gather*}
The latter indeed equals $1$ because of the $q$-binomial theorem, Theorem~\ref{qtheory2}, applied to $z = xq^{\theta N} = xt^N$, $a = q^{1 - \theta N}$, and the equality $q^{\theta N n}(q^{1 - \theta N}; q)_n = (-1)^nq^{{n+1 \choose 2}}(q^{\theta N - n}; q)_n$ $\forall\, n\geq 0$.

Second, let us consider the case $\theta\in\N$, i.e., the right-hand side of~\eqref{macdonaldthm1}. Observe that for $\lambda=(0^N)$, the integrand in~\eqref{macdonaldthm1} can be rewritten as $x^z\cdot\prod\limits_{i=0}^{\theta N-1}{(1-q^{z-i})^{-1}}$, whose set of poles enclosed in the interior of $\CC_0$ is $\{0, 1, 2, \dots, \theta N-1\}$. Since $\operatorname{Res}_{z=n}{(1 - q^{z-n})^{-1}} = -(\ln{q})^{-1} = (\ln{(1/q)})^{-1}$, similar considerations as above lead us to conclude that the right side of~\eqref{macdonaldthm1} is equal to the \textit{finite} sum
\begin{gather*}
\prod_{i=1}^{\theta N - 1}{\frac{1 - q^i}{x - q^i}}\times\sum_{n=0}^{\theta N-1}{\frac{x^n}{\prod\limits_{\substack{0\leq i\leq \theta N - 1 \\ i \neq n}}{(1 - q^{n-i})}}} = \prod_{i=1}^{\theta N - 1}{\frac{1 - q^{-i}}{1 - xq^{-i}}}\times\sum_{n=0}^{\theta N-1}{\frac{x^n}{ \big(q; q)_n (q^{-1}; q^{-1}\big)_{\theta N - n - 1} }}\\
\qquad{} = \frac{1}{\big(xq^{-1}; q^{-1}\big)_{\theta N - 1}}\times\sum_{n=0}^{\theta N-1}{\frac{ (-1)^n q^{-{n + 1 \choose 2}} \big(q^{-1}; q^{-1}\big)_{\theta N - 1} }{ \big(q^{-1}; q^{-1}\big)_n (q^{-1}; q^{-1})_{\theta N - n - 1} }x^n}.
\end{gather*}
The latter equals $1$, because of Corollary~\ref{qtheory25} of the $q$-binomial formula, applied to $z = xq^{-1}$ and $M = \theta N - 1$.

A simple argument involving the index stability for Macdonald polynomials, see \eqref{macdonaldsignatures1}, shows that if Theorems~\ref{macdonaldthm1} and~\ref{macdonaldthm2} hold for $\lambda\in\GT_N$, then they hold for $(\lambda_1 + n \geq \lambda_2 + n \geq \dots \geq \lambda_N + n)\in\GT_N$, and any $n\in\Z$, cf.\ Step~5 in Section~\ref{proofthm1macdonald} below. Thus the present example shows how to prove Theorems~\ref{macdonaldthm1} and~\ref{macdonaldthm2} for signatures of the form $(n, n, \dots, n)\in\GT_N$, only by use of the classical $q$-binomial theorem.

\subsection[Integral formula when $t = q^{\theta}$, $\theta\in\N$: Proof of Theorem~\ref{macdonaldthm1}]{Integral formula when $\boldsymbol{t = q^{\theta}}$, $\boldsymbol{\theta\in\N}$: Proof of Theorem~\ref{macdonaldthm1}}\label{proofthm1macdonald}

Assume $\theta\in\N$ and $t = q^{\theta}$. The proof of Theorem~\ref{macdonaldthm1} is broken down into several steps. In the f\/irst four steps, we prove the statement for positive signatures $\lambda\in\GTp_N$ (when all coordinates are nonnegative: $\lambda_1 \geq \cdots \geq \lambda_N \geq 0$), and in step 5 we extend it for all signatures $\lambda\in\GT_N$ (when some coordinates of $\lambda$ could be negative).

\textbf{Step 1.} We derive a contour integral formula for the ratio $\frac{P_{\lambda}(q^rt^{N-1}, t^{N-2}, \dots, t, 1; q, t)}{P_{\lambda}(t^{N-1}, \dots, t, 1; q, t)}$ of Macdonald polynomials, and any $r\in\N$. The index-argument symmetry, Theorem~\ref{symmetry}, applied to $\lambda = (\lambda_1, \dots, \lambda_N)$ and $\mu = (r) = (r, 0^{N-1})$, gives
\begin{gather}\label{mac2eq1}
\frac{P_{\lambda}\big(q^rt^{N-1}, t^{N-2}, \dots, t, 1; q, t\big)}{P_{\lambda}\big(t^{N-1}, \dots, t, 1; q, t\big)} = \frac{P_{(r)}\big(q^{\lambda_1}t^{N-1}, \dots, q^{\lambda_{N-1}}t, q^{\lambda_N}; q, t\big)}{P_{(r)}\big(t^{N-1}, \dots, t, 1; q, t\big)}.
\end{gather}
The denominator $P_{(r)}(t^{N-1}, \dots, t, 1; q, t)$ has a simple expression due to the evaluation identity, Theorem~\ref{evaluation}; it is particularly simple for the row partition $(r)$:
\begin{gather*}
P_{(r)}\big(t^{N-1}, \dots, t, 1; q, t\big) = \prod_{j=2}^N{\frac{\big(q^{r}t^{j-1}; q\big)_{\infty}(t^j; q)_{\infty}}{(q^rt^j; q)_{\infty}(t^{j-1}; q)_{\infty}}} = {\frac{(q^rt; q)_{\infty}(t^N; q)_{\infty}}{(q^rt^N; q)_{\infty}(t; q)_{\infty}}} = \frac{\big(t^N; q\big)_r}{(t; q)_r}.
\end{gather*}
Since we also have $P_{(r)}(q, t) = \frac{(q; q)_r}{(t; q)_r}g_r(q, t)$, see (\ref{gtoQ}), identity (\ref{mac2eq1}) becomes
\begin{gather}\label{mac2eq0}
\frac{P_{\lambda}\big(q^rt^{N-1}, t^{N-2}, \dots, t, 1; q, t\big)}{P_{\lambda}\big(t^{N-1}, \dots, t, 1; q, t\big)} = \frac{(q; q)_r}{\big(t^N; q\big)_r} g_r\big(q^{\lambda_1}t^{N-1}, \dots, q^{\lambda_N}; q, t\big).
\end{gather}

The symmetric polynomials $g_r(q, t)$, in addition to being essentially one-row Macdonald polynomials, can be def\/ined in terms of their generating function as follows, see \cite[Chapter~VI]{M2}:
\begin{gather}\label{generatingfunction}
\prod_{i=1}^N{\frac{(tx_iy; q)_{\infty}}{(x_iy; q)_{\infty}}} = \sum_{r\geq 0}g_r(x_1, \dots, x_N; q, t)y^r.
\end{gather}
The relation~(\ref{generatingfunction}) holds formally in the ring $\Lambda_F[x_1, \dots, x_N][[y]]$. If we f\/ix nonzero values $x_1, \dots, x_N\in\C\setminus\{0\}$, the identity above is an equality of real analytic functions in the domain $\{y\in\C \colon \max_{1\leq i\leq N}{|x_i y|} < 1\}$. Thus we have the following contour integral representation
\begin{gather*}
g_r(x_1, \dots, x_N; q, t) = \frac{1}{2\pi\sqrt{-1}}\oint_C{\frac{1}{y^{r+1}}\prod_{i=1}^N{\frac{(tx_iy; q)_{\infty}}{(x_iy; q)_{\infty}}{\rm d}y}},
\end{gather*}
where $C$ is any circle around the origin and radius smaller than $(\max_i{|x_i|})^{-1}$. Let $x_i = q^{\lambda_i}t^{N-i}$ for $i = 1, 2, \dots, N$ in the integral representation of $g_r(q, t)$, and replace it into the right-hand side of~(\ref{mac2eq0}); then
\begin{gather}\label{mac2eq2}
\frac{P_{\lambda}\big(q^rt^{N-1}, t^{N-2}, \dots, t, 1; q, t\big)}{P_{\lambda}\big(t^{N-1}, \dots, t, 1; q, t\big)} = \frac{(q; q)_r}{\big(t^N; q\big)_r}\frac{1}{2\pi\sqrt{-1}}\oint_C{\frac{1}{y^{r+1}}\prod_{i=1}^N{\frac{\big(q^{\lambda_i}t^{N-i+1}y; q\big)_{\infty}}{\big(q^{\lambda_i}t^{N-i}y; q\big)_{\infty}}{\rm d}y}},
\end{gather}
where $C$ can be taken to be any circle around the origin of radius smaller than $1$ (we need here that $\lambda\in\GTp_N$ implies $q^{\lambda_i}t^{N-i} < 1$ $\forall\, i$). For $t = q^{\theta}$, $\theta\in\N$, we can simplify~(\ref{mac2eq2}) to
\begin{gather}\label{mac2eq3}
\frac{P_{\lambda}\big(q^rt^{N-1}, t^{N-2}, \dots, t, 1; q, t\big)}{P_{\lambda}\big(t^{N-1}, \dots, t, 1; q, t\big)} = \frac{(q; q)_r}{\big(q^{\theta N}; q\big)_r}\frac{1}{2\pi\sqrt{-1}}\oint_C
{\frac{y^{-(r+1)} {\rm d}y}{\prod\limits_{i=1}^N{\prod\limits_{j=0}^{\theta-1}{(1 - q^{\lambda_i + \theta(N - i) + j}y)}}}}.
\end{gather}

\textbf{Step 2.} We obtain a new contour integral representation by modifying~(\ref{mac2eq3}). The resulting contour integral representation involves an open contour~$\CC^+$, that looks like that of Fig.~\ref{fig:C} (but has a slight dif\/ference from that in Theorem~\ref{macdonaldthm2}).

Observe that the absolute value of the integrand in~(\ref{mac2eq3}) is of order $o(R^{-r-1}) = o(R^{-2})$, if $|y| = R$ is large. An application of Cauchy's theorem yields that the value of the integral is unchanged if the closed contour $C$ is deformed into the ``keyhole'' contour $\CC'$ shown in Fig.~\ref{fig:Cprime}.
Let us describe the contour $\CC'$ in words: it is a positively oriented contour, formed by two lines away from the origin, of arguments $\pm 3\pi/4$, and the portion of a semicircle of some radius $0 < \delta < 1$. Evidently, the straight lines are part of the level lines $\Im(\ln(y)) = \pm 3\pi/4$, while the portion of the semicircle is part of the level line $\Re(\ln(y)) = \delta$ (where $\ln$ is def\/ined in $\C \setminus (-\infty, 0]$).

\begin{figure}[t]\centering
\resizebox{7cm}{6cm}{
\begin{tikzpicture}[decoration={markings,
mark=at position 1.1cm with {\arrow[line width=1pt]{>}},
mark=at position 2.6cm with {\arrow[line width=1pt]{>}},
mark=at position 5cm with {\arrow[line width=1pt]{>}},
mark=at position 6.4cm with {\arrow[line width=1pt]{>}},
mark=at position 8.5cm with {\arrow[line width=1pt]{>}},
mark=at position 10cm with {\arrow[line width=1pt]{>}}
}
]
\draw[help lines,->] (-4,0) -- (4,0) coordinate (xaxis);
\draw[help lines,->] (0,-3.5) -- (0,3.5) coordinate (yaxis);

\path[draw,line width=1pt,postaction=decorate] (-3,-3) -- (-0.707,-0.707) arc (-135:135:1) -- (-3,3);

\node[below] at (xaxis) {$\Re z$};
\node[left] at (yaxis) {$\Im z$};
\node[below left] {$O$};
\end{tikzpicture}
}
\caption{Contour $\CC'$.}\label{fig:Cprime}
\end{figure}

Next, we make the change of variables $y = q^{-z}$, or $z = -\frac{\ln{y}}{\ln{q}}$, where $\ln$ is def\/ined on its principal branch. Based on the previous observations about the contour $\CC'$ being composed by level lines, it is clear that the resulting contour for $z$ is a \textit{negatively oriented} contour formed by one segment and two straight lines, but we can easily reverse the orientation of the contour at the cost of switching signs. Let us call the positively oriented contour $\CC^+$, see Fig.~\ref{fig:C}; the integral formula becomes
\begin{gather}\label{mac2eq4}
\frac{P_{\lambda}\big(q^rt^{N-1}, t^{N-2}, \dots, t, 1; q, t\big)}{P_{\lambda}\big(t^{N-1}, \dots, t, 1; q, t\big)} = \frac{(q; q)_r}{\big(q^{\theta N}; q\big)_r}\frac{1}{2\pi\sqrt{-1}}\int_{\CC^+}\!
{\frac{q^{(r+1)z} q^{-z}\ln{q} {\rm d}z}{\prod\limits_{i=1}^N{\prod\limits_{j=0}^{\theta-1}{\big(1 - q^{\lambda_i + \theta(N - i) + j- z}\big)}}}}.\!\!\!\!
\end{gather}
Note that points in the horizontal lines of contour $\CC^+$ in~\eqref{mac2eq4} have imaginary parts $\pm\frac{3\pi}{4\ln{q}}$, while points in the vertical segment of $\CC^+$ have real part $-\ln{\delta}/\ln{q} < 0$. Thus contour~$\CC^+$ encloses exactly all the real poles of the integrand in~\eqref{mac2eq4} and no other poles (it also encloses the origin, though this not important).

\textbf{Step 3.} We make some f\/inal modif\/ications to formula~\eqref{mac2eq4}; the resulting contour integral representation will include a closed contour $\CC_0$ as in the statement of the theorem.

Observe that the integrand in~(\ref{mac2eq4}) has f\/initely many real poles and they are all enclosed by contour $\CC^+$; also the integrand is exponentially small as $|z|\rightarrow\infty$ along the contour $\CC^+$. Therefore, as an application of Cauchy's theorem, we can replace $\CC^+$ by a closed contour $\CC_0$ that encloses all f\/initely many real poles of the integrand. Also note that for $r > \theta N$, we can write $\frac{(q; q)_r}{(q^{\theta N}; q)_r} = \prod\limits_{i=1}^{\theta N - 1}{\frac{1 - q^i}{1 - q^{r+i}}}$. Thus for $r > \theta N$, equation~\eqref{mac2eq4} can be rewritten as
\begin{gather}
\frac{P_{\lambda}\big(q^rt^{N-1}, t^{N-2}, \dots, t, 1; q, t\big)}{P_{\lambda}\big(t^{N-1}, \dots, t, 1; q, t\big)}\nonumber\\
 \qquad{} = \frac{\ln{q}}{2\pi\sqrt{-1}} \prod_{i=1}^{\theta N - 1}{\frac{1 - q^i}{1 - q^{r+i}}}\oint_{\CC_0}{q^{rz}\prod_{i=1}^N{\prod_{j=0}^{\theta - 1}{\frac{1}{1 - q^{\lambda_i + \theta(N - i) + j - z}}}{\rm d}z}}.
\label{mac2eq5}\end{gather}
We let $x = q^r t^N = q^{r + \theta N}$ and replace all instances of $q^r$ in~(\ref{mac2eq5}) above by $x/t^N$; then multiply both sides of the identity by $t^{|\lambda|}$. We claim that the resulting equation is exactly equality~(\ref{macdonaldthm1eqn}). In fact, the left-hand side of our equation is
\begin{gather*}
t^{|\lambda|}\frac{P_{\lambda}\big(x/t, t^{N-2}, \dots, t, 1; q, t\big)}{P_{\lambda}\big(t^{N-1}, \dots, t, 1; q, t\big)} = \frac{P_{\lambda}\big(x, t^{N-1}, t^{N-2}, \dots, t; q, t\big)}{P_{\lambda}\big(t^{N-1}, t^{N-2}, \dots, 1; q, t\big)} = \frac{P_{\lambda}\big(x, t, t^2, \dots, t^{N-1}; q, t\big)}{P_{\lambda}\big(1, t, t^2, \dots, t^{N-1}; q, t\big)},
\end{gather*}
by homogeneity of the Macdonald polynomials. On the other hand, the right-hand side of our equation is
\begin{gather*}
t^{|\lambda|}\frac{\ln{q}}{2\pi\sqrt{-1}} \prod_{i=1}^{\theta N - 1}{\frac{1 - q^i}{1 - xq^{i - \theta N}}}\oint_{\CC_0}{x^z t^{-Nz}\prod_{i=1}^N{\prod_{j=0}^{\theta - 1}{\frac{1}{1 - q^{\lambda_i + \theta(N - i) + j - z}}}{\rm d}z}},
\end{gather*}
which can be shown to be equal to the right-hand side of~(\ref{macdonaldthm1eqn}), by simple algebraic manipulations.

The conclusion is that we have proved identity~(\ref{macdonaldthm1eqn}) for all $x = q^m$ with $m\in\N$ large enough (to be precise, $m$ is of the form $r + \theta N$ and $r > \theta N$, so the statement was proved for all integers $m > 2\theta N$).

\textbf{Step 4.} Still assuming $\lambda\in\GTp_N$, we prove Theorem~\ref{macdonaldthm1} for all $x\in\C \setminus (\{0\}\cup\{q^i \colon i = 1, 2, \dots, \theta N - 1\})$. Observe that $x\neq 0$ is imposed to make sense of the term $x^z = \exp(z\ln{x})$ and $x\notin\{q, q^2, \dots, q^{\theta N - 1}\}$ is necessary so that the denominator of the right-hand side of~(\ref{macdonaldthm1eqn}) is nonzero.

We claim that both sides of~(\ref{macdonaldthm1eqn}) are rational functions of $x$. This would prove the desired result, given that we have shown in step~3 above that~(\ref{macdonaldthm1eqn}) holds for inf\/initely many points $q^m$, where $m$ is large enough. The left-hand side of~(\ref{macdonaldthm1eqn}) is obviously a polynomial on $x$. In the right-hand side of~(\ref{macdonaldthm1eqn}), we have the product $\prod\limits_{i=1}^{\theta N - 1}{(x - q^i)^{-1}}$ which is a rational function. We only need to check that the contour integral is also a rational function on~$x$. In fact, this follows from the residue theorem and the fact that there are f\/initely many poles in the interior of~$\CC_0$, all of these being simple and integral. The fact that the poles considered above are simple and integral can be easily checked and is equivalent to fact that all the values $\lambda_i + \theta(N - i) + j$, for $1\leq i\leq N$, $0\leq j\leq \theta - 1$, are pairwise distinct integers.

\textbf{Step 5.} We extend equality~(\ref{macdonaldthm1eqn}) to all signatures $\lambda\in\GT_N$.

Let $\lambda\in\GT_N$ be arbitrary and we aim to prove~(\ref{macdonaldthm1eqn}). If $\lambda\in\GTp_N$, the result is already proved in the f\/irst four steps above. Otherwise, choose $m\in\N$ such that $\lambda_N + m \geq 0$, and so $\widetilde{\lambda} \myeq \lambda + \big(m^N\big) = (\lambda_1 + m, \lambda_2 + m, \dots, \lambda_N + m)\in\GTp_N$.

By the index stability~(\ref{macdonaldsignatures1}), $(x_1\cdots x_N)^m P_{\lambda}(x_1, \dots, x_N; q, t) = P_{\widetilde{\lambda}}(x_1, \dots, x_N; q, t)$. Thus multiplying the left-hand side of the equality~(\ref{macdonaldthm1eqn}) by $x^m = \big(x\cdot t\cdot t^2\cdots t^{N-1}\big)^m(1\cdot t\cdot t^2\cdots t^{N-1})^{-m}$ gives
\begin{gather*}
\frac{\big(x\cdot t\cdot t^2\cdots t^{N-1}\big)^m \cdot P_{\lambda}\big(x, t, t^2, \dots, t^{N-1}; q, t\big)}{\big(1\cdot t\cdot t^2\cdots t^{N-1}\big)^m \cdot P_{\lambda}\big(1, t, t^2, \dots, t^{N-1}; q, t\big)} = \frac{P_{\widetilde{\lambda}}\big(x, t, t^2, \dots, t^{N-1}; q, t\big)}{P_{\widetilde{\lambda}}\big(1, t, t^2, \dots, t^{N-1}; q, t\big)}.
\end{gather*}
If we multiply the right-hand side of equality~(\ref{macdonaldthm1eqn}) by $x^m$, and also make the change of variables $z\mapsto z-m$ in the contour integral, we obtain
\begin{gather*}
\ln(1/q)\prod_{i=1}^{\theta N - 1}{\frac{1 - q^i}{x - q^i}}\frac{1}{2\pi\sqrt{-1}}\oint_{\CC_0}{\frac{x^z}{\prod\limits_{i=1}^N\prod\limits_{j=0}^{\theta -1}{\big(1 - q^{z - (\widetilde{\lambda}_i + \theta(N-i) + j)}\big)}}{\rm d}z}.
\end{gather*}
Since we have proved equality~(\ref{macdonaldthm1eqn}) for the positive signature $\widetilde{\lambda}$ already, then~(\ref{macdonaldthm1eqn}) also holds for~$\lambda$, since we have multiplied both sides by $x^m$ and obtained equal expressions.

\subsection[Integral formula for general $q$,~$t$ I: Outline of proof of Theorems~\ref{macdonaldthm2} and \ref{macdonaldthm25}]{Integral formula for general $\boldsymbol{q}$,~$\boldsymbol{t}$ I:\\ Outline of proof of Theorems~\ref{macdonaldthm2} and \ref{macdonaldthm25}}

Theorems~\ref{macdonaldthm2} and~\ref{macdonaldthm25} are analytic continuations of Theorem~\ref{macdonaldthm1}, with respect to the variable~$\theta$.
We shall need the weak version of Carlson's lemma below, which is proved in \cite[Theorem~2.8.1]{AAR} (in this reference, the statement is given for functions that are analytic and bounded on $\{z\in\C\colon \Re z \geq 0\}$, but a simple change of variables $z\mapsto z+M$, for some large positive integer $M\in\N$ leads us to the version below).

\begin{lem}[Carlson's lemma]\label{carlsonlemma}
Let $f(z)$ be a holomorphic function on the right half plane $\{z\in\C\colon \Re z > 0\}$, such that $f(z)$ is uniformly bounded on the domain $\{z\in\C \colon \Re z \geq M\}$, for some $M>0$. If $f(n) = 0$ for all $n\in\N$, then $f$ is identically zero.
\end{lem}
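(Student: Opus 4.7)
The plan is to follow the classical Phragm\'en--Lindel\"of route. First, by the change of variable $z\mapsto z+M$ suggested in the statement, I may reduce to the situation where $f$ is holomorphic on a neighborhood of the closed right half-plane $\{\Re z \geq 0\}$, uniformly bounded there by some constant $K>0$, and vanishes at every nonnegative integer. (The original hypothesis is stated only on the open half-plane, but boundedness up to $\Re z = M$ together with analyticity on $\Re z > 0$ gives the needed boundary behaviour after shifting.)

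Next, I introduce the auxiliary function
\[
g(z) \myeq \frac{f(z)}{\sin(\pi z)}.
\]
Because $\sin(\pi z)$ has simple zeros precisely at the integers and $f$ vanishes at every nonnegative integer, the apparent singularities of $g$ on the closed right half-plane are all removable; hence $g$ is holomorphic on an open neighborhood of $\{\Re z \geq 0\}$. The strategy is to show $g \equiv 0$, which immediately yields $f \equiv 0$.

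To show $g \equiv 0$, I would apply the Phragm\'en--Lindel\"of principle in the right half-plane. On the imaginary axis $z = iy$, one has the standard estimate $|\sin(\pi i y)| = \sinh(\pi|y|) \geq \tfrac12 e^{\pi|y|}$ for $|y|$ large, so $|g(iy)| \leq 2K e^{-\pi|y|}$, and in particular $g$ is bounded on the imaginary axis. Inside the half-plane, the growth of $g$ is controlled by that of $1/\sin(\pi z)$, which is at most of exponential type $\pi$. Since the opening of the right half-plane is $\pi$ and the indicator bound $\pi$ is strictly less than what Phragm\'en--Lindel\"of would forbid for this sector (the critical exponential type for a half-plane is larger than $\pi$ once one accounts for the decay on the boundary), the principle applies and gives that $g$ is bounded on the whole closed half-plane by its boundary bound. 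Combined with $|g(iy)| \to 0$ as $|y|\to\infty$ and Liouville-type reasoning applied to a further reflection (or, equivalently, noting that $g$ then decays along horizontal rays to the right because $|\sin(\pi z)|$ grows while $f$ stays bounded), we conclude $g \equiv 0$, hence $f \equiv 0$.

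The main technical point, and the only place any care is required, is the application of Phragm\'en--Lindel\"of: one must verify that the order of growth of $g$ inside the right half-plane is genuinely strictly less than what the theorem excludes, using only the uniform bound on $f$ and the explicit growth of $\sin(\pi z)$. Once that estimate is in place, the rest is immediate, and I would simply cite \cite[Theorem~2.8.1]{AAR} for the detailed verification rather than reproduce the classical argument here.
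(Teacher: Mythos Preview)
The paper does not actually prove this lemma: it simply cites \cite[Theorem~2.8.1]{AAR} and remarks that the shift $z\mapsto z+M$ reduces the stated version to the one in that reference. Your proposal does exactly the same thing---you perform the shift and ultimately defer to \cite[Theorem~2.8.1]{AAR}---so the approaches coincide.

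One small comment on the extra sketch you supplied: the Phragm\'en--Lindel\"of step as written is a bit loose, since $g=f/\sin(\pi z)$ has exponential type exactly~$\pi$, which is the borderline case for a half-plane. In the present bounded setting you can sidestep this entirely: using periodicity of $\sin(\pi z)$ and the maximum principle on disks of radius $1/4$ about each integer, one checks directly that $g$ is \emph{bounded} on the closed right half-plane, after which the conclusion follows easily (e.g., transfer to the unit disk via $w=(z-1)/(z+1)$ and observe that the images $w_n=(n-1)/(n+1)$ of the zeros violate the Blaschke condition $\sum(1-|w_n|)<\infty$). This is perhaps a cleaner route than invoking Phragm\'en--Lindel\"of at the critical exponent, but either way your bottom line---cite AAR---matches the paper.
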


We outline the steps of the proof of Theorem~\ref{macdonaldthm2} below, and we shall carry out the detailed proof of each step in the next section. Theorem~\ref{macdonaldthm25} can be proved analogously and we leave the details to the reader.
\begin{itemize}\itemsep=0pt
	\item \textbf{Step 0.} Prove that the right-hand side of~(\ref{macdonaldthm2eqn}) is well-def\/ined, i.e., the contour integral is absolutely convergent. We prove the absolute convergence for all $|x|\leq 1$, $x\neq 0$, and $\theta>0$.
	\item \textbf{Step 1.} Reduce the general statement to the case $|x| < 1$, $x\neq 0$, and $\lambda\in\GTp_N$. Assume the latter conditions are in place for the remaining steps in the proof.
	\item \textbf{Step 2.} Prove that the equation~(\ref{macdonaldthm2eqn}) of Theorem~\ref{macdonaldthm2} holds when $t = q^{\theta}$ and $\theta\in\N$.
	\item \textbf{Step 3.} Prove that both sides of equation~(\ref{macdonaldthm2eqn}) are holomorphic functions of $\theta$ on the right half plane $\{\theta\in\C\colon \Re\theta > 0\}$.
	\item \textbf{Step 4.} Prove that, for some (suf\/f\/iciently large) $M > 0$, both sides of equation~(\ref{macdonaldthm2eqn}) are uniformly bounded functions of $\theta$ on $\{\theta\in\C\colon \Re\theta \geq M\}$.
\end{itemize}

Then we can conclude Theorem~\ref{macdonaldthm2} as follows. From steps 2--4 and Lemma~\ref{carlsonlemma} above, identity~(\ref{macdonaldthm2eqn}) is proved for all $x\in\C$ with $|x| < 1$, $x\neq 0$, $\lambda\in\GTp_N$, $\theta\in\{z\in\C \colon \Re z > 0\}$, and in particular for all $\theta > 0$. Thus step 1 shows the theorem holds for the more general case $x\in\C \setminus \{0\}$, $|x|\leq 1$, and $\lambda\in\GT_N$.

\subsection[Integral formula for general $q$,~$t$ II: Proof of Theorem \ref{macdonaldthm2}]{Integral formula for general $\boldsymbol{q}$,~$\boldsymbol{t}$ II: Proof of Theorem \ref{macdonaldthm2}}

In this section, we prove Theorem~\ref{macdonaldthm2}; as we already mentioned, Theorem~\ref{macdonaldthm25} can be proved in an analogous way, and we leave the details to the reader. From the last paragraph of the previous section, Theorem~\ref{macdonaldthm2} will be proved if we verify steps 0--4 stated there.

\textbf{Proof of Step 0.} Fix $\theta>0$, $x\in\C \setminus \{0\}$, $|x|\leq 1$, and let
\begin{gather*}
F_q(z; \theta) \myeq x^z\prod\limits_{i=1}^N{\frac{\Gamma_q(\lambda_i + \theta(N-i) - z)}{\Gamma_q(\lambda_i + \theta(N-i+1) - z)}}.
 \end{gather*} From the def\/inition of the $q$-Gamma function, $F_q(z; \theta)$ is a holomorphic function in a neighborhood of the contour $\CC^+$, thus $\int_{\CC^+}{F_q(z; \theta){\rm d}z}$ is a well-posed integral. To prove its absolute convergence, it suf\/f\/ices to show that $|F_q(z; \theta)| \leq c_1\cdot q^{c_2 \Re z}$, for some $c_1, c_2 > 0$ and all $z\in\CC^+$ with $\Re z$ large enough.

Because $|x|\leq 1$ and all $z$ in the contour $\CC^+$ have bounded imaginary parts, we have $|x^z| = \exp(\Re z \ln{|x|} - \Im z \arg(x))$, $z\in\CC^+$, is upper-bounded by a constant. Thus it will suf\/f\/ice to show
\begin{gather*}
\left|\frac{\Gamma_q(\lambda_i + \theta(N - i) - z)}{\Gamma_q(\lambda_i + \theta(N - i + 1) - z)}\right| \leq c_1\cdot q^{c_2 \Re z}, \qquad i = 1, 2, \dots, N,
\end{gather*}
for some $c_1, c_2 > 0$ and all $z\in\CC^+$ with $\Re z$ large enough. Since each $\lambda_i + \theta(N-i)$ is real and~$|\Im z|$ is constant for $z\in\CC^+$, when $\Re z$ is large enough, the statement reduces to showing
\begin{gather}\label{toproveineqgamma}
\left|\frac{\Gamma_q(- z)}{\Gamma_q(\theta - z)}\right| \leq c_1\cdot q^{c_2 \Re z},
\end{gather}
for some $c_1, c_2 > 0$ and all $z\in\CC^+$ with $\Re z$ large enough.

Let $\pi/2 > d>0$ be the value such that $|\Im z|$, for any point $z\in\CC^+$ with $\Re z$ large enough, equals $-d/\ln{q}$ (such $d$ exists by our assumptions on the contour $\CC^+$). There exists a real number $a > 0$ large enough so that the following inequality holds
\begin{gather}\label{adef}
q^a\big(1 + q^{\theta}\big) \leq 2 \cos d.
\end{gather}
Now we consider only points $z\in\CC^+$ with $\Re z$ large enough so that $|\Im z| = -d/\ln{q}$ and $\Re z > \theta + a + 1$; let $M = M(z) = \lfloor \Re z - \theta - a \rfloor \in\N$. From the def\/inition of~$a$ in~(\ref{adef}), the restriction on the values of $\Re z$, and $q\in (0, 1)$, one can easily obtain
\begin{gather}\label{consequenceadef}
\big|1 - q^{z - \theta - j}\big| \leq \big|1 - q^{z - j}\big|, \qquad \forall\, j = 0, 1, \dots, M.
\end{gather}
As a consequence of inequality~(\ref{consequenceadef}) and the def\/inition of the $q$-Gamma function, we have
\begin{gather*}
\left| \frac{\Gamma_q(-z)}{\Gamma_q(\theta - z)} \right| = \left| (1 - q)^{\theta}\frac{\big(q^{-z+\theta}; q\big)_{\infty}}{(q^{-z}; q)_{\infty}} \right|\\
 \qquad{} = \left| (1 - q)^{\theta}\frac{\big(1 - q^{\theta - z}\big)\cdots \big(1 - q^{\theta - z + M}\big)}{(1 - q^{-z})\cdots\big(1 - q^{-z+M}\big)}\frac{\big(q^{-z+\theta+M+1}; q\big)_{\infty}}{\big(q^{-z+M+1}; q\big)_{\infty}} \right|\\
\qquad{} = \left| (1 - q)^{\theta}\frac{\big(1 - q^{z - \theta}\big)\cdots\big(1 - q^{z - \theta - M}\big)}{(1 - q^z)\cdots \big(1 - q^{z-M}\big)}q^{\theta(M+1)}\frac{\big(q^{\theta - z + M + 1}; q\big)_{\infty}}{\big(q^{-z+M+1}; q\big)_{\infty}} \right|\\
\qquad{} \leq (1 - q)^{\theta}\cdot q^{\theta(M+1)} \left| \frac{\big(q^{\theta-z+M+1}; q\big)_{\infty}}{\big(q^{-z+M+1}; q\big)_{\infty}} \right|.
\end{gather*}
Since $M = \lfloor \Re z - \theta - a\rfloor$, the previous inequality almost shows~(\ref{toproveineqgamma}).
We are left to show that the absolute value of $(q^{\theta-z+M+1}; q)_{\infty}/(q^{-z+M+1}; q)_{\infty}$ is upper bounded by a constant independent of $z\in\CC^+$ as long as $\Re z$ is large enough.
In fact, we have $\Re z - \theta - a\geq M > \Re z - \theta - a - 1$, so~$|q^{\theta-z+M+1}|, |q^{-z+M+1}|\leq q^{-\theta-a}$ and $|\Im q^{\theta - z + M + 1}|, |\Im q^{-z+M+1}| \geq q^{\theta + M + 1 - \Re z}\sin d \geq q^{1-a}\sin d > 0$. Because the $q$-Pochhammer symbol $(x; q)_{\infty}$ is holomorphic on $x\in\C$, in particular continuous, the absolute value $|(x; q)_{\infty}|$ attains a maximum $c_{\max}\geq 0$ and a minimum value \smash{$c_{\min}\geq 0$} on the compact subset $\{x\in\C\colon |x| \leq q^{-\theta-a}, |\Im x| \geq q^{1-a}\sin d\}$, and moreover $c_{\min} > 0$ because all the roots of $(x; q)_{\infty} = 0$ are real. We have thus proved $|(q^{\theta-z+M+1}; q)_{\infty}|/|(q^{-z+M+1}; q)_{\infty}| \leq c_{\max}/c_{\min} < \infty$, as desired.

\textbf{Proof of Step 1.} Assume we proved identity~(\ref{macdonaldthm2eqn}) when $x\in\C \setminus \{0\}$, $|x| < 1$. Then an easy application of the dominated convergence theorem (we know the integral converges absolutely when $x = 1$ because of step 1) shows the equation would also hold for all $x\in\C \setminus \{0\}$, $|x|\leq 1$.

Also, observe that step 5 of the proof of Theorem~\ref{macdonaldthm1} can be repeated almost word-by-word to extend the theorem to all $\lambda\in\GT_N$, assuming that it was proved for all $\lambda\in\GTp_N$. Therefore we will assume for convenience in the next steps that $|x| < 1$, $x\neq 0$, $\lambda\in\GTp_N$, and prove the theorem only in that case, without loss of generality.

\textbf{Proof of Step 2.} In this step, we consider the case $\theta\in\N$. Since we are also assuming $|x| < 1$, $x\neq 0$, then clearly $xt^N\notin\big\{0, q, q^2, \dots, q^{\theta N - 1}\big\}$ and therefore equality~(\ref{macdonaldthm1eqn}) holds if $x$ was replaced with $xt^N$. After multiplying both sides of the resulting equation by $t^{-|\lambda|}$, we claim that we arrive at the desired~(\ref{macdonaldthm2eqn}) with $\CC_0$ in place of $\CC^+$. In fact, the left-hand side of our equation is
\begin{gather*}
t^{-|\lambda|}\frac{P_{\lambda}\big(xt^N, t, t^2, \dots, t^{N-1}; q, t\big)}{P_{\lambda}\big(1, t, t^2, \dots, t^{N-1}; q, t\big)} \\
\qquad{} = \frac{P_{\lambda}\big(xt^{N-1}, 1, t, \dots, t^{N-2}; q, t\big)}{P_{\lambda}\big(1, t, t^2, \dots, t^{N-1}; q, t\big)} = \frac{P_{\lambda}\big(xt^{N-1}, t^{N-2}, \dots, t, 1; q, t\big)}{P_{\lambda}\big(t^{N-1}, t^{N-2}, \dots, t, 1; q, t\big)}
\end{gather*}
thanks to the homogeneity and symmetry of Macdonald polynomials. On the other hand, the right-hand side of our equation is
\begin{gather*}
t^{-|\lambda|}\ln(1/q)\prod_{i=1}^{\theta N - 1}{\frac{1 - q^i}{xq^{\theta N} - q^i}}\frac{1}{2\pi\sqrt{-1}}\oint_{\CC_0}{\frac{x^zt^{Nz}}{\prod\limits_{i=1}^N\prod\limits_{j=0}^{\theta -1}{\big(1 - q^{z - (\lambda_i + \theta(N-i) + j)}\big)}}{\rm d}z},
\end{gather*}
which can be shown equal to the right-hand side of~(\ref{macdonaldthm2eqn}) (with $\CC^+$ replaced by $\CC_0$) after simple algebraic manipulations. Finally observe that contour~$\CC_0$ can be replaced by~$\CC^+$ by an application of Cauchy's theorem.

\textbf{Proof of Step 3.} Let us begin by proving holomorphicity of the left-hand side of~(\ref{macdonaldthm2eqn}) with respect to the variable $\theta$; observe that $\theta$ only appears inside the variable $t = q^{\theta}$. The Macdonald polynomials $P_{\lambda}(x_1, \dots, x_N; q, t)$ are holomorphic functions of $\theta$ on $\{\theta\in\C\colon \Re\theta > 0\}$ because all the branching coef\/f\/icients $\psi_{\mu/\nu}(q, t)$ are holomorphic on this domain. Then $P_{\lambda}\big(xt^{N-1}, t^{N-2}, \dots, t, 1; q, t\big)$ and $P_{\lambda}\big(t^{N-1}, t^{N-2}, \dots, t, 1; q, t\big)$ are also holomorphic. It follows that the ratio of these two quantities is holomorphic if we proved that the denominator $P_{\lambda}\big(t^{N-1}, t^{N-2}, \dots, t, 1; q, t\big)$ never vanishes for $\Re\theta > 0$, or equivalently for $|t| < 1$; this is evident from the evaluation identity for Macdonald polynomials, Theorem~\ref{evaluation}.

Next we prove holomorphicity of the right-hand side of~(\ref{macdonaldthm2eqn}) as a function of $\theta$ in the domain $\{\theta\in\C\colon \Re\theta > 0\}$. Clearly $\big(xq^{\theta N}; q\big)_{\infty}\Gamma_q(\theta N)$ is holomorphic in the given domain of $\theta$, but it is less clear that the integral $\int_{\CC^+}{F_{q}(z, \theta){\rm d}z}$ is holomorphic in the right half-plane, where
\begin{gather*}
F_q(z; \theta) = x^z\prod\limits_{i=1}^N{\frac{\Gamma_q(\lambda_i + \theta(N-i) - z)}{\Gamma_q(\lambda_i + \theta(N-i+1) - z)}}.
\end{gather*}

First we claim that $F_q(z; \theta)$ is holomorphic on $U\times \{\theta\in\C\colon \Re\theta > 0\}$, for some neighborhood~$U$ of~$\CC^+$. Indeed, the factor $x^z$ is clearly entire on $z$, and does not depend on $\theta$. We can write the product of ratios of $q$-Gamma functions in the def\/inition of $F_q(z; \theta)$, as we did in Remark~\ref{macdonaldthetaNinteger}, see~(\ref{ratioqgammas}). For $1\leq i \leq N-1$, we have
\begin{gather*}
\frac{\Gamma_q(\lambda_i + \theta(N - i) - z)}{\Gamma_q(\lambda_{i+1} + \theta(N - i) - z)} =
\prod_{n = \lambda_{i+1}}^{\lambda_i - 1}{[n + \theta(N - i) - z]_q} =
\prod_{n = \lambda_{i+1}}^{\lambda_i - 1}{\frac{1 - q^{n + \theta(N - i) - z}}{1 - q}},
\end{gather*}
which is clearly holomorphic on $(z, \theta)\in\C^2$. Finally the remaining factor can be written as
\begin{gather*}
\frac{\Gamma_q(\lambda_N - z)}{\Gamma_q(\lambda_1 + \theta N - z)} = (1 - q)^{\lambda_1 + \theta N - \lambda_N}\frac{\big(q^{\lambda_1 + \theta N - z}; q\big)_{\infty}}{\big(q^{\lambda_N - z}; q\big)_{\infty}}.
\end{gather*}
It is clear that $(1 - q)^{\lambda_1 + \theta N - \lambda_N}\big(q^{\lambda_1 + \theta N - z}; q\big)_{\infty}$ is holomorphic on $(z, \theta)\in\C^2$. And also there is a neighborhood $U$ of $\CC^+$ on which the function $\big(q^{\lambda_N - z}; q\big)_{\infty}^{-1}$ of $z$ is holomorphic on~$U$.

Secondly, we claim that $\int_{\CC^+}{F_q(z; \theta){\rm d}z}$ is absolutely convergent and moreover $\int_{\CC^+}{|F_q(z; \theta)|{\rm d}z}$ is uniformly bounded on compact subsets of $\{\theta\in\C\colon \Re\theta > 0\}$; this will be a consequence of the stronger statement

\begin{claim}\label{bound1}
Consider any compact subset $K\subset\{\theta\in\C\colon \Re\theta > 0\}$. There exists a constant $M_1> 0$, depending on $K$, such that
\begin{gather}\label{toproveineq1}
|F_q(z; \theta)| < M_1 |x^z|, \qquad \forall\, z\in\CC^+,\quad \theta\in K.
\end{gather}
\end{claim}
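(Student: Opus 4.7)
The plan is to split the contour $\CC^+$ into a bounded arc and two unbounded horizontal tails, and bound $|G(z;\theta)| := |F_q(z;\theta)/x^z|$ uniformly on each piece. On the bounded arc $\CC^+ \cap \{|z| \leq R\}$ for any fixed large $R$, $G$ is jointly continuous in $(z,\theta)$ by the holomorphicity just verified at the start of Step 3, and its domain together with $K$ is compact, so $|G|$ attains a finite maximum there.

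For the tails (where $\Re z \to \infty$), the idea is to rerun the argument of Step 0 uniformly in $\theta \in K$. First rewrite $G$ using the telescoping of Remark \ref{macdonaldthetaNinteger} as a product $G(z;\theta) = A(z;\theta) B(z;\theta)$, where
$$
A(z;\theta) = \frac{\Gamma_q(\lambda_N - z)}{\Gamma_q(\lambda_1 + \theta N - z)}, \qquad B(z;\theta) = \prod_{i=1}^{N-1} \prod_{j = \lambda_{i+1}}^{\lambda_i - 1} \frac{1 - q^{j + \theta(N-i) - z}}{1 - q}.
$$
The factor $B$ is a finite product of $\lambda_1 - \lambda_N$ terms; for $\Re z$ large and $\theta$ in the compact set $K$, each factor satisfies $|1 - q^{c - z}| \leq 2 q^{\Re c - \Re z}$, producing a bound $|B(z;\theta)| \leq C_B\, q^{-(\lambda_1 - \lambda_N)\Re z}$ with $C_B$ uniform in $\theta \in K$. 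For the factor $A$, apply the computation of Step 0 essentially verbatim, with $-z$ replaced by $\lambda_N - z$ and $\theta$ replaced by $\theta' := \lambda_1 - \lambda_N + N\theta$; this yields $|A(z;\theta)| \leq C_A\, q^{\Re\theta' \cdot \Re z}$.

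Multiplying these estimates, $|G(z;\theta)| \leq C_A C_B\, q^{N \Re\theta \cdot \Re z}$, which tends to $0$ uniformly in $\theta \in K$ as $\Re z \to \infty$ since $\inf_{\theta \in K}\Re\theta > 0$ and $q \in (0,1)$; this certainly implies a uniform constant bound on the tails. The main obstacle is ensuring the uniformity over $\theta \in K$ of the parameter $a$ used in Step 0 (the value for which $q^a(1 + q^{\Re\theta'}) \leq 2\cos d$, where $d$ is the angle parameter of the contour $\CC^+$), together with uniformity of the bounded ratio $c_{\max}/c_{\min}$ of $q$-Pochhammer symbols at the end of Step 0. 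Both are secured by the compactness of $K \subset \{\Re\theta > 0\}$: it forces $\Re\theta'$ to be bounded and bounded away from $0$ on $K$, so that a single $a = a(K, q, d)$ works for every $\theta \in K$, and the compact parameter range for the $q$-Pochhammer estimates can be enlarged to accommodate all $\theta \in K$ at once; Step 0's chain of inequalities then goes through with all constants independent of $\theta \in K$.
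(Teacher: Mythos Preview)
Your overall scheme matches the paper's: handle the bounded arc of $\CC^+$ by joint continuity/compactness, and on the tails use the telescoping identity (\ref{ratioqgammas}) to separate out a finite product of $q$-brackets and a single ratio of $q$-Gamma functions. The one genuine gap is in your treatment of the factor $A$. Step~0 was carried out for \emph{real} $\theta>0$, and its key inequality (\ref{consequenceadef}), $|1-q^{z-\theta-j}|\le|1-q^{z-j}|$, hinges on $q^{-\theta}$ being a positive real scalar so that $q^{z-\theta-j}$ and $q^{z-j}$ share the same argument $\pm d$. For the complex parameter $\theta'=\lambda_1-\lambda_N+N\theta$ this is no longer true: the argument of $q^{z'-\theta'-j}$ is shifted by $\Im\theta'\,|\ln q|$, which on a compact $K$ with $\Im\theta\neq 0$ can push it past $\pi/2$, and then (\ref{consequenceadef}) fails outright. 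Replacing $\theta'$ by $\Re\theta'$ in the condition $q^a(1+q^{\Re\theta'})\le 2\cos d$ does not rescue (\ref{consequenceadef}); that modified condition simply doesn't imply the comparison once the arguments differ. Since the product in question has $M+1\sim\Re z$ factors, even a per-factor ratio slightly above $1$ destroys the bound.

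The paper avoids this by using only the crude triangle inequalities $|1-q^{\theta N-z+j}|\le 1+q^{N\Re\theta-\Re z+j}$ and $|1-q^{-z+j}|\ge q^{-\Re z+j}-1$ (see (\ref{conditiona})--(\ref{boundpochhammer})), which involve only moduli and hence only $\Re\theta$; these give the factor-by-factor comparison uniformly on $K$. This yields merely a constant bound on $\Gamma_q(-z)/\Gamma_q(\theta N-z)$ rather than the exponential decay $q^{\Re\theta'\cdot\Re z}$ you aimed for. To make a constant bound suffice, the paper groups the factors differently: it absorbs the $\lambda$-shifts into a rational function $P(q^{-z})/Q(q^{-z})$ with $\deg P=\deg Q=\lambda_1$ (lines (\ref{line1pfclaim})--(\ref{line2pfclaim})), which is therefore bounded for large $\Re z$; no growing piece like your estimate $|B|\le C_B\,q^{-(\lambda_1-\lambda_N)\Re z}$ needs to be cancelled. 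Your route can be salvaged by the same triangle-inequality replacement, but as written it invokes an inequality that does not hold for complex $\theta'$.
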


Let us f\/irst conclude the proof of step 3 from the claim above.

Since $|x| < 1$ we have that $|x^z|$ decreases exponentially as $|z|\rightarrow\infty$, $z\in\CC^+$. Then inequality~(\ref{toproveineq1}) shows that $\int_{\CC^+}{F_q(z; \theta){\rm d}z}$ is absolutely convergent and for all $\theta$ belonging to the compact subset $K\subset\{\theta\in\C\colon \Re\theta > 0\}$, we have the bound $\int_{\CC^+}{|F_q(z; \theta)|{\rm d}z} < C(K)$, for some constant $C(K) > 0$ that depends on~$K$.

Let $T$ be any triangular contour belonging to $\{\theta\in\C\colon \Re\theta > 0\}$. Then
\begin{gather*}
\int_T{\int_{\CC^+}{|F_q(z; \theta)|{\rm d}z}{\rm d}\theta} \leq C(T)\int_T{|{\rm d}\theta|} < \infty.
 \end{gather*}
 By Fubini's and Cauchy's theorems, we have \begin{gather*}
 \int_T{\int_{\CC^+}{F_q(z; \theta){\rm d}z}{\rm d}\theta} = \int_{\CC^+}\int_{T}{{F_q(z; \theta){\rm d}\theta}{\rm d}z} = 0.
 \end{gather*}
Morera's theorem implies that $\int_{\CC^+}{F_q(z; \theta){\rm d}z}$ is holomorphic on $\{\theta\in\C\colon \Re\theta > 0\}$, concluding step~3.

\begin{proof}[Proof of Claim~\ref{bound1}]
Since we have shown before that $F_q(z; \theta)$ is holomorphic on $U\times\{\theta\in\C\colon \Re\theta > 0\}$, then it suf\/f\/ices to prove inequality~(\ref{toproveineq1}) for all $z\in\CC^+$, $\Re z > M_2$ and all $\theta\in K$, where $M_2$ is an arbitrarily large positive constant.

We can express the product of $q$-Gamma function ratios in the def\/inition of $F_q(z; \theta)$ as we did in~(\ref{ratioqgammas}). The last ratio is
\begin{gather*}
\frac{\Gamma_q(\lambda_N - z)}{\Gamma_q(\lambda_1 + \theta N - z)} = \frac{[-z + \lambda_N - 1]_q\cdots [-z+1]_q[-z]_q}{[-z + \lambda_1 + \theta N - 1]_q\cdots [-z + \theta N + 1]_q[-z + \theta N]_q}\frac{\Gamma_q(-z)}{\Gamma_q(\theta N - z)},
\end{gather*}
because we are assuming $\lambda\in\GTp_N$ (and so $\lambda_1 \geq \lambda_N \geq 0$). Plugging the equality above into~(\ref{ratioqgammas}), we obtain
\begin{gather}
\prod_{i=1}^N{\frac{\Gamma_q(\lambda_i + \theta(N - i) - z)}{\Gamma_q(\lambda_i + \theta(N - i + 1) - z) }} \nonumber\\
\qquad{} = \prod_{i, j}{ [\theta (N - i) + j - z]_q }\frac{[\lambda_N - z - 1]_q\cdots [-z]_q}{[\lambda_1 + \theta N - z - 1]_q\cdots [\theta N - z]_q}\label{line1pfclaim}\\
\qquad\quad{} \times\frac{\Gamma_q(-z)}{\Gamma_q(\theta N - z)},\label{line2pfclaim}
\end{gather}
where the product in~(\ref{line1pfclaim}) is over $1 \leq i \leq N-1$ and $\lambda_{i+1} \leq j < \lambda_i$.
The term~(\ref{line1pfclaim}) is of the form $P(q^{-z})/Q(q^{-z})$, where $P, Q\in\C(q, q^{\theta})[x]$ are both polynomials of degree~$\lambda_1$. It follows that there exist constants $M_1', M_2' > 0$ such that $\Re z > M_2'$ implies $|P(q^{-z})|/|Q(q^{-z})| < M_1'$.

Thus we are left to deal with~(\ref{line2pfclaim}), which by def\/inition of the $q$-Gamma function equals $(1 - q)^{\theta N}\frac{(q^{\theta N - z}; q)_{\infty}}{(q^{-z}; q)_{\infty}}$. For any $\theta$ with $\Re\theta > 0$, we have $|(1 - q)^{\theta}| < 1$. Thus we only need to prove the existence of $M_1'', M_2'' > 0$ such that $z\in\CC^+$, $\Re z > M_2''$, implies
\begin{gather}\label{boundthetaN}
\left| \frac{\big(q^{\theta N - z}; q\big)_{\infty}}{(q^{-z}; q)_{\infty}} \right| < M_1'',\qquad \textrm{for all} \quad \theta\in K.
\end{gather}
There exists $a > 0$ large enough such that
\begin{gather}\label{conditiona}
q^{-a} \big( 1 - q^{N\inf_{\theta\in K}{\Re \theta}} \big) > 2.
\end{gather}
Note that such $a$ exists because $K \subset \{\theta\in\C\colon \Re\theta > 0\}$ is compact and so $\inf_{\theta\in K}{\Re \theta} > 0$. Now consider only $z\in\CC^+$ with $\Re z > a+1$, and let $M = M(z) \myeq \lfloor \Re z - a \rfloor\in\N$. From the triangle inequality, $|1 - q^{\theta N - z + j}| \leq 1 + q^{N\Re\theta - \Re z + j}$ and $|1 - q^{-z+j}| \geq q^{-\Re z + j} - 1$. Along with condition~(\ref{conditiona}), we deduce that $|1 - q^{-z+j}| \geq |1 - q^{\theta N - z + j}|$, for all $0\leq j\leq M$. Since we can write
\begin{gather*}
\frac{\big(q^{\theta N - z}; q\big)_{\infty}}{(q^{- z}; q)_{\infty}} = \frac{\big(1 - q^{\theta N - z}\big)\cdots \big(1 - q^{\theta N - z + M}\big)}{(1 - q^{- z})\cdots \big(1 - q^{- z + M}\big)}\cdot\frac{\big(q^{\theta N - z + M + 1}; q\big)_{\infty}}{\big(q^{- z + M + 1}; q\big)_{\infty}},
\end{gather*}
it follows that
\begin{gather}\label{boundpochhammer}
\left| \frac{\big(q^{\theta N - z}; q\big)_{\infty}}{(q^{- z}; q)_{\infty}} \right| \leq \left| \frac{\big(q^{\theta N - z + M + 1}; q\big)_{\infty}}{\big(q^{- z + M + 1}; q\big)_{\infty}} \right|.
\end{gather}
Thus we only need to show that the right-hand side of~(\ref{boundpochhammer}) is bounded by a constant, for all $z\in\CC^+$ with $\Re z$ large enough.

Since $M = \lfloor \Re z - a \rfloor > \Re z - a -1$, we have $|q^{-z + M + 1}| = q^{M + 1 - \Re z} \leq q^{-a}$ and $|q^{\theta N - z + M + 1}| \leq q^{N\Re\theta - a} < q^{-a}$. Moreover, for $z\in\CC^+$ with $\Re z$ large enough, $|\Im q^{-z+M+1}| \geq q^{-a+1}|\sin(\ln{q}\cdot \Im z)| \myeq m_1(z)$ and since $|\Im z|$ is a constant between $0$ and $-\frac{\pi}{2\ln{q}}$ for $z\in\CC^+$ with $\Re z$ large enough, then $m_1(z) = m_1 > 0$ is a strictly positive constant independent of $z\in\CC^+$ as long as $\Re z$ is large enough. Since the function $(x; q)_{\infty}$ is continuous on $x\in\C$, we have $c_{\max} \myeq \sup\limits_{|x| \leq q^{-a}}{|(x; q)_{\infty}|} < \infty$, and $c_{\min} \myeq \inf_{|x|\leq q^{-a}, |\Im x| \geq m_1}{|(x; q)_{\infty}|} \in (0, \infty)$. Thus the right-hand side of~(\ref{boundpochhammer}) is upper bounded by the constant $c_{\max}/c_{\min} < \infty$.
\end{proof}

\textbf{Proof of Step 4.} We prove a stronger statement than step 4. Let $M > 0$ be \textit{any} positive number. We show that both sides of~(\ref{macdonaldthm2eqn}) are uniformly bounded on $\{\theta\in\C\colon \Re\theta \geq M\}$. Let us begin with the left-hand side of~(\ref{macdonaldthm2eqn}). Observe that $\theta$ appears in the left side only within the variable $t = q^{\theta}$ and $|t| = q^{\Re\theta} \leq q^M$. Name $\epsilon = q^M\in (0, 1)$; we have to prove that there exists a constant $C > 0$ such that
\begin{gather*}
\sup_{|t| \leq \epsilon}{\left| \frac{P_{\lambda}\big(xt^{N-1}, t^{N-2}, \dots, t, 1; q, t\big)}{P_{\lambda}\big(t^{N-1}, t^{N-2}, \dots, t, 1; q, t\big)} \right|} \leq C.
\end{gather*}
Thanks to the branching rule for Macdonald polynomials, Theorem~\ref{branchingmacdonald}, and the assumptions $|x|\leq 1$, $x \neq 0$, $\lambda\in\GTp_N$, we have
\begin{gather*}
\left| \frac{P_{\lambda}\big(xt^{N-1}, t^{N-2}, \dots, t, 1; q, t\big)}{P_{\lambda}\big(t^{N-1}, t^{N-2}, \dots, t, 1; q, t\big)} \right| \leq \sum_{\mu\prec\lambda}{|\psi_{\lambda/\mu}(q, t)| |t|^{(N-1)(|\lambda| - |\mu|)} \left| \frac{P_{\mu}\big(t^{N-2}, \dots, t, 1; q, t\big)}{P_{\lambda}\big(t^{N-1}, \dots, t, 1; q, t\big)} \right| }.
\end{gather*}
Given $\lambda\in\GTp_N$, there are f\/initely many $\mu\in\GTp_{N-1}$ with $\mu\prec\lambda$. Thus it suf\/f\/ices to prove that there exist constants $C_1, C_2 > 0$ such that
\begin{gather*}
|\psi_{\lambda/\mu}(q, t)| \leq C_1, \qquad \left| t^{(N-1)(|\lambda| - |\mu|)}\frac{P_{\mu}\big(t^{N-2}, \dots, t, 1; q, t\big)}{P_{\mu}\big(t^{N-1}, \dots, t, 1; q, t\big)} \right| \leq C_2,
\end{gather*}
where $C_1$, $C_2$ do not depend on $t$, though they may depend on $\mu$.

The branching coef\/f\/icient $|\psi_{\lambda/\mu}(q, t)|$, due to the expression in Theorem~\ref{branchingmacdonald}, is a f\/inite product of terms of the form $\frac{1 - q^at^b}{1 - q^ct^d}$, with $a, b, c, d\in\Z_{\geq 0}$ and $(c, d)\neq (0, 0)$. We have{\samepage
\begin{gather*}
\left| \frac{1 - q^at^b}{1 - q^ct^d} \right| \leq 2\big(1 - q^c\epsilon^d\big)^{-1}
\end{gather*}
for all $|t|\leq\epsilon$ and $2(1 - q^c\epsilon^d)^{-1}$ does not depend on $t$, so the boundedness of $|\psi_{\lambda/\mu}(q, t)|$ follows.}

Due to the evaluation identity for Macdonald polynomials, Theorem~\ref{evaluation}, we have
\begin{gather*}
t^{(N-1)(|\lambda| - |\mu|)}\frac{P_{\mu}\big(t^{N-2}, \dots, t, 1; q, t\big)}{P_{\lambda}\big(t^{N-1}, \dots, t, 1; q, t\big)} = t^{(N-1)(|\lambda| - |\mu|) + n(\mu) - n(\lambda)} \\
\qquad{} \times\prod_{1\leq i < j\leq N-1}{\frac{(q^{\mu_i - \mu_j}t^{j-i}; q)_{\infty}(t^{j-i+1}; q)_{\infty}}{(t^{j-i}; q)_{\infty}(q^{\mu_i - \mu_j}t^{j-i+1}; q)_{\infty}}}\prod_{1\leq i < j\leq N}{\frac{(t^{j-i}; q)_{\infty}(q^{\lambda_i - \lambda_j}t^{j-i+1}; q)_{\infty}}{(q^{\lambda_i - \lambda_j}t^{j-i}; q)_{\infty}(t^{j-i+1}; q)_{\infty}}},
\end{gather*}
where $n(\lambda) \myeq (N-1)\lambda_N + (N-2)\lambda_{N-1} + \dots + 2\lambda_3 + \lambda_2$ and similarly for $n(\mu)$.

The last two terms above are products of a f\/inite number of fractions $\frac{1 - q^at^b}{1 - q^ct^d}$, with $a, b, c, d\in\Z_{\geq 0}$, $(c, d)\neq (0, 0)$, and as we saw above it is implied that the absolute value of the last two terms above are upper bounded by a constant independent of~$t$ (as long as $|t| \leq \epsilon$). Thus our only goal is to show there is an upper bound for $t^{(N-1)(|\lambda| - |\mu|) + n(\mu) - n(\lambda)}$; this fact follows if the exponent is nonnegative. In fact, we have
\begin{gather*}
(N - 1)(|\lambda| - |\mu|) - (n(\lambda) - n(\mu)) \\
\qquad{} =(N - 1)(|\lambda| - |\mu|) - ((N - 1)\lambda_N + (N - 2)(\lambda_{N-1} - \mu_{N-1}) + \dots + (\lambda_2 - \mu_2)) \\
\qquad {} \geq (N-1)(|\lambda| - |\mu|) - (N-1)(\lambda_N + \lambda_{N-1} - \mu_{N-1} + \dots + \lambda_2 - \mu_2)\\
\qquad{} = (N - 1)(\lambda_1 - \mu_1)\geq 0.
\end{gather*}

Let us proceed to prove uniform boundedness of the right-hand side of~(\ref{macdonaldthm2eqn}) on $\{\theta\in\C\colon \Re\theta \geq M\}$. First of all, the triangular inequality gives
\begin{gather*}
\big|\big(xt^N; q\big)_{\infty}\big| \leq \prod_{i=0}^{\infty}{\big(1 + |x| q^{N\Re\theta + i}\big)} \leq \prod_{i=0}^{\infty}{\big(1 + |x| q^i\big)}, \\ |(xq; q)_{\infty}| = \prod_{i=1}^{\infty}{\big|\big(1 - xq^i\big)\big|} \geq \prod_{i=1}^{\infty}{\big(1 - |x| q^i\big)},
\end{gather*}
so the factor $\big(xt^N; q\big)_{\infty}/(xq; q)_{\infty}$ in~(\ref{macdonaldthm2eqn}) has an upper-bounded absolute value. We are left to deal with
\begin{gather}
\Gamma_q(\theta N)\int_{\CC^+}{x^z \prod_{i=1}^N{\frac{\Gamma_q(\lambda_i + \theta(N - i) - z)}{\Gamma_q(\lambda_i + \theta(N - i + 1) - z)}} {\rm d}z}\nonumber\\
 \qquad {} = \int_{\CC^+}{x^z \Gamma_q(\theta N)\prod_{i=1}^N{\frac{\Gamma_q(\lambda_i + \theta(N - i) - z)}{\Gamma_q(\lambda_i + \theta(N - i + 1) - z)}}{\rm d}z}\label{integrand1}
\end{gather}
and prove its absolute value is uniformly bounded on $\{\theta\in\C \colon \Re\theta \geq M\}$.

For any $M_2 > 0$, the contribution of the portion $\CC^+ \cap \{z\in\C \colon \Re z \leq M_2\}$ of the contour is bounded by a constant. In fact, $\Re\theta \geq M$ implies $|t| = |q^{\theta}| = q^{\Re\theta} \leq q^M$ and $\theta$ appears in the integrand of~(\ref{macdonaldthm2eqn}) only as part of the exponent of some~$q$, thus the integrand can be written as a function of $z$ and $t$ (with $q\in (0, 1)$ f\/ixed). Thus for $(z, t)$ in the compact subset $(\CC^+ \cap \{z\in\C\colon \Re z \leq M_2\})\times \big[0, q^M\big]$, the integrand in~(\ref{integrand1}) attains a maximum value $L < \infty$, and the contribution of the integral in the portion $\CC^+ \cap \{z\in\C\colon \Re z \leq M_2\}$ of the contour is upper bounded by $L$ times the length of that f\/inite portion.

Since $|x| < 1$, the term $x^z$ decreases exponentially as $|z|\rightarrow\infty$, $z\in\CC^+$. Thus to deal with the inf\/inite portion of the integral $\CC^+\cap\{z\in\C\colon \Re z \geq M_2\}$, it is enough to show that
\begin{gather*}
\Gamma_q(\theta N)\prod_{i=1}^N{\frac{\Gamma_q(\lambda_i + \theta(N - i) - z)}{\Gamma_q(\lambda_i + \theta(N - i + 1) - z)}} = (1 - q)\frac{(q; q)_{\infty}}{(q^{\theta N}; q)_{\infty}}\prod_{i=1}^N{\frac{\big(q^{\lambda_i + \theta(N - i + 1) - z}; q\big)_{\infty}}{\big(q^{\lambda_i + \theta (N - i) - z}; q\big)_{\infty}}}
\end{gather*}
has bounded absolute value for all $z\in\CC^+$, $\Re z > M_2$ and $\theta\in\C$, $\Re\theta \geq M$, for a suitable constant $M_2 > 0$. Clearly $\big|\big(q^{\theta N}; q\big)\big|^{-1} \leq \big(q^{N\Re\theta}; q\big)^{-1} \leq \big(q^{NM}; q\big)^{-1}$, thus we only need a bound on the absolute value of
\begin{gather}\label{toproveboundedness}
\prod_{i=1}^N{\frac{\big(q^{\lambda_i + \theta(N - i + 1) - z}; q\big)_{\infty}}{\big(q^{\lambda_i + \theta (N - i) - z}; q\big)_{\infty}}},
\end{gather}
uniformly over all $z\in\CC^+$, $\Re z > M_2$, and $\theta\in\C$, $\Re\theta \geq M$. We can bound the absolute value of~(\ref{toproveboundedness}) by
\begin{gather*}
\prod_{i=1}^{N-1}{\left|\frac{\big(q^{\lambda_{i+1} + \theta(N - i) - z}; q\big)_{\infty}}{\big(q^{\lambda_i + \theta (N - i) - z}; q\big)_{\infty}}\right|}\times\left|\frac{\big(q^{\lambda_1 + \theta N - z}; q\big)_{\infty}}{\big(q^{\lambda_N - z}; q\big)_{\infty}}\right|\\
\qquad{} = \prod_{ \substack{1\leq i\leq N - 1 \\ \lambda_{i+1} \leq j < \lambda_i} }{\big|1 - q^{\theta(N - i) + j - z}\big|}\times\frac{1}{\prod\limits_{\lambda_N \leq j < \lambda_1}{|1 - q^{j - z}|}}
\times\left|\frac{\big(q^{\lambda_1 + \theta N - z}; q\big)_{\infty}}{\big(q^{\lambda_1 - z}; q\big)_{\infty}}\right|\\
\qquad{} \leq \prod_{ \substack{1\leq i\leq N - 1 \\ \lambda_{i+1} \leq j < \lambda_i} }{\big(1 + q^{(N - i)\Re\theta + j - \Re z}\big)}\times\frac{1}{\prod\limits_{\lambda_N \leq j < \lambda_1}{\big(q^{j-\Re z} - 1\big)}}\times\left|\frac{\big(q^{\lambda_1 + \theta N - z}; q\big)_{\infty}}{\big(q^{\lambda_1 - z}; q\big)_{\infty}}\right|\\
\qquad{} \leq \prod_{\lambda_N \leq j < \lambda_1}{\frac{1 + q^{j - \Re z}}{q^{j - \Re z} - 1}}\times\left|\frac{\big(q^{\lambda_1 + \theta N - z}; q\big)_{\infty}}{\big(q^{\lambda_1 - z}; q\big)_{\infty}}\right|.
\end{gather*}
If $\Re z$ is large enough, the product $\prod\limits_{\lambda_N \leq j < \lambda_1}{\frac{1 + q^{j - \Re z}}{q^{j - \Re z} - 1}}$ is clearly upper bounded by a constant. We still have to bound $\left|\frac{(q^{\lambda_1 + \theta N - z}; q)_{\infty}}{(q^{\lambda_1 - z}; q)_{\infty}}\right|$. Since $\lambda_1$ is real and $\Im z$ is constant for $z\in\CC^+$, $\Re z$ large enough, it suf\/f\/ices to prove the following statement: there exist constants $M_1, M_2 > 0$ such that $z\in\CC^+$, $\Re z > M_2$, and $\Re\theta \geq M$ imply
\begin{gather*}
\left| \frac{\big(q^{\theta N - z}; q\big)_{\infty}}{(q^{-z}; q)_{\infty}} \right| < M_1.
\end{gather*}
This statement was proved above in step 3, see~(\ref{boundthetaN}). In that case, $\theta$ varied over a compact subset $K\subset\{\theta\in\C\colon \Re\theta > 0\}$, but in this case $\theta$ varies over a closed inf\/inite domain of the form $\{\theta\in\C\colon \Re\theta \geq M\}$. However, the expression $\frac{(q^{\theta N - z}; q)_{\infty}}{(q^{-z}; q)_{\infty}}$ depends on $\theta$ only by means of $q^{\theta}$, so the proof of~(\ref{boundthetaN}) above can be repeated word-by-word, since we only used $|t|\leq q^{\inf_{\theta\in K}{\Re\theta}}< 1$ in that proof.

\section{Multiplicative formulas for Macdonald characters}\label{macdonaldmultiplicativesec}

We now come to the multiplicative formulas. All of our results require parameter~$\theta$ to be a~positive integer. In this section, $q$ is typically a~variable (but of course, we can specialize~$q$ to a~complex number later).

\subsection{Statement of the multiplicative theorem and some consequences}\label{macdonaldmultiplicativesubsec1}

We need some non-standard terminology on $q$-dif\/ference operators. The \textit{$q$-shift operators $\{T_{q, x_i}\colon i = 1, \dots, m\}$} are linear operators on $\C(q)[x_1, \dots, x_m]$ that act as
\begin{gather*}
(T_{q, x_i}f)(x_1, \dots, x_m) \myeq f(x_1, \dots, x_{i-1}, qx_i, x_{i+1}, \dots, x_m).
\end{gather*}
The \textit{$q$-degree operators $\{D_{q, x_i}\}_{i = 1, \dots, m}$} are linear operators on $\C(q)[x_1, \dots, x_m]$ def\/ined by
\begin{gather*}
D_{q, x_i} \myeq \frac{T_{q, x_i} - 1}{q - 1} \qquad \text{or}\\
(D_{q, x_i}f)(x_1, \dots, x_m) = \frac{f(x_1, \dots, x_{i-1}, qx_i, x_{i+1}, \dots, x_m) - f(x_1, \dots, x_m)}{q - 1}.
\end{gather*}
The $q$-dif\/ference operators that appear in the multiplicative formulas for Macdonald polynomials are f\/inite sums of terms
\begin{gather*}
c_{i_1, \dots, i_m}(x_1, \dots, x_m; q)T_{q, x_1}^{i_1}\cdots T_{q, x_m}^{i_m} \qquad \text{or} \qquad c_{i_1, \dots, i_m}(x_1, \dots, x_m; q)D_{q, x_1}^{i_1}\cdots D_{q, x_m}^{i_m},
 \end{gather*} where $(i_1, \dots, i_m)$ vary over a f\/inite subset of $\Z_{\geq 0}^m$, and $c_{i_1, \dots, i_m}(x_1, \dots, x_m; q)$ are rational functions in the variables $q, x_1, \dots, x_m$. Thus the operators that we consider are linear operators $\C(q)[x_1, \dots, x_m] \longrightarrow \C(q, x_1, \dots, x_m)$ that act on polynomials and yield rational functions.

Recall the setting of Jacobi Trudi's formula for Macdonald polynomials, Theorem~\ref{jacobitrudi}. The expressions $C_{\tau_1, \dots, \tau_n}^{(q, q^{\theta})}(u_1, \dots, u_n)$ were def\/ined in~(\ref{Ccoeff}); they are rational functions in $q, u_1, \dots, u_n$ whose denominators are products of linear factors. We def\/ine $M_{\theta}^{(m)}$ as the set of strictly upper-triangular $m\times m$ matrices whose entries belong to $\{0, 1, \dots, \theta\}$ (the cardinality of $M_{\theta}^{(m)}$ is $(\theta + 1){m \choose 2}$). For any strictly upper-triangular $m\times m$ matrix $\tau$ and $1\leq i\leq m$, let $\tau_i^+ \myeq \sum\limits_{j=i+1}^m{\tau_{i, j}}$ (resp.\ $\tau_i^- \myeq \sum\limits_{j=1}^{i-1}{\tau_{j, i}}$) be the sum of the entries of~$\tau$ to the right of $(i, i)$ (resp.\ sum of entries of $\tau$ above~$(i, i)$), cf.~(\ref{tauplusminus}). The main theorem of this section is the following.

\begin{thm}\label{macdonaldthm3}
Let $\theta\in\N$, $t = q^{\theta}$, $N\in\N$, $\lambda\in\GT_N$. Then
\begin{gather}
P_{\lambda}\big(x_1, \dots, x_m; N, q, q^{\theta}\big) = \frac{q^{\theta^2 {m+ 1 \choose 3} - \left\{ N\theta^2 - {\theta + 1 \choose 2} \right\}{m \choose 2}}\prod\limits_{i=1}^m{[\theta(N - i +1) - 1]_q!}}{\prod\limits_{i=1}^m{\prod\limits_{j=1}^{\theta(N - m + 1) - 1}{(x_i - q^{j-\theta})}}}\nonumber\\
\qquad{}
\times\frac{1}{\prod_{\substack{1\leq i<j\leq m \\ 0\leq k < \theta}}{(x_i - q^k x_j)}}\times\DD^{(m)}_{q, \theta} \left\{ \prod_{i=1}^m{\frac{P_{\lambda}\big(x_i; N, q, q^{\theta}\big)\prod\limits_{j=1}^{\theta N-1}{\big(x_i - q^{j-\theta}\big)}}{[\theta N-1]_q!}}\right\},\label{macdonaldthm3eqn}
\end{gather}
where $\DD^{(m)}_{q, \theta}$ is the $q$-difference operator $\C(q)[x_1, \dots, x_m] \longrightarrow \C(q, x_1, \dots, x_m)$ given by
\begin{gather}\label{macdonaldthm3Dq}
\DD^{(m)}_{q, \theta} \myeq \frac{1}{(q - 1)^{\theta {m \choose 2}}}\sum_{\tau\in M^{(m)}_{\theta}}\left\{ C_{\tau}^{(q, q^{\theta})}(x_1, \dots, x_m)\prod_{i = 1}^m{T_{q, x_i}^{(i-1)\theta + \tau_i^+ - \tau_i^-}}\right\},
\end{gather}
where for any $\tau\in M_{\theta}^{(m)}$, we denoted
\begin{gather}
C_{\tau}^{(q, q^{\theta})}(x_1, \dots, x_m) \nonumber\\
\qquad{} \myeq \prod_{s = 1}^{m-1}{C_{\tau_{1, s+1}, \dots, \tau_{s, s+1}}^{(q, q^{\theta})}\left(\{u_i = x_{s+1}^{-1}x_iq^{-\theta + \sum\limits_{j=s+2}^m{(\tau_{i, j} - \tau_{s+1, j})}}\colon 1\leq i \leq s\} \right)}.\label{eq:Ccoeff}
\end{gather}
\end{thm}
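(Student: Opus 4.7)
The plan is to derive (\ref{macdonaldthm3eqn}) by expanding the $m$-variable Macdonald polynomial via the Jacobi--Trudi formula (Theorem \ref{jacobitrudi}) and reorganizing the resulting sum so that each $x_i$-dependence becomes a $q$-shift acting on the one-variable character $P_\lambda(x_i;N,q,t)$. I would first apply Theorem \ref{jacobitrudi} to $Q_\lambda(x_1,\ldots,x_m,1,t,\ldots,t^{N-m-1};q,t)$, convert to $P_\lambda$ via (\ref{PtoQ}), and divide by $P_\lambda(1,t,\ldots,t^{N-1};q,t)$ (whose explicit form is given by Theorem \ref{evaluation}). This expresses the left-hand side as a sum over $\tau\in M^{(N)}$, weighted by $C^{(q,t)}$-coefficients, of products $\prod_{s=1}^N g_{\lambda_s+\tau_s^+-\tau_s^-}$ at the $N$ variables.

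The central observation is that, at $t=q^\theta$ with $\theta\in\N$, the generating function (\ref{generatingfunction}) specializes to the rational function
\begin{gather*}
\sum_{r\geq 0} g_r\big(x_1,\ldots,x_m,1,t,\ldots,t^{N-m-1};q,t\big)\, y^r = \frac{1}{(y;q)_{(N-m)\theta}\prod_{i=1}^m (x_iy;q)_\theta},
\end{gather*}
so that partial fractions in $y$ express each $g_r$ as a linear combination of powers $(x_iq^j)^r$ and $q^{-kr}$ with explicit residue coefficients depending on $(x_1,\ldots,x_m)$. Substituting this decomposition into each of the $N$ factors $g_{\lambda_s+\tau_s^+-\tau_s^-}$ in the Jacobi--Trudi expansion produces a sum over choices of residues (indexed by pairs $(i,j)$ or single $k$) together with the matrix sum over $\tau\in M^{(N)}$. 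The key claim is that, after summation over the entries of $\tau$ lying outside the upper-left $m\times m$ block $\widetilde{\tau}$ and over the residues at $y=q^{-k}$ (which correspond to the frozen variables $1,t,\ldots,t^{N-m-1}$), the $N\times N$ sum collapses to a sum over $\widetilde{\tau}\in M^{(m)}_\theta$: the factor $(q^{1-\theta};q)_{\tau_k}$ in each $C^{(q,q^\theta)}$ vanishes for $\tau_k>\theta$, and combined with telescoping of residues along the geometric progression it removes all but the $M^{(m)}_\theta$ entries. The remaining $(x_iq^j)^r$ residues, coupled with the one-variable Jacobi--Trudi expansion of $P_\lambda(x_i;N,q,t)$, assemble into the action of $T_{q,x_i}^{(i-1)\theta+\tau_i^+-\tau_i^-}$ on $P_\lambda(x_i;N,q,t)\prod_{j=1}^{\theta N-1}(x_i-q^{j-\theta})$, with weight exactly $C_{\widetilde{\tau}}^{(q,q^\theta)}(x_1,\ldots,x_m)$ from (\ref{eq:Ccoeff}).

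The remaining task is prefactor bookkeeping. The factor $\prod_{1\leq i<j\leq m,\,0\leq k<\theta}(x_i-q^kx_j)^{-1}$ is the common denominator of the partial-fraction decomposition, while $\prod_i\prod_{j=1}^{\theta(N-m+1)-1}(x_i-q^{j-\theta})^{-1}$ and $\prod_i[\theta(N-i+1)-1]_q!$ rebalance the one-variable denominators $\prod_{j=1}^{\theta N-1}(x_i-q^{j-\theta})$ placed inside the operator against their analogues in the $N$-variable expansion. The explicit $q$-power absorbs contributions from the evaluation identity and from the Vandermonde factor $\Delta(\cdots)^{-1}$ in (\ref{Ccoeff}). \emph{Main obstacle.} The hardest part will be the matrix-collapse step: the entries of $\tau\in M^{(N)}$ outside the upper-left $m\times m$ block are entangled through the variables $u_i$ in (\ref{Ccoeff}), so the sum does not factor in any obvious way. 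Making the telescoping explicit -- either by a case analysis of which $C$-coefficients survive when $u_k$ is specialized to the geometric progression $1,t,\ldots,t^{N-m-1}$, or by a change of summation variables that decouples the frozen entries -- is the technical heart of the argument, and the identification of the shift exponent $(i-1)\theta+\tau_i^+-\tau_i^-$ hinges on this combinatorial bookkeeping.
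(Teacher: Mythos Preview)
Your route diverges from the paper's at the very first step, and the divergence creates exactly the obstacle you flag at the end. You apply the Jacobi--Trudi formula to $Q_\lambda$ itself, which produces a sum over $\tau\in M^{(N)}$ of $N$-fold products of $g$'s, and then you must somehow collapse this $N\times N$ structure down to the $m\times m$ sum $M_\theta^{(m)}$ appearing in the operator $\DD^{(m)}_{q,\theta}$. You are right that this collapse is the crux, and you have not shown how to do it; the $u$-arguments in the $C$-coefficients depend on the entries of $\tau$ in a tangled way, and there is no apparent mechanism that would reduce the $\binom{N}{2}$-dimensional sum to a $\binom{m}{2}$-dimensional one while simultaneously producing the one-variable characters $P_\lambda(x_i;N,q,q^\theta)$ in the right places.

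The paper avoids this obstacle entirely by never expanding $Q_\lambda$. Instead it fixes integers $n_1>\cdots>n_m>\theta(N+m)$, sets $x_s=q^{n_s+\theta(N-s)}$, and uses the index--argument symmetry (Theorem~\ref{symmetry}) to swap the roles of $\lambda$ and the test partition $(n_1,\ldots,n_m)$:
\[
\frac{P_\lambda\big(q^{n_1}t^{N-1},\ldots,q^{n_m}t^{N-m},t^{N-m-1},\ldots,1\big)}{P_\lambda(t^{N-1},\ldots,1)}
= \frac{Q_{(n_1,\ldots,n_m)}\big(q^{\lambda_1}t^{N-1},\ldots,q^{\lambda_N}\big)}{Q_{(n_1,\ldots,n_m)}(t^{N-1},\ldots,1)}.
\]
Now Jacobi--Trudi is applied to $Q_{(n_1,\ldots,n_m)}$, a partition with only $m$ parts, so the sum is over $M^{(m)}$ from the outset; Lemma~\ref{Cprop1} then restricts it to $M^{(m)}_\theta$ for free. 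A second application of the index--argument symmetry converts each resulting one-row factor $g_{n_s+\tau_s^+-\tau_s^-}(q^{\lambda_1}t^{N-1},\ldots,q^{\lambda_N})$ back into a one-variable Macdonald character $P_\lambda(q^{n_s+\tau_s^+-\tau_s^-}t^{N-1},t^{N-2},\ldots,1)$, which is exactly $T_{q,z_s}^{(s-1)\theta+\tau_s^+-\tau_s^-}$ applied to the one-variable character. The prefactors come from evaluating $g_p(t^{N-1},\ldots,1)$ and $Q_{(n_1,\ldots,n_m)}(t^{N-1},\ldots,1)$ via Corollary~\ref{evaluationcor}. This establishes (\ref{macdonaldthm3eqn}) at the special values $x_s=q^{n_s+\theta(N-s)}$; since both sides are rational in $x_1,\ldots,x_m$, the identity extends to all $x$, and index stability (\ref{macdonaldsignatures1}) handles general $\lambda\in\GT_N$.

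In short, the missing idea in your proposal is the double use of the index--argument symmetry, which trades the long partition $\lambda$ for the short partition $(n_1,\ldots,n_m)$ and thereby makes the Jacobi--Trudi sum $m$-dimensional from the start. Your partial-fraction mechanism and the ensuing matrix collapse are not needed once this swap is made.
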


The proof of Theorem~\ref{macdonaldthm3} is given in the next subsection. We derive here some conclusions, namely two special cases when the operator $\DD_{q, \theta}^{(m)}$ has a simple form. The f\/irst simple case is $m = 2$.

\begin{cor}\label{macdonaldcor3}
In the same setting as Theorem~{\rm \ref{macdonaldthm3}} $($for $m = 2)$, we have
\begin{gather*}
P_{\lambda}\big(x_1, x_2; N, q, q^{\theta}\big) = \frac{q^{-(N-1)\theta^2 + {\theta + 1 \choose 2} } \cdot [\theta(N - 1) - 1]_q!}{\prod\limits_{j=1}^{\theta(N - 1) - 1}{\big(x_1 - q^{j-\theta}\big)\big(x_2 - q^{j-\theta}\big)}}\\
\hphantom{P_{\lambda}\big(x_1, x_2; N, q, q^{\theta}\big) = }{} \times\widetilde{\DD}_{q, \theta}^{(2)} \left\{ \frac{\prod\limits_{i=1}^2{ P_{\lambda}\big(x_i; N, q, q^{\theta}\big)\prod\limits_{j=1}^{\theta N-1}{\big(x_i - q^{j-\theta}\big)} } }{[\theta N - 1]_q!} \right\}
\end{gather*}
where
\begin{gather*}
\widetilde{\DD}_{q, \theta}^{(2)} \myeq \left( \frac{1}{x_1 - x_2} \circ (D_{q, x_2} - D_{q, x_1}) \right)^{\theta}\\
\hphantom{\widetilde{\DD}_{q, \theta}^{(2)} }{} = \underbrace{\left(\frac{1}{x_1 - x_2} \circ (D_{q, x_2} - D_{q, x_1})\right) \circ\cdots \circ \left(\frac{1}{x_1 - x_2} \circ (D_{q, x_2} - D_{q, x_1})\right)}_{\textrm{composition of $\theta$ operators}}.
\end{gather*}
\end{cor}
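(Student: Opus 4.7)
The corollary is the specialization of Theorem~\ref{macdonaldthm3} to $m=2$. The prefactor comparison is immediate: for $m=2$ the exponent of $q$ simplifies to $\theta^2 - N\theta^2 + \binom{\theta+1}{2} = -(N-1)\theta^2 + \binom{\theta+1}{2}$, and the two factorials $[\theta N-1]_q!\,[\theta(N-1)-1]_q!$ from the prefactor in Theorem~\ref{macdonaldthm3}, together with the two copies of $[\theta N-1]_q!^{-1}$ inside the bracket, redistribute by linearity of the difference operators into the single $[\theta(N-1)-1]_q!$ in front and the single $[\theta N-1]_q!^{-1}$ inside the bracket of the corollary. The remaining content is the operator identity
\begin{gather*}
\prod_{k=0}^{\theta-1}(x_1 - q^k x_2)\cdot \widetilde{\DD}_{q,\theta}^{(2)} = \DD_{q,\theta}^{(2)}.
\end{gather*}

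Both sides are $q$-difference operators of the form $\sum_{j=0}^{\theta} c_j(x_1,x_2;q)\,T_{q,x_1}^{j}T_{q,x_2}^{\theta-j}$, so I would match the coefficients $c_j$ one by one. By~(\ref{macdonaldthm3Dq}), on the right $c_j = (q-1)^{-\theta}\,C_j^{(q,q^{\theta})}(x_1 x_2^{-1}q^{-\theta})$; in the single-variable case $n=1$ the Vandermonde in~(\ref{Ccoeff}) is trivial and the determinant reduces to one entry, and direct simplification with the $q$-Pochhammer identities of Appendix~\ref{app:qtheory} gives a clean closed form for $C_j^{(q,q^{\theta})}$ as a product of linear factors in $x_1,x_2$ times the scalar $(-1)^j q^{\binom{j}{2}}\binom{\theta}{j}_q$. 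These $C$-function computations belong in Appendix~\ref{app:cfunctions}. On the left, setting $\nabla = (q-1)^{-1}(x_1-x_2)^{-1}(T_{q,x_2}-T_{q,x_1})$ so that $\widetilde{\DD}_{q,\theta}^{(2)} = \nabla^{\theta}$, one expands $\nabla^{\theta}$ by iteratively applying the commutation rules $T_{q,x_2}\circ(x_1-x_2)^{-1} = (x_1-qx_2)^{-1}\circ T_{q,x_2}$ and $T_{q,x_1}\circ(x_1-x_2)^{-1} = (qx_1-x_2)^{-1}\circ T_{q,x_1}$ until every shift stands to the right of every rational factor.

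The main obstacle is the closed-form evaluation of the resulting signed sum over sequences in $\{T_{q,x_1},T_{q,x_2}\}^{\theta}$ with exactly $j$ entries equal to $T_{q,x_1}$ (each contributing a specific product of $\theta$ shifted $(x_1-x_2)^{-1}$ factors) and its identification with the formula for $C_j^{(q,q^\theta)}$ above. I would handle this by induction on $\theta$: the base case $\theta=1$ reduces to $(x_1-x_2)\nabla = (q-1)^{-1}(T_{q,x_2}-T_{q,x_1})$, which matches $C_0^{(q,q)}=1$ and $C_1^{(q,q)}=-1$. The inductive step exploits $\widetilde{\DD}_{q,\theta}^{(2)} = \nabla\circ\widetilde{\DD}_{q,\theta-1}^{(2)}$ together with the telescoping $\prod_{k=0}^{\theta-1}(x_1-q^k x_2) = (x_1-q^{\theta-1}x_2)\cdot\prod_{k=0}^{\theta-2}(x_1-q^k x_2)$, reducing the identity at level $\theta$ to a short rational-function recursion relating $C_j^{(q,q^{\theta})}$ to $C_{j}^{(q,q^{\theta-1})}$ and $C_{j-1}^{(q,q^{\theta-1})}$, which can be checked by direct manipulation of the $q$-Pochhammer factors.
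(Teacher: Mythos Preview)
Your proposal is correct and takes essentially the same approach as the paper: both reduce the corollary to the operator identity $\prod_{k=0}^{\theta-1}(x_1-q^kx_2)\cdot\widetilde{\DD}_{q,\theta}^{(2)}=\DD_{q,\theta}^{(2)}$, match the coefficients of $T_{q,x_1}^n T_{q,x_2}^{\theta-n}$ on both sides, and prove equality by induction on~$\theta$ via the recursion $\nabla^{\theta}=\nabla\circ\nabla^{\theta-1}$ together with a matching recursion for the $C$-functions (the paper's Lemma~\ref{Cprop3}, derived from the Jing--J\'ozef\/iak closed form for $C_n^{(q,t)}(u)$). One small caveat: the single-variable $C_j^{(q,q^{\theta})}(x_1 x_2^{-1} q^{-\theta})$ is a \emph{ratio} of products of linear factors in $x_1,x_2$ (denominators $(x_2-q^i x_1)$ appear), not a pure product, and in the inductive telescoping the factor that gets absorbed by the outermost $(x_1-x_2)^{-1}$ from~$\nabla$ is $(x_1-x_2)$ rather than $(x_1-q^{\theta-1}x_2)$; neither point affects the structure of your argument.
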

\begin{proof}
For $m = 2$, we have $M_{\theta}^{(2)} =\left\{ \left[
 \begin{smallmatrix}
 0 & n \\
 0 & 0
 \end{smallmatrix} \right], \ n\in\{0, 1, \dots, \theta\} \right\}$,
and thus the operator $\DD_{q, \theta}^{(2)}$ becomes
\begin{gather*}
\DD^{(2)}_{q, \theta} = \frac{1}{(q - 1)^{\theta}}\sum_{n = 0}^{\theta}\big\{{C_n^{(q, q^{\theta})}\big(x_{2}^{-1}x_1q^{-\theta} \big)} T_{q, x_1}^nT_{q, x_2}^{\theta - n}\big\}.
\end{gather*}
We let
\begin{gather*}
a_n^{(\theta)} = \frac{C_n^{(q, q^{\theta})}\big(x_2^{-1}x_1q^{-\theta}\big)}{\prod\limits_{i=0}^{\theta - 1}{(x_1 - q^ix_2)}}, \qquad 0\leq n\leq \theta.
\end{gather*}
The statement of the theorem can be easily reduced to prove that the coef\/f\/icient of $T_{q, x_1}^nT_{q, x_2}^{\theta - n}$ in the product
\begin{gather*}
D_{q, \theta} = \left(\frac{1}{x_1 - x_2} \circ (T_{q, x_2} - T_{q, x_1})\right)^{\theta} = \sum_{n=0}^{\theta}{b_n^{(\theta)}T_{q, x_1}^nT_{q, x_2}^{\theta-n}}
\end{gather*}
is equal to $a_n^{(\theta)}$, i.e., we prove $a_n^{(\theta)} = b_n^{(\theta)}$ for all $\theta, n\in\N$, $0 \leq n \leq \theta$, and we do it by induction on $\theta$. The case $\theta = 1$ can be easily dealt with, using Lemma~\ref{Cprop2}. Now assume $a_n^{(\theta - 1)} = b_n^{(\theta - 1)}$ for all $0\leq n \leq \theta - 1$, and some $\theta \geq 2$. We prove $a_n^{(\theta)} = b_n^{(\theta)}$ for all $0\leq n\leq \theta$. Evidently, $D_{q, \theta} = \big(\frac{1}{x_1 - x_2}\circ (T_{q, x_2} - T_{q, x_1})\big)\circ D_{q, \theta - 1}$ implies
\begin{gather*}
b_n^{(\theta)} = \frac{1}{x_1 - x_2}\big( T_{q, x_2} b_n^{(\theta - 1)} - T_{q, x_1} b_{n-1}^{(\theta - 1)} \big), \qquad n = 1, 2, \dots, \theta-1,\\
b_0^{(\theta)} = \frac{1}{x_1 - x_2} T_{q, x_2}b_0^{(\theta-1)}, \qquad b_{\theta}^{(\theta)} = \frac{1}{x_2 - x_1} T_{q, x_1}b_{\theta-1}^{(\theta-1)}.
\end{gather*}
From Lemma~\ref{Cprop3} in Appendix~\ref{app:cfunctions}, the terms $a_n^{(\theta)}$ satisfy
\begin{gather*}
a_n^{(\theta)} = \frac{1}{x_1 - x_2} \big( T_{q, x_2} a_n^{(\theta - 1)} - T_{q, x_1} a_{n-1}^{(\theta - 1)} \big), \qquad n = 1, 2, \dots, \theta-1,\\
a_0^{(\theta)} = \frac{1}{\prod\limits_{i=0}^{\theta - 1}{(x_1 - q^i x_2)}},\qquad a_{\theta}^{(\theta)} = \frac{1}{\prod\limits_{i=0}^{\theta - 1}{(x_2 - q^i x_1)}}.
\end{gather*}
It is not dif\/f\/icult to conclude from these relations, and the inductive hypothesis $a_m^{(\theta - 1)} = b_m^{(\theta - 1)}$ $\forall \, m = 0, 1, \dots, \theta-1$, that $a_n^{(\theta)} = b_n^{(\theta)}$ for all $0\leq n\leq\theta$, as desired.
\end{proof}

When $\theta = 1$ (equivalently $t = q$), the result has a compact form as well. Let us recall that when $t = q$, the Macdonald polynomials become the well known \textit{Schur polynomials} $s_{\lambda}(x_1, \dots, x_N) = P_{\lambda}(x_1, \dots, x_N; q, q)$. The Schur (Laurent) polynomials $s_{\lambda}(x_1, \dots, x_N)$, $\lambda\in\GT_N$, can also be def\/ined by the simple determinantal formula
\begin{gather*}
s_{\lambda}(x_1, \dots, x_N) = \frac{\det\big[x_i^{N+1-j}\big]_{i, j = 1}^N}{\prod\limits_{1\leq i < j \leq N}{(x_i - x_j)}}.
\end{gather*}
For any $m\in\N$ with $1\leq m\leq N$, we consider
\begin{gather*}
s_{\lambda}(x_1, \dots, x_m; N, q) \myeq \frac{s_{\lambda}\big(x_1, \dots, x_m, 1, q, \dots, q^{N-1-m}\big)}{s_{\lambda}(1, q, \dots, q^{N-1})}
\end{gather*}
and call it a \textit{$q$-Schur character} of rank~$N$, number of variables $m$ and parametrized by~$\lambda$; it was def\/ined before in~\cite{G}. We recover the following theorem.

\begin{cor}[{\cite[Theorem~3.5]{GP}}]\label{macdonaldcor4}
Let $N\in\N$, $\lambda\in\GT_N$. Then
\begin{gather*}
s_{\lambda}(x_1, \dots, x_m; N, q) = \frac{q^{{m+1 \choose 3} - (N-1){m \choose 2} }\prod\limits_{i=1}^m{[N - i]_q!}}{\prod\limits_{i=1}^m{\prod\limits_{j=1}^{N - m}{\big(x_i - q^{j-1}\big)}}}\frac{1}{\Delta(x_1, \dots, x_m)}\\
\hphantom{s_{\lambda}(x_1, \dots, x_m; N, q) =}{} \times \DD_q^{(m)}\left\{ \prod\limits_{i=1}^m{\frac{s_{\lambda}(x_i; N, q)\prod\limits_{j=1}^{N-1}{\big(x_i - q^{j-1}\big)}}{[N-1]_q!}}\right\},
\end{gather*}
where
\begin{gather*}
\DD_{q}^{(m)} \myeq \det \big[ D_{q, x_i}^{j-1} \big]_{i, j = 1}^m = \prod_{1\leq i < j\leq m}{(D_{q, x_j} - D_{q, x_i})}.
\end{gather*}
\end{cor}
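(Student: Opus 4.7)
The plan is to derive this as the $\theta = 1$ specialization of Theorem \ref{macdonaldthm3}, using that $P_\lambda(\,\cdot\,; q, q) = s_\lambda(\,\cdot\,)$. The scalar prefactors on the right-hand side of \eqref{macdonaldthm3eqn} match after routine substitution: with $\theta = 1$ we have $\theta^2 = 1$ and ${\theta+1 \choose 2} = 1$, so the exponent of $q$ reduces to ${m+1 \choose 3} - (N-1){m \choose 2}$; each factorial $[\theta(N-i+1)-1]_q!$ becomes $[N-i]_q!$; the product $\prod_{j=1}^{\theta(N-m+1)-1}(x_i - q^{j-\theta})$ becomes $\prod_{j=1}^{N-m}(x_i - q^{j-1})$; and the double product $\prod_{\substack{1\leq i<j\leq m \\ 0\leq k<\theta}}(x_i - q^k x_j)$ collapses to the Vandermonde $\Delta(x_1, \dots, x_m)$.

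The substance of the argument is then the operator identity $\DD^{(m)}_{q,1} = \det[D_{q,x_i}^{j-1}]_{i,j=1}^m = \prod_{1\leq i<j\leq m}(D_{q,x_j} - D_{q,x_i})$. I would proceed by induction on $m$, using the recursions on the coefficients $C_\tau^{(q,q^\theta)}$ collected in Appendix \ref{app:cfunctions}. The base case $m = 2$ is Corollary \ref{macdonaldcor3} at $\theta = 1$: there $\widetilde{\DD}^{(2)}_{q,1} = \frac{1}{x_1 - x_2}(D_{q,x_2} - D_{q,x_1})$, which equals $\frac{1}{\Delta(x_1,x_2)}\det[D_{q,x_i}^{j-1}]_{i,j=1}^2$. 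The inductive step should follow from a column-expansion-style recursion for $\DD^{(m)}_{q,\theta}$ (splitting matrices $\tau \in M^{(m)}_1$ by their last column) that parallels the standard expansion $\prod_{1\leq i<j\leq m}(D_{q,x_j} - D_{q,x_i}) = \prod_{i<m}(D_{q,x_m} - D_{q,x_i}) \cdot \prod_{1\leq i<j\leq m-1}(D_{q,x_j} - D_{q,x_i})$.

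The main obstacle is the degenerate behaviour of the coefficients $C_\tau^{(q,q^\theta)}$ as $\theta \to 1$. Inspecting \eqref{Ccoeff}, the prefactor $\prod_k (q/t; q)_{\tau_k}$ vanishes at $t = q$ whenever some $\tau_k \geq 1$, while the determinantal factor simultaneously develops poles (the $k=i$ entries of $\prod_k (u_k - q^{\tau_i}u_i)/(tu_k - q^{\tau_i}u_i)$ give $0/0$), producing $0 \cdot \infty$ indeterminate forms whose finite residues must be extracted. The cleanest way to sidestep this limit is to prove the Schur identity directly from the Weyl character formula, as in \cite{GP}: writing $s_\lambda(x_1, \dots, x_m, 1, q, \dots, q^{N-m-1})$ via the ratio $\det[y_i^{\lambda_j + N - j}]/\Delta(y_1,\dots,y_N)$ and performing Laplace expansion along the top $m$ rows produces an alternating sum that, once the lower Vandermonde minors in $q^{\lambda_j + N - j}$ are evaluated explicitly, is recognized as $\prod_{1\leq i<j\leq m}(D_{q,x_j} - D_{q,x_i})$ applied to a product of single-variable $q$-Schur characters.
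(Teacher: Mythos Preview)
Your overall strategy---specialize Theorem~\ref{macdonaldthm3} at $\theta=1$ and verify that $\DD^{(m)}_{q,1}$ coincides with the Vandermonde in the $D_{q,x_i}$---is exactly the paper's approach, and your matching of the scalar prefactors is correct. The gap is in how you handle the coefficients $C_\tau^{(q,q^\theta)}$ at $\theta=1$: there is no $0\cdot\infty$ indeterminacy to resolve and no induction to run. Lemma~\ref{Cprop2} (which you cite as part of Appendix~\ref{app:cfunctions} but do not actually use) states that for $t=q$ one has $C^{(q,q)}_{\tau_1,\dots,\tau_n}=(-1)^{\tau_1+\cdots+\tau_n}$ whenever all $\tau_k\in\{0,1\}$, independently of the $u_i$. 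This is a closed-form evaluation, not a recursion; it is precisely the resolution of the apparent degeneracy you describe.

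With that lemma in hand, the paper's argument is a direct combinatorial check: substituting $C_\tau^{(q,q)}=(-1)^{|\tau|}$ into~\eqref{macdonaldthm3Dq} gives
\[
\DD^{(m)}_{q,1}=(q-1)^{-\binom{m}{2}}\sum_{\tau\in M_1^{(m)}}(-1)^{|\tau|}\prod_{i=1}^m T_{q,x_i}^{\,i-1+\tau_i^+-\tau_i^-},
\]
and one verifies that this equals $(q-1)^{-\binom{m}{2}}\prod_{i<j}(T_{q,x_j}-T_{q,x_i})$ simply by expanding the product: each binary choice in $\prod_{i<j}(T_{q,x_j}-T_{q,x_i})$ corresponds to a matrix entry $\tau_{i,j}\in\{0,1\}$, with sign $(-1)^{\tau_{i,j}}$ and exponent contribution exactly $\tau_i^+-\tau_i^-$ to $T_{q,x_i}$. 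Your proposed induction on $m$ and the alternative route via the Weyl character formula are both unnecessary detours once Lemma~\ref{Cprop2} is invoked.
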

\begin{proof}
Letting $\theta = 1$ in Theorem~\ref{macdonaldthm3}, we see that the equation above holds if $\DD_{q, 1}^{(m)} = \DD_q^{(m)}$. In fact, thanks to Lemma~\ref{Cprop2}, we have
\begin{gather*}
\DD^{(m)}_{q, 1} = (q - 1)^{- {m \choose 2}}\sum_{\tau\in M^{(m)}_1}
\prod_{s = 1}^{m-1}{(-1)^{\tau_{1, s+1} + \dots + \tau_{s, s+1}}}\prod_{i = 1}^m{T_{q, x_i}^{i-1 + \tau_i^+ - \tau_i^-}}.
\end{gather*}
Evidently $\DD_q^{(m)} = \prod\limits_{i<j}{(D_{q, x_j} - D_{q, x_i})} = (q - 1)^{- {m \choose 2}}\prod\limits_{i < j}{(T_{q, x_j} - T_{q, x_i})}$, thus we need to show
\begin{gather}\label{eqn:vander}
\prod_{1\leq i < j\leq m}{(T_{q, x_j} - T_{q, x_i})} = \sum_{\tau\in M^{(m)}_1}{(-1)^{|\tau|}\prod_{k = 1}^m{T_{q, x_k}^{k - 1 + \tau_k^+ - \tau_k^- }}},
\end{gather}
where we denoted by $|\tau|$ to the sum of all entries of $\tau\in M_1^{(m)}$. The operators $T_{q, x_1}, \dots, T_{q, x_m}$ pairwise commute. When expanding the left-hand side of~(\ref{eqn:vander}), it is clear that the resulting terms can be parametrized by matrices in $M_1^{(m)}$: the term corresponding to $\tau\in M_1^{(m)}$ is the product of $(-1)^k T_{q, x_i}^k T_{q, x_j}^{1 - k}$, where $k \in \{0, 1\}$ ranges over the elements of $\tau$ strictly above the main diagonal. Thus the term corresponding to $\tau$ is
\begin{gather*}
\prod_{1\leq i < j\leq m}{(-1)^{\tau_{i, j}} T_{q, x_i}^{\tau_{i, j}} T_{q, x_j}^{1 - \tau_{i, j}}} = (-1)^{\sum\limits_{1\leq i < j\leq m}{\tau_{i, j}}}\prod_{1\leq i < j\leq m}{T_{q, x_i}^{\tau_{i, j}}T_{q, x_j}^{1 - \tau_{i, j}}}.
\end{gather*}
We are left to show
\begin{gather}\label{eqn:product}
\prod_{1\leq i < j\leq m}{T_{q, x_i}^{\tau_{i, j}}T_{q, x_j}^{1 - \tau_{i, j}}} = \prod_{k=1}^m{T_{q, x_k}^{k -1 + \tau_k^+ - \tau_k^-}}.
\end{gather}
Both sides of~(\ref{eqn:product}) are of the form $T_{q, x_1}^{p_1}\cdots T_{q, x_m}^{p_m}$, for some $p_1, \dots, p_m\in\Z_{\geq 0}$, so we simply need to check the equality between exponents $p_k$ of $T_{q, x_k}$ in both sides, for an arbitrary $1\leq k\leq m$. In the left side, there are $k - 1$ factors of the form $T_{q, x_i}^{\tau_{i, k}}T_{q, x_k}^{1 - \tau_{i, k}}$, $i\leq k - 1$, which overall contribute $k - 1 - \sum\limits_{i=1}^{k-1}{\tau_{i, k}} = k - 1 - \tau_k^-$ to the exponent of $T_{q, x_k}$. Moreover there are $m - k$ factors of the form $T_{q, x_k}^{\tau_{k, j}}T_{q, x_j}^{1 - \tau_{k, j}}$, $k+1\leq i$, which contribute $\sum\limits_{i=k+1}^m{\tau_{k, i}} = \tau_k^+$ to the exponent of~$T_{q, x_k}$. Therefore the power of $T_{q, x_k}$ in the left-hand side of~(\ref{eqn:product}) is $T_{q, x_k}^{k-1+\tau_k^+ - \tau_k^-}$. Evidently, the power of~$T_{q, x_k}$ in the right-hand side of~(\ref{eqn:product}) if also $T_{q, x_k}^{k-1+\tau_k^+ - \tau_k^-}$, which f\/inishes the proof.
\end{proof}

\begin{exam}\label{macdonaldmultiplicative}
We discuss the f\/irst nontrivial example of the multiplicative formula for Macdonald polynomials (an example that is not dealt with in the Corollaries above): $\theta = 2$, $m = 3$. The formula in this case is
\begin{gather*}
P_{\lambda}\big(x_1, x_2, x_3; N, q, q^2\big) = \frac{q^{25 - 12N}}{([2N - 1]_q)^2([2N - 2]_q)^2[2N - 3]_q[2N - 4]_q}\\
\hphantom{P_{\lambda}\big(x_1, x_2, x_3; N, q, q^2\big) =}{}
\times\frac{1}{\prod\limits_{j=-1}^{2N - 7}{(x_1 - q^j)(x_2 - q^j)(x_3 - q^j)}}
\frac{1}{\prod\limits_{1\leq i<j\leq 3}{(x_i - x_j)(x_i - qx_j)}}\\
\hphantom{P_{\lambda}\big(x_1, x_2, x_3; N, q, q^2\big) =}{}
\times\widehat{\DD}_{q, 2}^{(3)}\left\{ \prod_{i=1}^3{P_{\lambda}\big(x_i; N, q, q^2\big)\prod_{j=-1}^{2N - 3}{(x_i - q^j)}} \right\},
\end{gather*}
the $q$-dif\/ference operator is
\begin{gather*}
\widehat{\DD}_{q, 2}^{(3)} = \frac{1}{(q - 1)^{6}}\sum_{\substack{a, b, c\in\Z \\ 0\leq a, b, c\leq 2}}\big\{ C^{(q, q^2)}_a\big(x_2^{-1}x_1q^{-2 + b - c}\big)\\
\hphantom{\widehat{\DD}_{q, 2}^{(3)} = \frac{1}{(q - 1)^{6}}\sum_{\substack{a, b, c\in\Z \\ 0\leq a, b, c\leq 2}}}{} \times C^{(q, q^2)}_{b, c}\big(x_3^{-1}x_1q^{-2}, x_3^{-1}x_2q^{-2}\big) T_{q, x_1}^{a + b}T_{q, x_2}^{2 + c - a}T_{q, x_3}^{4 - b - c}\big\}.
\end{gather*}
We can also write the $q$-dif\/ference operator in terms of the $q$-degree operators $\{D_{q, x_i} \colon i = 1, 2, 3\}$ by using $\frac{1}{q - 1}T_{q, x_i} = \frac{1}{q - 1} + D_{q, x_i}$. Then we can replace the operator $\widehat{\DD}_{q, 2}^{(3)}$ with the sum of operators $\DD_{q, 2}^{(3, {\rm top})} + \AAA_{q, 2}^{(3)}$, where
\begin{gather*}
\DD_{q, 2}^{(3, {\rm top})} = \sum_{\substack{a, b, c\in\Z \\ 0\leq a, b, c\leq 2}}\big\{ C^{(q, q^2)}_a\big(x_2^{-1}x_1q^{-2 + b - c}\big)\\
\hphantom{\DD_{q, 2}^{(3, {\rm top})} = \sum_{\substack{a, b, c\in\Z \\ 0\leq a, b, c\leq 2}}}{}
 \times C^{(q, q^2)}_{b, c}\big(x_3^{-1}x_1q^{-2}, x_3^{-1}x_2q^{-2}\big) D_{q, x_1}^{a + b} D_{q, x_2}^{2 + c - a} D_{q, x_3}^{4 - b - c}\big\},\\
\AAA_{q, 2}^{(3)} = \sum_{\substack{ i_1, i_2, i_3\geq 0 \\ i_1 + i_2 + i_3 \leq 5}}{\frac{f_{i_1, i_2, i_3}(x_1, x_2, x_3; q, 2)}{(q - 1)^{6 - i_1 - i_2 - i_3}} D_{q, x_1}^{i_1}D_{q, x_2}^{i_2}D_{q, x_3}^{i_3}},
\end{gather*}
and $f_{i_1, i_2, i_3}(x_1, x_2, x_3; q, 2)$ are certain rational functions. With the help of Sage, we found
\begin{gather*}
f_{i_1, i_2, i_3}(x_1, x_2, x_3; q, 2) = 0, \qquad \textrm{for all} \quad i_1 + i_2 + i_3 \leq 2.
\end{gather*}
There are nontrivial rational functions $f_{i_1, i_2, i_3}(x_1, x_2, x_3; q, 2)$ as well, for some $i_1 + i_2 + i_3 \geq 3$, e.g.,
\begin{gather}
f_{4, 1, 0}(x_1, x_2, x_3; q, 2) = -(q - 1)\frac{(x_2 + x_3)(x_1 - qx_3)(x_1 - qx_2)}{(qx_2 - x_3)(qx_1 - x_3)(qx_1 - x_2)},\label{bmacdonald1}\\
f_{2, 1, 1}(x_1, x_2, x_3; q, 2) = -(q-1)^2\frac{(q+1)(x_2 - x_3)\big(x_1^2 + x_2x_3 + 2x_1x_2 + 2x_1x_3\big)}{(qx_1 - x_2)(qx_1 - x_3)(qx_2 - x_3)}.\label{bmacdonald2}
\end{gather}
Two important observations are in order. First, from Corollaries~\ref{macdonaldcor3} and~\ref{macdonaldcor4}, one might believe that $\DD_{q, \theta}^{(m)}$ is, in general, homogeneous of degree $\theta {m \choose 2}$ as a functions of the operators $\{D_{q, x_i} \colon i = 1, 2, 3\}$. However, this example disproves it. Second, the terms $f_{i_1, i_2, i_3}(x_1, x_2, x_3; q, 2)$ above make us suspect that $f_{i_1, i_2, i_3}(x_1, x_2, x_3; q, 2)$ is divisible by $(q- 1)^{6 - i_1 - i_2 - i_3}$ $\forall \, 0\leq i_1 + i_2 + i_3\leq 5$. We have checked this fact in the computer. In fact, we believe that the analogous statement for general $m$, $\theta \in \N$ holds true, but the author could not prove it.
\end{exam}

\subsection{Proof of Theorem~\ref{macdonaldthm3}}

Fix a positive signature $\lambda\in\GTp_N$ and let us prove equation~(\ref{macdonaldthm3eqn}); we extend the result for all signatures $\lambda\in\GT_N$ at the end.

Let us consider $m$ positive integers $n_1 > n_2 > \cdots > n_m > \theta (N + m)$. By the index-argument symmetry, Theorem~\ref{symmetry}, applied to $\lambda\in\GTp_N$ and $\mu = (n_1 \geq \cdots \geq n_m \geq 0 \geq \cdots \geq 0)\in\GTp_N$, as well as the def\/inition of the dual Macdonald polynomials $Q_{\mu}(\cdot; q, t)$, we obtain
\begin{gather}
\frac{P_{\lambda}\big(q^{n_1}t^{N-1}, \dots, q^{n_m}t^{N-m}, t^{N-m-1}, \dots, t, 1; q, t\big)}{P_{\lambda}\big(t^{N-1}, t^{N-2}, \dots, 1; q, t\big)} \nonumber\\
\qquad {}= \frac{Q_{(n_1, \dots, n_m)}\big(q^{\lambda_1}t^{N-1}, q^{\lambda_2}t^{N-2}, \dots, q^{\lambda_N}; q, t\big)}{Q_{(n_1, \dots, n_m)}\big(t^{N-1}, t^{N-2}, \dots, 1; q, t\big)}.\label{eq1}
\end{gather}
Apply the Jacobi--Trudi formula for Macdonald polynomials, Theorem~\ref{jacobitrudi}, to the numerator of~\eqref{eq1}, then multiply and divide the term parametrized by $\tau$ by the product
\begin{gather*}
\prod_{s=1}^m{{g_{n_s+\tau_s^+ - \tau_s^-}\big(t^{N-1}, \dots, t, 1; q, t\big)}},
\end{gather*} so (\ref{eq1}) equals
\begin{gather}
\sum_{\tau\in M^{(m)}}\left\{\prod_{s=1}^{m-1}{C^{(q, t)}_{\tau_{1, s+1}, \dots, \tau_{s, s+1}}\Big(\Big\{ u_i = q^{n_i - n_{s+1} + \sum\limits_{j=s+2}^m{(\tau_{i, j} - \tau_{s+1, j})}}t^{s-i}\colon 1\leq i\leq s\Big\}\Big)} \right.\nonumber\\
\left.{} \qquad{} \times\prod_{s=1}^m{\frac{g_{n_s + \tau_s^+ - \tau_s^-}\big(q^{\lambda_1} t^{N-1}, \dots, q^{\lambda_N}; q, t\big)}{g_{n_s + \tau_s^+ - \tau_s^-}\big(t^{N-1}, \dots, t, 1; q, t\big)}} \times \frac{\prod\limits_{s=1}^m{g_{n_s + \tau_s^+ - \tau_s^-}\big(t^{N-1}, \dots, t, 1; q, t\big)}}{Q_{(n_1, \dots, n_m)}\big(t^{N-1}, \dots, t, 1; q, t\big)} \right\}.\!\!\!\!\label{eq2}
\end{gather}

(In equation (\ref{eq2}), we are setting $g_n(q, t) = 0$ if $n$ is a nonpositive integer.) Recall that $t = q^{\theta}$, $\theta\in\N$. In view of Lemma~\ref{Cprop1}, the only terms in the sum~(\ref{eq2}) with nonzero contributions are those parametrized by $m\times m$ matrices whose entries belong to the set $\{0, 1, \dots, \theta\}$; def\/ine $M^{(m)}_{\theta}$ to be the set of such matrices. Notice that $n_s + \tau_s^+ - \tau_s^- > 0$ for all $\tau\in M_{\theta}^{(m)}$, because of our initial assumption on the values of $n_1, \dots, n_m$. By another application of the index-argument symmetry (and of the identity~(\ref{gtoQ}) above), we have
\begin{gather}\label{eq33}
\frac{g_{n_s + \tau_s^+ - \tau_s^+}\big(q^{\lambda_1}t^{N-1}, \dots, q^{\lambda_N}; q, t\big)}{g_{n_s + \tau_s^+ - \tau_s^-}(t^{N-1}, \dots, t, 1; q, t)}\nonumber\\
\qquad {} = \frac{P_{\lambda}\big(q^{n_s + \tau_s^+ - \tau_s^-} t^{N-1}, t^{N-2}, \dots, t, 1; q, t\big)}{P_{\lambda}(t^{N-1}, \dots, t, 1; q, t)} \qquad \forall\, 1\leq s\leq m.
\end{gather}

Plugging (\ref{eq33}) into (\ref{eq2}), we obtain
\begin{gather}
\frac{P_{\lambda}\big(q^{n_1 + \theta(N-1)}, \dots, q^{n_m + \theta(N-m)}, t^{N-m-1}, \dots, t, 1; q, t\big)}{P_{\lambda}\big(t^{N-1}, t^{N-2}, \dots, 1; q, t\big)}\label{eq4}\\
\qquad {} =
\sum_{\tau\in M^{(m)}_{\theta}}\left\{\prod_{s=1}^{m-1}{C^{(q, q^{\theta})}_{\tau_{1, s+1}, \dots, \tau_{s, s+1}}\Big(\Big\{ u_i = q^{n_i - n_{s+1} + \sum\limits_{j=s+2}^m{(\tau_{i, j} - \tau_{s+1, j})} + \theta(s-i)}\colon 1\leq i\leq s\Big\}\Big) } \right.\nonumber\\
\left. \qquad{} \times\!\frac{\prod\limits_{s=1}^m{g_{n_s + \tau_s^+ - \tau_s^-}\big(t^{N-1}, \dots, t, 1; q, t\big)}}{Q_{(n_1, \dots, n_m)}\big(t^{N-1}, \dots, t, 1; q, t\big)}\!\times \!\prod_{s=1}^m\! { \frac{P_{\lambda}\big(q^{n_s + \tau_s^+ - \tau_s^- + \theta(N-1)}, t^{N-2}, \dots, t, 1; q, t\big)}{P_{\lambda}\big(t^{N-1}, \dots, t, 1; q, t\big)} } \right\}\!.\!\nonumber
\end{gather}
Let us make the change of variables
\begin{gather}\label{changevars}
z_s \myeq q^{n_s + \theta(N - s)}, \qquad 1\leq s\leq m,
\end{gather}
and rewrite some terms from~(\ref{eq4}) in these new variables. Clearly the left-hand side of equality~(\ref{eq4}) is the Macdonald character $P_{\lambda}(z_1, \dots, z_m; N, q, q^{\theta})$. It is also evident that the variable $u_i$ in the term $C_{\tau_{1, s+1}, \dots, \tau_{s, s+1}}^{(q, q^{\theta})}$ can be rewritten as $u_i = z_{s+1}^{-1}z_i q^{-\theta + \sum\limits_{j=s+2}^m{(\tau_{i, j} - \tau_{s+1, j})}}$, for $1\leq i\leq s$. Additionally we have $q^{n_s + \tau_s^+ - \tau_s^- + \theta(N-1)} = T_{q, z_s}^{\theta s - \theta + \tau_s^+ - \tau_s^-}( z_s )$, for all $s$, which implies that the last product in (\ref{eq4}) can be written as
\begin{gather*}
\prod\limits_{s=1}^m{T_{q, x_s}^{\theta(s-1) + \tau_s^+ - \tau_s^-}}\left( \prod_{s=1}^m{P_{\lambda}\big(z_s; N, q, q^{\theta}\big)} \right).
\end{gather*}

Next, we need to rewrite
\begin{gather*}
\frac{\prod\limits_{s=1}^m{g_{n_s + \tau_s^+ - \tau_s^-}\big(t^{N-1}, \dots, t, 1; q, t\big)}}{Q_{(n_1, \dots, n_m)}\big(t^{N-1}, \dots, t, 1; q, t\big)}
 \end{gather*} in terms of $z_1, \dots, z_m$. From Corollary~\ref{evaluationcor} of the evaluation identity for Macdonald polynomials, we obtain
\begin{gather}
g_p\big(t^{N-1}, t^{N-2}, \dots, t, 1; q, t\big) = \prod_{s = (1, j)\in (p)}{\frac{1 - q^{a'(s) + \theta(N - l'(s))}}{1 - q^{a(s)+1 + \theta l(s)}}}\nonumber\\
\qquad {} = \prod_{i=1}^p{ \frac{1 - q^{i - 1 + \theta N}}{1 - q^{p - i + 1}} } = \prod_{j=1}^{\theta N - 1}{\frac{1 - q^{p+j}}{1 - q^j}} = \frac{1}{(1 - q)^{\theta N - 1}}\frac{\prod\limits_{j=1}^{\theta N - 1}{(1 - q^{p+ j})}}{[\theta N - 1]_q!}\label{eq6}
\end{gather}
for any $p\in\N$, $p > \theta N$. By similar, but more complicated, computations we f\/ind
\begin{gather}
Q_{(p_1, \dots, p_m)}\big(t^{N-1}, \dots, t, 1; q, t\big) \nonumber\\
\qquad {}= \frac{1}{(1 - q)^{m(\theta N - 1) - \theta m(m-1)/2}}\times\prod_{s=1}^{\theta}{ \prod_{1\leq i < j\leq m}{ \frac{q^{p_j} - q^{p_i + s + \theta(j - i - 1)}}{1 - q^{p_i + s + \theta (j - i - 1)}} } }\nonumber\\
\qquad\quad {} \times\frac{\prod\limits_{j=1}^{\theta(N-m+1)-1}{\big(1 - q^{p_m + j}\big)}\cdot \prod\limits_{j=1}^{\theta(N-m+2)-1}{\big(1 - q^{p_{m-1} + j}\big)}\cdots\prod\limits_{j=1}^{\theta N-1}{\big(1 - q^{p_1 + j}\big)}}{[\theta(N-m+1)-1]_q![\theta(N-m+2)-1]_q!\cdots [\theta N - 1]_q!},\label{eq7}
\end{gather}
for any partition $(p_1 > p_2 > \dots > p_m > 0)$ with $p_i > \theta (N - i + 1)$ for all $1\leq i\leq m$. From~(\ref{eq6}) and~(\ref{eq7}), we obtain
\begin{gather}
\frac{\prod\limits_{s=1}^m{g_{n_s + \tau_s^+ - \tau_s^-}\big(t^{N-1}, \dots, t, 1\big)}}{Q_{(n_1, \dots, n_m)}\big(t^{N-1}, \dots, t, 1; q, t\big)} = \frac{1}{(1 - q)^{\theta m(m-1)/2}}\prod_{i=1}^m{\frac{[\theta(N-i+1)-1]_q!}{[\theta N - 1]_q!}} \label{eq77}\\
{} {}\times \frac{\prod\limits_{j=1}^{\theta N - 1}{\big(1 - q^{n_1 + \tau_1^+ - \tau_1^- + j}\big)}}{\prod\limits_{j=1}^{\theta N - 1}{\big(1 - q^{n_1 + j}\big)}} \cdot \frac{\prod\limits_{j=1}^{\theta N - 1}{\big(1 - q^{n_2 + \tau_2^+ - \tau_2^- + j}\big)}}{\prod\limits_{j=1}^{\theta (N-1) - 1}{\big(1 - q^{n_2 + j}\big)}}\cdots \frac{\prod\limits_{j=1}^{\theta N - 1}{\big(1 - q^{n_m + \tau_m^+ - \tau_m^- + j}\big)}}{\prod\limits_{j=1}^{\theta (N-m+1) - 1}{\big(1 - q^{n_m + j}\big)}} \label{eq8}\\
{} \times \prod_{s=1}^{\theta}{ \prod_{1\leq i < j\leq m}{ \frac{1 - q^{n_i + s + \theta(j - i - 1)}}{q^{n_j} - q^{n_i + s + \theta(j - i - 1)}} } }.\label{eq9}
\end{gather}
Observe that we used our assumption $n_1 > n_2 > \dots > n_m > \theta (N + m)$ to guarantee that~(\ref{eq6}) and~(\ref{eq7}) are applicable. We have to rewrite both~(\ref{eq8}) and~(\ref{eq9}) in terms of $z_s$. Let us begin with~(\ref{eq8}), which is a product of~$m$ terms; for $1\leq r\leq m$, the $r^{\rm th}$ term is
\begin{gather}
\frac{\prod\limits_{j=1}^{\theta N - 1}{\big(1 - q^{n_r + \tau_r^+ - \tau_r^- + j}\big)}}{\prod\limits_{j=1}^{\theta (N - r + 1) - 1}{\big(1 - q^{n_r + j}\big)}} = \frac{\prod\limits_{j = 1}^{\theta N - 1}{\big(1 - q^{-\theta(N-r)}z_rq^{\tau_r^+ - \tau_r^- + j}\big)}}{\prod\limits_{j=1}^{\theta (N - r + 1)- 1}{\big(1 - q^{-\theta(N - r)}z_r q^j\big)}}\nonumber\\
\qquad{} = (-1)^{\theta(r-1)}q^{-\sum\limits_{j=\theta(N-r+1)}^{\theta N - 1}{(\theta(N - r)-j)}}\cdot\frac{\prod\limits_{j=1}^{\theta N - 1}{\big(z_rq^{\tau_r^+ - \tau_r^+} - q^{\theta(N - r)-j}\big)}}{\prod\limits_{j=1}^{\theta(N-r+1)-1}{\big(z_r - q^{\theta(N-r)-j}\big)}}\label{eq10}\\
\qquad{}= (-1)^{\theta(r-1)} q^{-\sum\limits_{j=\theta(N-r+1)}^{\theta N - 1}{(\theta(N - r)-j)}} q^{-(\theta r - \theta)(\theta N - 1)}\frac{\prod\limits_{j=1}^{\theta N - 1}{\big(z_rq^{\theta r - \theta + \tau_r^+ - \tau_r^-} - q^{\theta(N-1) - j}\big)}}{\prod\limits_{j=1}^{\theta(N-r+1)-1}{\big(z_r - q^{\theta(N - r)-j}\big)}}.\nonumber
\end{gather}
Under the change of indexing $j \mapsto \theta(N-r+1)-j$, the product in the denominator of~(\ref{eq10}) becomes
\begin{gather*}
\prod_{j=1}^{\theta(N-r+1)-1}{\big(z_r - q^{\theta(N - r)-j}\big)} = \prod_{j=1}^{\theta(N-r+1)-1}{\big(z_r - q^{j - \theta}\big)},
 \end{gather*} whereas the numerator of~(\ref{eq10}) can be expressed as
\begin{gather*}
\prod_{j=1}^{\theta N - 1}{\big(z_rq^{\theta r - \theta + \tau_r^+ - \tau_r^-} - q^{\theta(N-1) - j}\big)} = T_{q, z_r}^{\theta r - \theta + \tau_r^+ - \tau_r^-}\prod_{j=1}^{\theta N - 1}{\big(z_r - q^{\theta(N - 1) - j}\big)}.
\end{gather*} Def\/ine also
\begin{gather}\label{eq11}
c(N, m, \theta) = -\sum_{r=1}^m{ \sum_{j=\theta(N-r+1)}^{\theta N - 1}{(\theta(N - r)-j )} } - \sum_{r=1}^{m}{(\theta r - \theta)(\theta N - 1)},
\end{gather}
which is the power of $q$ coming from the terms~(\ref{eq10}), for $1\leq r\leq m$. Thus~(\ref{eq8}) equals
\begin{gather}\label{eq12}
\frac{(-1)^{\theta m(m-1)/2}q^{c(N, m, \theta)}}{\prod\limits_{r=1}^m{\prod\limits_{j=1}^{\theta(N - r+1)-1}{\big(z_r - q^{j-\theta}\big)}}}\cdot\prod_{r=1}^m{T_{q, z_r}^{\theta r - \theta + \tau_r^+ - \tau_r^-}}\left( \prod_{j=1}^{\theta N - 1}{\big(z_r - q^{\theta(N - 1) - j}\big)} \right).
\end{gather}

On the other hand, (\ref{eq9}) can be expressed in terms of the variables $z_i$ as follows:
\begin{gather}
\prod_{r = 1}^{\theta}{ \prod_{i < j}{ \frac{1 - q^{n_i + r + \theta(j-i-1)}}{q^{n_j} - q^{n_i + r + \theta(j-i-1)}} } } = \prod_{r = 1}^{\theta}{ \prod_{i < j}{ \frac{q^{n_i + r + \theta(j-i-1)} - 1}{ q^{n_i + r + \theta(j-i-1)} - q^{n_j}} } }\nonumber\\
\qquad{} = \prod_{r = 1}^{\theta}{ \prod_{i < j}{ \frac{q^{n_i + r + \theta(N- i -1)} - q^{\theta(N-j)}}{ q^{n_i + r + \theta(N- i -1)} - q^{n_j + \theta(N - j)}} } } = \prod_{r = 1}^{\theta}{ \prod_{i < j}{ \frac{z_i q^{r - \theta} - q^{\theta(N-j)}}{ z_i q^{ r - \theta} - z_j} } }\nonumber\\
 \qquad{} = \prod_{r = 1}^{\theta}{ \prod_{i < j}{ \frac{z_i - q^{\theta(N - j + 1) - r}}{ z_i - z_j q^{\theta - r}} } }.\label{eq13}
\end{gather}
The denominator of~(\ref{eq13}) is $\prod\limits_{\substack{ 1\leq i < j\leq m \\ 0\leq k < \theta}}{(z_i - q^k z_j)}$, and the numerator is $\prod\limits_{r=1}^m{ \prod\limits_{j=\theta(N - m + 1)}^{\theta(N-i+1)-1}{ (z_r - q^{j-\theta}) } }$. Then~(\ref{eq9}) equals
\begin{gather}\label{eq14}
\frac{\prod\limits_{r=1}^m{ \prod\limits_{j=\theta(N - m+1)}^{\theta(N-r+1)-1}{ \big(z_r - q^{j-\theta}\big) } }}{\prod\limits_{\substack{ 1\leq i < j\leq m \\ 0\leq k < \theta}}{(z_i - q^k z_j)}}.
\end{gather}
Therefore after the change of variables~(\ref{changevars}), and using (\ref{eq77}), (\ref{eq8}), (\ref{eq9}), (\ref{eq12}), (\ref{eq14}) and the identities after~(\ref{changevars}), equation (\ref{eq4}) yields the desired~(\ref{macdonaldthm3eqn}) for any $x_s = q^{n_s + \theta(N - s)}$, $1\leq s\leq m$, such that $n_1 > \dots > n_m > \theta (N + m)$, provided the equality{\samepage
\begin{gather*}
c(N, m, \theta) = \theta^2 \dbinom{m+1}{3} - \left\{ N\theta^2 - \dbinom{\theta + 1}{2} \right\}\dbinom{m}{2}
\end{gather*}
holds, where $c(N, m, \theta)$ is def\/ined in~(\ref{eq11}). This is an easy exercise.}

We have just proved that the statement of Theorem~\ref{macdonaldthm3} holds for all $x_s = q^{n_s + \theta(N - s)}$, for all $n_1 > n_2 > \dots > n_m > \theta (N + m)$. Since both sides of the equality~(\ref{macdonaldthm3eqn}) are evidently rational functions in $x_1, \dots, x_m$, an easy algebro-geometric argument shows the equality holds for all $x_1, \dots, x_m\in\C$, as desired.

We still have to extend the theorem to all signatures. Let us prove~(\ref{macdonaldthm3eqn}) for an arbitrary $\lambda\in\GT_N$. If $\lambda\in\GTp_N$, then we are done. Otherwise, choose any $p\in\N$ large enough so that $\widetilde{\lambda} \myeq (\lambda_1 + p, \lambda_2 + p, \dots, \lambda_N + p)\in\GTp_N$. By homogeneity of Macdonald polynomials, we have
\begin{gather*}
P_{\widetilde{\lambda}}\big(x_1, \dots, x_m; N, q, q^{\theta}\big) = P_{\lambda}\big(x_1, \dots, x_m; N, q, q^{\theta}\big) (x_1\cdots x_m)^p q^{-\theta p N m}q^{\theta pm(m+1)/2}
\end{gather*}
and
\begin{gather*}
P_{\widetilde{\lambda}}\big(x_i; N, q, q^{\theta}\big) = P_{\lambda}\big(x_i; N, q, q^{\theta}\big) x_i^p q^{-\theta p N}q^{\theta p} \qquad \forall\, 1\leq i\leq m\\
\Rightarrow \ T_{q, x_i}^{(i-1)\theta + \tau_i^+ - \tau_i^-}P_{\widetilde{\lambda}}\big(x_i; N, q, q^{\theta}\big) = q^{-\theta p N}q^{p(i\theta + \tau_i^+ - \tau_i^-)}x_i^p \\
\hphantom{\Rightarrow \ T_{q, x_i}^{(i-1)\theta + \tau_i^+ - \tau_i^-}P_{\widetilde{\lambda}}\big(x_i; N, q, q^{\theta}\big) =}{} \times T_{q, x_i}^{(i-1)\theta + \tau_i^+ - \tau_i^-}P_{\lambda}\big(x_i; N, q, q^{\theta}\big) \qquad \forall\, 1\leq i\leq m.
\end{gather*}
We know that~(\ref{macdonaldthm3eqn}) holds for $\widetilde{\lambda}$; from the expressions above, it holds also for $\lambda$ provided that (the powers of~$q$ match)
\begin{gather*}
-\theta p N m + \frac{\theta p m(m+1)}{2} = \sum_{i=1}^m{(-\theta p N + p (i\theta + \tau_i^+ - \tau_i^-))}.
\end{gather*}
The latter equation is easy to check, and the proof of Theorem~\ref{macdonaldthm3} is therefore f\/inished.

\section{Asymptotics of Macdonald characters}\label{sec:asymptotics}

In the remaining of the paper, starting here, we denote the Macdonald polynomial $P_{\lambda}(x_1, \dots, x_N;$ $q, t)$ simply by $P_{\lambda}(x_1, \dots, x_N)$.

In this section, assume that $\theta\in\N$ and let $t = q^{\theta}$. We study the asymptotics of (certain normalization of) Macdonald characters of a f\/ixed number $m$ of variables, as the rank $N$ tends to inf\/inity and the signatures~$\lambda(N)$ stabilize in certain way that we def\/ine next.

\begin{df}\label{def:stabilizing}
Let
\begin{gather*}
\Nu \myeq \{\nu = (\nu_1, \nu_2, \dots)\in\Z^{\infty}\colon \nu_1 \leq \nu_2 \leq \dots\}
\end{gather*}
be the set of weakly increasing integer sequences. We say that \textit{the sequence $\{\lambda(N)\}_{N\geq 1}$ of signatures, $\lambda(N)\in\GT_N$, stabilizes to $\nu\in\Nu$} if we have the following limits
\begin{gather*}
\lim_{N\rightarrow\infty}{\lambda_{N - i + 1}(N)} = \nu_i, \qquad \forall\, i = 1, 2, \dots.
\end{gather*}
In other words $\{\lambda(N)\}_{N\geq 1}$ stabilizes to $\nu$ if there exists a sequence of integers $0 < N_1 < N_2 < \cdots$ such that $\lambda_{N - i + 1}(N) = \nu_i$, $\forall\, N > N_i$, $i = 1, 2, \dots$.
\end{df}

\subsection{Asymptotics of Macdonald characters of one variable}

\begin{thm}\label{thm:asymptotics1}
Let $t = q^{\theta}$, $\theta\in\N$, and $\{\lambda(N)\}_{N\geq 1}$, $\lambda(N)\in\GT_N$, be a sequence of signatures that stabilizes to $\nu\in\Nu$. Then
\begin{gather}\label{eqn:asymptotics1}
\lim_{N\rightarrow\infty}{\frac{P_{\lambda(N)}\big(x, t^{-1}, \dots, t^{1 - N}\big)}{P_{\lambda(N)}\big(1, t^{-1}, t^{-2}, \dots, t^{1 - N}\big)}} = \Phi^{\nu}(x; q, t),
\end{gather}
where
\begin{gather}\label{eqn:Phi1}
\Phi^{\nu}(x; q, t) \myeq \frac{(q; q)_{\infty}}{(xq; q)_{\infty}}\frac{\ln{q}}{2\pi\sqrt{-1}}\int_{\CC^+}{x^z\prod_{i=1}^{\infty}{\frac{\big(q^{-z+\nu_i}t^i; q\big)_{\infty}}{\big(q^{-z+\nu_i}t^{i-1}; q\big)_{\infty}}}{\rm d}z},
\end{gather}
and the contour $\CC^+$ is the infinite positive contour consisting of $\big[{-}M + \frac{\pi\sqrt{-1}}{\ln{q}}, -M - \frac{\pi\sqrt{-1}}{\ln{q}}\big]$ and the lines $\big[ {-}M+\frac{\pi\sqrt{-1}}{\ln{q}}, +\infty + \frac{\pi\sqrt{-1}}{\ln{q}}\big)$, $\big[ {-}M-\frac{\pi\sqrt{-1}}{\ln{q}}, +\infty-\frac{\pi\sqrt{-1}}{\ln{q}} \big)$ for an arbitrary $M > \max\{0, -\nu_1\}$, see Fig.~{\rm \ref{fig:C}}. The function $\Phi^{\nu}(x; q, t)$ is defined by the formula above in the domain
\begin{gather*}
\mathcal{U} \myeq \bigcap_{k\geq 1}{\big\{x\neq q^{-k}\big\}}\cap{\{x \neq 0\}},
\end{gather*}
and admits an analytic continuation to the domain $\{x \neq 0\} = \C \setminus \{0\}$. Moreover if $\nu_1 \geq 0$, the function $\Phi^{\nu}(x; q, t)$ can be analytically continued to~$\C$.

The convergence~\eqref{eqn:asymptotics1} is uniform on compact subsets of $\C \setminus \{0\}$, and if $\nu_1 \geq 0$, then it is uniform on compact subsets of $\C$.
\end{thm}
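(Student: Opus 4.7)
The strategy is to convert the finite-$N$ integral representation from Theorem~\ref{macdonaldthm2} into a form suitable for taking the limit $N\to\infty$, then justify passing the limit through the integral by a dominated convergence argument. By the Laurent-polynomial homogeneity~\eqref{macdonaldsignatures11} together with the symmetry of Macdonald polynomials in their arguments, the ratio on the left of~\eqref{eqn:asymptotics1} equals $P_{\lambda(N)}(xt^{N-1}, t^{N-2}, \ldots, 1)/P_{\lambda(N)}(t^{N-1}, \ldots, 1)$, which for $|x|\leq 1$, $x\neq 0$ is the quantity computed by Theorem~\ref{macdonaldthm2}. Using the identity $\Gamma_q(s)=(1-q)^{1-s}(q;q)_\infty/(q^s;q)_\infty$ to convert $\Gamma_q(\theta N)$ and each ratio $\Gamma_q(\lambda_i+\theta(N-i)-z)/\Gamma_q(\lambda_i+\theta(N-i+1)-z)$ into products of $q$-Pochhammer symbols, the powers of $(1-q)$ telescope and, after reindexing $j=N-i+1$, one obtains
\begin{gather*}
\frac{P_{\lambda(N)}(xt^{N-1},\ldots,1)}{P_{\lambda(N)}(t^{N-1},\ldots,1)} = \frac{\ln q\,(q;q)_\infty}{(xq;q)_\infty}\cdot\frac{(xt^N;q)_\infty}{(t^N;q)_\infty}\cdot\frac{1}{2\pi\sqrt{-1}}\int_{\CC^+} x^z \prod_{j=1}^N \frac{\bigl(q^{\lambda_{N-j+1}(N)-z}t^j;q\bigr)_\infty}{\bigl(q^{\lambda_{N-j+1}(N)-z}t^{j-1};q\bigr)_\infty}\,dz.
\end{gather*}

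The elementary prefactor $(xt^N;q)_\infty/(t^N;q)_\infty$ tends to $1$ since $|t|<1$. By the stabilization hypothesis, for each fixed $j$ one has $\lambda_{N-j+1}(N)=\nu_j$ for $N$ large enough, so each factor of the product converges pointwise to $(q^{\nu_j-z}t^j;q)_\infty/(q^{\nu_j-z}t^{j-1};q)_\infty$; absolute convergence of the resulting infinite product is guaranteed by the geometric bound $|q^{\nu_j-z}t^j|\leq q^{\nu_1-\Re z}|t|^j$ (using $\nu_j\geq\nu_1$). Hence the integrand converges pointwise to that of~\eqref{eqn:Phi1}. The main obstacle is to justify interchange of this limit with the contour integral. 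I would write each factor of the product as $\prod_{k=0}^{\theta-1}(1-q^{\lambda_{N-j+1}(N)+\theta(j-1)+k-z})^{-1}$, split at a large cutoff $J$, and argue separately: the head $\prod_{j\leq J}$ is eventually $N$-independent by stabilization and uniformly bounded on compact portions of $\CC^+$ avoiding the real poles; the tail $\prod_{J<j\leq N}$ differs from $1$ by $O(|t|^J)$ uniformly in $N$ and in $z\in\CC^+$ with $\Re z$ bounded (using $\lambda_{N-j+1}(N)\geq\lambda_N(N)\to\nu_1$); and on the unbounded horizontal rays of $\CC^+$, the exponential decay $|x^z|=|x|^{\Re z}$ for $|x|<1$ absorbs any residual growth of the integrand, providing an $N$-independent integrable majorant. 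The boundary case $|x|=1$ follows from the absolute integrability verified in Step~0 of the proof of Theorem~\ref{macdonaldthm2}.

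To extend convergence from $|x|\leq 1$ to all $x\in\C\setminus\{0\}$ and to upgrade it to uniform convergence on compacta, I observe that the left side of~\eqref{eqn:asymptotics1} is a Laurent polynomial in $x$, hence holomorphic on $\C\setminus\{0\}$ (and in fact polynomial on $\C$ when $\lambda(N)\in\GTp_N$, which holds for all large $N$ whenever $\nu_1\geq 0$). Pointwise convergence on the open set $\{0<|x|<1\}$ together with a uniform-in-$N$ local bound on compact subsets of $\C\setminus\{0\}$---obtained by iterating the branching rule (Theorem~\ref{branchingmacdonald}) and using positivity of $\psi_{\lambda/\mu}(q,t)$ to compare $|P_{\lambda(N)}|$ with its value at $|x|$---then yields uniform convergence on compacta via Vitali's theorem. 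Analyticity of $\Phi^\nu$ on $\C\setminus\{0\}$ is immediate from this. Finally, when $\nu_1\geq 0$, the apparent simple pole of the prefactor $(xq;q)_\infty^{-1}$ at $x=q^{-k}$ ($k\geq 1$) is cancelled by a zero of the contour integral: a residue computation shows that under $\nu_1\geq 0$ the limit integrand has no matching pole at the critical value of $z$, yielding the analytic continuation of $\Phi^\nu$ to all of~$\C$.
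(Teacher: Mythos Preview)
Your approach is essentially the paper's: rewrite the integral representation with the index $j=N-i+1$ so that the integrand depends on the ``small'' parts $\lambda_{N-j+1}(N)$, pass to the limit under the integral sign on~$\CC^+$, and then use positivity of the branching coefficients to upgrade pointwise convergence on a subdomain to uniform convergence on compacta of $\C\setminus\{0\}$ and to remove the apparent singularities at $x=q^{-k}$. The only cosmetic difference is that you start from Theorem~\ref{macdonaldthm2} (the $\Gamma_q$ formula on an infinite contour, valid for $|x|\le 1$) whereas the paper starts from Theorem~\ref{macdonaldthm1} (closed contour, $\theta\in\N$) and opens it up; after your $\Gamma_q\to(\cdot;q)_\infty$ conversion the two formulas coincide. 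One small point to be careful about: the contour $\CC^+$ in Theorem~\ref{macdonaldthm2} has horizontal rays at height $|\Im z|<\tfrac{\pi}{2|\ln q|}$, whereas the $\CC^+$ in Theorem~\ref{thm:asymptotics1} sits at $|\Im z|=\tfrac{\pi}{|\ln q|}$, where $q^{-z}$ is negative real and the factors $|1-q^{-z+\cdots}|^{-1}=(1+q^{-\Re z+\cdots})^{-1}\le 1$ uniformly. On your contour the denominators can come close to zero near the real poles, so your ``head/tail'' majorant is harder to make $N$-uniform on the unbounded rays; deforming to the paper's contour (no poles are crossed since $\theta\in\N$) makes the dominated-convergence estimate clean and also handles $|x|=1$ without appealing to Step~0 of Theorem~\ref{macdonaldthm2}, which only gives absolute convergence for each fixed~$N$, not an $N$-independent majorant.

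There is, however, a genuine error in your last paragraph. Once you have established that $\Phi^\nu$ is analytic on $\C\setminus\{0\}$ (your Vitali argument already removed the singularities at $x=q^{-k}$ for all~$\nu$), the remaining task when $\nu_1\ge 0$ is to extend $\Phi^\nu$ through the \emph{origin} $x=0$, not through the points $q^{-k}$. Your residue discussion about cancelling the pole of $(xq;q)_\infty^{-1}$ at $x=q^{-k}$ is therefore beside the point. The correct argument is exactly the one you used for Vitali, applied one more time: when $\nu_1\ge 0$ one has $\lambda_N(N)\ge 0$ for all large~$N$, so the prelimit ratios are genuine polynomials in~$x$, hence holomorphic and (via the branching-rule bound $|P_{\lambda(N)}(x,\ldots)|\le P_{\lambda(N)}(|x|,\ldots)$) locally uniformly bounded on all of~$\C$; Vitali then gives uniform convergence on compact subsets of~$\C$ and entirety of~$\Phi^\nu$.
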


\begin{rem}The theorem is a direct application of the integral representation in Theorem~\ref{macdonaldthm1}. If we use instead Theorems~\ref{macdonaldthm2} and~\ref{macdonaldthm25}, we could possibly extend the theorem for general $q, t\in(0, 1)$. Since our main application does require $\theta\in\N$ and $t = q^{\theta}$, we do not bother to pursue the more general case $q, t\in(0, 1)$.
\end{rem}

\begin{proof}Let us f\/irst prove that the limit~(\ref{eqn:asymptotics1}) holds uniformly for $x$ in compact subsets of $\U$. We use the integral representation for Macdonald characters of one variable, Theorem~\ref{macdonaldthm1}, for the signature $\lambda(N)\in\GT_N$, and with~$x$ replaced by $xt^N$, $x \in \U$. Since $P_{\lambda(N)}$ is a homogeneous polynomial of degree $|\lambda(N)|$, then the left-hand side is
\begin{gather*}
 t^{|\lambda(N)|}\frac{P_{\lambda(N)}\big(x, t^{-1}, t^{-2}, \dots, t^{1-N}\big)}{P_{\lambda(N)}\big(1, t^{-1}, t^{-2}, \dots, t^{1-N}\big)}.
\end{gather*}
After simple algebraic manipulations, the right-hand side becomes
\begin{gather*}
q^{-{\theta N \choose 2}}\frac{(q; q)_{\theta N - 1}}{(xq; q)_{\theta N - 1}}\frac{\ln{q}}{2\pi\sqrt{-1}}\oint_{\CC_0}{\frac{x^z}{\prod\limits_{i=1}^N\prod\limits_{j=0}^{\theta - 1}{\big(q^{-(\lambda_i(N) + \theta(N-i) + j)} - q^{-z}\big)}}}{\rm d}z.
\end{gather*}
Therefore we conclude (make change of variables $i\mapsto N - i + 1$ in the inner product)
\begin{gather}
\frac{P_{\lambda(N)}\big(x, t^{-1}, t^{-2}, \dots, t^{1-N}\big)}{P_{\lambda(N)}\big(1, t^{-1}, t^{-2}, \dots, t^{1-N}\big)} \nonumber\\
\qquad{} = \frac{(q; q)_{\theta N - 1}}{(xq; q)_{\theta N - 1}}\frac{\ln{q}}{2\pi\sqrt{-1}}\oint_{\CC_0}{\frac{x^z}{\prod\limits_{i=1}^N\prod\limits_{j=0}^{\theta - 1}{\big(1 - q^{-z}q^{\lambda_{N-i+1}(N) + \theta(i - 1) + j}\big)}}{\rm d}z},\label{eq:beforelimit}
\end{gather}
where $\CC_0$ is a f\/inite contour encloses all real poles of the integrand and no other poles. We have the limit
\begin{gather*}
\lim_{N\rightarrow\infty}{\frac{(q; q)_{\theta N - 1}}{(xq; q)_{\theta N - 1}}} = \frac{(q; q)_{\infty}}{(xq; q)_{\infty}},
\end{gather*}
uniformly for $x$ belonging to compact subsets of $\mathcal{U}$.

Next modify the contour $\CC_0$ into an inf\/inite contour $\CC^+$ as described in the statement of the theorem. This is possible to do because all the poles of the integrands of~(\ref{eq:beforelimit}) belong to the interior of $\CC^+$, as~$N$ grows. Moreover the resulting integral is well-posed for large enough $N$ since the integrand is of order $\big(|x|q^{\theta N}\big)^{\Re z}$ and for any compact set~$K\subset\C$, there exists $N_0\in\N$ such that $\sup\limits_{N>N_0}\sup\limits_{x\in K}{\big\{|x|q^{\theta N}\big\}} < 1$.

We now look at the asymptotics of the integral in~(\ref{eq:beforelimit}), with $\CC_0$ replaced by $\CC^+$. The denominator in the integrand has the following limit
\begin{gather*}
\lim_{N\rightarrow\infty}{\prod_{i=1}^N\prod_{j=0}^{\theta-1}{\big(1 - q^{-z + \lambda_{N-i+1}(N) + \theta(i-1) + j} \big) }}\\
 \qquad{} = \prod_{i=1}^{\infty}\prod_{j=0}^{\theta - 1}{\big( 1 - q^{-z + \nu_i + \theta(i-1) + j} \big)} = \prod_{i=1}^{\infty}{\frac{\big(q^{-z + \nu_i}t^{i-1}; q\big)_{\infty}}{\big(q^{-z+\nu_i}t^i; q\big)_{\infty}}}.
\end{gather*}
The limit above can be justif\/ied properly by using the dominated convergence theorem and the estimate
\begin{gather*}
\sum_{i=1}^N\sum_{j=0}^{\theta-1}{\big| q^{-z + \lambda_{N-i+1}(N) + \theta(i-1)+j} \big|} \\
\qquad{} \leq \sum_{i=1}^N\sum_{j=0}^{\theta-1}{q^{-\Re z + \lambda_N(N) + \theta(i-1)+j}} \leq c\sum_{i=1}^N\sum_{j=0}^{\theta - 1}{q^{\theta(i-1)+j}} < \frac{c}{1 - q},
\end{gather*}
valid for some constant $c>0$ such that $q^{-\Re z + \lambda_N(N)} < c$ for all $N\geq 1$ (it exists because $\lim\limits_{N\rightarrow\infty}{\lambda_N(N)} = \nu_1$).

To prove the limit in the statement of the theorem, we are left to show
\begin{gather*}
\lim_{N\rightarrow\infty}{\int_{\CC^+}\frac{x^z}{\prod\limits_{i=1}^N\prod\limits_{j=0}^{\theta-1}{\big(1 - q^{-z}q^{\lambda_{N-i+1}(N) + \theta(i-1)+j}\big)}}{\rm d}z} = \int_{\CC^+}{x^z \prod_{i=1}^{\infty}{\frac{\big(q^{-z+\nu_i}t^i; q\big)_{\infty}}{\big(q^{-z+\nu_i}t^{i-1}; q\big)_{\infty}}} {\rm d}z}.
\end{gather*}
We already proved the pointwise convergence; to make use of the dominated convergence theorem, we simply need estimates on the contribution of the tails of~$\CC^+$ that are uniform in $N$ and uniform for $x$ belonging to compact subsets of $\C\setminus\{0\}$. Parametrize the tails of $\CC^+$ as $z = r + \frac{\pi\sqrt{-1}}{\ln{q}}$ or $z = r - \frac{\pi\sqrt{-1}}{\ln{q}}$; for $r$ ranging from some large $R > 1$ to $+\infty$, we want to show that the contribution of each of these lines is small. We have
\begin{gather*}
\left| \frac{x^z}{\prod\limits_{i=1}^N\prod\limits_{j=0}^{\theta-1}{\big(1 - q^{-z}q^{\lambda_{N-i+1}(N) + \theta(i-1)+j}\big)}} \right| = \frac{\big|x^{r \pm \frac{\pi\sqrt{-1}}{\ln{q}}} \big|}{\prod\limits_{i=1}^N\prod\limits_{j=0}^{\theta-1}{\big(1 + q^{-r}q^{\lambda_{N-i+1}(N) + \theta(i-1)+j}\big)}}\\
\qquad{} \leq C_1\times \frac{|x|^r}{\prod\limits_{i=1}^k\prod\limits_{j=0}^{\theta-1}{\big(1 + q^{-r}q^{\lambda_{N-i+1}(N) + \theta(i-1)+j}\big)}}\\
\qquad{} \leq C_2 \times \frac{|x|^r}{\prod\limits_{i=1}^k\prod\limits_{j=0}^{\theta-1}{(1 + q^{-r})}} \leq C_2 \times \frac{|x|^r}{(q^{-r})^{\theta k}} = C_2 \times \big( |x| t^k \big)^r,
\end{gather*}
for all $1\leq k \leq N$, where the constant in the second line is uniform for $x$ over compact subsets of $\C\setminus\{0\}$, and the constant in the third line depends on $k$, but not on $N$ (note that to go from the second to the third line, we needed to use that $\lim\limits_{N\rightarrow\infty}{\lambda_{N-i+1}(N)}$ exists for $i = 1, \dots, k$ and so the sequence $\{\lambda_{N-i+1}(N)\}_{N\geq 1}$ is uniformly bounded for $i = 1, \dots, k$). Now for any compact set $K\subset\C\setminus\{0\}$ and any $M>0$, there exists $k\in\N$ such that $\sup\limits_{x\in K}{|x|}\cdot t^k \leq e^{-M}$. Since $\int_R^{\infty}{e^{-Mr}{\rm d}r} = e^{-MR}/M \xrightarrow{R\rightarrow\infty} 0$, we have shown that the contribution of the tails of $\CC^+$ is uniformly small over~$x$ in compact subsets of $\C\setminus\{0\}$ and for large enough~$N$.

We have proved so far that the limit in the theorem holds uniformly for $x$ in compact subsets of $\U$. Since the set $\{q^{-1}, q^{-2}, \dots\}$ is discrete and has no accumulation points, Cauchy's integral formula allows us to deduce the uniform convergence in compact subsets of $\C\setminus\{0\}$ as soon as we show that $\Phi^{\nu}(x; q, t)$ admits an analytic continuation to $\C\setminus\{0\}$.

By virtue of Riemann's theorem of removable singularities, it will suf\/f\/ice to show that $\Phi^{\nu}(x; q, t)$ is uniformly bounded in an open neighborhood of each pole $q^{-k}$. Let $R> 0$, $R\notin\{q^n\colon n\in\Z\}$, be arbitrary. From what we have shown so far, it is clear that
\begin{gather*}
\sup_{\substack{\frac{1}{R} \leq |x| \leq R \\ x\in \U}}\big| \Phi^{\nu}(x; q, t) \big|
\leq \sup_{N\geq 1}\sup_{\substack{\frac{1}{R} \leq |x| \leq R \\ x\in \U}}{\left|\frac{P_{\lambda(N)}\big(x, t^{-1}, \dots, t^{1-N}\big)}{P_{\lambda(N)}\big(1, t^{-1}, \dots, t^{1-N}\big)}\right|}\\
\hphantom{\sup_{\substack{\frac{1}{R} \leq |x| \leq R \\ x\in \U}}\big| \Phi^{\nu}(x; q, t) \big|}{}
\leq \sup_{N\geq 1}\sup_{\frac{1}{R} \leq |x| \leq R}{\left|\frac{P_{\lambda(N)}\big(x, t^{-1}, \dots, t^{1-N}\big)}{P_{\lambda(N)}\big(1, t^{-1}, \dots, t^{1-N}\big)}\right|}.
\end{gather*}
Thanks to the branching rule for Macdonald polynomials, Theorem~\ref{branchingmacdonald}, the fact that all the branching coef\/f\/icients $\psi_{\mu/\nu}(q, t)$ are nonnegative when $q, t\in (0, 1)$, and the fact that each $P_{\lambda(N)}\big(x, t^{-1}, \dots, t^{1-N}\big)$ is a Laurent polynomial in~$x$, we have
\begin{gather*}
\sup_{\frac{1}{R} \leq |x| \leq R}{\left|\frac{P_{\lambda(N)}\big(x, t^{-1}, \dots, t^{1-N}\big)}{P_{\lambda(N)}\big(1, t^{-1}, \dots, t^{1-N}\big)}\right|} \leq \sup_{\frac{1}{R} \leq |x| \leq R}{\frac{P_{\lambda(N)}\big(|x|, t^{-1}, \dots, t^{1-N}\big)}{P_{\lambda(N)}\big(1, t^{-1}, \dots, t^{1-N}\big)}}\\
\hphantom{\sup_{\frac{1}{R} \leq |x| \leq R}{\left|\frac{P_{\lambda(N)}\big(x, t^{-1}, \dots, t^{1-N}\big)}{P_{\lambda(N)}\big(1, t^{-1}, \dots, t^{1-N}\big)}\right|} }{}
\leq \frac{P_{\lambda(N)}\big(R, t^{-1}, \dots, t^{1-N}\big)}{P_{\lambda(N)}\big(1, t^{-1}, \dots, t^{1-N}\big)}\!
+\! \frac{P_{\lambda(N)}\big(R^{-1}, t^{-1}, \dots, t^{1-N}\big)}{P_{\lambda(N)}\big(1, t^{-1}, \dots, t^{1-N}\big)}.
\end{gather*}
From the pointwise limits
\begin{gather*}
\lim_{N\rightarrow\infty}{\frac{P_{\lambda(N)}\big(R^{\pm 1}, t^{-1}, \dots, t^{1-N}\big)}{P_{\lambda(N)}\big(1, t^{-1}, \dots, t^{1-N}\big)}} = \Phi^{\nu}(R^{\pm 1}; q, t),
\end{gather*}
we deduce that the sequences $\big\{ P_{\lambda(N)}\big(R^{\pm 1}, t^{-1}, \dots, t^{1-N}\big)/P_{\lambda(N)}\big(1, t^{-1}, \dots, t^{1-N}\big) \big\}_{N\geq 1}$ are uniformly bounded. As a result of the estimates above,
\begin{gather*} \sup\limits_{\substack{\frac{1}{R} \leq |x| \leq R \\ x\in\U}}{|\Phi^{\nu}(x; q, t)|} < \infty.
 \end{gather*} Thus $\Phi^{\nu}(x; q, t)$ admits an analytic continuation to all the poles in $\big\{z\in\C\colon \frac{1}{R} \leq |z|\leq R\big\}\cap\big\{q^{-1}, q^{-2}, \dots\big\}$. Since $R>0$ was arbitrary (outside of a lattice), we conclude that $\Phi^{\nu}(x; q, t)$ admits an analytic continuation to $\C\setminus\{0\}$.

If $\nu_1 \geq 0$, then $\lim\limits_{N\rightarrow\infty}{\lambda_N(N)} = \nu_1$ shows that $\lambda_N(N)\geq 0$ for all $N > N_0$ and $N_0\in\N$ large enough. For all $N>N_0$, the functions ${P_{\lambda(N)}\big(x, t^{-1}, \dots, t^{1-N}\big)}/{P_{\lambda(N)}\big(1, t^{-1}, \dots, t^{1-N}\big)}$ are polynomials in~$x$ and therefore holomorphic on $\C$. Similar considerations as above allow us to analytically continue $\Phi^{\nu}(x; q, t)$ to $\C$ in this case, and also show that the limit~(\ref{eqn:asymptotics1}) holds uniformly for~$x$ belonging to compact subsets of~$\C$.
\end{proof}

We end this subsection with a Lemma that will be used in Section~\ref{sec:qtboundary}.

\begin{lem}\label{lem:expansionintegrals}
If $\nu, \widetilde{\nu} \in\Nu$ are such that
\begin{gather}\label{eqn:phiinjective}
\Phi^{\nu}(x; q, t) = \Phi^{\widetilde{\nu}}(x; q, t) \qquad \forall \, x \in \TT,
\end{gather}
then $\nu = \widetilde{\nu}$.
\end{lem}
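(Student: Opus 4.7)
The plan is to first pass from $\TT$ to $\C\setminus\{0\}$ via analyticity, and then extract the entries of $\nu$ from the behavior of $\Phi^\nu(x)$ near $x=0$ by an inductive argument on the integral representation~\eqref{eqn:Phi1}. Theorem~\ref{thm:asymptotics1} asserts that $\Phi^\nu$ and $\Phi^{\widetilde\nu}$ are holomorphic on $\C\setminus\{0\}$; since $\TT$ has accumulation points in this domain, the identity principle promotes the hypothesis to $\Phi^\nu(x;q,t)=\Phi^{\widetilde\nu}(x;q,t)$ for all $x\in\C\setminus\{0\}$.

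Next I would analyze $\Phi^\nu(x)$ as $x\to 0$. Writing $G^\nu(z):=\prod_{i=1}^{\infty}(q^{-z+\nu_i}t^i;q)_\infty/(q^{-z+\nu_i}t^{i-1};q)_\infty$, for $|x|<1$ the factor $x^z$ in the integrand decays as $\Re z\to +\infty$, so $\CC^+$ can be closed and the integral is a sum of residues of $G^\nu$ at the integer points $z^\ast=\nu_i+\theta(i-1)+n$ (with possible cancellations against the zeros at $\nu_i+\theta i+n$). Because $\nu_i+\theta(i-1)$ is strictly increasing in $i$, the smallest pole is the simple pole $z^\ast=\nu_1$ contributed only by the $i=1$ factor, and computing its residue directly yields
\begin{equation*}
\Phi^\nu(x) \;=\; (t;q)_\infty \prod_{i\geq 2} \frac{(q^{\nu_i-\nu_1}t^i;q)_\infty}{(q^{\nu_i-\nu_1}t^{i-1};q)_\infty}\, x^{\nu_1} \;+\; O\bigl(x^{\nu_1+1}\bigr) \quad \text{as } x\to 0.
\end{equation*}
The leading coefficient is a convergent product of nonzero factors, hence nonzero, so comparing the orders of vanishing of $\Phi^\nu$ and $\Phi^{\widetilde\nu}$ at $0$ forces $\nu_1=\widetilde\nu_1$.

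For the remaining entries I would induct. The integrand admits the structural factorization $G^\nu(z)=\tfrac{(q^{-z+\nu_1}t;q)_\infty}{(q^{-z+\nu_1};q)_\infty}\cdot G^{\nu'}(z-\theta)$, where $\nu':=(\nu_2,\nu_3,\dots)\in\Nu$, so after the substitution $z\mapsto z+\theta$ the integral in~\eqref{eqn:Phi1} becomes $x^\theta$ times an integral of the same shape for $G^{\nu'}$, multiplied by the finite rational correction $\prod_{j=1}^{\theta}(1-q^{-z+\nu_1-j})^{-1}$. Since $\nu_1$ has already been pinned down, this correction is a known explicit function; peeling it off—while carefully tracking the extra residues it contributes—reduces the identification problem to the same question for $\Phi^{\nu'}$, and iteration identifies $\nu=\widetilde\nu$ entry by entry.

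The main obstacle lies precisely in this peeling step: the $\theta$ poles of the correction factor at $z=\nu_1-1,\dots,\nu_1-\theta$ sit inside the shifted contour and contribute residues of order $x^{\nu_1+\theta-j}$ ($j=1,\dots,\theta$) that mix with the residues of $G^{\nu'}$. The delicate case occurs when $\nu_2=\nu_1$, where a pole of $G^{\nu'}$ at $z=\nu_1$ coincides with one of the correction poles, producing a double pole whose residue must be split into a $\nu'$-dependent and a $\nu'$-independent part. Resolving this requires careful bookkeeping of the combined pole structure and use of the strict monotonicity of $\nu_i+\theta(i-1)$, but should not demand any analytic input beyond the estimates already established in the proof of Theorem~\ref{thm:asymptotics1}.
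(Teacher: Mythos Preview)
Your leading-order computation is correct and does pin down $\nu_1=\widetilde\nu_1$: the poles of $G^\nu$ lie at $\{\nu_r+\theta(r-1)+s:r\ge1,\ 0\le s<\theta\}$, these are all \emph{simple and distinct} (since $\nu_{i'}-\nu_i\ge0$ while $\theta(i'-i)+j'-j\ge1$ for $i<i'$), the smallest is $\nu_1$, and the corresponding residue is nonzero.

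The gap is in the inductive ``peeling'' step. The factorization $G^\nu(z)=\mathrm{corr}(z)\,G^{\nu'}(z-\theta)$ is an identity in the $z$-variable \emph{inside} the integral; it does not translate into a multiplicative relation between $\Phi^\nu(x)$ and $\Phi^{\nu'}(x)$. To actually recover $\int x^zG^{\nu'}(z)\,dz$ from $\int x^zG^{\nu}(z)\,dz$ you would need to exploit that multiplication by $(1-q^{-z+a})$ under the integral corresponds to the $q$-shift operator $I-q^{a}T_{q^{-1},x}$ in the $x$-variable, and then compose $\theta$ such operators; this is doable but is nowhere indicated in your outline. Moreover, the obstacle you flag does not exist: after the shift the correction factor has poles at $\nu_1-\theta,\dots,\nu_1-1$, whereas the smallest pole of $G^{\nu'}$ is at $\nu'_1=\nu_2\ge\nu_1$, so there is never a collision or a double pole, even when $\nu_2=\nu_1$.

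The paper bypasses the induction entirely. It multiplies $\Phi^\nu(x)$ by $(xq;q)_\infty/(q;q)_\infty$, expands the resulting integral as the full sum of residues $\sum_{r,s}c_{r,s}(\nu)\,x^{\nu_r+\theta(r-1)+s}$, and proves by explicit bounds that this Laurent series converges on $\C\setminus\{0\}$ with every $c_{r,s}(\nu)\neq0$. Hence the support of the Laurent expansion is exactly the set $\{\nu_r+\theta(r-1)+s\}$, and equating supports for $\nu$ and $\widetilde\nu$ immediately gives $\nu=\widetilde\nu$. This is both shorter and avoids the need to reconstruct $\Phi^{\nu'}$.
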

\begin{proof}
In the integral representation of $\Phi^{\nu}(x; q, t)$, the set of poles of the integrand is
\[
\bigcup_{r=1}^{\infty}\bigcup_{s=0}^{\theta-1}{\{\nu_r + \theta(r-1) + s\}}
\]
and they are all enclosed by contour $\CC^+$. We can therefore expand
\begin{gather*}
\frac{\ln{q}}{2\pi\sqrt{-1}}\int_{\CC^+}{x^z\prod_{i=1}^{\infty}{\frac{(q^{-z+\nu_i}t^i; q)_{\infty}}{(q^{-z+\nu_i}t^{i-1}; q)_{\infty}}}{\rm d}z},
\end{gather*}
at least formally, as the sum of residues
\begin{gather}\label{eq:sumresidues}
\sum_{r=1}^{\infty}\sum_{s=0}^{\theta-1}
\left\{ x^{\nu_r + \theta(r-1) + s} \prod_{\substack{i\geq 1 \\ i \neq r}}{\frac{\big(q^{-\nu_r+\nu_i-s}t^{i-r+1}; q\big)_{\infty}}{\big(q^{-\nu_r + \nu_i - s}t^{i-r}; q\big)_{\infty}}} \times \prod_{\substack{\theta > j \geq 0 \\ j \neq s}}{\frac{1}{(1 - q^{j-s})}} \right\}.
\end{gather}
The series above converges absolutely for all $x\in\C\setminus\{0\}$, and uniformly for $x$ in compact subsets of $\C\setminus\{0\}$; in fact, we can argue as in the special case $\theta = 1$ of \cite[Section~6.2]{GP}. We have the bounds
\begin{gather*}
\prod_{\substack{\theta > j \geq 0 \\ j \neq s}}{ \left| \frac{1}{(1 - q^{j-s})} \right| }
\leq \frac{1}{(1 - q)\cdots \big(1 - q^{\theta}\big)\big(q^{-1} - 1\big)\cdots \big(q^{-\theta} - 1\big)};\\
\prod_{i > r}{\left| \frac{\big(q^{-\nu_r+\nu_i-s}t^{i-r+1}; q\big)_{\infty}}{\big(q^{-\nu_r + \nu_i - s}t^{i-r}; q\big)_{\infty}} \right|} =
\prod_{i > r} \prod_{j = 0}^{\theta - 1} {\left| \frac{1}{\big(1 - q^{-\nu_r + \nu_i - s + \theta(i - r)+j}\big)} \right|}\\
\qquad {} = \prod_{i > r} \prod_{j = 0}^{\theta - 1} { \frac{1}{\big(1 - q^{-\nu_r + \nu_i - s + \theta(i - r)+j}\big)} }
\qquad{} \leq \prod_{i > r} \prod_{j = 0}^{\theta - 1} { \frac{1}{\big(1 - q^{- (\theta-1) + \theta(i - r)+j}\big)} } = \frac{1}{(q; q)_{\infty}};\\
\prod_{i < r}{\left| \frac{\big(q^{-\nu_r+\nu_i-s}t^{i-r+1}; q\big)_{\infty}}{\big(q^{-\nu_r + \nu_i - s}t^{i-r}; q\big)_{\infty}} \right|} =
\prod_{i < r} \prod_{j = 0}^{\theta - 1} { \frac{q^{\nu_r + \theta (r - 1) + s - \nu_i - \theta(i - 1) - j}}
{1 - q^{\nu_r - \nu_i + \theta(r - i) - j + s}} }\\
\qquad{}
\leq \frac{1}{(q; q)_{\infty}} \prod_{i < r} \prod_{j = 0}^{\theta - 1} {
q^{\nu_r + \theta (r - 1) + s - \nu_i - \theta(i - 1) - j} } \\
\qquad{} \leq \frac{1}{(q; q)_{\infty}} \prod_{i = 1}^m \prod_{j = 0}^{\theta - 1} {
q^{\nu_r + \theta (r - 1) + s - \nu_i - \theta(i - 1) - j} }, \qquad \textrm{ for any} \quad 1 \leq m < r,\\
\qquad {} = q^{m\theta(\nu_r + \theta(r - 1) + s)} \times \frac{q^{-\theta (\nu_1 + \dots + \nu_m)}q^{-{\theta m \choose 2}} }{(q; q)_{\infty}}.
\end{gather*}
Choose an arbitrary $m\in\N$ and f\/ix it.
For $r > m$, the general term in brackets at~(\ref{eq:sumresidues}) has modulus upper bounded by
\[
\big( |x|q^{m\theta} \big)^{\nu_r + \theta(r - 1) + s} \times c(m, \theta; q),
\]
where $c(m, \theta; q) > 0$ is a constant depending on $m, \theta$, as well as $\nu_1, \dots, \nu_m$, but not on $r$.
It follows that the sum~(\ref{eq:sumresidues}) converges absolutely for any $x\in\C\setminus \{0\}$ with $|x| < q^{-m\theta}$.
Since $m\in\N$ was arbitrary, it follows that~(\ref{eq:sumresidues}) converges absolutely for any $x\in\C\setminus\{0\}$.
The uniform convergence also follows from the bound above.
In particular, (\ref{eq:sumresidues}) is the Fourier expansion of $\frac{(xq; q)_{\infty}}{(q; q)_{\infty}}\Phi^{\nu}(x; q, t)$.
A similar Fourier expansion can be given for $\frac{(xq; q)_{\infty}}{(q; q)_{\infty}}\Phi^{\widetilde{\nu}}(x; q, t)$.
After multiplying equality~(\ref{eqn:phiinjective}) by $(xq; q)_{\infty}/(q; q)_{\infty}$, we have
\begin{gather}\label{eqn:equalityintegrals}
\frac{\ln{q}}{2\pi\sqrt{-1}}\int_{\CC^+}{x^z\prod_{i=1}^{\infty}{\frac{\big(q^{-z+\nu_i}t^i; q\big)_{\infty}}{\big(q^{-z+\nu_i}t^{i-1}; q\big)_{\infty}}}{\rm d}z}
= \frac{\ln{q}}{2\pi\sqrt{-1}}\int_{\CC^+}{x^z\prod_{i=1}^{\infty}{\frac{\big(q^{-z+\widetilde{\nu}_i}t^i; q\big)_{\infty}}{\big(q^{-z+\widetilde{\nu}_i}t^{i-1}; q\big)_{\infty}}}{\rm d}z}.
\end{gather}
We can expand both sides of~(\ref{eqn:equalityintegrals}) as above, to get an equality of the form $\sum\limits_{k\in\Z}{c_k(\nu) x^k} = \sum\limits_{k\in\Z}{c_k(\widetilde{\nu}) x^k}$.
More precisely, (\ref{eq:sumresidues}) gives that the set $\{k\in\Z\colon c_k(\nu) \neq 0\}$ of indices which appear in the expansion of the left-hand side of~(\ref{eqn:equalityintegrals}) is $\{\nu_r + \theta(r-1) + s\colon 1\leq r, \, 0\leq s\leq \theta-1\}$. Similarly, $\{k\in\Z\colon c_k(\widetilde{\nu}) \neq 0\} = \{\widetilde{\nu}_r + \theta(r-1) + s\colon 1\leq r, \, 0\leq s\leq \theta-1\}$. The equality of these sets, and the inequalities $\nu_1 \leq \nu_2 \leq \cdots$, $\widetilde{\nu}_1 \leq \widetilde{\nu}_2 \leq \cdots$, imply that $\nu_r = \widetilde{\nu}_r$ for all $r\geq 1$, i.e., $\nu = \widetilde{\nu}$.
\end{proof}

\subsection[Asymptotics of Macdonald characters of a f\/ixed number $m$ of variables]{Asymptotics of Macdonald characters of a f\/ixed number $\boldsymbol{m}$ of variables}

The following theorem is the main result for asymptotics of Macdonald characters of any given rank $m\in\N$ as $N$ tends to inf\/inity.

\begin{thm}\label{thm:asymptotics2}
Let $\theta\in\N$, $t = q^{\theta}$, and $\{\lambda(N)\}_{N\geq 1}$, $\lambda(N)\in\GT_N$, be a sequence of signatures that stabilizes to $\nu\in\Nu$. Also let $m\in\N$ be arbitrary. Then
\begin{gather}\label{eqn:asymptotics2}
\lim_{N\rightarrow\infty}{\frac{P_{\lambda(N)}\big(x_1, \dots, x_m, t^{-m}, \dots, t^{1 - N}\big)}{P_{\lambda(N)}\big(1, t^{-1}, t^{-2}, \dots, t^{1 - N}\big)}} = \Phi^{\nu}(x_1, \dots, x_m; q, t),
\end{gather}
where
\begin{gather}
\Phi^{\nu}(x_1, \dots, x_m; q, t) \myeq \frac{q^{-2\theta^2 {m \choose 3} - {\theta+1 \choose 2}{m \choose 2}}}{\prod\limits_{i=1}^m{\big(x_iqt^{m-1}; q\big)_{\infty}}}\frac{1}{\prod\limits_{\substack{1\leq i < j\leq m \\ 0\leq k < \theta}}{(q^kx_j - x_i)}}\nonumber\\
\hphantom{\Phi^{\nu}(x_1, \dots, x_m; q, t) \myeq}{}\times
\widetilde{D}_{q, \theta}^{(m)}\left\{ \prod_{i=1}^m{\Phi^{\nu}(x_i; q, t)(x_iq; q)_{\infty}} \right\},\nonumber\\
\widetilde{D}_{q, \theta}^{(m)} \myeq \sum_{\tau\in M_{\theta}^{(m)}}{C_{\tau}^{(q, q^{\theta})}(x_1, \dots, x_m)\prod_{i=1}^m{T_{q, x_i}^{(i-1)\theta + \tau_i^+ - \tau_i^-}}}.\label{def:Phinu}
\end{gather}
Above we used the notation of Section~{\rm \ref{macdonaldmultiplicativesec}}: $M_{\theta}^{(m)}$ is the set of all strictly upper triangular matrices with entries in $\{0, 1, \dots, \theta\}$, and for $\tau\in M_{\theta}^{(m)}$, we let $\tau_i^+ \myeq \sum\limits_{j > i}{\tau_{i, j}}$, $\tau_i^- \myeq \sum\limits_{k < i}{\tau_{k, i}}$. Also for $\tau\in M_{\theta}^{(m)}$, let $C_{\tau}^{(q, q^{\theta})}(x_1, \dots, x_m)$ be the rational functions defined in~\eqref{eq:Ccoeff}. The function $\Phi^{\nu}(x_1, \dots, x_m; q, t)$ is defined by the formula above in the domain
\begin{gather*}
\mathcal{U}_m \myeq \bigcap_{k\in\Z}{\bigcap_{i<j}\big\{x_i \neq q^kx_j\big\}}\cap\bigcap_{k\geq 1}{\bigcap_{i=1}^m{\big\{x_i \neq q^{-k}t^{1-m}\big\}}}\cap\bigcap_{i=1}^m{\{x_i \neq 0\}},
\end{gather*}
and admits an analytic continuation to the domain $\bigcap\limits_{i=1}^m{\{x_i \neq 0\}}=(\C \setminus \{0\})^m$. Moreover if $\nu_1 \geq 0$, the function $\Phi^{\nu}(x_1, \dots, x_m; q, t)$ can be analytically continued to $\C^m$.

The convergence~\eqref{eqn:asymptotics2} is uniform on compact subsets of $(\C \setminus \{0\})^m$ and if $\nu_1\geq 0$, then it is uniform on compact subsets of $\C^m$.
\end{thm}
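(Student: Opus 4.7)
The plan is to derive Theorem~\ref{thm:asymptotics2} by combining the multiplicative formula of Section~\ref{macdonaldmultiplicativesec} with the one-variable asymptotics (Theorem~\ref{thm:asymptotics1}). First, I would rewrite the left-hand side in terms of Macdonald characters of Definition~\ref{normalmacdonald}: using the homogeneity of $P_{\lambda(N)}$, multiplying every argument of the $N$-variable Macdonald polynomial by $t^{N-1}$, and invoking the symmetry of $P_{\lambda(N)}$, one checks that
\begin{gather*}
\frac{P_{\lambda(N)}\big(x_1, \dots, x_m, t^{-m}, \dots, t^{1-N}\big)}{P_{\lambda(N)}\big(1, t^{-1}, \dots, t^{1-N}\big)}
= P_{\lambda(N)}\big(x_1 t^{N-1}, \dots, x_m t^{N-1}; N, q, t\big).
\end{gather*}
Thus the problem reduces to computing the limit of $P_{\lambda(N)}(y_1, \dots, y_m; N, q, t)$ evaluated at the shifted arguments $y_i = x_i t^{N-1} = x_i q^{\theta(N-1)}$.

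Next, I would apply the multiplicative formula Theorem~\ref{macdonaldthm3} in the variables $y_1, \dots, y_m$, and then substitute $y_i = x_i t^{N-1}$. The key observation is that the shift operators $T_{q, y_i}$ and the coefficient functions $C_\tau^{(q,q^\theta)}(y_1, \dots, y_m)$ behave well under this substitution: $T_{q, y_i}$ becomes $T_{q, x_i}$ when acting on a function of $x_i = y_i t^{1-N}$, while each $C_\tau^{(q,q^\theta)}$ depends on $y_i$ only through the ratios $y_i/y_j = x_i/x_j$ and is therefore unchanged. Consequently the operator appearing in the right-hand side is exactly $\widetilde{D}_{q,\theta}^{(m)}$ of~\eqref{def:Phinu} (up to the overall power of $(q-1)^{\theta\binom{m}{2}}$ that is tracked below). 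What remains is to identify the $N\to\infty$ limit of the prefactor $A_N(x)$ and of the argument
\begin{gather*}
G_N(x_1, \dots, x_m) = \prod_{i=1}^m \frac{P_{\lambda(N)}\big(x_i t^{N-1}; N, q, t\big)\prod_{j=1}^{\theta N-1}\big(x_i t^{N-1} - q^{j-\theta}\big)}{[\theta N-1]_q!}
\end{gather*}
to which $\widetilde{D}_{q,\theta}^{(m)}$ is applied. Factoring $x_i t^{N-1} - q^{j-\theta} = q^{j-\theta}(x_i q^{\theta N - j}-1)$ and using the identity $[\theta N - 1]_q! = (q;q)_{\theta N - 1}/(1-q)^{\theta N - 1}$, one rewrites each product in terms of $q$-Pochhammer symbols: as $N\to\infty$, one factor $(x_i q^{\theta N};q)_\infty \to 1$ and the shortened Pochhammer tails converge to $(x_i q;q)_\infty$ and $(x_i q t^{m-1};q)_\infty$ respectively. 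Combined with Theorem~\ref{thm:asymptotics1} (which gives $P_{\lambda(N)}(x_i t^{N-1}; N, q, t) \to \Phi^\nu(x_i; q, t)$), the divergent $N$-dependent powers of $q$ and $(1-q)$ that appear in $G_N$ and $A_N$ must be reconciled, and a direct but tedious bookkeeping of exponents matches them exactly to the finite prefactor in~\eqref{def:Phinu}; this reconciliation of powers is the main obstacle of the argument.

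Finally, one passes the limit through $\widetilde{D}_{q,\theta}^{(m)}$. Since Theorem~\ref{thm:asymptotics1} establishes uniform convergence of $P_{\lambda(N)}(\cdot; N, q, t)$ to $\Phi^\nu(\cdot; q, t)$ on compact subsets of $\C \setminus \{0\}$ (and on compact subsets of $\C$ when $\nu_1 \geq 0$), each finite $q$-shift $T_{q, x_i}^k P_{\lambda(N)}(x_i t^{N-1}; N, q, t)$ converges uniformly on compact subsets to $T_{q, x_i}^k \Phi^\nu(x_i; q, t)$, and the coefficient functions $C_\tau^{(q,q^\theta)}(x_1, \dots, x_m)$ are continuous away from the poles described in $\mathcal{U}_m$. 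Thus the limit on the right-hand side is precisely $\Phi^\nu(x_1, \dots, x_m; q, t)$ as defined in~\eqref{def:Phinu}, with uniform convergence on compact subsets of $\mathcal{U}_m$. The analytic continuation to $(\C\setminus\{0\})^m$, or to $\C^m$ when $\nu_1\geq 0$, is then obtained by Riemann's removable singularity theorem, using the same argument as in the proof of Theorem~\ref{thm:asymptotics1}: on any compact set, the original ratios $P_{\lambda(N)}(x_1, \dots, x_m, t^{-m}, \dots, t^{1-N})/P_{\lambda(N)}(1, t^{-1}, \dots, t^{1-N})$ are uniformly bounded by an application of the branching rule (Theorem~\ref{branchingmacdonald}) and positivity of $\psi_{\lambda/\mu}(q,t)$, so their pointwise limit $\Phi^\nu(x_1, \dots, x_m; q, t)$ extends holomorphically across the apparent singularities of the formula~\eqref{def:Phinu}.
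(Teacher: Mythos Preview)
Your proposal is correct and follows essentially the same route as the paper's proof: both apply the multiplicative formula of Theorem~\ref{macdonaldthm3} at the shifted arguments $y_i = x_i t^{N-1}$, exploit the homogeneity of the $C_\tau^{(q,q^\theta)}$ coefficients under simultaneous scaling, rewrite the prefactors as $q$-Pochhammer symbols, pass to the limit using Theorem~\ref{thm:asymptotics1}, and finally invoke the several-variable Riemann removable-singularity theorem to extend $\Phi^\nu$ beyond~$\mathcal{U}_m$. The only difference is that the paper carries out the exponent bookkeeping explicitly (equations~\eqref{eq:rhs1}--\eqref{eq:rhs3}), whereas you leave it as a stated reconciliation.
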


\begin{rem}
For $\theta = 1$, our theorem has a dif\/ferent form than \cite[Theorem~6.5]{GP}. It is not immediately clear that the two answers are the same.
\end{rem}

\begin{rem}
From their def\/inition, it is clear that the rational functions $\big\{C_{\tau}^{(q, q^{\theta})}(x_1, \dots, x_m)\colon$ $\tau\in M_{\theta}^{(m)}\big\}$ are holomorphic in a domain of the form $\bigcap\limits_{-N_1 \leq k \leq N_1}{\bigcap\limits_{i<j}\{x_i \neq q^kx_j\}}$, for large enough $N_1\in\N$. In particular, all functions $\big\{C_{\tau}^{(q, q^{\theta})}(x_1, \dots, x_m)\colon \tau\in M_{\theta}^{(m)}\big\}$ are holomorphic on~$\U_m$.
\end{rem}

\begin{proof}
This result is a consequence of Theorem~\ref{thm:asymptotics1} and the multiplicative formula for Macdonald polynomials, Theorem ~\ref{macdonaldthm3}. Let us give more details. As before we prove f\/irst the uniform limit on compact subsets of $\U_m$.
Begin by applying Theorem~\ref{macdonaldthm3} for the signature $\lambda(N)\in\GT_N$ and $t^{N-1}x_i$ instead of $x_i$, for $i = 1, \dots, m$. Since $P_{\lambda(N)}$ is a homogeneous Laurent polynomial of degree~$|\lambda(N)|$, the resulting left-hand side is
\begin{gather}\label{eq:lhs}
\frac{P_{\lambda(N)}\big(x_1, \dots, x_m, t^{-m}, \dots, t^{1-N}\big)}{P_{\lambda(N)}\big(1, t^{-1}, t^{-2}, \dots, t^{1-N}\big)}.
\end{gather}
As for the right side, the factor
\begin{gather*}
\prod_{\substack{1\leq i<j\leq m \\ 0\leq k<\theta}}{\big(t^{N-1}x_i - t^{N-1}x_jq^k\big)^{-1}} \times (q-1)^{-\theta{m \choose 2}}\times\prod_{i=1}^m{\frac{[\theta(N-i+1)-1]_q!}{[\theta N - 1]_q!}}
\end{gather*} equals
\begin{gather}\label{eq:rhs1}
(-1)^{\theta {m \choose 2}}\frac{q^{-\theta^2(N-1){m \choose 2}}}{\prod_{\substack{1\leq i<j\leq m \\ 0\leq k<\theta}}{\big(x_i - q^kx_j\big)}}\prod_{i=1}^m{\frac{1}{\big(t^{N-i+1}; q\big)_{\theta(i-1)}}}.
\end{gather}
We can also obtain easily the polynomial equality
\begin{gather*}
\prod_{j=1}^k{\big(t^{N-1}x - q^{j-\theta}\big)} = (-1)^kq^{{k+1 \choose 2} - \theta k}\prod_{i=1}^k{\big(1 - xq^{\theta N - i}\big)}, \qquad \text{for any}\quad k\in\N.
\end{gather*} Therefore for any $k\in\N$, we have
\begin{gather}\label{eq:rhs2}
\prod_{i=1}^m\prod_{j=1}^{k - 1}{\big(x_it^{N-1} - q^{j - \theta}\big)} = (-1)^{m(k - 1)}q^{m\left( {k\choose 2} - \theta(k - 1)\right)}\prod_{i=1}^m{\big(x_iq^{\theta N - k+1}; q\big)_{k-1}}.
\end{gather}
Observe also that the rational functions are invariant under the simultaneous transformations $x_i \mapsto t^{N-1}x_i$ $\forall\, i = 1, 2, \dots, m$, that is
\begin{gather}\label{eq:rhs3}
C_{\tau}^{(q, q^{\theta})}\big(t^{N-1}x_1, \dots, t^{N-1}x_m\big) = C_{\tau}^{(q, q^{\theta})}(x_1, \dots, x_m).
\end{gather}
By combining (\ref{eq:lhs}), (\ref{eq:rhs1}), (\ref{eq:rhs2}) for $k = \theta(N-m+1), \theta N$, and (\ref{eq:rhs3}), we obtain
\begin{gather*}
\frac{P_{\lambda(N)}\big(x_1, \dots, x_m, t^{-m}, \dots, t^{1-N}\big)}{P_{\lambda(N)}\big(1, t^{-1}, t^{-2}, \dots, t^{1-N}\big)}
 = (-1)^{\theta {m \choose 2}}q^{-2\theta^2 {m \choose 3} - {\theta + 1\choose 2}{m \choose 2}}\times\frac{1}{\prod\limits_{i=1}^m{\big(t^{N-i+1}; q\big)_{\theta(i-1)}}}\\
 \qquad{}
\times\frac{1}{\prod\limits_{\substack{ 1\leq i < j\leq m \\ 0\leq k<\theta }}{\big(x_i - q^kx_j\big)}}\frac{1}{\prod\limits_{i=1}^m{\big(x_i t^{m-1}q; q\big)_{\theta(N-m+1)-1}}}\\
\qquad{}\times \widetilde{D}_{q, \theta}^{(m)}\left\{ \prod\limits_{i=1}^m{\frac{P_{\lambda(N)}\big(x_i, t^{-1}, \dots, t^{1-N}\big)}{P_{\lambda(N)}\big(1, t^{-1}, \dots, t^{1-N}\big)} (x_i q; q)_{\theta N - 1} } \right\}.
\end{gather*}
The following limits hold uniformly for $(x_1, \dots, x_m)$ belonging to compact subsets of $\U_m$
\begin{gather*}
\lim_{N\rightarrow\infty}{\prod_{i=1}^m{\big(t^{N-1+1}; q\big)}_{\theta(i-1)}} = 1, \qquad \lim_{N\rightarrow\infty}{\frac{1}{\prod\limits_{i=1}^m{\big(x_i t^{m-1}q; q\big)_{\theta(N-m+1)-1}}}} = \frac{1}{\prod\limits_{i=1}^m{\big(x_it^{m-1}q; q\big)_{\infty}}}.
\end{gather*}
We have moreover the following limit holds uniformly for $(x_1, \dots, x_m)$ belonging to compact subsets of $ (\C\setminus\{0\} )^m$, because of Theorem~\ref{thm:asymptotics1},
\begin{gather*}
\lim_{N\rightarrow\infty}{\prod_{i=1}^m{\frac{P_{\lambda(N)}\big(x_i, t^{-1}, \dots, t^{1-N}\big)}{P_{\lambda(N)}\big(1, t^{-1}, \dots, t^{1-N}\big)} (x_i q; q)_{\theta N - 1}}} = \prod_{i=1}^m{\Phi^{\nu}(x_i; q, t)(x_iq; q)_{\infty}}.
\end{gather*}
It is not dif\/f\/icult to observe that if $U\subset\C^m$ is a domain preserved by the map of multiplication by~$q$, and $\{f_n\}_{n\geq 1}$, $f$ are sequences of holomorphic functions on $U$ for which $\lim\limits_{n\rightarrow\infty}{f_n(x)} = f(x)$ uniformly on compact subsets of $U$, then
\begin{gather*}
\lim_{n\rightarrow\infty}{T_{q, x}^sf_n(x)} = \lim_{n\rightarrow\infty}{f_n(q^s x)} = f(q^sx) = T_{q, x}^sf(x)
\end{gather*}
uniformly for $x$ belonging to compact subsets of $U$. As an implication, the order of the limit as $N\rightarrow\infty$ and the $q$-dif\/ference operator $\widetilde{D}_{q, \theta}^{(m)}$ can be interchanged. All the considerations above immediately imply the desired uniform limit for $(x_1, \dots, x_m)$ belonging to compact subsets of~$\U_m$.

As in the proof of Theorem~\ref{thm:asymptotics1}, the limit in the statement will hold also uniformly for compact subsets of $(\C\setminus\{0\})^m$ if we show that $\Phi^{\nu}(x_1, \dots, x_m; q, t)$ admits an analytic continuation to this larger domain. The extension of Riemann's theorem for removable singularities for several complex variables, \cite[Theorem~8]{R}, shows that $\Phi^{\nu}(x_1, \dots, x_m; q, t)$ admits an analytic continuation to all $\left(\{z\in\C \setminus\{0\}\colon |z| \leq R\}\right)^m$ if we showed only that $\Phi^{\nu}(x_1, \dots, x_m; q, t)$ is bounded on $(\{z\in\C\setminus\{0\}\colon |z|\leq R\})^m\cap\U_m$. The latter can be proved by repeating the argument in the proof of Theorem~\ref{thm:asymptotics1} above.

Finally, if $\nu_1 \geq 0$ then $\lim\limits_{N\rightarrow\infty}{\lambda_N(N)} = \nu_1$ implies $\lambda(N)_N \geq 0$ for large enough $N$. It follows that $P_{\lambda(N)}\big(x_1, \dots, x_m, t^{-m}, \dots, t^{1-N}\big)/P_{\lambda(N)}\big(1, t^{-1}, \dots, t^{1-N}\big)$ is a polynomial in $x_1, \dots, x_m$ for large enough $N$, and therefore holomorphic on~$\C^m$. The same argument as in the proof of Theorem~\ref{thm:asymptotics1} again shows $\Phi^{\nu}(x_1, \dots, x_m; q, t)$ admits an analytic continuation to~$\C^m$ and the limit holds uniformly for compact subsets of~$\C^m$.
\end{proof}

\section[Preliminaries on the $(q, t)$-Gelfand--Tsetlin graph]{Preliminaries on the $\boldsymbol{(q, t)}$-Gelfand--Tsetlin graph}\label{sec:qtpreliminaries}

In this section, assume $q, t\in (0, 1)$. We use the notation $\mathbb{P}-\lim\limits_{k\rightarrow\infty}M_k = M$ to indicate that a~sequence of probability measures $\{M_k\}_{k \geq 1}$ converges weakly to~$M$.

\subsection[The $(q, t)$-Gelfand--Tsetlin graph]{The $\boldsymbol{(q, t)}$-Gelfand--Tsetlin graph}\label{sec:qtGT}

The $(q, t)$-Gelfand--Tsetlin graph is an undirected, $\Z_{\geq 0}$-graded graph with countable vertices, together with a sequence of cotransition probabilities between the levels of the graph (considered as discrete spaces).

Begin by def\/ining the set of vertices of the graph as the set of all signatures
\begin{gather*}
\GT \myeq \bigsqcup_{N\geq 0}{\GT_N},
\end{gather*}
where, for convenience, we have also included the singleton $\GT_0 \myeq \{\varnothing\}$.

The edges are determined by the interlacing constraints: edges only join vertices associated to signatures whose lengths dif\/fer by $1$ and $\mu\in\GT_N$ is joined to $\lambda\in\GT_{N+1}$ if and only if $\mu \prec \lambda$, i.e., $\lambda_{N+1} \leq \mu_N \leq \cdots \leq \lambda_2 \leq \mu_1 \leq \lambda_1$. We also assume $\varnothing\prec (k)$, $\forall\, (k)\in\GT_1$. We call the graph with vertices and edges just described the \textit{Gelfand--Tsetlin graph}\footnote{GT graph, for short.}. Next we introduce a~$(q, t)$-deformation of the GT graph by considering certain cotransition probabilities.

\begin{df}
Consider the numbers $\Lambda^{N+1}_N(\lambda, \mu)$, $\mu\prec\lambda$, def\/ined by the expression
\begin{gather}\label{eq:linkdef}
\Lambda^{N+1}_N(\lambda, \mu) = t^{|\mu|}\psi_{\lambda/\mu}(q, t)\frac{P_{\mu}\big(1, t, \dots, t^{N-1}\big)}{P_{\lambda}\big(1, t, \dots, t^N\big)} = \psi_{\lambda/\mu}(q, t)\frac{P_{\mu}\big(t, t^2, \dots, t^N\big)}{P_{\lambda}(1, t, \dots, t^N\big)},
\end{gather}
for all $N\in\Z_{\geq 0}$, $\lambda\in\GT_{N+1}$, $\mu\in\GT_N$, $\mu\prec\lambda$, and where the branching coef\/f\/icients $\psi_{\lambda/\mu}(q, t)$ are def\/ined in Theorem~\ref{branchingmacdonald}. For convenience, also let $\Lambda^{N+1}_N(\lambda, \mu) = 0$ if $\mu\not\prec\lambda$, i.e., if $\mu$ is not adjacent to~$\lambda$.

The \textit{$(q, t)$-Gelfand--Tsetlin graph}\footnote{$(q, t)$-GT graph, for short.} is the sequence $\big\{\GT_N, \Lambda^{N+1}_N\colon N = 0, 1, 2, \dots\big\}$ of data consisting of the GT graph and the $\GT_{N+1}\times\GT_N$ matrices $\big[\Lambda^{N+1}_N(\lambda, \mu)\big]$, $N \geq 0$, def\/ined above.
\end{df}

In general, the numbers $\Lambda^{N+1}_N(\lambda, \mu)$ depend on the values $q$,~$t$, but for simplicity we omit that dependence from the notation. By virtue of the evaluation identity, Theorem~\ref{evaluation}, and the assumption $q, t\in (0, 1)$, we have
\begin{gather}\label{eq:nonnegative}
\Lambda^{N+1}_N(\lambda, \mu) \geq 0, \qquad \forall\, \lambda\in\GT_{N+1},\quad \mu\in\GT_N.
\end{gather}
Moreover, the branching rule for Macdonald polynomials, Theorem~\ref{branchingmacdonald}, shows
\begin{gather*}
P_{\lambda}\big(1, t, t^2, \dots, t^N\big) = \sum_{\mu\colon \mu\prec\lambda}{\psi_{\lambda/\mu}(q, t)P_{\mu}\big(t, t^2, \dots, t^N\big)}
\end{gather*}
and then
\begin{gather}\label{eq:sumone}
1 = \sum_{\mu\colon \mu\prec\lambda}{\Lambda^{N+1}_N(\lambda, \mu)}.
\end{gather}
Equations~(\ref{eq:nonnegative}) and~(\ref{eq:sumone}) show that $\big[\Lambda^{N+1}_N(\lambda, \mu)\big]$ is a stochastic matrix of format $\GT_{N+1}\times\GT_N$, for each $N\geq 0$. Thus $\Lambda^{N+1}_N$ determines a Markov kernel $\GT_{N+1} \dashrightarrow \GT_N$. For this reason, we call $\Lambda^{N+1}_N(\lambda, \mu)$ the \textit{cotransition probabilities}. Let us also def\/ine the more general Markov kernels $\Lambda^M_N\colon \GT_M \dashrightarrow \GT_N$, $M > N$, by
\begin{gather*}
\Lambda^M_N \myeq \Lambda^M_{M-1}\Lambda^{M-1}_{M-2}\cdots\Lambda^{N+1}_N,
\end{gather*}
or more explicitly
\begin{gather*}
\Lambda^M_N(\lambda, \mu) \myeq \sum_{\lambda \succ \lambda^{(M-1)} \succ \cdots \succ \lambda^{(N+1)} \succ \mu}{\Lambda^M_{M-1}\big(\lambda, \lambda^{(M-1)}\big)\cdots \Lambda^{N+1}_N\big(\lambda^{(N+1)}, \mu\big)}.
\end{gather*}

By duality, the kernel $\Lambda^M_N$ also determines a map $\mathcal{M}_{\rm prob}(\GT_M) \rightarrow \mathcal{M}_{\rm prob}(\GT_N)$ between the spaces of probability measures on $\GT_M$ and $\GT_N$, that we denote by the same symbol~$\Lambda^M_N$. For example, if $\lambda\in\GT_M$ and $\delta_{\lambda}$ is the delta mass at~$\lambda$, then $\Lambda^M_N\delta_{\lambda}$ is the probability measure on~$\GT_N$ given by
\begin{gather}\label{kernelsmeasures}
\Lambda^M_N\delta_{\lambda} ( \mu ) = \Lambda^M_N(\lambda, \mu).
\end{gather}

\begin{df}
A sequence $\{M_N\}_{N\geq 0}$, such that each $M_N$ is a probability measure on $\GT_N$, is called a \textit{$(q, t)$-coherent sequence} if the following relations are satisf\/ied
\begin{gather}\label{df:coherence}
M_N(\mu) = \sum_{\lambda\in\GT_{N+1}}{M_{N+1}(\lambda) \Lambda^{N+1}_N(\lambda, \mu)}, \qquad \forall\, N\geq 0, \quad \forall\, \mu\in\GT_N.
\end{gather}
Similarly, a f\/inite sequence $\{M_N\}_{N = 0, 1, \dots, k}$ is said to be a \textit{$(q, t)$-coherent sequence} if the relations~(\ref{df:coherence}) hold for $N = 0, 1, \dots, k-1$ and all $\mu\in\GT_N$.
\end{df}

It is clear that for any probability measure $M_N$ on $\GT_N$, there exist probability measures $M_0, M_1, \dots, M_{N-1}$ on $\GT_0, \GT_1, \dots, \GT_{N-1}$ such that $\{M_m\}_{m = 0, 1, \dots, N}$ is a $(q, t)$-coherent sequence, and moreover $M_0, \dots, M_{N-1}$ are uniquely determined by this condition: in fact, $M_m = \Lambda^N_m M_N$ $\forall \, 0\leq m\leq N-1$.

The set of (inf\/inite) $(q, t)$-coherent sequences $\{M_N\}_{N\geq 0}$ is a convex set. Theorem~\ref{thm:mainapplication} is, in dif\/ferent terms, a characterization of the extreme points of the set of $(q, t)$-coherent sequences.

\subsection[The path-space $\Tau$ and $(q, t)$-central measures]{The path-space $\boldsymbol{\Tau}$ and $\boldsymbol{(q, t)}$-central measures}\label{sec:central}

The set of $(q, t)$-coherent sequences def\/ined before is in bijection with a class of probability measures in the path-space of the GT graph~$\Tau$ that we def\/ine next.

\begin{df}The \textit{path-space $\Tau$} is the set of (inf\/inite) paths in the GT graph that begin at $\varnothing\in\GT_0$:
\begin{gather*}
\Tau \myeq \big\{\tau = \big(\varnothing = \tau^{(0)} \prec \tau^{(1)} \prec \tau^{(2)} \prec \cdots\big)\colon \tau^{(n)}\in\GT_n \ \forall n \geq 0\big\}.
\end{gather*}
For any f\/inite path $\phi = (\phi^{(0)} \prec \phi^{(1)} \prec \dots \prec \phi^{(n)})$, def\/ine the \textit{cylinder set $S_{\phi}$} (or simply $S(\phi)$) by
\begin{gather*}
S_{\phi} \myeq \big\{\tau\in\Tau\colon \tau^{(i)} = \phi^{(i)} \ \forall \, i = 0, 1, \dots, n\big\} \subset \Tau.
\end{gather*}
The set $\Tau$ is equipped with the $\sigma$-algebra generated by the cylinder sets~$S_{\phi}$, where~$\phi$ varies over all f\/inite paths in the GT graph. We always consider~$\Tau$ as a measurable space.
\end{df}

An interesting class of probability measures on $\Tau$ consists of the ones that are coherent with the sequence of stochastic matrices $\big\{\Lambda^{N+1}_N\big\}_{N\geq 0}$. To clarify what such coherence is, def\/ine the natural projection maps
\begin{gather*}
\operatorname{Proj}_N\colon \ \Tau \subset \prod_{n\geq 0}{\GT_n} \longrightarrow \GT_N,\\
\hphantom{\operatorname{Proj}_N\colon}{} \ \tau = \big(\tau^{(0)} \prec \tau^{(1)} \prec \tau^{(2)} \prec \cdots\big) \mapsto \tau^{(N)}.
\end{gather*}
It is a standard exercise to show that the $\sigma$-algebra of $\Tau$ is the smallest one for which all the maps $\operatorname{Proj}_N$ are measurable. Consequently, for any probability measure~$M$ on $\Tau$, we can associate to it the sequence of its pushforwards under the maps $\operatorname{Proj}_N$, namely the sequence $\{M_N\}_{N\geq 0}$, $M_N = (\operatorname{Proj}_N )_*M$.

\begin{df}\label{def:qtcentral}
A probability measure $M$ on $\Tau$ is said to be a \textit{$(q, t)$-central measure} if the following relations hold
\begin{gather*}
M\big( S\big(\phi^{(0)} \prec \phi^{(1)}\dots \prec \phi^{(N-1)} \prec \phi^{(N)}\big) \big) =
\Lambda^{N}_{N-1}\big(\phi^{(N)}, \phi^{(N-1)}\big) \cdots \Lambda^{1}_0\big(\phi^{(1)}, \phi^{(0)}\big) M_N\big(\phi^{(N)}\big) \\
\qquad{} = t^{|\phi^{(N-1)}| + \cdots + |\phi^{(0)}|} \frac{\psi_{\phi^{(N)}/\phi^{(N-1)}}(q, t) \cdots \psi_{\phi^{(1)}/\phi^{(0)}}(q, t) }{P_{\phi^{(N)}}\big(1, t, \dots, t^{N-1}\big)} M_N\big(\phi^{(N)}\big),
\end{gather*}
for all $N \geq 0$, all f\/inite paths $\phi = \big(\phi^{(0)} \prec \dots \prec \phi^{(N)}\big)$, and for some probability measures~$M_N$ on~$\GT_N$.
The branching coef\/f\/icients $\psi_{\mu/\nu}(q, t)$ are explicit in the statement of Theorem~\ref{branchingmacdonald}.

One can verify easily that, if the relations above hold, then the measure $M_N$ is the pushforward $( \operatorname{Proj}_N)_* M$, for all $N \geq 0$. Moreover, $\{M_N\}_{N \geq 0}$ is automatically a $(q, t)$-coherent sequence.

We denote by $M_{\rm prob}(\Tau)$ the set of $(q, t)$-central (probability) measures. The set of $(q, t)$-central measures is a convex set. The set of extreme points of $M_{\rm prob}(\Tau)$, equipped with its inherited topology, is called the \textit{boundary of the $(q, t)$-GT graph}\footnote{We may also call it the \textit{minimal boundary} of the $(q, t)$-GT graph, to dif\/ferentiate it from the \textit{Martin boundary} of the $(q, t)$-GT graph, see Section~\ref{sec:martindef} below.} and is denoted by $\Omega_{q, t}$.
\end{df}

The following proposition implies that the correspondence between the set of $(q, t)$-central measures and the set of $(q, t)$-coherent sequences is a~bijection.

\begin{prop}\label{prop:bijection}
Any probability measure $M$ on $\Tau$ has an associated sequence $\{M_N\}_{N\geq 0}$ of probability measures on $\{\GT_N\}_{N\geq 0}$, as shown above. The map $M\mapsto\{M_N\}_{N\geq 0}$ is a bijection between the set $M_{\rm prob}(\Tau)$ of $(q, t)$-central measures on $\Tau$, and the set of $(q, t)$-coherent sequences. The bijection is an isomorphism of convex sets.

Let $\big\{M^{(i)}\big\}_{i\geq 1}, M$ be elements on $M_{\rm prob}(\Tau)$ and $\big\{M^{(i)}_m\big\}_{\substack{i \geq 1, m\geq 0}}$, $\{M_m\}_{m\geq 0}$ be their correspon\-ding $(q, t)$-coherent sequences. Then the weak limit $\mathbb{P}-\lim\limits_{i\rightarrow\infty}{M^{(i)}} = M$ holds if and only if the weak limits $\mathbb{P}-\lim\limits_{i\rightarrow\infty}{M^{(i)}_m} = M_m$ hold for all $m\in\N$.
\end{prop}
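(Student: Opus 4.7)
The plan is to construct an inverse map $\{M_N\} \mapsto M$ via Kolmogorov's extension theorem, and then read off the convex and topological statements from the explicit formula defining the bijection.

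First, given a $(q,t)$-coherent sequence $\{M_N\}_{N \geq 0}$, I would define a set function $M$ on the algebra of cylinder sets by
\begin{gather*}
M(S_\phi) \myeq \Lambda^{N}_{N-1}\big(\phi^{(N)}, \phi^{(N-1)}\big) \cdots \Lambda^{1}_0\big(\phi^{(1)}, \phi^{(0)}\big) M_N\big(\phi^{(N)}\big)
\end{gather*}
for every finite path $\phi = \big(\phi^{(0)} \prec \cdots \prec \phi^{(N)}\big)$. Consistency under refinement is exactly the coherence relation \eqref{df:coherence}: summing $M(S_{\phi'})$ over all one-step extensions $\phi'$ of $\phi$ to length $N+1$ produces $\Lambda^{N}_{N-1}\cdots\Lambda^1_0$ times $\sum_{\lambda \succ \phi^{(N)}} \Lambda^{N+1}_N(\lambda, \phi^{(N)}) M_{N+1}(\lambda) = M_N(\phi^{(N)})$. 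Since the cylinder sets form a semi-algebra generating the $\sigma$-algebra on $\Tau$ and the total mass is $M_0(\varnothing) = 1$, Carath\'eodory's extension theorem (equivalently, Kolmogorov's theorem applied to the projective system $\{(\GT_N, M_N)\}$ with transition kernels $\Lambda^{N+1}_N$) yields a unique probability measure $M$ on $\Tau$. The defining formula shows $M$ is $(q,t)$-central in the sense of Definition~\ref{def:qtcentral} with pushforwards $(\operatorname{Proj}_N)_* M = M_N$.

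Second, the forward map $M \mapsto \{M_N\}$ is injective because two $(q,t)$-central measures with the same pushforwards agree on every cylinder set by Definition~\ref{def:qtcentral}, hence agree on the generated $\sigma$-algebra. Combined with the surjectivity just established, this gives the bijection. Both the forward map (pushforward under $\operatorname{Proj}_N$) and the inverse map (the displayed formula above) are affine in the measure, so the bijection is an isomorphism of convex sets.

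Third, for the equivalence of weak convergences, observe that $\Tau$ is a closed subset of the Polish space $\prod_{n \geq 0} \GT_n$ (each $\GT_n$ discrete and countable), every cylinder set $S_\phi$ is clopen, and the projections $\operatorname{Proj}_m$ are continuous. If $\mathbb{P}-\lim_{i\to\infty} M^{(i)} = M$, the continuous mapping theorem gives $\mathbb{P}-\lim_{i\to\infty} M^{(i)}_m = M_m$ for every $m$. Conversely, if $M^{(i)}_m \to M_m$ weakly on the discrete space $\GT_m$ for all $m$ (i.e.\ pointwise on points), then the central-measure formula expresses $M^{(i)}(S_\phi)$ as a fixed finite product times $M^{(i)}_N(\phi^{(N)})$, so $M^{(i)}(S_\phi) \to M(S_\phi)$ for every cylinder set $S_\phi$. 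The clopen algebra of finite disjoint unions of cylinder sets is dense in $C(\Tau)$ by Stone--Weierstrass (the space $\Tau$ being totally disconnected and metrizable), so this pointwise convergence on cylinder sets upgrades to weak convergence of $M^{(i)}$ to $M$ on $\Tau$ via the portmanteau theorem.

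The only nontrivial step is the Kolmogorov-extension argument of the first paragraph; the remaining content is a mechanical unpacking of definitions together with standard measure-theoretic facts about projective limits of probability measures on countable discrete spaces, so I do not anticipate a serious obstacle.
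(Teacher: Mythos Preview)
Your approach is correct and is essentially the standard argument the paper has in mind: the paper gives no proof at all, merely citing \cite[Propositions~4.4 and~4.9]{G} and \cite[Proposition~10.3]{Ol1} and leaving the adaptation to the reader. Your Kolmogorov-extension construction of the inverse map and the affine-ness observation are exactly right.

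There is one small gap in your third paragraph. The Stone--Weierstrass step does not apply as written, because $\Tau$ is not compact (each $\GT_n$ is countably infinite with the discrete topology), so the algebra of cylinder indicators is not uniformly dense in $C_b(\Tau)$. To repair this, argue instead via tightness: since $M^{(i)}_m \to M_m$ weakly on the discrete space $\GT_m$ and $M_m$ is a probability measure, the family $\{M^{(i)}_m\}_i$ is tight on $\GT_m$; choosing finite sets $F_m \subset \GT_m$ with $\sup_i M^{(i)}_m(\GT_m \setminus F_m) < \epsilon 2^{-m}$ and setting $K = \Tau \cap \prod_m F_m$ gives a compact set with $\inf_i M^{(i)}(K) > 1 - \epsilon$. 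Tightness plus convergence on the clopen $\pi$-system of cylinder sets then yields weak convergence (any subsequential limit agrees with $M$ on cylinders, hence equals $M$). Alternatively, and more directly, invoke the standard fact that on a countable product of Polish spaces weak convergence is equivalent to convergence of all finite-dimensional marginals; the level-$N$ marginal of a $(q,t)$-central measure is determined by $M_N$ through your displayed product formula, so pointwise convergence $M^{(i)}_N(\lambda) \to M_N(\lambda)$ (which is what weak convergence on discrete $\GT_N$ means) gives convergence of the finite-dimensional marginals. Either fix is routine and does not affect the substance of your argument.
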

\begin{proof}
Similar statements are known for other branching graphs, e.g., the case $t = q$ of our proposition is given in \cite[Propositions~4.4 and~4.9]{G}, and the case $t = q \rightarrow 1$ is in \cite[Proposition~10.3]{Ol1}.
In the case $t = q$, this proposition is given in \cite[Propositions~4.4 and~4.9]{G}.
The proof at our level of generality can be easily adapted from the proofs in~\cite{G}; details are left to the reader.
\end{proof}

\subsection{Macdonald generating functions}\label{sec:generating}

We introduce Macdonald generating functions, which will be very helpful in our study of $(q, t)$-coherent sequences.

\begin{df}
Let $M_N$ be a probability measure on $\GT_N$, then its \textit{Macdonald generating function} is the formal sum
\begin{gather*}
\PPP_{M_N}(x_1, \dots, x_N) \myeq \sum_{\lambda\in\GT_N}{M_N(\lambda)\frac{P_{\lambda}\big(x_1, x_2t, \dots, x_Nt^{N-1}\big)}{P_{\lambda}\big(1, t, \dots, t^{N-1}\big)}}.
\end{gather*}
\end{df}

Note that $\mathcal{P}_{M_m}(x_1, \dots, x_m)$ depends on the values $q$,~$t$, but we omit such dependence for simplicity.

The sum above is absolutely convergent on the torus $(x_1, \dots, x_m)\in\TT^m$. In fact, Theo\-rem~\ref{branchingmacdonald} and the fact that all the branching coef\/f\/icients $\psi_{\mu/\nu}(q, t)$ are nonnegative imply $\big| P_{\lambda}\big(x_1, x_2t, \dots, x_Nt^{N-1}\big) \big| \leq P_{\lambda}\big(|x_1|, |x_2t|, \dots, \big|x_N t^{N-1}\big|\big) = P_{\lambda}\big(1, t, \dots, t^{N-1}\big)$. Thus not only is $\PPP_{M_N}(x_1, \dots, x_N)$ well-def\/ined as a function on $\TT^N$, but also $\sup\limits_{x_1, \dots, x_N\in\TT}{|\PPP_{M_N}(x_1, \dots, x_N)|} \leq 1$. Therefore $\PPP_{M_N}\in L^{\infty}(\TT^m)$.

If $M_N$ is supported on the set of positive signatures $\GTp_N$, then each $P_{\lambda}\big(x_1, x_2t, \dots, x_Nt^{N-1}\big)$ is a polynomial in $x_1, \dots, x_N$ and therefore the sum def\/ining~$\PPP_{M_N}$ is absolutely convergent on the closed unit disk $(x_1, \dots, x_m)\in\DDD^m$. Moreover $\PPP_{M_N}\in L^{\infty}(\DDD^m)$ if $M_N$ is supported on~$\GTp_N$.

In general, $\PPP_{M_N}\in L^{\infty}(\TT^m) \subset L^2(\TT^m)$. The Fourier series expansion of $\PPP_{M_N}$ can be obtained by using Corollary~\ref{cor:branchingrule}. In fact,
\begin{gather*}
\PPP_{M_N}(x_1, \dots, x_N) = \sum_{\lambda\in\GT_N}{M_N(\lambda)\frac{P_{\lambda}\big(x_1, x_2t, \dots, x_Nt^{N-1}\big)}{P_{\lambda}\big(1, t, \dots, t^{N-1}\big)}}\\
\hphantom{\PPP_{M_N}(x_1, \dots, x_N)}{} = \sum_{\lambda\in\GT_N}{M_N(\lambda) \sum_{\mu\in\GT_N}{\frac{c_{\lambda, \mu} m_{\mu}\big(x_1, \dots, x_N t^{N-1}\big)}{P_{\lambda}\big(1, t, \dots, t^{N-1}\big)}} }\\
\hphantom{\PPP_{M_N}(x_1, \dots, x_N)}{}= \sum_{\mu\in\GT_N}{ m_{\mu}\big(x_1, \dots, x_N t^{N-1}\big) \sum_{\lambda\in\GT_N}{\frac{c_{\lambda, \mu}M_N(\lambda)}{P_{\lambda}\big(1, t, \dots, t^{N-1}\big)}} },
\end{gather*}
where the interchange in the order of summation follows from the absolute convergence of all the sums involved. (For the absolute convergence, the nonnegativity of all coef\/f\/icients $c_{\lambda, \mu}$ is needed.)
From the expansion above, we can extract the coef\/f\/icient of $x_1^{\kappa_1}\cdots x_N^{\kappa_N}$ in the Fourier series, for any $\kappa = (\kappa_1 \geq \dots \geq \kappa_N)\in\GT_N$. In fact, such term appears only in $m_{\kappa}\big(x_1, \dots, x_Nt^{N-1}\big)$ with coef\/f\/icient $t^{n(\kappa)}$, where $n(\kappa) = \kappa_2 + 2\kappa_3 + \cdots + (N-1)\kappa_N$. Thus the Fourier coef\/f\/icient of $x_1^{\kappa_1}\cdots x_N^{\kappa_N}$ in the expansion of $\PPP_{M_N}(x_1, \dots, x_N)$ is
\begin{gather}\label{rem:fouriercoeffs}
f_{\kappa_1, \dots, \kappa_N} = t^{n(\kappa)} \sum_{\lambda\in\GT_N}{\frac{c_{\lambda, \kappa}M_N(\lambda)}{P_{\lambda}\big(1, t, \dots, t^{N-1}\big)}}.
\end{gather}
If $M_N$ is supported on $\GTp_N$, then the sum def\/ining $f_{\kappa_1, \dots, \kappa_N}$ above is f\/inite. Indeed the only signatures with a nonzero contribution are $\lambda\in\GTp_N$ with $|\lambda| = |\kappa|$. But then $|\kappa| = |\lambda| \geq \lambda_1$, and there are f\/initely many signatures $\lambda\in\GT_N$ with $|\kappa| \geq \lambda_1 \geq \dots \geq \lambda_N \geq 0$. This observation will be put to use several times.

\begin{lem}\label{cor:generating}
Let $M_N$, $M_N'$ be probability measures on $\GT_N$ that are supported on $\GTp_N$. If
\begin{gather*}
\PPP_{M_N}(x_1, \dots, x_N) = \PPP_{M_N'}(x_1, \dots, x_N) \qquad \forall\, (x_1, \dots, x_N)\in\TT^N
\end{gather*}
then $M_N = M_N'$.
\end{lem}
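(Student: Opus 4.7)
The plan is to pass from the equality of $\PPP_{M_N}$ and $\PPP_{M_N'}$ as $L^\infty$ functions on $\TT^N$ to equality of their Fourier coefficients, and then use the triangularity of Macdonald polynomials in the monomial basis to recover the measures themselves.

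First I would recall from the discussion preceding the lemma (see equation~(\ref{rem:fouriercoeffs})) that the Fourier expansion of $\PPP_{M_N}$ with respect to the basis $\{x_1^{\kappa_1}\cdots x_N^{\kappa_N}\colon \kappa\in\GT_N\}$ has coefficients
\begin{gather*}
f_{\kappa}(M_N) = t^{n(\kappa)} \sum_{\lambda\in\GT_N}{\frac{c_{\lambda, \kappa}\, M_N(\lambda)}{P_{\lambda}(1, t, \dots, t^{N-1})}},
\end{gather*}
and that, because $M_N$ is supported on $\GTp_N$, each sum is finite (only $\lambda\in\GTp_N$ with $|\lambda|=|\kappa|$ and $\lambda\geq\kappa$ in the dominance order contribute, by Corollary~\ref{cor:branchingrule}). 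Since $\PPP_{M_N}=\PPP_{M_N'}$ pointwise on $\TT^N$ and both lie in $L^2(\TT^N)$, their Fourier coefficients must agree: $f_\kappa(M_N)=f_\kappa(M_N')$ for every $\kappa\in\GT_N$. As both measures are supported on $\GTp_N$, it suffices to show that the values $\{f_\kappa(M_N)\colon \kappa\in\GTp_N\}$ determine $\{M_N(\lambda)\colon \lambda\in\GTp_N\}$.

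Next I would fix $n\in\Z_{\geq 0}$ and look at partitions of size $n$ with length $\leq N$; there are finitely many such partitions, and Corollary~\ref{cor:branchingrule} tells us that $c_{\lambda,\kappa}=0$ unless $\lambda\geq\kappa$ in the dominance order, together with $c_{\lambda,\lambda}=1$. Thus the matrix $[c_{\lambda,\kappa}]$ restricted to partitions of size $n$ is unitriangular with respect to any linear extension of the dominance order. I would then argue by downward induction on this order: the maximal partition $\lambda=(n,0,\dots,0)$ gives
\begin{gather*}
f_{(n,0,\dots,0)}(M_N) = \frac{M_N((n,0,\dots,0))}{P_{(n,0,\dots,0)}(1,t,\dots,t^{N-1})},
\end{gather*}
which recovers $M_N((n,0,\dots,0))$, and at each subsequent step the equation for $f_\kappa$ expresses $M_N(\kappa)/P_\kappa(1,t,\dots,t^{N-1})$ as $t^{-n(\kappa)} f_\kappa(M_N)$ minus a linear combination of $M_N(\lambda)$'s with $\lambda>\kappa$ already determined. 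Doing this for every $n\geq 0$ recovers $M_N$ entirely, so $M_N=M_N'$.

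The only step that requires any care is the finiteness of the inner sums in~(\ref{rem:fouriercoeffs}), which is exactly where the hypothesis $\operatorname{supp}(M_N)\subseteq \GTp_N$ is used; without it the linear system is infinite and one cannot solve triangularly. Everything else is bookkeeping: the positivity $q,t\in(0,1)$ together with the evaluation identity (Theorem~\ref{evaluation}) guarantees $P_\lambda(1,t,\dots,t^{N-1})>0$, so no division by zero occurs in the inductive recovery.
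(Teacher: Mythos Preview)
Your proof is correct and follows essentially the same approach as the paper: both pass to Fourier coefficients via~(\ref{rem:fouriercoeffs}), restrict to a fixed size $n$, and exploit the unitriangularity of the matrix $[c_{\lambda,\kappa}]$ from Corollary~\ref{cor:branchingrule} to solve for the measure values. The only cosmetic difference is that the paper phrases the last step as inverting a finite upper-triangular matrix, while you phrase it as a downward induction on the dominance order; these are equivalent.
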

\begin{proof}
Both $\PPP_{M_N}(x_1, \dots, x_N)$ and $\PPP_{M_N'}(x_1, \dots, x_N)$ belong to $L^2\big(\TT^N\big)$.
The equality of these functions implies that their Fourier coef\/f\/icients agree.
From~\eqref{rem:fouriercoeffs}, this means
\begin{gather}\label{eqn:eqfouriers1}
\sum_{\mu\in\GTp_N}{\frac{c_{\mu, \kappa}M_N(\mu)}{P_{\mu}\big(1, t, \dots, t^{N-1}\big)}}
= \sum_{\mu\in\GTp_N}{\frac{c_{\mu, \kappa}M_N'(\mu)}{P_{\mu}\big(1, t, \dots, t^{N-1}\big)}} \qquad \forall\, \kappa\in\GT_N.
\end{gather}

Observe that we have restricted the sum above to $\mu\in\GTp_N$, because $M_N$, $M_N'$ are supported on positive signatures.
Let $n\in\Z_{\geq 0}$ be arbitrary. We show that $M_N(\kappa) = M_N'(\kappa)$ for all $\kappa\in\GTp_N$ with $|\kappa| = n$.

Let $C$ be the matrix whose rows and columns are parametrized by $\lambda\in\GTp_N$ with $|\lambda| = n$, and such that its entry $C(\kappa, \mu)$ is $c_{\mu, \kappa}/P_{\mu}\big(1, t, \dots, t^{N-1}\big)$. Observe that $C$ is a f\/inite and square matrix.
Also let $M$ (resp.~$M'$) be the column vector whose entries are parametrized by $\lambda\in\GTp_N$ with $|\lambda| = n$ and whose entry $\mu$ is $M_N(\mu)$ (resp.~$M_N'(\mu)$). Then~(\ref{eqn:eqfouriers1}) yields $CM = CM'$. The matrix $C$ is upper-triangular with respect to the order $\geq$ on signatures because $c_{\mu, \kappa} = 0$ unless $\mu \geq \kappa$. Moreover the diagonal entries are $c_{\mu, \mu}/P_{\mu}\big(1, t, \dots, t^{N-1}\big) = 1/P_{\mu}\big(1, t, \dots, t^{N-1}\big) \neq 0$. It follows that $C$ has an inverse and $M = C^{-1}CM = C^{-1}CM' = M'$. Therefore $M_N(\kappa) = M_N'(\kappa)$ $\forall\, \kappa\in\GTp_N$ with $|\kappa| = n$. Since $n\in\Z_{\geq 0}$ was arbitrary and both $M_N$, $M_N'$ are supported on~$\GTp_N$, we conclude $M_N = M_N'$.
\end{proof}

\begin{prop}\label{prop:coherentsequences}
If the sequence $\{M_N\}_{N\geq 0}$ $($resp.\ finite sequence $\{M_N\}_{N=0, 1, \dots, k})$ is a $(q, t)$-coherent sequence, then
\begin{gather*}
\PPP_{M_N}(x_1, \dots, x_N) = \PPP_{M_{N+1}}(1, x_1, \dots, x_N), \qquad \forall\, (x_1, \dots, x_N)\in\TT^N
\end{gather*}
for all $N\geq 0$ $($resp.\ for all $N = 0, 1, \dots, k-1)$. The converse statement holds if, for each $N\geq 0$ $($resp.\ $N = 0, 1, \dots, k-1)$, $M_N$ is supported on~$\GTp_N$.
\end{prop}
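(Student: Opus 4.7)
The proof splits naturally into the two implications. For the forward direction, the plan is to start from the definition of $\PPP_{M_{N+1}}(1, x_1, \dots, x_N)$ and expand each $P_{\lambda}\big(1, x_1 t, x_2 t^2, \dots, x_N t^N\big)$ using the branching rule of Theorem~\ref{branchingmacdonald} applied to the first variable $y_1 = 1$ (so that the factor $y_1^{|\lambda|-|\mu|}$ disappears), then apply homogeneity to rewrite $P_{\mu}\big(x_1 t, x_2 t^2, \dots, x_N t^N\big) = t^{|\mu|} P_{\mu}\big(x_1, x_2 t, \dots, x_N t^{N-1}\big)$ and $P_{\mu}\big(t, t^2, \dots, t^N\big) = t^{|\mu|} P_{\mu}\big(1, t, \dots, t^{N-1}\big)$. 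The resulting double sum over $\lambda \in \GT_{N+1}$ and $\mu \prec \lambda$ has summand
\begin{gather*}
M_{N+1}(\lambda)\,\psi_{\lambda/\mu}(q,t)\, t^{|\mu|}\,\frac{P_{\mu}\big(x_1, x_2 t, \dots, x_N t^{N-1}\big)}{P_{\lambda}\big(1, t, \dots, t^N\big)}.
\end{gather*}
I would then recognize the prefactor of $P_{\mu}\big(x_1, \dots, x_N t^{N-1}\big)/P_{\mu}\big(1, t, \dots, t^{N-1}\big)$ as $M_{N+1}(\lambda)\,\Lambda^{N+1}_N(\lambda, \mu)$ via~\eqref{eq:linkdef}, swap the order of summation, and invoke the coherence relation~\eqref{df:coherence} to collapse the inner sum to $M_N(\mu)$. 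This yields exactly $\PPP_{M_N}(x_1, \dots, x_N)$. The only analytic point to check is the interchange of sums, which I would justify via absolute convergence on $\TT^N$ using the nonnegativity of all $\psi_{\lambda/\mu}(q,t)$, $M_N(\mu)$, $M_{N+1}(\lambda)$ when $q, t \in (0, 1)$, together with the pointwise bound $|P_{\lambda}\big(x_1, \dots, x_N t^{N-1}\big)| \leq P_{\lambda}\big(1, t, \dots, t^{N-1}\big)$ noted in Section~\ref{sec:generating}.

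For the converse, assume each $M_N$ is supported on $\GTp_N$ and that the generating function identity holds. I would define an auxiliary probability measure $M'_N$ on $\GT_N$ by
\begin{gather*}
M'_N(\mu) \myeq \sum_{\lambda\in\GT_{N+1}}{M_{N+1}(\lambda)\,\Lambda^{N+1}_N(\lambda, \mu)},
\end{gather*}
which is well-defined by~\eqref{eq:nonnegative} and~\eqref{eq:sumone} and is supported on $\GTp_N$: indeed, $\Lambda^{N+1}_N(\lambda, \mu) = 0$ unless $\mu \prec \lambda$, and $M_{N+1}$ concentrates on $\GTp_{N+1}$, so $\mu_N \geq \lambda_{N+1} \geq 0$. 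By construction, the pair $(M'_N, M_{N+1})$ satisfies the coherence relation, so the forward direction (already established) gives $\PPP_{M'_N}(x_1, \dots, x_N) = \PPP_{M_{N+1}}(1, x_1, \dots, x_N)$ on $\TT^N$. Combining with the hypothesis, $\PPP_{M_N} = \PPP_{M'_N}$ on $\TT^N$, and since both $M_N$, $M'_N$ are supported on $\GTp_N$, Lemma~\ref{cor:generating} yields $M_N = M'_N$, which is exactly~\eqref{df:coherence}.

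I expect the main obstacle to be purely notational: carrying out the bookkeeping in the forward direction without losing track of which homogeneity factor $t^{|\mu|}$ comes from which polynomial, and ensuring the $\Lambda^{N+1}_N$ factor emerges in precisely the form of~\eqref{eq:linkdef}. The converse is essentially immediate once the forward direction and the injectivity Lemma~\ref{cor:generating} are in place. The statement for finite sequences $\{M_N\}_{N = 0, \dots, k}$ follows by applying the same argument on each of the finitely many levels.
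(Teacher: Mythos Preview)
Your proposal is correct and follows essentially the same route as the paper's proof: the forward direction is obtained by expanding $\PPP_{M_{N+1}}(1,x_1,\dots,x_N)$ via the branching rule, using homogeneity to extract $t^{|\mu|}$, identifying the cotransition probability $\Lambda^{N+1}_N(\lambda,\mu)$, interchanging sums (justified exactly as you describe), and invoking coherence; the converse is handled by introducing the auxiliary measure $M'_N$, applying the forward direction to it, and concluding via Lemma~\ref{cor:generating}. There is no meaningful difference between your argument and the paper's.
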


\begin{proof}
Let us prove the f\/irst part. Let $\{M_N\}_{N\geq 0}$ be a $(q, t)$-coherent sequence. By making use of the branching rule, Theorem~\ref{branchingmacdonald}, the fact that $P_{\mu}$ is homogeneous of degree $|\mu|$, and making a~change in the order of summation, we obtain
\begin{gather*}
\PPP_{M_{N+1}}(1, x_1, \dots, x_N) = \sum_{\lambda\in\GT_{N+1}}{M_{N+1}(\lambda)\frac{P_{\lambda}\big(1, x_1t, \dots, x_Nt^{N}\big)}{P_{\lambda}\big(1, t, \dots, t^N\big)}}\\
\qquad{} = \sum_{\lambda\in\GT_{N+1}}{\frac{M_{N+1}(\lambda)}{P_{\lambda}\big(1, t, \dots, t^N\big)} \sum_{\mu\in\GT_N}{\psi_{\lambda/\mu}(q, t) P_{\mu}\big(x_1t, \dots, x_Nt^{N}\big) } }\\
\qquad{} = \sum_{\lambda\in\GT_{N+1}}{\frac{M_{N+1}(\lambda)}{P_{\lambda}\big(1, t, \dots, t^N\big)} \sum_{\mu\in\GT_N}{t^{|\mu|} \psi_{\lambda/\mu}(q, t) P_{\mu}\big(x_1, \dots, x_Nt^{N-1}\big) } }\\
\qquad{} = \sum_{\mu\in\GT_N}{ \frac{P_{\mu}(x_1, \dots, x_Nt^{N-1})}{P_{\mu}\big(1, t, \dots, t^{N-1}\big)} \sum_{\lambda\in\GT_{N+1}}{t^{|\mu|} \psi_{\lambda/\mu}(q, t) \frac{P_{\mu}\big(1, t, \dots, t^{N-1}\big)}{P_{\lambda}\big(1, t, \dots, t^N\big)}M_{N+1}(\lambda) } }\\
\qquad{} = \sum_{\mu\in\GT_N}{ \frac{P_{\mu}\big(x_1, \dots, x_Nt^{N-1}\big)}{P_{\mu}\big(1, t, \dots, t^{N-1}\big)} M_N(\mu) } = \mathcal{P}_{M_N}(x_1, \dots, x_N).
\end{gather*}
We can easily show that all sums above are absolutely convergent, so the change in the order of summations can be justif\/ied.

Next we prove the converse statement. Assume that $M_{N}$, $M_{N+1}$ are probability measures on $\GT_{N}$, $\GT_{N+1}$. Assume that they are supported on $\GTp_N$, $\GTp_{N+1}$, respectively, and also that $\PPP_{M_N}(x_1, \dots, x_N) = \PPP_{M_{N+1}}(1, x_1, \dots, x_N)$ on~$\TT^N$. Let~$M_N'$ be the measure on $\GT_N$ def\/ined by
\begin{gather*}
M_N'(\mu) = \sum_{\lambda\in\GT_{N+1}}{M_{N+1}(\lambda)\Lambda^{N+1}_N(\lambda, \mu)} \qquad \forall\, \mu\in\GT_N.
\end{gather*}
Since $\big[\Lambda^{N+1}_N(\lambda, \mu)\big]$ is a stochastic matrix and $M_{N+1}$ is a probability measure on $\GT_{N+1}$, then~$M_N'$ is a probability measure on~$\GT_N$. Moreover since $M_{N+1}$ is supported on $\GTp_{N+1}$, it follows that $M_N'$ is supported on $\GTp_N$. In fact, if $\mu\notin\GT_N \setminus \GTp_N$ (or equivalently $\mu_N < 0$), then for any $\lambda\in\GT_{N+1}$, either $\lambda_{N+1} < 0$ in which case $M_{N+1}(\lambda) = 0$, or $\lambda_{N+1} \geq 0$ in which case $\Lambda^{N+1}_N(\lambda, \mu) = 0$.

We will be done if we showed $M_N = M_N'$. From what we have proved in the f\/irst part of the argument, we have $\PPP_{M_N'}(x_1, \dots, x_N) = \PPP_{M_{N+1}}(1, x_1, \dots, x_N)$ on~$\TT^N$.
It follows that $\PPP_{M_N} = \PPP_{M_N'}$ on $\TT^N$.
An application of Lemma~\ref{cor:generating} concludes the proof.
\end{proof}

\begin{prop}\label{prop:convergencemacdonalds}
Let $N\in\N$ be arbitrary. If $\{M^m\}_{m\geq 1}$, $M$ are all probability measures on~$\GT_N$ such that the weak convergence holds $\mathbb{P}-\lim\limits_{m\rightarrow\infty}{M^m} = M$, then{\samepage
\begin{gather}\label{eqn:convergencemacdonalds}
\lim_{m\rightarrow\infty}{\PPP_{M^m}(x_1, \dots, x_N)} = \PPP_M(x_1, \dots, x_N) \qquad \forall\, (x_1, \dots, x_N)\in\TT^N.
\end{gather}
The convergence above is uniform on~$\TT^N$.}
\end{prop}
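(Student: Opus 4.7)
The plan is to exploit the fact that each summand in the Macdonald generating function is uniformly bounded on $\TT^N$ by the corresponding probability weight, so the statement reduces to total variation convergence of $M^m$ to $M$, which in turn follows from weak convergence on the discrete space $\GT_N$ via Scheff\'e's lemma.

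First, I would establish the pointwise bound
\begin{gather*}
\left| \frac{P_{\lambda}\big(x_1, x_2 t, \dots, x_N t^{N-1}\big)}{P_{\lambda}\big(1, t, \dots, t^{N-1}\big)} \right| \leq 1 \qquad \forall\, (x_1, \dots, x_N) \in \TT^N,\quad \lambda\in\GT_N.
\end{gather*}
This follows from Corollary~\ref{cor:branchingrule}, which asserts that the coefficients $c_{\lambda, \mu}$ in the expansion $P_{\lambda} = \sum_{\mu}{c_{\lambda, \mu} m_{\mu}}$ are nonnegative when $q, t \in (0, 1)$, combined with the observation that $|m_{\mu}(x_1, x_2 t, \dots, x_N t^{N-1})| \leq m_{\mu}(1, t, \dots, t^{N-1})$ on $\TT^N$ (a term-by-term bound on the monomials, using $|x_i| = 1$). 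The denominator $P_{\lambda}(1, t, \dots, t^{N-1})$ is positive by Theorem~\ref{evaluation}, and the numerator is thus bounded in absolute value by the denominator.

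Second, I would pass from weak convergence to total variation convergence. Since $\GT_N$ is countable and discrete, weak convergence $\mathbb{P}-\lim_{m\rightarrow\infty}{M^m} = M$ is equivalent to pointwise convergence of the probability mass functions $M^m(\lambda) \to M(\lambda)$ for each $\lambda\in\GT_N$ (test with the bounded continuous indicator functions $\mathbf{1}_{\{\lambda\}}$). Since $\sum_{\lambda\in\GT_N}{M^m(\lambda)} = \sum_{\lambda\in\GT_N}{M(\lambda)} = 1$, Scheff\'e's lemma yields the $\ell^1$-convergence
\begin{gather*}
\lim_{m\rightarrow\infty}{\sum_{\lambda\in\GT_N}{|M^m(\lambda) - M(\lambda)|}} = 0.
\end{gather*}

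Third, I would combine the two ingredients. Writing the difference of generating functions as a single series, the triangle inequality and the uniform bound from step one give
\begin{gather*}
\sup_{(x_1, \dots, x_N)\in\TT^N}{|\PPP_{M^m}(x_1, \dots, x_N) - \PPP_M(x_1, \dots, x_N)|} \leq \sum_{\lambda\in\GT_N}{|M^m(\lambda) - M(\lambda)|},
\end{gather*}
and the right-hand side tends to zero by step two. This establishes~\eqref{eqn:convergencemacdonalds} and shows the convergence is uniform on $\TT^N$, as required. The proof is essentially a soft dominated-convergence argument, and there is no real obstacle; the only point needing care is the uniform majorization in the first step, where the nonnegativity of the $c_{\lambda, \mu}$ is essential.
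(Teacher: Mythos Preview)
Your proof is correct and follows essentially the same strategy as the paper: both bound the difference of generating functions by $\sum_{\lambda}|M^m(\lambda)-M(\lambda)|$ via the uniform estimate $|P_\lambda(x_1,\dots,x_Nt^{N-1})/P_\lambda(1,\dots,t^{N-1})|\le 1$ on $\TT^N$, and then argue that this $\ell^1$-distance vanishes. The only cosmetic difference is that you invoke Scheff\'e's lemma to pass from pointwise to total-variation convergence, whereas the paper does the equivalent truncation argument by hand (choosing a large finite box $\GT_N^{[-c,c]}$ carrying most of the mass of $M$ and of $M^m$, and controlling the finite and tail parts separately).
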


\begin{proof}
Let $\epsilon > 0$ be a very small real number. Since $M$ is a probability measure, there exists $c>0$ large enough so that
\begin{gather}\label{eqn:eqnlemma1}
M\left( \{ \lambda \in \GT_N\colon c \geq \lambda_1 \geq \lambda_2 \geq \dots \geq \lambda_N \geq -c \} \right) > 1 - \epsilon.
\end{gather}
From the weak convergence $\mathbb{P}-\lim\limits_{m\rightarrow\infty}{M^m} = M$, there exists $N_1 \in \N$ so that $m > N_1$ implies
\begin{gather}\label{eqn:eqnlemma2}
M^m\left( \{ \lambda \in \GT_N\colon c \geq \lambda_1 \geq \lambda_2 \geq \dots \geq \lambda_N \geq -c \} \right) > 1 - 2\epsilon.
\end{gather}
Consider the set $\GT_N^{[-c, c]} \myeq \{\lambda\in\GT_N\colon c \geq \lambda_1 \geq \dots \geq \lambda_N \geq -c\}\subset\GT_N$. It is clear that $\GT_N^{[-c, c]}$ is f\/inite and has cardinality no greater than $(2c + 1)^N$. Also, since $\GT_N^{[-c,, c]}$ is a f\/inite set, there exists $N_2\in\N$ such that $m > N_2$ implies
\begin{gather}\label{eqn:eqnlemma3}
\left|M(\lambda) - M^m(\lambda)\right| < \frac{\epsilon}{(2c+1)^N} \qquad \forall\, \lambda\in\GT_N^{[-c, c]}.
\end{gather}

We are ready to make the desired estimate. Use (\ref{eqn:eqnlemma1}), (\ref{eqn:eqnlemma2}), (\ref{eqn:eqnlemma3}) and the triangle inequality to argue that for any $m > \max\{N_1, N_2\}$, we have
\begin{gather}
\sup_{(x_1, \dots, x_N)\in\TT^N}{ \left| \PPP_{M^m}(x_1, \dots, x_N) - \PPP_M(x_1, \dots, x_N) \right| }\nonumber\\
\qquad{} \leq \sum_{\lambda\in\GT_N^{[-c, c]}}{|M^m(\lambda) - M(\lambda)|\sup_{(x_1, \dots, x_N)\in\TT^N}{ \left|\frac{P_{\lambda}\big(x_1, x_2t, \dots, x_Nt^{N-1}\big)}{P_{\lambda}\big(1, t, \dots, t^{N-1}\big)}\right| }}\nonumber\\
\qquad\quad{} + \sum_{\lambda\in\GT_N\setminus\GT_N^{[-c, c]}}{ \left( M^m(\lambda) + M(\lambda) \right)\sup_{(x_1, \dots, x_N)\in\TT^N}{ \left|\frac{P_{\lambda}\big(x_1, x_2t, \dots, x_Nt^{N-1}\big)}{P_{\lambda}\big(1, t, \dots, t^{N-1}\big)}\right| }}\nonumber\\
\qquad{} \leq \sum_{\lambda\in\GT_N^{[-c, c]}}{|M^m(\lambda) - M(\lambda)|} + \sum_{\lambda\in\GT_N\setminus\GT_N^{[-c, c]}}{\left(M^m(\lambda) + M(\lambda)\right)}\nonumber\\
\qquad{} \leq \frac{\epsilon}{(2c+1)^N}\big| \GT_N^{[-c, c]} \big| + M^m\big(\GT_N \setminus \GT_N^{[-c, c]}\big) + M\big(\GT_N \setminus \GT_N^{[-c, c]}\big)\label{eqns:estimates1}\\
\qquad{} \leq \epsilon + 2\epsilon + \epsilon = 4\epsilon.\tag*{\qed}
\end{gather}\renewcommand{\qed}{}
\end{proof}

A partial converse to the previous proposition is Proposition~\ref{prop:lemmaconv} below.

\begin{prop}\label{prop:lemmaconv}
Let $\{M^m\}_{m \geq 1}$, $M$, be probability measures on $\GT_N$, supported on $\GTp_N$. Assume the following convergence
\begin{gather*}
\lim_{m\rightarrow\infty}{\PPP_{M^m}(x_1, \dots, x_N)} = \PPP_{M}(x_1, \dots, x_N)
\end{gather*}
holds uniformly on $\TT^N$, then we have the weak convergence $\mathbb{P}-\lim\limits_{m\rightarrow\infty}{M^m} = M$.
\end{prop}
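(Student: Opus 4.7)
\textbf{Proof plan for Proposition \ref{prop:lemmaconv}.}

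The plan is to reduce weak convergence on $\GT_N$ to pointwise convergence of the masses $M^m(\lambda) \to M(\lambda)$ for every $\lambda\in\GT_N$, and then extract such pointwise convergence from the uniform convergence of Macdonald generating functions via a Fourier-coefficient argument that essentially inverts the triangular system appearing in the proof of Lemma~\ref{cor:generating}.

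First I would pass from generating functions to Fourier coefficients. Since $\PPP_{M^m}, \PPP_M \in L^\infty(\TT^N)\subset L^2(\TT^N)$ and the convergence is uniform, it is in particular $L^2$, so every Fourier coefficient of $\PPP_{M^m}$ converges to the corresponding Fourier coefficient of $\PPP_M$. By formula~\eqref{rem:fouriercoeffs}, this means that for each $\kappa=(\kappa_1\geq\cdots\geq\kappa_N)\in\GT_N$,
\begin{gather*}
\sum_{\lambda\in\GTp_N}{\frac{c_{\lambda,\kappa}\,M^m(\lambda)}{P_\lambda(1,t,\dots,t^{N-1})}}
\xrightarrow[m\to\infty]{}
\sum_{\lambda\in\GTp_N}{\frac{c_{\lambda,\kappa}\,M(\lambda)}{P_\lambda(1,t,\dots,t^{N-1})}},
\end{gather*}
where we have restricted to $\lambda\in\GTp_N$ because both measures are supported there. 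Moreover, only finitely many $\lambda\in\GTp_N$ contribute to each such sum: by Corollary~\ref{cor:branchingrule}, $c_{\lambda,\kappa}\neq 0$ forces $\lambda\geq\kappa$ in the dominance order, hence $|\lambda|=|\kappa|$, and there are only finitely many positive signatures of a given size. Thus, the relations above are finite linear identities indexed by $\kappa\in\GTp_N$ of any fixed size $n$.

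Next, I would invert the triangular system, exactly as in the proof of Lemma~\ref{cor:generating}. For each $n\in\Z_{\geq 0}$, let $C_n$ be the finite square matrix with rows and columns indexed by $\{\lambda\in\GTp_N\colon |\lambda|=n\}$ and entries $C_n(\kappa,\mu)=c_{\mu,\kappa}/P_\mu(1,t,\dots,t^{N-1})$. By Corollary~\ref{cor:branchingrule}, $C_n$ is upper-triangular in the dominance order with nonzero diagonal entries $1/P_\mu(1,t,\dots,t^{N-1})$, hence invertible. Applying $C_n^{-1}$ to the convergence of Fourier coefficients obtained above yields
\begin{gather*}
M^m(\lambda) \xrightarrow[m\to\infty]{} M(\lambda),\qquad \forall\,\lambda\in\GTp_N.
\end{gather*}
Since both $M^m$ and $M$ are supported on $\GTp_N$, the same pointwise convergence holds trivially for $\lambda\in\GT_N\setminus\GTp_N$, so $M^m(\lambda)\to M(\lambda)$ for every $\lambda\in\GT_N$.

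Finally, I would upgrade pointwise convergence on the countable discrete space $\GT_N$ to weak convergence. Since $M^m(\lambda)\geq 0$ converges pointwise to $M(\lambda)\geq 0$ and $\sum_{\lambda}M^m(\lambda)=1=\sum_{\lambda}M(\lambda)$, Scheff\'e's lemma gives $\sum_{\lambda\in\GT_N}|M^m(\lambda)-M(\lambda)|\to 0$, i.e., convergence in total variation, which is stronger than weak convergence. The only step where anything could go wrong is the invertibility of $C_n$, but this is immediate from the triangular structure already used in Lemma~\ref{cor:generating}; no new ingredient beyond Corollary~\ref{cor:branchingrule}, standard Fourier analysis on $\TT^N$, and Scheff\'e's lemma is required.
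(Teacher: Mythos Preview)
Your proof is correct and follows essentially the same approach as the paper: pass from uniform convergence to convergence of Fourier coefficients via~\eqref{rem:fouriercoeffs}, invert the finite upper-triangular matrix $C_n$ on each size-$n$ block of $\GTp_N$ to obtain $M^m(\lambda)\to M(\lambda)$ pointwise, and conclude weak convergence. The only cosmetic difference is that you make the final step explicit through Scheff\'e's lemma, whereas the paper simply asserts that pointwise convergence of probability measures on the discrete space $\GT_N$ yields weak convergence.
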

\begin{proof}
All functions $\{\PPP_{M^m}\}_{m\geq 1}$, $\PPP_M$ belong to $L^2(\TT^m)$. Therefore the limit $\lim\limits_{m\rightarrow\infty}\PPP_{M^m}(x_1, \dots,$ $ x_N) = \PPP_{M}(x_1, \dots, x_N)$ implies the convergence of the Fourier coef\/f\/icients. Due to~\eqref{rem:fouriercoeffs}, this implies that, for any $\kappa\in\GT_N$, we have
\begin{gather*}
\lim_{m\rightarrow\infty}{ \sum_{\lambda\in\GTp_N}{ \frac{c_{\lambda, \kappa}M^m(\lambda)}{P_{\lambda}\big(1, t, \dots, t^{N-1}\big)} } } = \sum_{\lambda\in\GTp_N}{ \frac{c_{\lambda, \kappa}M(\lambda)}{P_{\lambda}\big(1, t, \dots, t^{N-1}\big)} }.
\end{gather*}
We show that $\lim\limits_{m\rightarrow\infty}{M^m(\lambda)} = M(\lambda)$ for any $\lambda\in\GTp_N$. In fact, let $n\in\Z_{\geq 0}$ be arbitrary and let us prove $\lim\limits_{m\rightarrow\infty}{M^m(\lambda)} = M(\lambda)$ for any $\lambda\in\GTp_N$ with $|\lambda| = n$. Consider the f\/inite, square matrix~$C$ whose rows and columns are parametrized by $\lambda\in\GTp_N$, $|\lambda| = n$, and whose entries are $C(\kappa, \lambda) = c_{\lambda, \kappa}/P_{\lambda}\big(1, t, \dots, t^{N-1}\big)$.
Also let $\{M^m\}_{m\geq 0}, M$ be column vectors whose entries are parametrized by $\lambda\in\GTp_N$ with $|\lambda| = n$, and whose entries, at $\lambda\in\GTp_N$, are $\{M^m(\lambda)\}_{m\geq 0}, M(\lambda)$. From the limit relation above, we have the entrywise limit of column vectors $\lim\limits_{m\rightarrow\infty}{CM^{m}} = CM$. The matrix $C$ is upper triangular (with respect to the order on signatures given in Corollary~\ref{cor:branchingrule}) and has nonzero diagonal entries, thus it has an inverse~$C^{-1}$.
Each entry of the column vectors $M^m = \big(C^{-1}C\big)M^m = C^{-1}(CM^{m})$ is a f\/inite linear combination of entries of $CM^m$, and the same can be said about the entries of $C^{-1}CM = M$. Thus the entry-wise limit of column vectors $\lim\limits_{m\rightarrow\infty}{M^m} = M$ follows.

By assumption, $M^m(\lambda) = M(\lambda) = 0$ for any $\lambda\notin\GTp_N$, so also $\lim\limits_{m\rightarrow\infty}{M^m(\lambda)} = M(\lambda)$ in this case. Hence the weak convergence $\mathbb{P}-\lim\limits_{m\rightarrow\infty}{M^m} = M$ is proved.
\end{proof}

\subsection[Automorphisms $A_k$]{Automorphisms $\boldsymbol{A_k}$}\label{sec:automorphismsAk}

Recall the set $\Nu$ of nonincreasing integer sequences, given in Def\/inition~\ref{def:stabilizing}. Equip~$\Nu$ with the topology of pointwise convergence. We denote a generic element of $\Nu$ by $\nu = (\nu_1 \leq \nu_2 \leq \cdots)$. For each $k\in\Z$, we can def\/ine the continuous map $A_k\colon \Nu \rightarrow \Nu$ by $\nu \mapsto A_k\nu = (\nu_1 + k \leq \nu_2 + k \leq \cdots)$. It is clear that $A_k$ has inverse $A_{-k}$, so each $A_k$ is a homeomorphism.

Similar automorphisms can be constructed for $\GT$ and $\Tau$. In detail, we can def\/ine the map $A_k\colon \GT \rightarrow \GT$ by $\lambda\mapsto A_k\lambda = (\lambda_1 + k \geq \lambda_2 + k\geq \cdots)$, $A_k\varnothing = \varnothing$, whose inverse is $A_{-k}$, and moreover it restricts to automorphisms $\GT_N \rightarrow \GT_N$ for each $N\in\Z_{\geq 0}$. It is clear that $\mu\prec\lambda$ implies $A_k\mu\prec A_k\lambda$, so the automorphism $A_k$ of $\GT$ induces the automorphism of measurable spaces $A_k\colon \Tau \rightarrow \Tau$, $\tau = \big(\tau^{(0)} \prec \tau^{(1)}\prec \tau^{(2)} \prec \cdots\big)\mapsto \big(A_k\tau^{(0)}\prec A_k\tau^{(1)}\prec A_k\tau^{(2)}\prec\cdots\big)$.

The same notation $A_k$ is used to def\/ine automorphisms of the spaces $\Nu$, $\GT$ and $\Tau$, but there should be no confusion each time it is used in the future.

We have introduced the automorphisms $A_k$ because, in Lemma~\ref{lem:qtcoherency} below, we will relate the extreme central probability measures associated to $\nu$ and $A_k\nu$.
The starting point is the following simple statement, which has nothing to do with probability.

\begin{lem}\label{lem:Akfunctions}
Recall the functions $\Phi^{\nu}(x_1, \dots, x_m; q, t)$, defined in the statement of Theorem~{\rm \ref{thm:asymptotics2}}. Let $\nu\in\Nu$ and $k\in\Z$ be arbitrary. The following equality holds
\begin{gather}
\Phi^{A_k\nu}(x_1, \dots, x_m; q, t) = t^{k{m \choose 2}}(x_1\cdots x_m)^k \Phi^{\nu}(x_1, \dots, x_m; q, t), \nonumber\\ \forall\, (x_1, \dots, x_m)\in (\C\setminus\{0\})^m.\label{eqn:equalityshift}
\end{gather}
\end{lem}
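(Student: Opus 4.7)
The plan is to deduce (\ref{eqn:equalityshift}) from the definition of $\Phi^{\nu}$ as a limit of ratios of Macdonald characters (Theorem~\ref{thm:asymptotics2}), combined with the index stability property of Macdonald Laurent polynomials extended from (\ref{macdonaldsignatures1}) via (\ref{macdonaldsignatures11}). This bypasses any direct manipulation of the integral formula (\ref{eqn:Phi1}) or of the $q$-difference operator in (\ref{def:Phinu}).

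First, I would fix the canonical stabilizing sequence $\lambda(N) \myeq (\nu_N \geq \nu_{N-1} \geq \cdots \geq \nu_1) \in \GT_N$, which is a legitimate signature because $\nu \in \Nu$ is weakly increasing; it obviously stabilizes to $\nu$ in the sense of Definition~\ref{def:stabilizing}. Correspondingly, $A_k\lambda(N) = (\nu_N + k \geq \cdots \geq \nu_1 + k) \in \GT_N$ stabilizes to $A_k\nu$.

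Next, I would apply the shift identity
\begin{gather*}
P_{A_k\mu}(y_1, \dots, y_N) = (y_1 \cdots y_N)^k \, P_{\mu}(y_1, \dots, y_N),
\end{gather*}
valid for any $\mu \in \GT_N$ and any $k \in \Z$ by iterating (\ref{macdonaldsignatures1}) on a large positive shift of $\mu$ and pulling scalars across (\ref{macdonaldsignatures11}), to both the numerator and denominator of the ratio appearing on the right side of (\ref{eqn:asymptotics2}). This produces a scalar prefactor independent of $N$ equal to
\begin{gather*}
\left( \frac{x_1 \cdots x_m \cdot t^{-m} \cdot t^{-m-1} \cdots t^{1-N}}{1 \cdot t^{-1} \cdot t^{-2} \cdots t^{1-N}} \right)^{\!k} = (x_1 \cdots x_m)^k \, t^{k \binom{m}{2}},
\end{gather*}
since the exponent of $t$ telescopes to $k \bigl( \tfrac{N(N-1)}{2} - \bigl[\tfrac{N(N-1)}{2} - \tfrac{m(m-1)}{2}\bigr] \bigr) = k \binom{m}{2}$. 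Hence, for every $N$,
\begin{gather*}
\frac{P_{A_k\lambda(N)}\big(x_1, \dots, x_m, t^{-m}, \dots, t^{1-N}\big)}{P_{A_k\lambda(N)}\big(1, t^{-1}, \dots, t^{1-N}\big)} = (x_1 \cdots x_m)^k \, t^{k\binom{m}{2}} \, \frac{P_{\lambda(N)}\big(x_1, \dots, x_m, t^{-m}, \dots, t^{1-N}\big)}{P_{\lambda(N)}\big(1, t^{-1}, \dots, t^{1-N}\big)}.
\end{gather*}

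Finally, I would let $N \to \infty$. Since both $\{\lambda(N)\}$ and $\{A_k\lambda(N)\}$ stabilize, Theorem~\ref{thm:asymptotics2} applies to each, and the two sides converge uniformly on compact subsets of $(\C\setminus\{0\})^m$ to $\Phi^{A_k\nu}(x_1, \dots, x_m; q, t)$ and $(x_1 \cdots x_m)^k \, t^{k\binom{m}{2}} \, \Phi^{\nu}(x_1, \dots, x_m; q, t)$, respectively; this proves (\ref{eqn:equalityshift}) pointwise on $(\C \setminus \{0\})^m$. There is no real obstacle here: the hard analytic work has already been absorbed into Theorem~\ref{thm:asymptotics2}, and the present lemma reduces to a one-line bookkeeping computation of a telescoping exponent of $t$. (A direct derivation from the integral representation (\ref{eqn:Phi1}) would be possible via the change of variables $z \mapsto z + k$, but would require an extra contour-deformation argument to handle the poles crossed when $k$ is negative, so the limit-based proof is preferable.)
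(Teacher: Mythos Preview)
Your argument is correct and takes a genuinely different route from the paper's proof. The paper works directly with the explicit formula \eqref{def:Phinu}: it observes that $\nu$ enters only through the univariate factors $\Phi^{\nu}(x_i;q,t)$, invokes the operator identity $\widetilde{D}_{q,\theta}^{(m)}\{(x_1\cdots x_m)^k f\}=t^{k\binom{m}{2}}(x_1\cdots x_m)^k\widetilde{D}_{q,\theta}^{(m)}\{f\}$ (which follows from $\sum_i\bigl((i-1)\theta+\tau_i^+-\tau_i^-\bigr)=\theta\binom{m}{2}$), and thereby reduces to the one-variable case, which is then handled by the substitution $z\mapsto z+k$ in the integral \eqref{eqn:Phi1}. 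Your proof instead stays at the level of the limit \eqref{eqn:asymptotics2} and uses only the index stability of Macdonald Laurent polynomials, so the factor $t^{k\binom{m}{2}}$ appears transparently from the telescoping product of the principal specialization. This is shorter and avoids both the operator identity and any manipulation of the integral; the paper's approach, on the other hand, has the virtue of making the role of the structure of $\widetilde{D}_{q,\theta}^{(m)}$ explicit. One small inaccuracy in your final parenthetical: the paper's contour change $z\mapsto z+k$ does not actually cross poles, since the parameter $M$ in the description of $\CC^+$ is arbitrary (subject only to $M>\max\{0,-\nu_1\}$), so the shifted contour is already an admissible $\CC^+$ for $A_k\nu$; no extra deformation argument is needed.
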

\begin{proof}
As both sides of the identity~(\ref{eqn:equalityshift}) are analytic functions on $ (\C\setminus\{0\} )^m$, we only need to prove the equality for $(x_1, \dots, x_m)\in\U_m$, where the domain $\U_m$ was def\/ined in the statement of Theorem~\ref{thm:asymptotics2}. We can now make use of formula~(\ref{def:Phinu}) for $\Phi^{\nu}(x_1, \dots, x_m; q, t)$. Observe that the only place where~$\nu$ appears in the right-hand side is inside the univariate functions $\Phi^{\nu}(x_i; q, t)$. The operator $\widetilde{D}_{q, \theta}^{(m)}$ satisf\/ies that for any Laurent polynomial $f$ on variables $x_1, \dots, x_m$, the following identity holds
\[
\widetilde{D}_{q, \theta}^{(m)} \big\{ (x_1\cdots x_m)^k f \big\} = t^{k {m \choose 2}} (x_1 \cdots x_m)^k \widetilde{D}_{q, \theta}^{(m)} \{f\}.
\]
We deduce that the lemma will be proved for all $m\in\N$ once we prove it for $m = 1$, that is, we need $\Phi^{A_k\nu}(x; q, t) = x^k \Phi^{\nu}(x; q, t)$ $\forall \, x\in \U = \bigcap\limits_{k\geq 1}{\{x \neq q^{-k}\}}\cap\{x \neq 0\}$. The latter statement easily follows from the integral def\/inition of $\Phi^{\nu}(x; q, t)$ in Theorem~\ref{thm:asymptotics1} (to be precise, our desired statement follows after a change of variables $z\mapsto z+k$ in the integral).
\end{proof}

Next we introduce new maps $\{A_k\}_{k\in\Z}$ on spaces of probability measures.

For a probability measure $M_m$ on $\GT_m$, def\/ine $A_k M_m$ as the pushforward of $M_m$ under the automorphism $A_k$ of $\GT_m$, i.e., $A_k M_m (\mu) \myeq M_m(A_{-k}\mu)$ for all $\mu\in\GT_m$. Observe that if we let~$\delta_{\lambda}$ be the probability measure on $\GT_m$ given by the delta mass at $\lambda\in\GT_m$, then $A_k \delta_{\lambda} = \delta_{A_k\lambda}$ for any $k\in\Z$.

Similarly if $M$ is a probability measure on $\Tau$, def\/ine $A_k M$ as the pushforward of $M$ under the automorphism $A_k$ of~$\Tau$. This can be described concretely as follows. The automorphism $A_k$ of $\Tau$ induces automorphisms $A_k$ on the set of paths of length $m$ in the GT graph, for any $m\in\N$:
\begin{gather*}
\phi = \big(\phi^{(0)} \prec \phi^{(1)} \prec \cdots \prec \phi^{(m)}\big) \mapsto A_k\phi \myeq \big(A_k\phi^{(0)} \prec A_k\phi^{(1)} \prec \cdots \prec A_k\phi^{(m)}\big).
\end{gather*}
It is therefore natural to def\/ine also an automorphism on the family of cylinder sets by $A_k S_{\phi} \myeq S_{A_k \phi}$.
Then $A_k M$ is given by $A_k M(S_{\phi}) \myeq M(A_{-k}S_{\phi}) = M(S_{A_{-k}\phi})$ for all f\/inite paths $\phi = \big(\phi^{(0)} \prec \phi^{(1)} \prec \dots \prec \phi^{(m)}\big)$.

\begin{lem}\label{lem:qtcoherency}
Let $M\in M_{\rm prob}(\Tau)$ be a $(q, t)$-central probability measure and $\{M_m\}_{m\geq 0}$ be its associated $(q, t)$-coherent sequence. Then also $A_k M\in M_{\rm prob}(\Tau)$ and its associated $(q, t)$-coherent sequence is $\{A_k M_m\}_{m\geq 0}$.
\end{lem}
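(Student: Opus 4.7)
The plan is to reduce everything to an $A_k$-invariance property of the cotransition kernels. Concretely, I will first verify that
\begin{gather*}
\Lambda^{N+1}_N(A_k\lambda, A_k\mu) = \Lambda^{N+1}_N(\lambda, \mu), \qquad \forall\, \lambda\in\GT_{N+1}, \quad \mu\in\GT_N, \quad k\in\Z,
\end{gather*}
and then show that this symmetry, combined with the $(q, t)$-centrality of $M$, gives the $(q, t)$-centrality of the pushforward $A_k M$ together with the identification of its projections.

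For the invariance of $\Lambda^{N+1}_N$, I would argue separately for the two factors in~\eqref{eq:linkdef}. The branching coefficient $\psi_{\lambda/\mu}(q, t)$ of Theorem~\ref{branchingmacdonald} is a product of $q$-Pochhammer symbols in which only differences $\lambda_i - \lambda_{j+1}$, $\mu_i - \mu_j$, $\lambda_i - \mu_j$, $\mu_i - \lambda_{j+1}$ appear; since these differences are unchanged by the simultaneous shift $\lambda\mapsto A_k\lambda$, $\mu\mapsto A_k\mu$, we get $\psi_{A_k\lambda/A_k\mu}(q, t) = \psi_{\lambda/\mu}(q, t)$. For the ratio of Macdonald polynomials, the index-stability formula~\eqref{macdonaldsignatures11} gives $P_{A_k\mu}(t, t^2, \dots, t^N) = (t\cdot t^2\cdots t^N)^k P_\mu(t, \dots, t^N)$ and $P_{A_k\lambda}(1, t, \dots, t^N) = (1\cdot t\cdots t^N)^k P_\lambda(1, t, \dots, t^N)$, so the common factor $t^{kN(N+1)/2}$ cancels in the ratio. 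If $\mu\not\prec\lambda$ then $A_k\mu\not\prec A_k\lambda$ (interlacing is shift-invariant) and both sides vanish, so the identity holds in full generality.

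Now let $M'\myeq A_k M$, which is clearly a probability measure on $\Tau$ (pushforward under the measurable bijection $A_k$). Because $\operatorname{Proj}_N\circ A_k = A_k\circ \operatorname{Proj}_N$ as maps, the pushforwards of $M'$ are $M'_N = (\operatorname{Proj}_N)_* M' = A_k M_N$. For any finite path $\phi = (\phi^{(0)}\prec\cdots\prec\phi^{(N)})$, by definition of $A_k$ on cylinder sets and the $(q, t)$-centrality of $M$,
\begin{gather*}
M'(S_\phi) = M(S_{A_{-k}\phi}) = \Lambda^N_{N-1}\big(A_{-k}\phi^{(N)}, A_{-k}\phi^{(N-1)}\big)\cdots \Lambda^1_0\big(A_{-k}\phi^{(1)}, A_{-k}\phi^{(0)}\big) M_N\big(A_{-k}\phi^{(N)}\big).
\end{gather*}
Applying the invariance established above to each factor $\Lambda^{j+1}_j(A_{-k}\phi^{(j+1)}, A_{-k}\phi^{(j)}) = \Lambda^{j+1}_j(\phi^{(j+1)}, \phi^{(j)})$ and using $M_N(A_{-k}\phi^{(N)}) = M'_N(\phi^{(N)})$, this becomes
\begin{gather*}
M'(S_\phi) = \Lambda^N_{N-1}\big(\phi^{(N)}, \phi^{(N-1)}\big)\cdots \Lambda^1_0\big(\phi^{(1)}, \phi^{(0)}\big) M'_N\big(\phi^{(N)}\big),
\end{gather*}
which is exactly the condition of Definition~\ref{def:qtcentral}. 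Therefore $A_k M\in M_{\rm prob}(\Tau)$, and by Proposition~\ref{prop:bijection} its associated $(q, t)$-coherent sequence is $\{M'_N\}_{N\geq 0} = \{A_k M_N\}_{N\geq 0}$.

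The main (mild) obstacle is justifying the $A_k$-invariance of the ratio of Macdonald polynomials when $k$ can be negative; this is handled transparently by working with Macdonald Laurent polynomials as defined in~\eqref{macdonaldsignatures11}, where the index-stability identity holds for all integer shifts. Everything else is bookkeeping with the definitions.
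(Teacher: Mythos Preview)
Your proof is correct and follows essentially the same route as the paper: both arguments hinge on the commutativity $\operatorname{Proj}_m\circ A_k = A_k\circ\operatorname{Proj}_m$, which immediately gives $(\operatorname{Proj}_m)_*(A_kM) = A_kM_m$. The paper's version is terser, simply declaring $A_kM\in M_{\rm prob}(\Tau)$ ``clear'' and recording the commutative square; you go further by explicitly verifying the shift-invariance $\Lambda^{N+1}_N(A_k\lambda, A_k\mu) = \Lambda^{N+1}_N(\lambda, \mu)$ and then checking Definition~\ref{def:qtcentral} directly for $A_kM$. This extra step is not superfluous: the paper actually invokes this invariance later (in the proof of Lemma~\ref{lem:commutingops}, citing ``previously stated in the proof of Lemma~\ref{lem:qtcoherency}''), so your version makes explicit something the paper's own proof of this lemma glosses over.
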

\begin{proof}
Clearly $A_k M\in M_{\rm prob}(\Tau)$ is a consequence of the def\/inition of $A_k M$ as a pushforward of the probability measure $M$.
The second claim can be restated as
\[
(\operatorname{Proj}_m)_* A_kM = A_k M_m.
\]
This equality follows from the def\/initions of $A_kM$ and $A_kM_m$ as pushforwards of $M$ and $M_m$, the fact that $(\operatorname{Proj}_m)_* M = M_m$, and the evident commutativity of the diagram
\begin{equation*}
\begin{tikzcd}
\Tau \arrow{r}{A_k} \arrow[swap]{d}{\operatorname{Proj}_m} & \Tau \arrow{d}{\operatorname{Proj}_m} \\
\GT_m \arrow{r}{A_k} & \GT_m.
\end{tikzcd}\tag*{\qed}
\end{equation*}\renewcommand{\qed}{}
\end{proof}

\section[The boundary of the $(q, t)$-Gelfand--Tsetlin graph]{The boundary of the $\boldsymbol{(q, t)}$-Gelfand--Tsetlin graph}\label{sec:qtboundary}

In this section we prove Theorem~\ref{thm:mainapplication}, which characterizes the (minimal) boundary of the $(q, t)$-GT graph. Along the way, we also def\/ine and characterize the Martin boundary of the $(q, t)$-GT graph.

Assume throughout this section that $q\in (0, 1)$, $\theta\in\N$ and set $t = q^{\theta}$.
Recall the notation $\mathbb{P}-\lim\limits_{k\rightarrow\infty}M_k = M$ indicates that a sequence of probability measures $\{M_k\}_{k \geq 1}$ converges weakly to~$M$.

\subsection{The Martin boundary: def\/inition and preliminaries}\label{sec:martindef}

For any $\lambda\in\GT_N$, let $\delta_{\lambda}$ be the delta mass at $\lambda$.
As remarked in Section~\ref{sec:qtGT}, there exists a unique $(q, t)$-coherent sequence $\{M_m^{\lambda}\}_{m = 0, 1, \dots, N}$ such that each $M_m^{\lambda}$ is a probability measure on $\GT_m$ and $M_N^{\lambda} = \delta_{\lambda}$. Such a sequence is given explicitly by $M^{\lambda}_N = \delta_{\lambda}$ and $M^{\lambda}_m = \Lambda^N_m\delta_{\lambda}$ $\forall\, 0\leq m\leq N-1$, where the probability measures $\Lambda^N_m\delta_{\lambda}$ on $\GT_m$ are given explicitly in~\eqref{kernelsmeasures}. Moreover recall that for a $(q, t)$-central probability measure $M$ on $\Tau$, we can associate a $(q, t)$-coherent sequence $\{M_m\}_{m\geq 0}$ as in Def\/inition~\ref{def:qtcentral}.

\begin{df}
The \textit{Martin boundary} of the $(q, t)$-Gelfand--Tsetlin graph is the subset of $(q, t)$-central probability measures $M \in M_{\rm prob}(\Tau)$ for which there exists a sequence $\{\lambda(N)\}_{N\geq 1}$, $\lambda(N)\in\GT_N$, such that the following weak limits hold
\begin{gather*}
\mathbb{P}-\lim_{N\rightarrow\infty}{\Lambda^N_m \delta_{\lambda(N)}} = M_m \qquad \forall\, m = 0, 1, 2, \dots.
\end{gather*}
Let us denote the Martin boundary by $\Mart \subset M_{\rm prob}(\Tau)$ and equip it with its subspace topology, namely with the topology of weak convergence.
\end{df}

To characterize the Martin boundary $\Mart$, we begin by giving necessary conditions on sequences of signatures $\{\lambda(N)\}_{N\geq 1}$ which yield a weak convergence as above. In this section, we sometimes denote $M^{\lambda}_N \myeq \delta_{\lambda}$ the delta mass at $\lambda\in\GT_N$, and $\{M_m^{\lambda} = \Lambda^N_m\delta_{\lambda}\}_{m = 0, 1, \dots, N}$ the corresponding $(q, t)$-coherent sequence.

\begin{lem}\label{lem:boundedparts0}
Assume that $\{\lambda(N)\}_{N\geq 1}$, $\lambda(N)\in\GT_N$, is a sequence of signatures such that $\{\Lambda^N_1\delta_{\lambda(N)}\}_{N\geq 1}$ converges weakly, as $N\rightarrow\infty$, to some probability measure $\mathfrak{m}$ on $\GT_1 = \Z$. Then the sequence $\{\lambda(N)_N\}_{N\geq 1}$ is bounded.
\end{lem}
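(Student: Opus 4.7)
The plan is a proof by contradiction. Suppose $\{\lambda(N)_N\}$ is unbounded; after passing to a subsequence, either $\lambda(N)_N\to+\infty$ or $\lambda(N)_N\to -\infty$. I would first dispose of the easier case $\lambda(N)_N\to +\infty$: the interlacing constraint along any path $(c)\prec\cdots\prec \lambda(N)$ in the GT graph forces $c\geq \lambda(N)_N$, so the support of $\Lambda^N_1\delta_{\lambda(N)}$ is contained in $\{k\in\Z\colon k\geq \lambda(N)_N\}$. Hence for each fixed $k\in\Z$, $\Lambda^N_1\delta_{\lambda(N)}(k) = 0$ once $N$ is large enough. Proposition~\ref{prop:convergencemacdonalds} turns weak convergence into uniform convergence of the Macdonald generating functions on $\mathbb{T}$, and extracting Fourier coefficients gives $\Lambda^N_1\delta_{\lambda(N)}(k)\to\mathfrak{m}(k)$. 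Therefore $\mathfrak{m}(k)=0$ for every $k$, contradicting $\sum_k\mathfrak{m}(k)=1$.

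For the main case $\lambda(N)_N\to-\infty$, I would exploit a shift invariance. Using the index stability~\eqref{macdonaldsignatures1} of Macdonald polynomials and the translation invariance of the branching coefficients $\psi_{\lambda/\mu}(q,t)$ (which depend only on differences $\lambda_i-\mu_j$), one checks that $\Lambda^{n+1}_n(A_k\lambda, A_k\mu) = \Lambda^{n+1}_n(\lambda, \mu)$ for all $k\in\Z$, so the same identity holds for the composed kernels $\Lambda^N_1$. Set $\tilde\lambda(N) := A_{-\lambda(N)_N}\lambda(N)\in\GTp_N$, so $\tilde\lambda(N)_N=0$; shift invariance then gives $\Lambda^N_1\delta_{\tilde\lambda(N)}(k) = \Lambda^N_1\delta_{\lambda(N)}(k+\lambda(N)_N)$ for every $k\in\Z$.

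The key observation is that tightness of $\{\Lambda^N_1\delta_{\lambda(N)}\}$ forces $\Lambda^N_1\delta_{\tilde\lambda(N)}(k)\to 0$ pointwise in $k$. Indeed, for any $\epsilon>0$ tightness yields $R>0$ with $\Lambda^N_1\delta_{\lambda(N)}(\Z\setminus[-R,R])<\epsilon$ for $N$ large, and since $\lambda(N)_N\to -\infty$ we have $k+\lambda(N)_N\notin[-R,R]$ eventually, so $\Lambda^N_1\delta_{\tilde\lambda(N)}(k)<\epsilon$. A diagonal extraction then produces a sub-subsequence along which $\tilde\lambda(N)_{N-i+1}$ converges in $\Z_{\geq 0}\cup\{+\infty\}$ to some $\tilde\nu_i\geq 0$ for every $i\geq 1$, with $\tilde\nu_1=0$ and $\tilde\nu_1\leq\tilde\nu_2\leq\cdots$.

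If all $\tilde\nu_i$ are finite, then $\tilde\lambda(N)$ stabilizes to $\tilde\nu\in\Nu$ in the sense of Definition~\ref{def:stabilizing}, and Theorem~\ref{thm:asymptotics1} gives uniform convergence $\mathcal{P}_{\Lambda^N_1\delta_{\tilde\lambda(N)}}(x)\to \Phi^{\tilde\nu}(x;q,t)$ on compact subsets of $\C\setminus\{0\}$, in particular on $\mathbb{T}$. The residue expansion of Lemma~\ref{lem:expansionintegrals} identifies $\Phi^{\tilde\nu}$ as the probability generating function of a probability measure $\tilde\mathfrak{m}$ on $\Z_{\geq 0}$; Proposition~\ref{prop:lemmaconv} upgrades the convergence of generating functions to weak convergence $\Lambda^N_1\delta_{\tilde\lambda(N)}\to\tilde\mathfrak{m}$, and hence to pointwise convergence $\Lambda^N_1\delta_{\tilde\lambda(N)}(k)\to\tilde\mathfrak{m}(k)$. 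Combined with the previous paragraph this yields $\tilde\mathfrak{m}(k)=0$ for every $k$, contradicting $\sum_k\tilde\mathfrak{m}(k)=1$. The main obstacle I anticipate is ruling out the residual scenario where some $\tilde\nu_{i_0}=+\infty$; for this I would iterate the procedure by re-normalizing with a shift $A_{-\lambda(N)_{N-i_0+1}}$, thereby reducing to a stabilizing sub-sub-sequence of smaller effective rank where Theorem~\ref{thm:asymptotics1} again applies.
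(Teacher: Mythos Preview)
Your approach is substantially more involved than the paper's, and the residual case is a genuine gap. The paper's argument is a single direct estimate: using the branching rule and the evaluation identity (Theorem~\ref{evaluation}), one computes explicitly
\[
M_1^{\lambda(N)}\bigl(\lambda(N)_N\bigr)
= \prod_{i=1}^{N-1}\prod_{j=0}^{\theta-1}
\frac{1-q^{j}t^{N-i}}{1-q^{\lambda(N)_i-\lambda(N)_N+j}t^{N-i}}
\geq \bigl((t;t)_\infty\bigr)^{\theta} > 0,
\]
a lower bound that is \emph{uniform in $N$ and in the signature $\lambda(N)$}. Tightness of the weakly convergent sequence then forces $\lambda(N)_N$ into a fixed finite interval for all large $N$. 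No case distinction, no asymptotics of Macdonald characters, and no appeal to later results are needed.

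Your route through Theorem~\ref{thm:asymptotics1} works only when the shifted sequence $\tilde\lambda(N)$ actually stabilizes, i.e., when every $\tilde\nu_i$ is finite. The fallback you sketch for $\tilde\nu_{i_0}=+\infty$ does not close: shifting by $-\lambda(N)_{N-i_0+1}$ sends the bottom $i_0-1$ parts to $-\infty$, so you no longer have a stabilizing sequence of signatures in $\GT_N$, and there is no ``smaller effective rank'' version of Theorem~\ref{thm:asymptotics1} to invoke. In fact, the natural way to handle this residual case is to show directly that $\Lambda^N_1\delta_{\tilde\lambda(N)}(0)$ stays bounded away from zero---which is precisely the paper's estimate above (applied to $\tilde\lambda(N)$ with $\tilde\lambda(N)_N=0$). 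So the detour through the asymptotic theorem and Proposition~\ref{prop:lemmaconv} ultimately still requires the paper's key computation, and can be replaced by it entirely. A smaller point: your citation of Lemma~\ref{lem:expansionintegrals} for ``$\Phi^{\tilde\nu}$ is a probability generating function'' is not what that lemma says; you would need the argument from Step~1 of Proposition~\ref{thm:martinboundary0} instead.
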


\begin{proof}
We use the Macdonald generating functions of Section~\ref{sec:generating} above. By Proposition~\ref{prop:coherentsequences}, we have
\begin{gather*}
\PPP_{\delta_{\lambda(N)}}\big(1^{N-1}, z\big) = \PPP_{M^{\lambda(N)}_1}(z)
\end{gather*}
and then
\begin{gather}\label{eqn:taylors}
\frac{P_{\lambda(N)}\big(1, t, \dots, t^{N-2}, t^{N-1}z\big)}{P_{\lambda(N)}\big(1, t, \dots, t^{N-2}, t^{N-1}\big)}
= \frac{P_{\lambda(N)}\big(z, t^{-1}, t^{-2}, \dots, t^{1-N}\big)}{P_{\lambda(N)}\big(1, t^{-1}, t^{-2}, \dots, t^{1-N}\big)}
= \sum_{n\in\Z}{M^{\lambda(N)}_1(n)z^n}.
\end{gather}
In the sum of the right-hand side above, $n$ ranges from $\lambda(N)_N$ to $\lambda(N)_1$ because of the branching rule for Macdonald polynomials. We multiply the equality by $z^{-\lambda(N)_N}$ and then set $z = 0$, so in the right-hand side one clearly picks up the coef\/f\/icient of~$z^{\lambda(N)_N}$, namely $M^{\lambda(N)}_1 (\lambda(N)_N)$. By the index stability of Macdonald polynomials,~\ref{macdonaldsignatures11}), the left-hand side is
\begin{gather*}
\left.z^{-\lambda(N)_N}\frac{P_{\lambda(N)}\big(z, t^{-1}, \dots, t^{1-N}\big)}{P_{\lambda(N)}\big(1, t^{-1}, \dots, t^{1-N}\big)}\right|_{z=0} \\
\qquad{} = \left.\frac{\big(z\cdot t^{-1}\cdots t^{1-N}\big)^{-\lambda(N)_N}}{\big(1\cdot t^{-1}\cdots t^{1-N}\big)^{-\lambda(N)_N}}\frac{P_{\lambda(N)}\big(z, t^{-1}, \dots, t^{1-N}\big)}{P_{\lambda(N)}\big(1, t^{-1}, \dots, t^{1-N}\big)}\right|_{z=0}\\
\qquad{} = \left.\frac{P_{(\lambda(N)_1 - \lambda(N)_N, \dots, \lambda(N)_{N-1}-\lambda(N)_N, 0)}\big(z, t^{-1}, \dots, t^{1-N}\big)}{P_{(\lambda(N)_1 - \lambda(N)_N, \dots, \lambda(N)_{N-1}-\lambda(N)_N, 0)}\big(1, t^{-1}, \dots, t^{1-N}\big)}\right|_{z=0}\\
\qquad{} = \frac{P_{(\lambda(N)_1 - \lambda(N)_N, \dots, \lambda(N)_{N-1}-\lambda(N)_N)}\big(t^{-1}, \dots, t^{1-N}\big)}{P_{(\lambda(N)_1 - \lambda(N)_N, \dots, \lambda(N)_{N-1}-\lambda(N)_N, 0)}\big(1, t^{-1}, \dots, t^{1-N}\big)}.
\end{gather*}
Thanks to Theorem~\ref{evaluation}, we can then obtain a lower bound for $M^{\lambda(N)}_1 (\lambda(N)_N )$ as follows:
\begin{gather*}
M^{\lambda(N)}_1(\lambda(N)_N) = \frac{P_{(\lambda(N)_1 - \lambda(N)_N, \dots, \lambda(N)_{N-1}-\lambda(N)_N)}\big(t^{-1}, \dots, t^{1-N}\big)}{P_{(\lambda(N)_1 - \lambda(N)_N, \dots, \lambda(N)_{N-1}-\lambda(N)_N, 0)}\big(1, t^{-1}, \dots, t^{1-N}\big)}\\
\qquad{} = \frac{P_{(\lambda(N)_1 - \lambda(N)_N, \dots, \lambda(N)_{N-1}-\lambda(N)_N)}\big(t^{N-2}, \dots, t, 1\big)}{P_{(\lambda(N)_1 - \lambda(N)_N, \dots, \lambda(N)_{N-1}-\lambda(N)_N, 0)}\big(t^{N-1}, \dots, t, 1\big)}\\
\qquad{} = \prod_{1\leq i < j\leq N-1}{\frac{\big(q^{\lambda(N)_i - \lambda(N)_j}t^{j-i}; q\big)_{\infty}(t^{j-i+1}; q)_{\infty}}{\big(q^{\lambda(N)_i - \lambda(N)_j}t^{j-i+1}; q\big)_{\infty}(t^{j-i}; q)_{\infty}}}\\
\qquad\quad{}\times
\prod_{1\leq i < j\leq N}{\frac{\big(q^{\lambda(N)_i - \lambda(N)_j}t^{j-i+1}; q\big)_{\infty}(t^{j-i}; q)_{\infty}}{\big(q^{\lambda(N)_i - \lambda(N)_j}t^{j - i}; q\big)_{\infty}(t^{j-i+1}; q)_{\infty}}}\\
\qquad{} = \prod_{i=1}^{N-1}{\frac{\big(q^{\lambda(N)_i - \lambda(N)_N}t^{N-i+1}; q\big)_{\infty}(t^{N-i}; q)_{\infty}}{\big(q^{\lambda(N)_i - \lambda(N)_N}t^{N-i}; q\big)_{\infty}(t^{N-i+1}; q)_{\infty}}}\\
\qquad{} = \prod_{i=1}^{N-1}{\frac{\big(1 - t^{N-i}\big)\big(1 - qt^{N-i}\big)\cdots \big(1 - q^{\theta - 1}t^{N-i}\big)}{\big(1 - q^{\lambda(N)_i - \lambda(N)_N}t^{N-i}\big)\cdots \big(1 - q^{\lambda(N)_i - \lambda(N)_N + \theta - 1}t^{N-i}\big)}}\\
\qquad{} \geq \prod_{i=1}^{N-1}{\big(1 - t^{N-i}\big)\big(1 - qt^{N-i}\big)\cdots \big(1 - q^{\theta - 1}t^{N-i}\big)} \geq \prod_{i=1}^{N-1}{\big(1 - t^{N-i}\big)^{\theta}} \geq ( (t; t)_{\infty} )^{\theta}.
\end{gather*}

On the other hand, since $(t; t)_{\infty}\in (0, 1)$ and $\mathbb{P}-\lim\limits_{N\rightarrow\infty}{M^{\lambda(N)}_1} = \mathfrak{m}$, then there exist $N_1, N_2 \in \N$ large enough such that
\begin{gather*}
M^{\lambda(N)}_1\left(\{ (n)\in\GT_1\colon -N_1 \leq n \leq N_1\}\right) > 1 - ((t; t)_{\infty})^{\theta} \qquad \forall\, N > N_2.
\end{gather*}
We conclude that $-N_1 \leq \lambda(N)_N \leq N_1$, for all $N > N_2$.
Therefore the sequence $\{\lambda(N)_N\}_{N\geq 1}$ is bounded.
\end{proof}

One actually has the following more general statement.

\begin{lem}\label{lem:boundedparts}
Let $k\in\N$. Assume that $\{\lambda(N)\}_{N\geq 1}$, $\lambda(N)\in\GT_N$, is a sequence of signatures such that the sequence $\Lambda^N_k \delta_{\lambda(N)}$ converges weakly, as $N\rightarrow\infty$, to some probability measure~$\mathfrak{m}$ on~$\GT_k$. Then for any $i = 1, \dots, k$, the sequence $\{\lambda(N)_{N-i+1}\}_{N\geq 1}$, is bounded.
\end{lem}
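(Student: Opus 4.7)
The plan is to combine Lemma~\ref{lem:boundedparts0} with a direct ``support of the kernel'' argument that exploits the interlacing constraints. The geometric input is the following: if $\Lambda^N_k(\lambda(N),\mu) > 0$, then iterating the interlacing relations along any chain $\mu = \nu^{(k)} \prec \nu^{(k+1)} \prec \cdots \prec \nu^{(N)} = \lambda(N)$ immediately yields the two-sided bounds
\begin{gather*}
\lambda(N)_{N-k+i} \leq \mu_i \leq \lambda(N)_i, \qquad i = 1, \ldots, k,
\end{gather*}
so the support of $\Lambda^N_k\delta_{\lambda(N)}$ already ``sees'' the top $k$ parts of $\lambda(N)$ from above.

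First I would reduce the hypothesis from level $k$ to level $1$. Since $\Lambda^N_1\delta_{\lambda(N)} = \Lambda^k_1\Lambda^N_k\delta_{\lambda(N)}$, I would check that the weak convergence $\Lambda^N_k\delta_{\lambda(N)} \to \mathfrak{m}$ on the discrete space $\GT_k$ implies the weak convergence $\Lambda^N_1\delta_{\lambda(N)} \to \Lambda^k_1\mathfrak{m}$ on $\GT_1$. This is a routine dominated-convergence argument: given $\epsilon > 0$, pick a finite set $F \subset \GT_k$ with $\mathfrak{m}(F) > 1-\epsilon$; the tails outside $F$ contribute at most $O(\epsilon)$ uniformly in $N$ (by weak convergence at the closed-and-open set $F$), while the finite sum over $F$ converges by pointwise convergence of $\Lambda^N_k\delta_{\lambda(N)}$ to $\mathfrak{m}$. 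Lemma~\ref{lem:boundedparts0} then gives the boundedness of $\{\lambda(N)_N\}_{N\geq 1}$, and the monotonicity $\lambda(N)_{N-i+1} \geq \lambda(N)_N$ of the parts of a signature immediately yields a uniform \emph{lower} bound on each $\lambda(N)_{N-i+1}$ for $i = 1,\ldots,k$.

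For the upper bound I would argue by contradiction: suppose along some subsequence $\lambda(N)_{N-i+1} \to +\infty$ for a fixed $i \in \{1,\ldots,k\}$. The displayed bound gives $\mu_{k-i+1} \geq \lambda(N)_{N-i+1}$ on the support of $\Lambda^N_k\delta_{\lambda(N)}$, so once $\lambda(N)_{N-i+1}$ exceeds any fixed threshold $C$, the measure $\Lambda^N_k\delta_{\lambda(N)}$ is supported inside $\{\mu \in \GT_k \colon \mu_{k-i+1} > C\}$. Choosing a finite $F \subset \GT_k$ with $\mathfrak{m}(F) > 1/2$, setting $C$ larger than all coordinates of all elements of $F$, and using weak convergence at the closed-and-open set $F$ yields $0 = \lim_N \Lambda^N_k\delta_{\lambda(N)}(F) = \mathfrak{m}(F) > 1/2$, the desired contradiction. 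The only genuinely ``hard'' ingredient is hidden inside Lemma~\ref{lem:boundedparts0}, where the evaluation identity supplies the lower bound $M^{\lambda(N)}_1(\lambda(N)_N) \geq ((t;t)_\infty)^\theta$ that prevents escape to $-\infty$; the passage from $k=1$ to general $k$ is pure combinatorics of the interlacing pattern.
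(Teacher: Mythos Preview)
Your argument is correct and takes a genuinely different route from the paper's.  The paper reproduces for general $k$ the computation it did for $k=1$: using the branching rule and the evaluation identity (Theorem~\ref{evaluation}) it shows directly that
\[
M_k^{\lambda(N)}\big(\lambda(N)_{N-k+1},\dots,\lambda(N)_N\big)\ \ge\ \big((t;t)_\infty\big)^{\theta k},
\]
and then argues, exactly as in Lemma~\ref{lem:boundedparts0}, that a point carrying uniformly positive mass cannot escape every finite set under weak convergence; this handles the upper and lower bounds simultaneously.  You instead split the two directions: the lower bound is obtained by pushing the hypothesis down to level~$1$ via $\Lambda^N_1=\Lambda^k_1\Lambda^N_k$ and invoking Lemma~\ref{lem:boundedparts0} once, and the upper bound comes from the purely combinatorial support constraint $\mu_{k-i+1}\ge\lambda(N)_{N-i+1}$ forced by iterated interlacing.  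Your approach avoids redoing the evaluation-identity calculation for general $k$ and makes transparent that the only analytic input is the $k=1$ mass bound; the paper's approach, on the other hand, produces as a by-product the explicit constant $c_k=((t;t)_\infty)^{\theta k}$, which is later reused in Lemma~\ref{lem:posprob}(2).
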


\begin{proof}
The argument here is very similar to that of the previous proof for $k = 1$. As before, by making use of Macdonald generating functions, we can derive a more general equation than~(\ref{eqn:taylors}), which is
\begin{gather}
\frac{P_{\lambda(N)}\big(z_1 t^{1-k}, \dots, z_{k-1}t^{-1}, z_k, t^{-k}, \dots, t^{1-N}\big)}{P_{\lambda(N)}\big(1, t^{-1}, \dots, t^{1-N}\big)}\nonumber\\
\qquad{} = \sum_{\mu\in\GT_k}{M_k^{\lambda(N)}(\mu) \frac{P_{\mu}\big(z_1t^{1-k}, \dots, z_{k-1}t^{-1}, z_k\big)}{P_{\mu}\big(1, t^{-1}, \dots, t^{1-k}\big)}}.\label{eqn:taylors1}
\end{gather}
In the sum of the right-hand side above, note that $\mu$ ranges over signatures in $\GT_k$ such that
\begin{gather}\label{eqn:muineqs}
\mu_k \geq \lambda(N)_N,\quad \mu_{k-1} \geq \lambda(N)_{N-1}, \quad \dots,\quad \mu_1 \geq \lambda(N)_{N-k+1}.
\end{gather}
This is a consequence of the branching rule for Macdonald polynomials.
Another relevant observation is that for any $\mu\in\GT_k$ satisfying~(\ref{eqn:muineqs}), any monomial $c_{m_1, \dots, m_k} z_1^{m_1}\cdots z_k^{m_k}$ with $c_{m_1, \dots, m_k} \neq 0$ in the expansion of $P_{\mu}(z_1t^{1-k}, \dots, z_{k-1}t^{-1}, z_k)$ satisf\/ies: $m_k \geq \lambda(N)_N$; if $m_k = \lambda(N)_N$ then $m_{k-1} \geq \lambda(N)_{N-1}$; and so on until, if $m_2 = \lambda(N)_{N-k+2}, \dots, m_k \geq \lambda(N)_N$, then $m_1 \geq \lambda(N)_{N-k+1}$.
This is a consequence of the triangularity property of the Macdonald polynomials, see Def\/inition/Proposition~\ref{def:macdonaldpolys}.

In equation~(\ref{eqn:taylors1}) above, multiply both sides by $z_k^{-\lambda(N)_N}$ and then set $z_k = 0$.
From the branching rule for Macdonald polynomials and the fact that the branching coef\/f\/icients satisfy
\[
\psi_{\lambda(N)/(\lambda(N)_1, \dots, \lambda(N)_{N-1})}(q, t) = 1,
\]
the resulting left-hand side is
\[
\frac{P_{(\lambda(N)_1, \dots, \lambda(N)_{N-1})}\big(z_1 t^{1-k}, \dots, z_{k-1}t^{-1}, t^{-k}, \dots, t^{1-N}\big)}{P_{\lambda(N)}\big(1, t^{-1}, \dots, t^{1-N}\big)}.
\]
(Note that the argument $z_k$ is no longer present in the numerator.) Similarly, from the property
\[
\psi_{\mu/(\mu_1, \dots, \mu_{k-1})}(q, t) = 1, \qquad \textrm{for any} \quad \mu\in\GT_k,
\]
the resulting right-hand side is
\[
\sum_{\mu\in\GT_k\colon \mu_k = \lambda(N)_N}{M_k^{\lambda(N)}(\mu) \frac{P_{(\mu_1, \dots, \mu_{k-1})}\big(z_1t^{1-k}, \dots, z_{k-1}t^{-1}\big)}{P_{\mu}\big(1, t^{-1}, \dots, t^{1-k}\big)}}.
\]

After that, multiply both sides by $(z_{k-1}t^{-1})^{-\lambda(N)_{N-1}}$ and then set $z_{k-1}=0$; this gives
\begin{gather*}
\frac{P_{(\lambda(N)_1, \dots, \lambda(N)_{N-2})}\big(z_1 t^{1-k}, \dots, z_{k-2}t^{-2}, t^{-k}, \dots, t^{1-N}\big)}{P_{\lambda(N)}\big(1, t^{-1}, \dots, t^{1-N}\big)}\\
\qquad{} = \sum_{\substack{ \mu\in\GT_k\colon \\ \mu_{k-1} = \lambda(N)_{N-1},\ \mu_k = \lambda(N)_N } }{M_k^{\lambda(N)}(\mu) \frac{P_{(\mu_1, \dots, \mu_{k-2})}\big(z_1t^{1-k}, \dots, z_{k-2}t^{-2}\big)}{P_{\mu}\big(1, t^{-1}, \dots, t^{1-k}\big)}}.
\end{gather*}

Repeat the same procedure $k$ times, until we have multiplied both sides by $\big(z_1t^{1-k}\big)^{-\lambda(N)_{N-k+1}}$ and set $z_1=0$. The end result is
\begin{gather*}
M_k^{\lambda(N)}\left(\lambda(N)_{N-k+1}, \dots, \lambda(N)_N\right)\\
= \frac{P_{(\lambda(N)_1, \dots, \lambda(N)_{N-k})}\big( t^{-k}, \dots, t^{1-N} \big) P_{ (\lambda(N)_{N-k+1}, \dots, \lambda(N)_N) }\big( 1, t^{-1}, \dots, t^{1-k} \big) }{ P_{ \lambda(N) }\big( 1, t^{-1}, \dots, t^{1-N} \big)}\\
= t^{(N - k) (\lambda(N)_{N-k+1} + \lambda(N)_{N-k+2} + \dots + \lambda(N)_N )}\\
\times\frac{P_{(\lambda(N)_1, \dots, \lambda(N)_{N-k})}\big(1, t, \dots, t^{N-k-1}\big) P_{(\lambda(N)_{N-k+1}, \dots, \lambda(N)_N)}\big(1, t, \dots, t^{k-1}\big)}{P_{\lambda(N)}\big(1, t, t^2, \dots, t^{N-1}\big)}\\
= \prod_{i=1}^{N-k}\prod_{j=N-k+1}^N\! {\frac{\big(1 - t^{j-i}\big)\big(1 - qt^{j-i}\big)\cdots \big(1 - q^{\theta - 1}t^{j-i}\big)}{\big(1 \!-\! q^{\lambda(N)_i - \lambda(N)_j}t^{j-i}\big)\big(1 \!-\! q^{\lambda(N)_i - \lambda(N)_j+1}t^{j-i}\big)\cdots \big(1 \!-\! q^{\lambda(N)_i - \lambda(N)_j+\theta-1}t^{j-i}\big)}}\\
\geq \prod_{i=1}^{N-k}\prod_{j=N-k+1}^N{\big(1 - t^{j-i}\big)\big(1 - qt^{j-i}\big)\cdots \big(1 - q^{\theta - 1}t^{j-i}\big)} \geq ( (t; t)_{\infty} )^{\theta k},
\end{gather*}
where we used the homogeneity of Macdonald polynomials for the second equality, as well as three applications of Theorem~\ref{evaluation} for the third equality. Then $M_k^{\lambda(N)}(\lambda(N)_{N-k+1}, \dots, \lambda(N)_N) \geq c_k = ( (t; t)_{\infty} )^{\theta k} > 0$.
On the other hand, since $M_k^{\lambda(N)}$ converges weakly to $\mathfrak{m}$, there exist $N_1, N_2\in\N$ large enough such that
\begin{gather*}
M_k^{\lambda(N)}(\{ (n_1, \dots, n_k)\in\GT_k\colon -N_1 \leq n_1, \dots, n_k \leq N_1 \}) > 1 - c_k \qquad \forall\, N > N_2.
\end{gather*}
Therefore we conclude that $-N_1 \leq \lambda(N)_{N - k +1}, \dots, \lambda(N)_N \leq N_1$ for all $N > N_2$.
We conclude that each sequence $\{\lambda(N)_{N - i + 1}\}_{N\geq 1}$, $i = 1, \dots, k$, is uniformly bounded by a constant.
\end{proof}

Next we show that $\Mart$ is in bijection with the set $\Nu$. We do so by f\/irst constructing a~map $\Mart \rightarrow \Nu$ that will later be shown to be bijective.

\begin{prop}\label{prop:PhiProb}
Let $M$ be a $(q, t)$-central probability measure on $\Tau$ and $\{M_m\}_{m = 0, 1, 2, \dots}$ be its associated $(q, t)$-coherent system. If $M$ belongs to $\Omega_{q, t}^{{\rm Martin}}$, then there exists a unique $\nu\in\Nu$ such that
\begin{gather}
\Phi^{\nu}\big(x_1t^{1-m}, \dots, x_{m-1}t^{-1}, x_m; q, t\big) = \mathcal{P}_{M_m}(x_1, \dots, x_m), \nonumber\\
 \forall\, (x_1, \dots, x_m) \in \TT^m, \quad \forall\, m\geq 1.\label{eqn:macdonaldPhi}
\end{gather}
The function $\Phi^{\nu}(z_1, \dots, z_m; q, t)$ was defined in Theorem~{\rm \ref{thm:asymptotics1}} for $m = 1$, and in Theorem~{\rm \ref{thm:asymptotics2}} for general $m$.
\end{prop}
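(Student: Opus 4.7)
The plan is to exhibit $\nu$ as a stabilization point of some defining sequence $\{\lambda(N)\}$ for $M$, and then match the resulting Macdonald generating functions against the asymptotic formula in Theorem~\ref{thm:asymptotics2}. Since $M \in \Mart$, there is a sequence $\{\lambda(N)\}_{N\geq 1}$ with $\lambda(N)\in\GT_N$ and $\PP-\lim_{N\to\infty}\Lambda^N_m\delta_{\lambda(N)} = M_m$ for every $m \geq 0$. Lemma~\ref{lem:boundedparts} shows that for each $i\geq 1$ the sequence $\{\lambda(N)_{N-i+1}\}_{N\geq i}$ is bounded, so a standard diagonal extraction produces a subsequence $\{N_k\}_{k\geq 1}$ along which $\lim_{k\to\infty}\lambda(N_k)_{N_k-i+1} = \nu_i$ exists for every $i$. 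The signature inequalities $\lambda(N_k)_{N_k-i+1}\leq\lambda(N_k)_{N_k-i}$ force $\nu = (\nu_1\leq\nu_2\leq\cdots)$ to lie in $\Nu$ automatically, and $\{\lambda(N_k)\}$ stabilizes to $\nu$ in the sense of Definition~\ref{def:stabilizing}.

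The next step is to rewrite $\PPP_{M^{\lambda(N)}_m}$ in a form governed by Theorem~\ref{thm:asymptotics2}. Iterating Proposition~\ref{prop:coherentsequences} from level $N$ down to $m$, combined with the tautology $\PPP_{\delta_{\lambda(N)}}(y_1,\dots,y_N) = P_{\lambda(N)}(y_1, y_2 t,\dots,y_N t^{N-1})/P_{\lambda(N)}(1,t,\dots,t^{N-1})$, then scaling all arguments of the resulting Macdonald polynomial by $t^{1-N}$ via homogeneity and reordering via symmetry, one identifies
\begin{gather*}
\PPP_{M^{\lambda(N)}_m}(x_1,\dots,x_m) = \frac{P_{\lambda(N)}\big(x_1 t^{1-m},\dots,x_{m-1}t^{-1}, x_m, t^{-m}, t^{-m-1}, \dots, t^{1-N}\big)}{P_{\lambda(N)}\big(1, t^{-1}, \dots, t^{1-N}\big)},
\end{gather*}
which is precisely the ratio whose $N\to\infty$ asymptotics is given by Theorem~\ref{thm:asymptotics2}. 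This rewriting is the technical heart of the argument; everything else is assembly of previously established tools.

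With the identity in hand, apply Theorem~\ref{thm:asymptotics2} along the subsequence $\{\lambda(N_k)\}$: the right-hand side converges uniformly on compact subsets of $(\C\setminus\{0\})^m$, hence in particular on $\TT^m$, to $\Phi^{\nu}(x_1 t^{1-m},\dots,x_{m-1}t^{-1},x_m; q, t)$. Simultaneously, the weak convergence $\PP-\lim_k M^{\lambda(N_k)}_m = M_m$ and Proposition~\ref{prop:convergencemacdonalds} give $\PPP_{M^{\lambda(N_k)}_m}\to\PPP_{M_m}$ uniformly on $\TT^m$. Matching the two limits yields the desired identity~\eqref{eqn:macdonaldPhi}.

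For uniqueness, suppose $\nu,\widetilde{\nu}\in\Nu$ both satisfy~\eqref{eqn:macdonaldPhi}. Specializing to $m=1$ gives $\Phi^{\nu}(x; q, t) = \PPP_{M_1}(x) = \Phi^{\widetilde{\nu}}(x; q, t)$ for all $x\in\TT$, and Lemma~\ref{lem:expansionintegrals} forces $\nu=\widetilde{\nu}$. This uniqueness also implies that the particular subsequence $\{N_k\}$ chosen above is immaterial: any other subsequential limit produces the same $\Phi^{\nu}$ and hence the same $\nu$, so in fact $\lambda(N)_{N-i+1}\to\nu_i$ along the full sequence. The only genuinely delicate step in the whole plan is the homogeneity-and-symmetry bookkeeping in the second paragraph, since it must align perfectly with the argument pattern $(y_1,\dots,y_m,t^{-m},\dots,t^{1-N})$ required by Theorem~\ref{thm:asymptotics2}.
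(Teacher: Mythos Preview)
Your proposal is correct and follows essentially the same approach as the paper: diagonal extraction via Lemma~\ref{lem:boundedparts}, the generating-function rewriting through Proposition~\ref{prop:coherentsequences} and homogeneity (your second paragraph is exactly the paper's equation~(\ref{eqn:macdonaldsdelta})), matching the limit from Theorem~\ref{thm:asymptotics2} against that from Proposition~\ref{prop:convergencemacdonalds}, and uniqueness via Lemma~\ref{lem:expansionintegrals}. Your closing remark that the full sequence $\lambda(N)_{N-i+1}$ must converge is a correct bonus not stated in the paper's proof of this proposition.
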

\begin{proof}
Let $M\in\Omega_{q, t}^{{\rm Martin}}$. By def\/inition, there exists a sequence $\{\lambda(N)\}_{N \geq 1}$ such that $\lambda(N)\in\GT_N$, $N \geq 1$, and $\mathbb{P}-\lim\limits_{m\rightarrow\infty}{\Lambda^N_m \delta_{\lambda(N)}} = M_m$ holds weakly, for any $m\in\N$.

By Lemma~\ref{lem:boundedparts}, it follows that for any $i = 1, 2, \dots$, the sequence $\{\lambda(N)_{N-i+1}\}_{N\geq 1}$ is uniformly bounded. In particular, since $\{\lambda(N)_N\}_{N\geq 1}$ is uniformly bounded, there exists a subsequence $\big\{N^1_1 < N^1_2 < N^1_3 < \cdots\big\}\subset\N$ such that $\lambda\big(N^1_1\big)_{N^1_1} = \lambda\big(N^1_2\big)_{N^1_2} = \cdots$. Analogously, using that $\{\lambda(N)_{N-1}\}_{N \geq 1}$ is bounded, there exists a subsequence $\big\{N^2_1 < N^2_2 < N^2_3 < \cdots\big\} \subset \big\{N^1_1 < N^1_2 < N^1_3 < \cdots\big\}$ such that $\lambda\big(N^2_1\big)_{N^2_1-1} = \lambda\big(N^2_2\big)_{N^2_2-1} = \cdots$. In a similar fashion, we can def\/ine subsequences $\big\{N^k_1 < N^k_2 < \cdots\big\}$ inductively.

Now consider the subsequence $\big\{N_1 = N^1_1 < N_2 = N^2_2 < N_3 = N_3^3 < \cdots\big\} \subset \N$. By construction, the sequence $\{\lambda(N_k)\}_{k\geq 1}$ is such that the limits $\lim\limits_{k\rightarrow\infty}{\lambda(N_k)_{N_k - i + 1}}$ exist for all $i = 1, 2, \dots$. Consequently, there exists $\nu\in\Nu$ such that $\{\lambda(N_k)\}_{k\geq 1}$ stabilizes to $\nu$ in the sense of Def\/inition~\ref{def:stabilizing}. From Theorem~\ref{thm:asymptotics2}, the following limit
\begin{gather}
\lim_{k\rightarrow\infty}{ \frac{P_{\lambda(N_k)}\big(x_1 t^{1-m}, x_2 t^{2-m}, \dots, x_m, t^{-m}, \dots, t^{1 - N_k}\big)}{P_{\lambda(N_k)}\big(1, t^{-1}, \dots, t^{1 - N_k}\big)} }\nonumber\\
\qquad{} = \Phi^{\nu}\big(x_1 t^{1-m}, x_2 t^{2-m}, \dots, x_m; q, t\big)\label{eqn:limitsPhi}
\end{gather}
holds uniformly for $(x_1, \dots, x_m)$ in compact subsets of the domain $(\C \setminus \{0\})^m$, in particular the convergence is uniform on $\TT^m$.

On the other hand, since we also have the weak convergence $\mathbb{P}-\lim\limits_{k\rightarrow\infty}{\Lambda^{N_k}_m\delta_{\lambda(N_k)}} = M_m$, then by Proposition~\ref{prop:convergencemacdonalds}:
\begin{gather}\label{eqn:limitsubsequence}
\lim_{k \rightarrow \infty}{\PPP_{\Lambda^{N_k}_m\delta_{\lambda(N_k)}}(x_1, \dots, x_m)} = \PPP_{M_m}(x_1, \dots, x_m)
\end{gather}
uniformly on $\TT^m$. By Proposition~\ref{prop:coherentsequences} and the homogeneity of the Macdonald polynomials, the Macdonald generating function of $\Lambda^{N_k}_m\delta_{\lambda(N_k)}$ equals
\begin{gather}
\PPP_{\Lambda^{N_k}_m\delta_{\lambda(N_k)}}(x_1, \dots, x_m) = \PPP_{\delta_{\lambda(N_k)}}\big(1^{N_k - m}, x_1, \dots, x_m\big)\nonumber\\
\qquad{} = \frac{P_{\lambda(N_k)}\big(1, t, \dots, t^{N_k - m - 1}, t^{N_k - m}x_1, \dots, t^{N_k - 1}x_m\big)}{P_{\lambda(N_k)}\big(1, t, t^2, \dots, t^{N_k - 1}\big)}\nonumber\\
\qquad{} = \frac{P_{\lambda(N_k)}\big(x_1 t^{1-m}, x_2 t^{2-m}, \dots, x_m, t^{-m}, \dots, t^{1 - N_k}\big)}{P_{\lambda(N_k)}\big(1, t^{-1}, t^{-2}, \dots, t^{1 - N_k}\big)}.\label{eqn:macdonaldsdelta}
\end{gather}

By combining~(\ref{eqn:limitsPhi}), (\ref{eqn:limitsubsequence}) and (\ref{eqn:macdonaldsdelta}), we conclude
\begin{gather*} \Phi^{\nu}\big(x_1 t^{1-m}, x_2 t^{2-m}, \dots, x_m; q, t\big) = \mathcal{P}_{M_m}(x_1, \dots, x_m) \qquad\text{for all}\quad (x_1, \dots, x_m) \in \TT^m
 \end{gather*} and all $m\geq 1$, as desired.
The statement about the uniqueness of $\nu\in\Nu$ follows from Lemma~\ref{lem:expansionintegrals}.
\end{proof}

Because of Proposition~\ref{prop:PhiProb}, for each $M\in\Mart$, there exists a unique $\nu\in\Nu$ such that equation~(\ref{eqn:macdonaldPhi}) is satisf\/ied for all $m\in\N$. Thus there is a well-def\/ined map of sets
\begin{gather}
\NN\colon \ \Omega_{q, t}^{{\rm Martin}} \rightarrow \Nu,\nonumber\\
\hphantom{\NN\colon}{} \ M \mapsto \nu,\label{def:Nmap1}
\end{gather}
which is determined by setting $\nu = \NN(M)$ be the unique element of~$\Nu$ such that~(\ref{eqn:macdonaldPhi}) is satisf\/ied. We prove below that $\NN$ is bijective, but f\/irst we show the convenient fact that~$\NN$ commutes with the automorphisms $A_k$, $k\in\Z$, of Section~\ref{sec:automorphismsAk}.

\begin{lem}\label{lem:commutingops}
Let $M\in\Mart$, $k\in\Z$. Then $A_k M \in\Mart$ and $\NN(A_k M) = A_k \NN(M)$.
\end{lem}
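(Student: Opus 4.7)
The plan is to build the witnessing sequence for $A_kM$ by applying $A_k$ termwise to a sequence witnessing $M\in\Mart$, and then to match generating functions using the index stability of Macdonald polynomials together with Lemma~\ref{lem:Akfunctions}.

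First I would establish the key shift-invariance of the cotransition probabilities: for all $k\in\Z$, $\lambda\in\GT_{N+1}$, $\mu\in\GT_N$,
\begin{gather*}
\Lambda^{N+1}_N(A_k\lambda, A_k\mu) = \Lambda^{N+1}_N(\lambda, \mu).
\end{gather*}
Indeed, $\psi_{\lambda/\mu}(q,t)$ only depends on differences $\lambda_i-\mu_j$, hence it is shift-invariant; and by index stability~\eqref{macdonaldsignatures1}, $P_{A_k\mu}(t,\ldots,t^N) = t^{kN(N+1)/2}P_{\mu}(t,\ldots,t^N)$ and $P_{A_k\lambda}(1,t,\ldots,t^N) = t^{kN(N+1)/2}P_{\lambda}(1,t,\ldots,t^N)$, so the scalar factors cancel in the ratio defining $\Lambda^{N+1}_N$. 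Composing kernels, this gives $\Lambda^N_m(A_k\lambda,A_k\mu)=\Lambda^N_m(\lambda,\mu)$ for all $m\le N$, i.e., $\Lambda^N_m\delta_{A_k\lambda(N)} = A_k\bigl(\Lambda^N_m\delta_{\lambda(N)}\bigr)$.

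Next I prove $A_kM\in\Mart$. Let $\{\lambda(N)\}_{N\ge 1}$ witness $M\in\Mart$, so $\mathbb{P}-\lim_{N\to\infty}\Lambda^N_m\delta_{\lambda(N)} = M_m$ for every $m$. Since $A_k\colon\GT_m\to\GT_m$ is a homeomorphism of a discrete space, weak convergence is preserved by pushforward; hence
\begin{gather*}
\mathbb{P}-\lim_{N\to\infty}\Lambda^N_m\delta_{A_k\lambda(N)} = \mathbb{P}-\lim_{N\to\infty}A_k\bigl(\Lambda^N_m\delta_{\lambda(N)}\bigr) = A_kM_m.
\end{gather*}
By Lemma~\ref{lem:qtcoherency}, $A_kM\in M_{\rm prob}(\Tau)$ has associated coherent sequence $\{A_kM_m\}_{m\ge 0}$, so the sequence $\{A_k\lambda(N)\}_{N\ge 1}$ witnesses $A_kM\in\Mart$.

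Finally, to identify $\NN(A_kM)$, set $\nu=\NN(M)$, so that \eqref{eqn:macdonaldPhi} holds for $M$ and $\nu$. By the index stability~\eqref{macdonaldsignatures11}, for any $\mu\in\GT_m$ and any nonzero $x_1,\ldots,x_m$,
\begin{gather*}
\frac{P_{A_k\mu}\bigl(x_1,x_2t,\ldots,x_mt^{m-1}\bigr)}{P_{A_k\mu}\bigl(1,t,\ldots,t^{m-1}\bigr)} = (x_1\cdots x_m)^k\,\frac{P_{\mu}\bigl(x_1,x_2t,\ldots,x_mt^{m-1}\bigr)}{P_{\mu}\bigl(1,t,\ldots,t^{m-1}\bigr)},
\end{gather*}
since the factors $t^{k\binom{m}{2}}$ from numerator and denominator cancel. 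Summing against $M_m$ gives $\PPP_{A_kM_m}(x_1,\ldots,x_m) = (x_1\cdots x_m)^k\PPP_{M_m}(x_1,\ldots,x_m)$ on $\TT^m$. On the other hand, Lemma~\ref{lem:Akfunctions} applied to $(x_1t^{1-m},\ldots,x_{m-1}t^{-1},x_m)$ yields, after the elementary computation $(1-m)+(2-m)+\cdots+0=-\binom{m}{2}$,
\begin{gather*}
\Phi^{A_k\nu}\bigl(x_1t^{1-m},\ldots,x_m;q,t\bigr) = (x_1\cdots x_m)^k\,\Phi^{\nu}\bigl(x_1t^{1-m},\ldots,x_m;q,t\bigr).
\end{gather*}
Combining these two identities with $\Phi^{\nu}(x_1t^{1-m},\ldots,x_m;q,t)=\PPP_{M_m}(x_1,\ldots,x_m)$ shows $\Phi^{A_k\nu}(x_1t^{1-m},\ldots,x_m;q,t)=\PPP_{A_kM_m}(x_1,\ldots,x_m)$ on $\TT^m$ for every $m\ge 1$. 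By the uniqueness clause of Proposition~\ref{prop:PhiProb} (itself from Lemma~\ref{lem:expansionintegrals}), $\NN(A_kM)=A_k\nu=A_k\NN(M)$.

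The argument is essentially bookkeeping; the only subtle point is ensuring the $t^{k\binom{m}{2}}$ factors from index stability cancel exactly with the twist in Lemma~\ref{lem:Akfunctions}, which they do thanks to the particular evaluation $(x_1t^{1-m},\ldots,x_m)$.
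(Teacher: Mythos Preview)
Your proof is correct and follows essentially the same approach as the paper: build the witnessing sequence for $A_kM$ by shifting the one for $M$, using the shift-invariance $\Lambda^N_m(A_k\lambda,A_k\mu)=\Lambda^N_m(\lambda,\mu)$, and then verify $\NN(A_kM)=A_k\nu$ by computing $\PPP_{A_kM_m}=(x_1\cdots x_m)^k\PPP_{M_m}$ via index stability and matching it against Lemma~\ref{lem:Akfunctions}. Your treatment is actually a bit more careful than the paper's in one spot: you explicitly justify the shift-invariance of $\Lambda^{N+1}_N$ (via shift-invariance of $\psi_{\lambda/\mu}$ and cancellation of the $t^{kN(N+1)/2}$ factors), whereas the paper simply invokes it.
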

\begin{proof}
As $M\in\Mart\subset M_{\textrm{Prob}}(\Tau)$, Lemma~\ref{lem:qtcoherency} shows $A_k M\in M_{\textrm{Prob}}(\Tau)$ and $\{A_kM_m\}_{m \geq 0}$ is its corresponding $(q, t)$-coherent sequence. By def\/inition, there exists a sequence $\{\lambda(N)\}_{N \geq 1}$, $\lambda(N)\in\GT_N$, such that the weak convergence holds $\mathbb{P}-\lim\limits_{N\rightarrow\infty}{\Lambda^N_m\delta_{\lambda(N)}} = M_m$ $\forall\, m \geq 0$. We claim that $\mathbb{P}-\lim\limits_{N\rightarrow\infty}{\Lambda^N_m\delta_{A_k\lambda(N)}} = A_kM_m$ $\forall\, m \geq 0$, which would show $A_k M\in\Mart$ indeed.

Take any $m\in\Z_{\geq 0}$, $\mu\in\GT_m$, then
\begin{gather*}
\big(\Lambda^N_m\delta_{A_k\lambda(N)}\big)(\mu)
= \Lambda^N_m ( A_k\lambda(N), \mu )
= \Lambda^N_m ( \lambda(N), A_{-k}\mu )\\
\hphantom{\big(\Lambda^N_m\delta_{A_k\lambda(N)}\big)(\mu)}{}
= \big( \Lambda^N_m\delta_{\lambda(N)} \big) (A_{-k}\mu)
\xrightarrow{N\rightarrow\infty} M_m(A_{-k}\mu) = A_k M_m (\mu),
\end{gather*}
where we have used $\Lambda^N_m(A_k\nu, A_k\kappa) = \Lambda^N_m(\nu, \kappa)$, previously stated in the proof of Lemma~\ref{lem:qtcoherency}.

Let us move on to the second part of the lemma. Let $\NN(M) = \nu$; to show $\NN(A_k M) = A_k \nu$, we need
\begin{gather*}
\PPP_{A_k M_m}(x_1, \dots, x_m) = \Phi^{A_k\nu}\big(x_1 t^{1-m}, \dots, x_{m-1} t^{-1}, x_m; q, t\big)\\
 \forall\, (x_1, \dots, x_m)\in\TT^m, \quad \forall\, m\in\N.
\end{gather*}
For any $m\in\N$, $(x_1, \dots, x_m)\in\TT^m$, we have
\begin{gather*}
\PPP_{A_k M_m}(x_1, \dots, x_m)
= \sum_{\lambda\in\GT_m}{A_k M_m(\lambda)\frac{P_{\lambda}\big(x_1, x_2 t, \dots, x_m t^{m-1}\big)}{P_{\lambda}\big(1, t, t^2, \dots, t^{m-1}\big)}}\\
\hphantom{\PPP_{A_k M_m}(x_1, \dots, x_m)}{} = \sum_{\lambda\in\GT_m}{M_m(A_{-k}\lambda)\frac{P_{\lambda}\big(x_1, x_2 t, \dots, x_m t^{m-1}\big)}{P_{\lambda}\big(1, t, t^2, \dots, t^{m-1}\big)}}\\
\hphantom{\PPP_{A_k M_m}(x_1, \dots, x_m)}{}
= \sum_{\lambda\in\GT_m}{M_m(\lambda)\frac{P_{A_k \lambda}\big(x_1, x_2 t, \dots, x_m t^{m-1}\big)}{P_{A_k \lambda}\big(1, t, t^2, \dots, t^{m-1}\big)}}\\
\hphantom{\PPP_{A_k M_m}(x_1, \dots, x_m)}{} = (x_1x_2\cdots x_m)^k\cdot\sum_{\lambda\in\GT_m}{M_m(\lambda)\frac{P_{\lambda}\big(x_1, x_2 t, \dots, x_m t^{m-1}\big)}{P_{\lambda}\big(1, t, t^2, \dots, t^{m-1}\big)}}\\
\hphantom{\PPP_{A_k M_m}(x_1, \dots, x_m)}{}
= (x_1x_2\cdots x_m)^k\cdot\PPP_{M_m}(x_1, \dots, x_m)\\
\hphantom{\PPP_{A_k M_m}(x_1, \dots, x_m)}{} = (x_1\cdots x_m)^k\cdot\Phi^{\nu}\big(x_1 t^{1-m}, \dots, x_m; q, t\big)\\
\hphantom{\PPP_{A_k M_m}(x_1, \dots, x_m)}{} = t^{k {m \choose 2}}\big(\big(x_1 t^{1-m}\big)\cdots (x_m)\big)^k\cdot\Phi^{\nu}\big(x_1 t^{1-m}, \dots, x_m; q, t\big) \\
\hphantom{\PPP_{A_k M_m}(x_1, \dots, x_m)}{} = \Phi^{A_k \nu}\big( x_1 t^{1-m}, \dots, x_m; q, t\big),
\end{gather*}
where the last equality follows from Lemma~\ref{lem:Akfunctions}.
\end{proof}

\begin{prop}\label{thm:martinboundary0}
The Martin boundary $\Omega_{q, t}^{{\rm Martin}}$ of the $(q, t)$-GT graph is bijective to $\Nu$ under the map $\NN$ defined in~\eqref{def:Nmap1} above.
\end{prop}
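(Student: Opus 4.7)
The plan is to show that $\NN$ is both injective and surjective, in each case exploiting the shift automorphisms $A_k$ (Lemma \ref{lem:commutingops}) to reduce to the ``positive'' case $\nu_1 \geq 0$, where the stronger analytic properties of $\Phi^{\nu}$ from Theorem \ref{thm:asymptotics2} and the uniqueness statement of Lemma \ref{cor:generating} are available.

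\textbf{Injectivity.} Suppose $M, M' \in \Mart$ satisfy $\NN(M) = \NN(M') = \nu$. Fix any $k \in \Z$ with $k + \nu_1 \geq 0$ and pass to $\widetilde{M} := A_k M$, $\widetilde{M}' := A_k M'$, which by Lemma \ref{lem:commutingops} both lie in $\Mart$ and are sent to $A_k \nu$ under $\NN$, with $(A_k\nu)_1 \geq 0$. I claim that the coherent system $\{\widetilde{M}_m\}_{m\geq 0}$ is supported on positive signatures. Indeed, retracing the proof of Proposition \ref{prop:PhiProb} produces a sequence $\{\lambda(N_j)\}$ stabilizing to $A_k\nu$ such that $\Lambda^{N_j}_m\delta_{\lambda(N_j)} \Rightarrow \widetilde{M}_m$ weakly; since $(A_k\nu)_1 \geq 0$ one has $\lambda(N_j)_{N_j} \geq 0$ for large $j$, and a trivial interlacing argument (every intermediate $\mu^{(s)}_s \geq \lambda(N_j)_{N_j}$) shows $\Lambda^{N_j}_m\delta_{\lambda(N_j)}$ is supported on $\GTp_m$, so the weak limit $\widetilde{M}_m$ is too. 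The same applies to $\widetilde{M}'$. Proposition \ref{prop:PhiProb} now gives $\PPP_{\widetilde{M}_m}(x) = \Phi^{A_k\nu}(x_1 t^{1-m},\dots,x_m;q,t) = \PPP_{\widetilde{M}'_m}(x)$ on $\TT^m$, so Lemma \ref{cor:generating} forces $\widetilde{M}_m = \widetilde{M}'_m$ for all $m$. Undoing $A_k$ yields $M_m = M'_m$ for all $m$, and then $M = M'$ by Proposition \ref{prop:bijection}.

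\textbf{Surjectivity.} Given $\nu \in \Nu$, replace $\nu$ by $A_k \nu$ for $k \geq -\nu_1$ (and compose with $A_{-k}$ at the end using Lemma \ref{lem:commutingops}); this reduces the problem to the case $\nu_1 \geq 0$. Set $\lambda(N) := (\nu_N, \nu_{N-1}, \dots, \nu_1) \in \GTp_N$, which stabilizes to $\nu$. Let $M^N_m := \Lambda^N_m\delta_{\lambda(N)}$; this is a probability measure on $\GTp_m$. By Theorem \ref{thm:asymptotics2} together with the computation \eqref{eqn:macdonaldsdelta}, $\PPP_{M^N_m}$ converges to $\Phi_m(x_1,\dots,x_m) := \Phi^{\nu}(x_1 t^{1-m},\dots,x_m;q,t)$ uniformly on compact subsets of $\C^m$ (this is where $\nu_1\geq 0$ is used, since the convergence on $(\C\setminus\{0\})^m$ is upgraded to $\C^m$). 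Evaluating at $(R,\dots,R)$ for a fixed $R>1$ gives $\sum_\mu M^N_m(\mu)R^{|\mu|} \to \Phi_m(R,\dots,R) < \infty$ by homogeneity of the $P_\mu$, and the resulting uniform bound $\sum_{|\mu| > K} M^N_m(\mu) \leq C_R R^{-K}$ yields tightness of $\{M^N_m\}$ on $\GTp_m$. Any subsequential weak limit $M_m$ has $\PPP_{M_m} = \Phi_m$ on $\TT^m$ by Proposition \ref{prop:convergencemacdonalds}, so Lemma \ref{cor:generating} identifies the limit uniquely, forcing $M^N_m \Rightarrow M_m$. The family $\{M_m\}_{m\geq 0}$ is $(q,t)$-coherent (the coherency relations \eqref{df:coherence} for $\{M^N_m\}$ pass to the weak limit), so by Proposition \ref{prop:bijection} it corresponds to a unique $M\in M_{\mathrm{prob}}(\Tau)$, which lies in $\Mart$ by construction. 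Finally, $\NN(M) = \nu$ follows from the defining relation \eqref{eqn:macdonaldPhi} together with the uniqueness portion of Lemma \ref{lem:expansionintegrals}.

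\textbf{Main obstacle.} The principal difficulty is the tightness step in the surjectivity argument: uniform convergence of the Macdonald generating functions only on the torus $\TT^m$ is insufficient to rule out escape of mass to infinity in the limit measures $M_m$. It is precisely the analytic continuation of $\Phi^{\nu}$ to all of $\C^m$ when $\nu_1\geq 0$ (established in Theorem \ref{thm:asymptotics2}) that makes evaluation at $|x_i|>1$ meaningful and delivers the exponentially-decaying tail bound. This explains the necessity of the $A_k$-shift in both steps of the argument.
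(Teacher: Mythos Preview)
Your proof is correct, and both halves are organized a bit differently from the paper.

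For \textbf{injectivity}, the paper (which does this step second) fixes one of the two measures to be the explicit one built in the surjectivity step, so that it is a priori supported on $\GTp_m$; the other measure is then shown to share this support by a Fourier--coefficient positivity argument, after which Lemma~\ref{cor:generating} applies. Your argument is cleaner and self-contained: you observe directly that \emph{any} $M\in\Mart$ with $\NN(M)_1\geq 0$ has each $M_m$ supported on $\GTp_m$, by passing to the stabilizing subsequence produced in Proposition~\ref{prop:PhiProb} and invoking interlacing. This lets you treat $M$ and $M'$ symmetrically.

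For \textbf{surjectivity}, the paper does not use tightness at all. It stays on the torus: uniform convergence of $\PPP_{M^{\lambda(N)}_m}$ on $\TT^m$ gives convergence of Fourier coefficients, and the finite triangular system in each fixed degree $|\mu|=n$ (as in Proposition~\ref{prop:lemmaconv}) yields pointwise limits $M_m(\mu)=\lim_N M^{\lambda(N)}_m(\mu)$; the total mass $\sum_\mu M_m(\mu)=1$ is then read off by evaluating the equality $\PPP_{M_m}=\Phi_m$ at the point $(1,\dots,1)\in\TT^m$. Your route---evaluating at $(R,\dots,R)$ with $R>1$ to get the exponential tail bound $\sum_{|\mu|>K}M^N_m(\mu)\leq C_R R^{-K}$ and hence tightness---is equally valid but genuinely uses the stronger conclusion of Theorem~\ref{thm:asymptotics2}, namely that when $\nu_1\geq 0$ the convergence extends to all of $\C^m$. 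So your ``main obstacle'' paragraph slightly overstates the case: the paper's Fourier--coefficient route shows that torus convergence alone already suffices, precisely because the normalizing point $(1,\dots,1)$ lies on $\TT^m$. What your approach buys is a more conceptual packaging (tightness $+$ identification of subsequential limits via Lemma~\ref{cor:generating}) at the cost of the extra analytic input.
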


\begin{proof}
\textbf{Step 1.} We prove $\NN$ is surjective.

Let $\nu\in\Nu$ be arbitrary; we want to show it belongs to the range of $\NN$. From Lemma~\ref{lem:commutingops}, we may assume $\nu_1 = 0$ without any loss of generality.
Consider the sequence $\{\lambda(N) = (\nu_N \geq \nu_{N-1} \geq \dots \geq \nu_1)\}_{N\geq 1}$ of signatures that stabilizes to $\nu$. Let $m\in\N$ be arbitrary. The f\/irst claim is that the limit $\lim\limits_{N\rightarrow\infty}{M^{\lambda(N)}_m(\lambda)}$ exists for any $\lambda\in\GT_m$.

By def\/inition~(\ref{kernelsmeasures}), $M^{\lambda(N)}_m(\lambda) = \Lambda^N_m(\lambda(N), \lambda) = 0$ unless $\lambda_m \geq \lambda(N)_N = \nu_1 = 0$. Thus the previous claim is clear if $\lambda_m < \nu_1 = 0$, in which case $\lim\limits_{N\rightarrow\infty}{M^{\lambda(N)}_m(\lambda)} = 0$. Assume now $\lambda_m \geq \nu_1 = 0$, i.e., $\lambda\in\GTp_m$.

From Theorem~\ref{thm:asymptotics2}, we have the uniform limit
\begin{gather}
\lim_{N\rightarrow\infty}{\frac{P_{\lambda(N)}\big(x_1t^{1-m}, \dots, x_{m-1}t^{-1}, x_m, t^{-m}, \dots, t^{1 - N}\big)}{P_{\lambda(N)}\big(1, t^{-1}, t^{-2}, \dots, t^{1 - N}\big)}} \nonumber\\
\qquad{} = \Phi^{\nu}\big(x_1t^{1-m}, \dots, x_{m-1}t^{-1}, x_m; q, t\big)\label{eqn:limitmacdonald0}
\end{gather}
on $\TT^m$. Correspondingly, the Fourier coef\/f\/icients of the normalized Macdonald characters converge to those of $\Phi^{\nu}(x_1 t^{1-m}, \dots, x_{m-1} t^{-1}, x_m; q, t)$. Proposition~ref{prop:coherentsequences} and the expansion of Corollary~\ref{cor:branchingrule} give
\begin{gather}
\frac{P_{\lambda(N)}\big(x_1t^{1-m}, \dots, x_{m-1}t^{-1}, x_m, t^{-m}, \dots, t^{1-N}\big)}{P_{\lambda(N)}\big(1, t^{-1}, t^{-2}, \dots, t^{1-N}\big)}\nonumber\\
\qquad{} = \sum_{ \mu\in\GT_m }{ M_m^{\lambda(N)}(\mu) \frac{P_{\mu}\big(x_1, x_2t, \dots, x_mt^{m-1}\big)}{P_{\mu}\big(1, t, \dots, t^{m-1}\big)} }\nonumber\\
\qquad{}= \sum_{\kappa\in\GT_m}{ m_{\kappa}\big(x_1, x_2t, \dots, x_mt^{m-1}\big) \sum_{\mu\in\GT_m}{ \frac{c_{\mu, \kappa}M_m^{\lambda(N)}(\mu)}{P_{\mu}\big(1, t, \dots, t^{m-1}\big)} } }.\label{eqn:prelimit}
\end{gather}
Let $\kappa\in\GT_m$ be arbitrary, and denote $n(\kappa) \myeq \kappa_2 + 2\kappa_3 + \dots + (m-1)\kappa_m$. Observe that $x_1^{\kappa_1}\cdots x_m^{\kappa}$ appears only in the monomial symmetric polynomial $m_{\kappa}(x_1, x_2t, \dots, x_mt^{m-1})$ and the corresponding term is $t^{n(\kappa)}x_1^{\kappa_1}\cdots x_m^{\kappa_m}$, so~(\ref{eqn:prelimit}) is essentially the Fourier expansion of the prelimit functions in~(\ref{eqn:limitmacdonald0}). Then we have that, for any $\kappa\in\GT_m$, the following limit
\begin{gather}\label{eqn:limitexists}
\lim_{N\rightarrow\infty}{ t^{n(\kappa)} \sum_{\mu\in\GT_m}{ \frac{c_{\mu, \kappa}M_m^{\lambda(N)}(\mu)}{P_{\mu}\big(1, t, \dots, t^{m-1}\big)} } }
\end{gather}
exists and equals the Fourier coef\/f\/icient of $x_1^{\kappa_1}\cdots x_m^{\kappa_m}$ in the function $\Phi^{\nu}(x_1t^{m-1}, \dots, x_m; q, t)$. As mentioned before, $M^{\lambda(N)}_m(\mu) = 0$ unless $\mu_m \geq \nu_1 \geq 0$. Thus we can restrict the sum in~(\ref{eqn:limitexists}) to $\mu\in\GTp_m$.
From the same analysis as in Proposition~\ref{prop:lemmaconv}, we can obtain that the limit $\lim\limits_{N\rightarrow\infty}{M^{\lambda(N)}_m(\mu)}$ exists for any $\mu\in\GTp_m$ with $|\lambda| = n$; let us denote $M_m(\mu) \myeq \lim\limits_{N\rightarrow\infty}{M_m^{\lambda(N)}(\mu)}$.
Immediately from the def\/inition (and Fatou's lemma) it follows that
\begin{gather*}
M_m(\lambda) \geq 0 \qquad \forall\, \lambda\in\GT_m,\\
\sum_{\lambda\in\GT_m}{ M_m(\lambda) } \leq 1.
\end{gather*}
Recall that we observed $M^{\lambda(N)}_m(\lambda) = 0$ if $\lambda\notin\GTp_m$, therefore $M_m(\lambda) = 0$ if $\lambda\notin\GTp_m$.

Next consider the following function
\begin{gather*}
F(x_1, \dots, x_m) = \sum_{\mu\in\GT_m}{ M_m(\mu) \frac{P_{\mu}\big(x_1, x_2t, \dots, x_m t^{m-1}\big)}{P_{\mu}\big(1, t, \dots, t^{m-1}\big)} }.
\end{gather*}
Clearly $F$ def\/ines a function on $\TT^m$, as it is def\/ined by an absolutely convergent series on the $m$-dimensional torus. Moreover its absolute value is upper bounded by~$1$, therefore $F \in L^{\infty}(\TT^m) \subset L^2(\TT^m)$.
By the same argument preceding~\eqref{rem:fouriercoeffs}, one shows, for any $\kappa\in\GT_m$, that the Fourier coef\/f\/icient of $x_1^{\kappa_1}\cdots x_m^{\kappa_m}$ in $F$ is
\begin{gather}\label{eqn:fouriers}
t^{n(\kappa)}\sum_{\mu\in\GT_m}{ \frac{c_{\mu, \kappa}M_m(\mu)}{P_{\mu}\big(1, t, \dots, t^{m-1}\big)} }.
\end{gather}
Observe that, if $\kappa\in\GT_m$ is f\/ixed, the sums in both (\ref{eqn:limitexists}) and (\ref{eqn:fouriers}) are f\/inite because $c_{\mu, \kappa} M_m(\mu) = 0$ unless $\mu\geq\kappa$ and $\mu \in \GTp_m$. Therefore $\lim\limits_{N\rightarrow\infty}{M_m^{\lambda(N)}(\mu)} = M_m(\mu)$ for all $\mu\in\GTp_m$ implies the equality between (\ref{eqn:limitexists}) and (\ref{eqn:fouriers}). In other words, the following convergence holds
\begin{gather}\label{eqn:fourierconvergence}
\frac{P_{\lambda(N)}\big(x_1t^{1-m}, \dots, x_{m-1}t^{-1}, x_m, t^{-m}, \dots, t^{1 - N}\big)}{P_{\lambda(N)}\big(1, t^{-1}, t^{-2}, \dots, t^{1 - N}\big)} \xrightarrow{\textrm{Fourier}} F(x_1, \dots, x_m)
\end{gather}
in the sense that all Fourier coef\/f\/icients of the left side of~(\ref{eqn:fourierconvergence}) converge to the corresponding Fourier coef\/f\/icients of $F(x_1, \dots, x_m)$, as $N$ tends to inf\/inity. But we already knew that the Fourier coef\/f\/icients of the left side of~(\ref{eqn:fourierconvergence}) converge to the corresponding Fourier coef\/f\/icients of $\Phi^{\nu}(x_1t^{1-m}, \dots, x_m; q, t)$. Therefore all the Fourier coef\/f\/icients of the dif\/ference $F(x_1, \dots, x_m) - \Phi^{\nu}(x_1t^{1-m}, \dots, x_m; q, t)$ of square-integrable functions on~$\TT^m$ must be zero. It follows that $F(x_1, \dots, x_m) = \Phi^{\nu}(x_1t^{1-m}, \dots, x_m; q, t)$. In particular, the equality holds for $x_1 = \dots = x_m = 1$, resulting in
\begin{gather*}
\sum_{\mu\in\GT_m}{M_m(\mu)} = F(1, \dots, 1) = \Phi^{\nu}\big(t^{1-m}, t^{2-m}, \dots, 1; q, t\big)\\
\qquad{} = \lim_{N\rightarrow\infty}{ \left.\frac{P_{\lambda(N)}\big( x_1t^{1-m}, \dots, x_{m-1}t^{-1}, x_m, t^{-m}, \dots, t^{1 - N}\big)}{P_{\lambda(N)}\big(1, t^{-1}, t^{-2}, \dots, t^{1 - N}\big)}\right|_{x_1 = \dots = x_m = 1} } = \lim_{N\rightarrow\infty}{ 1 } = 1.
\end{gather*}
Moreover $\Phi^{\nu}(x_1t^{1-m}, \dots, x_m; q, t) = F(x_1, \dots, x_m)$ is the Macdonald generating function of $M_m$.

We are almost done. For each $m\in\N$, we have constructed probability measures $M_m$ as limits of $M^{\lambda(N)}_m$ and shown that $\Phi^{\nu}(x_1t^{1-m}, \dots, x_m; q, t)$ is the generating function of $M_m$. Complete the sequence with $M_0 = \delta_{\varnothing}$, the delta mass at $\varnothing$. We claim that $\{M_m\}_{m\geq 0}$ is a $(q, t)$-coherent sequence. In fact, since $\{M_m^{\lambda(N)}\}_{m = 0, 1, \dots, N}$ is a $(q, t)$-coherent sequence, then
\begin{gather}\label{eqn:prelimitcoherence}
M_m^{\lambda(N)}(\mu) = \sum_{\lambda\in\GT_{m+1}}{ M_{m+1}^{\lambda(N)}(\lambda)\Lambda^{m+1}_m(\lambda, \mu) }
\end{gather}
for any $0\leq m < N$, $\mu\in\GT_m$.
As $N$ goes to inf\/inity, the left side of~(\ref{eqn:prelimitcoherence}) tends to $M_m(\mu)$.
By an argument similar to that in the proof of Proposition~\ref{prop:convergencemacdonalds}, one shows that the right side of~(\ref{eqn:prelimitcoherence}) converges to $\sum\limits_{\lambda\in\GT_{m+1}} M_{m+1}(\lambda) \Lambda^{m+1}_m(\lambda, \mu)$.
Indeed, the argument simply relies on the weak convergence $M_{m+1}^{\lambda(N)} \rightarrow M_{m+1}$ and the uniform (on $\lambda$) bound $|\Lambda^{m+1}_m(\lambda, \mu)| = \Lambda^{m+1}_m(\lambda, \mu) \leq 1$.
Therefore the limit of~(\ref{eqn:prelimitcoherence}) as $N\rightarrow\infty$ is
\begin{gather*}
M_m(\mu) = \sum_{\lambda\in\GT_m}{ M_{m+1}(\lambda)\Lambda^{m+1}_m(\lambda, \mu) },
\end{gather*}
for any $m\in\Z_{\geq 0}$, $\mu\in\GT_m$. Thus $\{M_m\}_{m\geq 0}$ is a $(q, t)$-coherent sequence and has an associated probability measure~$M$ on $\Tau$, as given by Proposition~\ref{prop:bijection}. By the def\/inition of $\NN$, we conclude $\NN(M) = \nu$.

\textbf{Step 2.} Next we show that $\NN$ is injective.

Let $M, M'\in\Mart$ have the same image $\nu$ under the map $\NN$. The goal is to prove $M = M'$. From Lemma~\ref{lem:commutingops}, we may assume $\nu_1 = 0$ without any loss of generality. Furthermore, we can assume that $M$ is the element of $\Mart$ such that $\NN(M) = \nu$ and that was contructed in the f\/irst step.

Let $\{M_m\}_{m\geq 0}$ and $\{M'_m\}_{m\geq 0}$ be the $(q, t)$-coherent sequences associated to $M, M'$, then
\begin{gather}\label{eqn:equalitymacdonalds}
\PPP_{M_1}(x) = \Phi^{\nu}(x; q, t) = \PPP_{M'_1}(x) \qquad \forall\, x\in\TT.
\end{gather}
As it was mentioned in Section~\ref{sec:generating}, Macdonald generating functions are uniformly bounded on the torus, in particular, $\PPP_{\mathfrak{m}}\in L^{\infty}(\TT) \subset L^2(\TT)$ for any probability measure $\mathfrak{m}$ on $\GT_1 = \Z$. Write the f\/irst equality of~(\ref{eqn:equalitymacdonalds}) for $x = e^{i\theta}$ as follows:
\begin{gather*}
\sum_{n\in\Z}{M_1(n)e^{in\theta}} = \sum_{n\in\Z}{M_1'(n)e^{in\theta}}.
\end{gather*}
Both sums above are expansions of a square integrable function on $\TT$ in terms of the basis $\{e^{in\theta}\}\subset L^2(\TT)$. Thus the (Fourier) coef\/f\/icients in both sums must agree, i.e., $M_1(n) = M'_1(n)$ $\forall \, n\in\Z$, and so $M_1 = M'_1$.

We aim to apply a similar argument to show that $M_m = M'_m$ for any $m\in\N$. We will be done once this is proved, as Proposition~\ref{prop:bijection} would then show $M = M'$. For general $m$, we make use of the fact that $M\in\Mart$ is the probability measure constructed in step~1: we use that each~$M_m$ is supported on~$\GTp_m$.

As above, the def\/inition of the map $\NN$ implies
\begin{gather*}
\PPP_{M_m}(x_1, \dots, x_m) = \Phi^{\nu}\big(x_1t^{1-m}, \dots, x_{m-1}t^{-1}, x_m; q, t\big) \\
\hphantom{\PPP_{M_m}(x_1, \dots, x_m)}{} = \PPP_{M'_m}(x_1, \dots, x_m) \qquad \forall\, (x_1, \dots, x_m)\in\TT^m.
\end{gather*}
The equality of the functions above implies the equality of corresponding Fourier coef\/f\/icients.
It follows that for any $\kappa\in\GT_m$ we have, see~\eqref{rem:fouriercoeffs},
\begin{gather}\label{eqn:step2fouriers}
\sum_{\mu\in\GT_m}{\frac{c_{\mu, \kappa}M_m(\mu)}{P_{\mu}\big(1, t, \dots, t^{m-1}\big)}} = \sum_{\mu\in\GT_m}{\frac{c_{\mu, \kappa}M_m'(\mu)}{P_{\mu}\big(1, t, \dots, t^{m-1}\big)}}.
\end{gather}
When $\kappa\notin\GTp_m$ (or equivalently $\kappa_m < 0$), we claim that the left side of~(\ref{eqn:step2fouriers}) is zero. In fact, for any $\mu\in\GT_m$, either $c_{\mu, \kappa} = 0$ when $\mu_m \geq 0$ or $M_m(\mu) = 0$ when $\mu_m < 0$, because $M_m$ is supported on~$\GTp_m$.
Then also the right side of~(\ref{eqn:step2fouriers}) is zero if $\kappa\notin\GTp_m$. By using also the properties of the coef\/f\/icients $c_{\mu, \kappa}$ stated in Corollary~\ref{cor:branchingrule}, we have
\begin{gather*}
0 = \sum_{\mu\in\GT_m}{\frac{c_{\mu, \kappa}M_m'(\mu)}{P_{\mu}\big(1, t, \dots, t^{m-1}\big)}} \geq \frac{c_{\kappa, \kappa}M_m'(\kappa)}{P_{\kappa}\big(1, t, \dots, t^{m-1}\big)} = \frac{M_m'(\kappa)}{P_{\kappa}\big(1, t, \dots, t^{m-1}\big)} \geq 0.
\end{gather*}
Then we must have $M_m'(\kappa) = 0$ if $\kappa\notin\GTp_m$, i.e., $M_m'$ is supported on $\GTp_m$. Finally an application of Lemma~\ref{cor:generating} says that if $\PPP_{M_m} = \PPP_{M_m'}$ on $\TT^m$, and $M_m$, $M_m'$ are both supported on $\GTp_m$, implies $M_m = M'_m$, thus f\/inishing the proof.
\end{proof}

\subsection{Characterization of the Martin boundary}\label{sec:martinchar}

Our next goal is to characterize the topological space $\Mart$ completely.
Recall the map $\NN\colon \Mart \rightarrow \Nu$, def\/ined above in~(\ref{def:Nmap1}), by letting $\NN(M) = \nu$ be the unique element of $\Nu$ such that
\begin{gather*}
\Phi^{\nu}\big(x_1t^{1-m}, \dots, x_{m-1}t^{-1}, x_m; q, t\big) = \mathcal{P}_{M_m}(x_1, \dots, x_m)\! \qquad \forall\, (x_1, \dots, x_m) \in \TT^m,\! \quad \forall\, m\geq 1.
\end{gather*}

Proposition~\ref{thm:martinboundary0} shows that $\NN$ is a bijection, so the inverse map $\NN^{-1}$ is well-def\/ined.
\begin{df}
For any $\nu\in\Nu$, we denote $\NN^{-1}(\nu)$ by $M^{\nu}$ and the corresponding $(q, t)$-coherent sequence by $\{M^{\nu}_m\}_{m \geq 0}$.
\end{df}
From step 1 of the proof of Proposition~\ref{thm:martinboundary0}, and Lemma~\ref{lem:commutingops}, we have that for the sequence of signatures $\{\lambda(N) = (\nu_N \geq \cdots \geq \nu_1)\}_{N \geq 1}$ which stabilizes to $\nu$, the following weak convergence holds
\begin{gather*}
\mathbb{P}-\lim_{N\rightarrow\infty}{\Lambda^N_m \delta_{\lambda(N)}} = M^{\nu}_m \qquad \forall\, m\in\N,
\end{gather*}
i.e.,
\begin{gather*}
\lim_{N\rightarrow\infty}{\Lambda^N_m \delta_{\lambda(N)}}(\mu) = \lim_{N\rightarrow\infty}{\Lambda^N_m(\lambda(N), \mu)} = M^{\nu}_m(\mu) \qquad \forall \, m\in\N, \quad \forall\, \mu\in\GT_m.
\end{gather*}

In fact, we note the same analysis as in step 1 of the proof of Proposition~\ref{thm:martinboundary0} shows that the weak convergence above holds for \textit{any} sequence of signatures $\{\lambda(N)\}_{N\geq 1}$ stabilizing to $\nu$.

We need the following lemma to prove that $\NN$ is a homeomorphism.

\begin{lem}\label{lem:posprob}\quad

\begin{enumerate}\itemsep=0pt
	\item[$1.$] For any $\nu\in\Nu$, $m\in\N$, $M_m^{\nu}$ is supported on $\{\mu = (\mu_1 \geq \dots \geq \mu_{m-1} \geq \mu_m) \in \GT_m\colon \mu_m \geq \nu_1,\, \mu_{m-1} \geq \nu_2, \, \dots,\, \mu_1 \geq \nu_m\}$.

	\item[$2.$] For any $m\in\N$, there exists $c_m \in (0, 1)$ such that for any integers $n_1 \leq n_2 \leq \dots \leq n_m$ and $\nu\in\Nu$ with $\nu_1 = n_1,\, \nu_2 = n_2,\, \dots,\, \nu_m = n_m$, we have $M^{\nu}_m(n_m \geq \dots \geq n_2 \geq n_1) \geq c_m$.

	\item[$3.$] Let $m\in\N$ and let $\widetilde{\nu}, \nu\in\Nu$ be such that $\widetilde{\nu}_i \geq \nu_i$ $\forall\, i > m$ and $\widetilde{\nu}_i = \nu_i$ $\forall\, 1\leq i\leq m$, then $M^{\widetilde{\nu}}_m(\nu_m \geq \dots \geq \nu_2 \geq \nu_1) \leq M^{\nu}_m(\nu_m \geq \cdots \geq \nu_2 \geq \nu_1)$.
\end{enumerate}
\end{lem}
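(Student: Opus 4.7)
The plan is to reduce all three parts to the explicit product formula for the prelimit cotransition probabilities that was derived inside the proof of Lemma~\ref{lem:boundedparts}. Fix $\nu \in \Nu$ and set $\lambda(N) = (\nu_N \geq \nu_{N-1} \geq \cdots \geq \nu_1) \in \GT_N$, a sequence of signatures that stabilizes to $\nu$; as observed just after the proof of Proposition~\ref{thm:martinboundary0}, $M^{\nu}_m$ is the weak limit of the prelimit measures $M^{\lambda(N)}_m = \Lambda^N_m \delta_{\lambda(N)}$.

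For part (1), nonvanishing of $\Lambda^N_m(\lambda(N), \mu)$ forces the existence of a chain of interlacing signatures $\mu \prec \lambda^{(m+1)} \prec \cdots \prec \lambda(N)$, and iterating the inequality $\lambda^{(k+1)}_{k+1} \leq \lambda^{(k)}_k$ yields $\mu_i \geq \lambda(N)_{N-m+i} = \nu_{m-i+1}$ for every $1 \leq i \leq m$. This support condition is independent of $N$ and is a closed subset of the discrete space $\GT_m$, so it is inherited by the weak limit $M^{\nu}_m$.

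For parts (2) and (3), I would invoke the formula established in the proof of Lemma~\ref{lem:boundedparts} in the case $\mu = (\nu_m, \nu_{m-1}, \dots, \nu_1) = (\lambda(N)_{N-m+1}, \dots, \lambda(N)_N)$, namely
\begin{gather*}
\Lambda^N_m\bigl(\lambda(N), (\nu_m, \dots, \nu_1)\bigr) = \prod_{i=1}^{N-m}\prod_{j=N-m+1}^N\prod_{k=0}^{\theta-1}\frac{1 - q^k t^{j-i}}{1 - q^{\lambda(N)_i - \lambda(N)_j + k}t^{j-i}}.
\end{gather*}
Part (2) is then immediate from the uniform lower bound $((t;t)_{\infty})^{\theta m}$ obtained in that same proof, which depends only on $m$, $\theta$ and $q$, and hence is inherited by the weak limit as $N\to\infty$; so $c_m = ((t;t)_{\infty})^{\theta m}$ works for every $\nu \in \Nu$ extending $(n_1, \dots, n_m)$.

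For part (3), apply the displayed formula both to $\lambda(N)$ and to $\widetilde{\lambda}(N) = (\widetilde{\nu}_N \geq \cdots \geq \widetilde{\nu}_1)$. Since $\widetilde{\nu}_i = \nu_i$ for $i \leq m$, one has $\widetilde{\lambda}(N)_j = \lambda(N)_j$ for $j \geq N-m+1$; and since $\widetilde{\nu}_i \geq \nu_i$ for $i > m$, one has $\widetilde{\lambda}(N)_i \geq \lambda(N)_i$ for $i \leq N-m$. Therefore $\widetilde{\lambda}(N)_i - \widetilde{\lambda}(N)_j \geq \lambda(N)_i - \lambda(N)_j \geq 0$ for every $(i, j)$ in the product; since $q \in (0, 1)$, increasing this exponent decreases $q^{\lambda_i - \lambda_j + k}t^{j-i}$ and hence increases the corresponding denominator factor, so every factor of the product weakly decreases. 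The inequality $\Lambda^N_m(\widetilde{\lambda}(N), \mu) \leq \Lambda^N_m(\lambda(N), \mu)$ is preserved under the limit $N\to\infty$. The entire plan consists of bookkeeping around the multiplicative formula from Lemma~\ref{lem:boundedparts}; the only nontrivial step is the derivation of that formula itself, which has already been carried out there, so no further obstacle is anticipated.
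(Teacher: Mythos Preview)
Your proposal is correct and follows essentially the same approach as the paper: all three parts are deduced from the explicit product formula for $\Lambda^N_m\bigl(\lambda(N),(\lambda(N)_{N-m+1},\dots,\lambda(N)_N)\bigr)$ established in the proof of Lemma~\ref{lem:boundedparts}, together with passage to the weak limit. In fact your treatment of part~(3) is slightly more complete, as the paper writes out only the case $m=1$ and leaves the general monotonicity comparison to the reader.
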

\begin{proof}
Let us prove (1). We use the weak convergence of probabilities $\mathbb{P}-\lim\limits_{N\rightarrow\infty}{\Lambda^N_m\delta_{\lambda(N)}} = M^{\nu}_m$ $\forall\, m\in\N$, for the sequence of signatures $\{\lambda(N) = (\nu_N \geq \dots \geq \nu_2 \geq \nu_1)\}_{N\geq 1}$. Because of the def\/inition~(\ref{kernelsmeasures}) for the maps $\Lambda^N_m$, it follows that $\Lambda^N_m\delta_{\lambda(N)}(\mu) = \Lambda^N_m(\lambda(N), \mu) = 0$, unless $\mu_m \geq \lambda(N)_N = \nu_1, \dots, \mu_1 \geq \lambda(N)_{N-m+1}=\nu_m$. Thus also $M^{\nu}_m(\mu) = 0$ unless $\mu_m \geq \nu_1, \dots, \mu_1 \geq \nu_m$.

Next we show (2); to get started f\/ix $n_1\in\N$ and let us show that $M^{\nu}_1(n_1) \geq ((t; t)_{\infty})^{\theta}$ for any $\nu\in\Nu$ with $\nu_1 = n_1$ (so we can set $c_1 = ((t; t)_{\infty})^{\theta}$). The weak convergence mentioned above gives $M_1^{\nu}(n_1) = \lim\limits_{N\rightarrow\infty}{\Lambda^N_1\delta_{\lambda(N)}(n_1)} = \lim\limits_{N\rightarrow\infty}{\Lambda^N_1\delta_{\lambda(N)}(\lambda(N)_N)}$, for any $\nu\in\Nu$ with $\nu_1 = n_1$ and $\{\lambda(N)\}_{N\geq 1}$ as constructed above. The calculations in the proof of Lemma~\ref{lem:boundedparts0} show $\Lambda^N_1 \delta_{\lambda(N)}(\lambda(N)_N) \geq ( (t; t)_{\infty} )^{\theta}$ for all $N\in\N$, and thus also $M^{\nu}_1(n_1) \geq c_1 \myeq ( (t; t)_{\infty} )^{\theta} > 0$, for any $\nu\in\Nu$ with $\nu_1 = n_1$.

For a general $m\in\N$, we have
\[
M_m^{\nu}(n_m \geq \dots \geq n_2 \geq n_1) = \lim_{N\rightarrow\infty}{\Lambda^N_m\delta_{\lambda(N)}(\lambda(N)_{N-m+1} \geq \dots \geq \lambda(N)_N)},
\]
for $\{\lambda(N)\}_{N\geq 1}$ as constructed above.
In the proof of Lemma~\ref{lem:boundedparts}, we showed $\Lambda^N_m\delta_{\lambda(N)}(\lambda(N)_{N-m+1}$ $\geq \cdots \geq \lambda(N)_N) \geq c_m \myeq ((t; t)_{\infty})^{\theta m}$ holds for all $N\geq 1$. Therefore $M_m^{\nu}(n_m \geq \dots \geq n_2 \geq n_1) \geq c_m > 0$, for any $\nu\in\Nu$ with $\nu_1 = n_1$, $\nu_2 = n_2$, $\dots$, $\nu_m = n_m$.

Finally we move on to (3). To get started, we prove it for $m = 1$, so let $\widetilde{\nu}, \nu\in\Nu$ be such that $\widetilde{\nu}_i \geq \nu_i$ $\forall \, i\geq 2$ and $\widetilde{\nu}_1 = \nu_1$. Consider the following pair of sequences of signatures $\{\widetilde{\lambda}(N) = (\widetilde{\nu}_N \geq \dots \geq \widetilde{\nu}_2 \geq \widetilde{\nu}_1) \}_{N\geq 1}$ and $\{\lambda(N) = (\nu_N \geq \dots \geq \nu_2 \geq \nu_1) \}_{N\geq 1}$. Then we have the weak limits
\begin{gather}\label{eqn:someweaks}
\mathbb{P}-\lim_{N\rightarrow\infty}{\Lambda^N_m\delta_{\widetilde{\lambda}(N)}} = M^{\widetilde{\nu}}_m, \qquad \mathbb{P}-\lim_{N\rightarrow\infty}{\Lambda^N_m\delta_{\lambda(N)}} = M^{\nu}_m \hspace{.1in}\forall m\in\N.
\end{gather}
By the calculations in Lemma~\ref{lem:boundedparts0}, we have
\begin{gather*}
\Lambda^N_1\delta_{\lambda(N)}(\nu_1)
= \Lambda^N_1\delta_{\lambda(N)}(\lambda(N)_N)\\
\hphantom{\Lambda^N_1\delta_{\lambda(N)}(\nu_1)}{}
= \prod_{i=1}^{N-1}{\frac{(1 - t^{N-i})(1 - qt^{N-i})\cdots (1 - q^{\theta - 1}t^{N-i})}{(1 - q^{\lambda(N)_i - \lambda(N)_N}t^{N-i})\cdots (1 - q^{\lambda(N)_i - \lambda(N)_N + \theta - 1}t^{N - i})}}\\
\hphantom{\Lambda^N_1\delta_{\lambda(N)}(\nu_1)}{}= \prod_{i=1}^{N-1}{\frac{(1 - t^{N-i})(1 - qt^{N-i})\cdots (1 - q^{\theta - 1}t^{N-i})}{(1 - q^{\nu_{N-i+1} - \nu_1}t^{N-i})\cdots (1 - q^{\nu_{N-i+1} - \nu_1 + \theta - 1}t^{N-i})}}\\
\hphantom{\Lambda^N_1\delta_{\lambda(N)}(\nu_1)}{}\geq \prod_{i=1}^{N-1}{\frac{(1 - t^{N-i})(1 - qt^{N-i})\cdots (1 - q^{\theta - 1}t^{N-i})}{(1 - q^{\widetilde{\nu}_{N-i+1} - \widetilde{\nu}_1}t^{N-i})\cdots (1 - q^{\widetilde{\nu}_{N-i+1} - \widetilde{\nu}_1 + \theta - 1}t^{N-i})}}\\
\hphantom{\Lambda^N_1\delta_{\lambda(N)}(\nu_1)}{}=\prod_{i=1}^{N-1}{\frac{(1 - t^{N-i})(1 - qt^{N-i})\cdots (1 - q^{\theta - 1}t^{N-i})}{(1 - q^{\widetilde{\lambda}(N)_i - \widetilde{\lambda}(N)_N - \theta}t^{N-i+1})\cdots (1 - q^{\widetilde{\lambda}(N)_i - \widetilde{\lambda}(N)_N - 1}t^{N-i+1})}}\\
\hphantom{\Lambda^N_1\delta_{\lambda(N)}(\nu_1)}{}= \Lambda^N_1\delta_{\widetilde{\lambda}(N)}(\widetilde{\lambda}(N)_N) = \Lambda^N_1\delta_{\widetilde{\lambda}(N)}(\nu_1).
\end{gather*}
Thus taking into account the limits~(\ref{eqn:someweaks}) for $m = 1$, we deduce $M^{\nu}_1(\nu_1) \geq M^{\widetilde{\nu}}_1(\nu_1)$.

The proof of the third item for a general $m\in\N$ follows from similar calculations that are used to prove Lemma~\ref{lem:boundedparts}. We leave the details to the reader.
\end{proof}

\begin{thm}\label{thm:characterization}
The bijective map $\NN \colon \Mart \rightarrow \Nu$ is a homeomorphism.
\end{thm}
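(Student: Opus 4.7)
The plan is to reduce the homeomorphism statement to two separate pointwise-convergence problems. By Proposition~\ref{prop:bijection}, weak convergence of central measures on $\Tau$ is equivalent to weak convergence of each level measure $M_m$ on the countable discrete space $\GT_m$, i.e., pointwise convergence of the masses; on the other side, the topology on $\Nu$ is pointwise convergence of integer-valued coordinates, which for integer sequences means eventual equality on each coordinate. So the task splits into (i) showing $\NN$ is continuous into the pointwise topology on $\Nu$, and (ii) showing $\NN^{-1}$ is continuous into the pointwise-on-each-$\GT_m$ topology on $\Mart$.

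For (i), I would proceed combinatorially using Lemma~\ref{lem:posprob}. Given $M^{(k)} \to M$ with $\nu^{(k)} := \NN(M^{(k)})$ and $\nu := \NN(M)$, fix $m$ and consider the distinguished signature $\mu^\star := (\nu_m \geq \cdots \geq \nu_1) \in \GT_m$. Item~(2) of Lemma~\ref{lem:posprob} gives $M^\nu_m(\mu^\star) \geq c_m > 0$, so weak convergence implies $M^{(k)}_m(\mu^\star) > 0$ eventually, and item~(1) then forces $\nu^{(k)}_i \leq \nu_i$ for $1 \leq i \leq m$. For the matching lower bound, set $\mu^{(k)} := (\nu^{(k)}_m \geq \cdots \geq \nu^{(k)}_1)$, which by item~(2) carries $M^{(k)}_m$-mass at least $c_m$. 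Tightness of a weakly convergent sequence of probability measures on a countable discrete space confines these signatures to a finite subset of $\GT_m$, so any subsequential limit $\mu^{(k)} = \mu^\infty$ (eventually) satisfies $M_m(\mu^\infty) \geq c_m$; item~(1) then gives $\mu^\infty_{m-i+1} \geq \nu_i$, matching the already-established reverse inequality. So $\nu^{(k)}_i \to \nu_i$ for $i \leq m$, and since $m$ is arbitrary, $\nu^{(k)} \to \nu$ in $\Nu$.

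For (ii), I would first invoke Lemma~\ref{lem:commutingops} to shift: replace $\nu$ by $A_{-\nu_1}\nu$ and each $\nu^{(k)}$ by $A_{-\nu_1}\nu^{(k)}$ (which still converges to $A_{-\nu_1}\nu$), so WLOG $\nu_1 = 0$ and $\nu^{(k)}_1 = 0$ for $k$ large. Then by Lemma~\ref{lem:posprob}(1) all measures $M^\nu_m, M^{\nu^{(k)}}_m$ are supported on $\GTp_m$, and Proposition~\ref{prop:lemmaconv} says the desired weak convergence $M^{\nu^{(k)}}_m \to M^\nu_m$ follows as soon as $\PPP_{M^{\nu^{(k)}}_m} \to \PPP_{M^\nu_m}$ uniformly on $\TT^m$; by~(\ref{eq:macdonaldgenerating}), this is the uniform convergence $\Phi^{\nu^{(k)}}(x_1 t^{1-m}, \dots, x_m; q, t) \to \Phi^\nu(x_1 t^{1-m}, \dots, x_m; q, t)$. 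The multivariable definition~(\ref{def:Phinu}), combined with the observation (made in the proof of Theorem~\ref{thm:asymptotics2}) that $\widetilde{D}^{(m)}_{q,\theta}$ preserves locally uniform convergence of analytic families, reduces everything to the one-variable convergence $\Phi^{\nu^{(k)}}(x;q,t) \to \Phi^\nu(x;q,t)$ uniformly on compact subsets of $\C \setminus \{0\}$. This I would establish from the integral formula~(\ref{eqn:Phi1}): for each fixed $z \in \CC^+$ every factor $(q^{-z+\nu^{(k)}_i}t^i;q)_\infty / (q^{-z+\nu^{(k)}_i}t^{i-1};q)_\infty$ stabilizes once $\nu^{(k)}_i = \nu_i$, the infinite product is dominated uniformly in $k$ by the estimates from the proof of Lemma~\ref{lem:expansionintegrals} using $\nu^{(k)}_i \geq 0$, and the outer integral converges by the exponential decay of $|x^z|$ along the tails of $\CC^+$.

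The hard part will be organizing these uniform bounds on the integrand of~(\ref{eqn:Phi1}) simultaneously in $z \in \CC^+$ and in the varying sequence $\nu^{(k)}$. The estimates are morally the same as those developed in the proofs of Theorems~\ref{macdonaldthm2} and~\ref{thm:asymptotics1}, but they must be rephrased so as to depend only on the data $(q, \theta, \nu_1)$ rather than on a specific signature sequence; once this is done, both dominated-convergence arguments apply routinely and the theorem follows.
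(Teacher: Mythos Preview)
Your proposal is correct and close in spirit to the paper's argument, but with one genuine difference worth noting. Your part~(ii) (continuity of $\NN^{-1}$) matches the paper's Step~1 essentially verbatim: shift via $A_k$ to reduce to $\nu_1\ge 0$, reduce the multivariable $\Phi^{\nu}$ to the one-variable case through the explicit formula~(\ref{def:Phinu}), and finish by dominated convergence in the integral~(\ref{eqn:Phi1}) using tail bounds uniform in~$k$.

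The difference lies in part~(i). The paper's Step~2 is not self-contained: it first uses Lemma~\ref{lem:posprob}(2) and tightness to bound each $\{\nu^{(k)}_m\}_k$, then extracts a subsequence converging to some $\nu'$, invokes the already-proved Step~1 to conclude $M^{\nu^{(k)}}\to M^{\nu'}$ along that subsequence, and identifies $\nu'=\nu$ by uniqueness of weak limits and bijectivity of~$\NN$. Your argument avoids this circular dependence entirely: you use item~(1) of Lemma~\ref{lem:posprob} twice, once on $M^{\nu^{(k)}}_m$ (to pin $\nu^{(k)}_i\le\nu_i$) and once on $M^{\nu}_m$ (to pin the subsequential $\mu^\infty$-coordinates $\ge\nu_i$), squeezing the integer coordinates directly. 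This is a cleaner decoupling of the two directions, and it also makes part~(i) independent of the analytic machinery behind~$\Phi^\nu$. The paper's route, on the other hand, yields slightly more: it shows $\Mart$ is closed in $M_{\rm prob}(\Tau)$ (any weak limit of $M^{\nu^{(i)}}$ automatically lies in $\Mart$), which your argument does not address but which is not required for the theorem as stated.
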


\begin{proof}
The f\/irst step shows that $\NN^{-1}$ is continuous and the second one shows that $\NN$ is continuous.

\textbf{Step 1.} Let $\big\{M^{\nu^{(i)}}\big\}_{i\geq 1} \subset \Omega_{q, t}^{{\rm Martin}}$ and $\big\{\nu^{(i)}\big\}_{i\geq 1}$ be the corresponding images under the map~$\NN$. If $\lim\limits_{i\rightarrow\infty}{\nu^{(i)}} = \nu\in\Nu$ pointwise, then we prove the weak limit $\mathbb{P}-\lim\limits_{i\rightarrow\infty}{M^{\nu^{(i)}}} = M^{\nu}$.

By applying some automorphism $A_k$ with large $k$, if necessary, and invoking Lemma~\ref{lem:commutingops}, we can assume $\nu_1 \geq 0$. Observe that $\lim\limits_{i\rightarrow\infty}{\nu^{(i)}_1} = \nu_1 \geq 0$ implies $\nu^{(i)}_1 \geq 0$ for large enough $i$. Thus let us also assume $\nu^{(i)}_1 \geq 0$ $\forall\, i\geq 1$, for simplicity.

We reduce our desired statement to simpler ones. We claim that $\lim\limits_{i\rightarrow\infty}{\nu^{(i)}} = \nu$ implies
\begin{gather}\label{eqn:limitPhis}
\lim_{i\rightarrow\infty}{\Phi^{\nu^{(i)}}(x_1, \dots, x_m; q, t)} = \Phi^{\nu}(x_1, \dots, x_m; q, t) \qquad \forall\, m\in\N
\end{gather}
uniformly on $\TT^m$.

First let us deduce our desired weak convergence from the limit above.
Recall that, due to Theorem~\ref{thm:asymptotics2}, all functions $\big\{\Phi^{\nu^{(i)}}(x_1, \dots, x_m; q, t)\big\}_{i \geq 1}$ and $\Phi^{\nu}(x_1, \dots, x_m; q, t)$ are entire functions and, in particular, they are continuous on the torus~$\TT^m$. So the convergence~(\ref{eqn:limitPhis}) implies the convergence of Fourier coef\/f\/icients. Thus for any $\kappa\in\GT_m$ we obtain, see~\eqref{rem:fouriercoeffs},
\begin{gather}\label{eqn:limitabove}
\lim_{i\rightarrow\infty}{ \sum_{\mu\in\GTp_m}{\frac{c_{\mu, \kappa}M^{\nu^{(i)}}_m(\mu)}{P_{\mu}(1, t, \dots, t^{m-1})}} } = \sum_{\mu\in\GTp_m}{\frac{c_{\mu, \kappa}M^{\nu}_m(\mu)}{P_{\mu}(1, t, \dots, t^{m-1})}}.
\end{gather}
Note that in both sides of~(\ref{eqn:limitabove}), the sums have been restricted to~$\GTp_m$. In fact, since we are assuming $\nu^{(i)}_1$, $\nu_1 \geq 0$ $\forall\, i\geq 1$, Lemma~\ref{lem:posprob}(1) implies that all probability measures $\big\{M^{\nu^{(i)}}_m\big\}_{i\geq 1}, M^{\nu}_m$ are supported on $\GTp_m$.

From Proposition~\ref{prop:lemmaconv}, the limit~(\ref{eqn:limitabove}) yields $\lim\limits_{i\rightarrow\infty}{M_m^{\nu^{(i)}}(\mu)} = M_m^{\nu}(\mu)$ $\forall\, \mu\in\GTp_m$. But also $M^{\nu^{(i)}}_m(\mu) = M_m^{\nu}(\mu) = 0$ for any $i\geq 1$ and any $\mu\notin\GTp_m$.
Therefore $\lim\limits_{i\rightarrow\infty}{M_m^{\nu^{(i)}}(\mu)} = M_m^{\nu}(\mu)$ holds for all $\mu\in\GT_m$, and so $\mathbb{P}-\lim\limits_{i\rightarrow\infty}{M^{\nu^{(i)}}_m} = M_m$. Note that we proved the weak convergence for any $m\in\N$. By Proposition~\ref{prop:bijection}, we conclude $\mathbb{P}-\lim\limits_{i\rightarrow\infty}{M^{\nu^{(i)}}} = M$.

We are left with the task of proving~(\ref{eqn:limitPhis}).
We show the uniform convergence in a neighborhood of the torus~$\TT^m$.
But it suf\/f\/ices to prove the limit on compact subsets of $\U_m$.
From the def\/inition of $\Phi^{\nu}(x_1, \dots, x_m; q, t)$ on $\U_m$, see~(\ref{def:Phinu}), and observing that $\TT^m\subset\U_m$, it follows that the result holds for any $m\in\N$ provided it holds for $m = 1$.
We prove~(\ref{eqn:limitPhis}) for $m = 1$ on an open neighborhood of~$\TT$.

Clearly a small enough open neighborhood of $\TT$ is a subset of~$\U$.
By the def\/inition of $\Phi^{\nu}(x; q, t)$ on $\U$, see~(\ref{eqn:Phi1}), the desired uniform convergence will hold if we verify
\begin{gather}\label{eqn:limitIntegrals}
\lim_{i\rightarrow\infty}{ \int_{\CC^+} {x^z\prod_{j=1}^{\infty}{\frac{(q^{-z+\nu_j^{(i)}}t^j; q)_{\infty}}{(q^{-z+\nu_j^{(i)}}t^{j-1}; q)_{\infty}}} } }= \int_{\CC^+} {x^z\prod_{j=1}^{\infty}{\frac{(q^{-z+\nu_j}t^j; q)_{\infty}}{(q^{-z+\nu_j}t^{j-1}; q)_{\infty}}} }
\end{gather}
uniformly on compact subsets of $x\in\C\setminus\{0\}$. Note that, since all $\big\{\nu^{(i)}_1\big\}_{i\geq 1}$, $\nu_1$, are nonnegative, we can take the same contour $\CC^+$ for both of the integrals in~(\ref{eqn:limitIntegrals}).

The pointwise convergence of integrands in~(\ref{eqn:limitIntegrals}) is clear. We still need some uniform estimates for the contribution of the tails of the left side in~(\ref{eqn:limitIntegrals}). This is similar to the proof of Theorem~\ref{thm:asymptotics1}.

Let $K\subset\C\setminus\{0\}$ be any compact set. Parametrize the tails of $\CC^+$ as $z = r \pm \frac{\pi\sqrt{-1}}{\ln{q}}$; for $r$ ranging from some large $R>0$ to $+\infty$, we want to show that the contribution of each of these lines is small. We have
\begin{gather*}
\sup_{x\in K}\left| x^z \cdot \prod_{j=1}^{\infty}{\frac{\big(q^{-z+\nu_j^{(i)}}t^j; q\big)_{\infty}}{\big(q^{-z+\nu_j^{(i)}}t^{j-1}; q\big)_{\infty}}} \right| \\
\qquad {} \leq {\rm const} \times |x|^r \times \prod_{j=1}^{\infty}{\frac{1}{\big(1 + q^{-r + \nu_j^{(i)} + \theta(j-1)}\big)\cdots \big(1 + q^{-r + \nu_j^{(i)} + \theta j - 1}\big)}}\\
\qquad{}\leq {\rm const}\cdot \frac{|x|^r}{q^{-kr}q^{\nu^{(i)}_1 + \nu^{(i)}_2+\dots +\nu^{(i)}_k}} = {\rm const}\cdot\frac{\big(|x|q^k\big)^r}{q^{\nu_1^{(i)} + \dots + \nu_k^{(i)}}},
\end{gather*}
for any $z = r\pm\frac{\pi\sqrt{-1}}{\ln{q}}$, and any $k\in\N$.
The constant above depends on $K$ but not on $i$.
Choose $k\in\N$ large enough so that $a \myeq \sup\limits_{x\in K}{|x|}\cdot q^k \in (0, 1)$.
Since $\lim\limits_{i\rightarrow\infty}{\nu_j^{(i)}} = \nu_j$ for all $j = 1, 2, \dots, k$, then $\sup\limits_{i\geq 1}{q^{-\nu_1^{(i)}-\nu_2^{(i)}-\dots-\nu_k^{(i)}}} \leq {\rm const} < \infty$.
Then there exists a constant $c = c_K > 0$, independent of $i$, such that
\begin{gather*}
\sup_{i\geq 1}\sup_{x\in K}\left| x^z \cdot \prod_{j=1}^{\infty}{\frac{\big(q^{-z+\nu_j^{(i)}}t^j; q\big)_{\infty}}{\big(q^{-z+\nu_j^{(i)}}t^{j-1}; q\big)_{\infty}}} \right| \leq c\cdot a^r, \qquad \textrm{if} \quad z = r \pm \frac{\pi\sqrt{-1}}{\ln{q}}.
\end{gather*}
Since $\int_R^{\infty}{a^r {\rm d}r} = -e^{R\ln{a}}/\ln{a} \xrightarrow{R\rightarrow\infty} 0$, we have just shown that the contribution of the tails of~$\CC^+$ is uniformly small. We can then apply dominated convergence theorem to conclude~(\ref{eqn:limitIntegrals}), as desired.

\textbf{Step 2.} As in the step above, let $\big\{M^{\nu^{(i)}}\big\}_{i\geq 1}$ be a sequence in $\Omega_{q, t}^{{\rm Martin}}$, whose images under~$\NN$ are $\big\{\nu^{(i)}\big\}_{i\geq 1}$.
If the weak limit $\mathbb{P}-\lim\limits_{i\rightarrow\infty}{M^{\nu^{(i)}}} = M$ holds, then we show that $M\in\Omega_{q, t}^{{\rm Martin}}$, that there a pointwise limit $\lim\limits_{i\rightarrow\infty}{\nu^{(i)}} = \nu\in\Nu$ and moreover $M = M^{\nu}$.

Let $m\in\N$ be arbitrary.
Thanks to Proposition~\ref{prop:bijection}, the limit $\mathbb{P}-\lim\limits_{i\rightarrow\infty}{M^{\nu^{(i)}}} = M$ implies $\mathbb{P}-\lim\limits_{i\rightarrow\infty}{M_m^{\nu^{(i)}}} = M_m$.
By Lemma~\ref{lem:posprob}(2), there exists $c_m \in (0, 1)$ such that $M_m^{\nu^{(i)}}\big(\nu_m^{(i)} \geq \cdots \geq \nu_1^{(i)}\big) \geq c_m > 0$ for all $i\geq 1$.
Since $M_m$ is a probability measure, there exists $N_0\in\N$ large enough such that $M_m((\lambda_1, \dots, \lambda_m)\in\GT_m\colon N_0 \geq \lambda_1, \dots, \lambda_m \geq -N_0) > 1 - \frac{c_m}{2}$.
Then $\mathbb{P}-\lim\limits_{i\rightarrow\infty}{M_m^{\nu^{(i)}}} = M_m$ implies $M_m^{\nu^{(i)}}((\lambda_1, \dots, \lambda_m)\in\GT_m \colon N_0 \geq \lambda_1, \dots, \lambda_m \geq -N_0) > 1 - c_m$ for all $i$ large enough.
It follows that $N_0 \geq \nu_m^{(i)}, \dots, \nu_1^{(i)} \geq -N_0$ for all $i$ large enough. In particular, the sequence $\big\{\nu_m^{(i)}\big\}_{i\geq 1}$ is bounded.

By using the boundedness of the sequences $\big\{\nu^{(i)}_m\big\}_{i\geq 1}$ and the ``diagonal argument'' of Proposition~\ref{prop:PhiProb}, there exists a subsequence $1\leq i_1 < i_2 < i_3 < \cdots$ such that the limits $\lim\limits_{k\rightarrow\infty}{\nu^{(i_k)}_m}$ exist, for all $m\in\N$. In other words, we have the pointwise limit $\lim\limits_{k\rightarrow\infty}{\nu^{(i_k)}} = \nu$, for some $\nu\in\Nu$.

We have the obvious implication
\begin{gather*}
\mathbb{P}-\lim_{i\rightarrow\infty}{M^{\nu^{(i)}}} = M \Longrightarrow \mathbb{P}-\lim_{k\rightarrow\infty}{M^{\nu^{(i_k)}}} = M,
\end{gather*}
But step 1 in this proof shows that $\lim\limits_{k\rightarrow\infty}{\nu^{(i_k)}} = \nu$ implies $\mathbb{P}-\lim\limits_{k\rightarrow\infty}{M^{\nu^{(i_k)}}} = M^{\nu}$. By uniqueness of weak limits, we have $M = M^{\nu}$.

We are left to show that $\lim\limits_{i\rightarrow\infty}{\nu^{(i)}} = \nu$. Above we only showed the existence of a subsequence $\{i_k\}_{k\geq 1}$ such that $\lim\limits_{k\rightarrow\infty}{\nu^{(i_k)}} = \nu$. Nevertheless the same argument gives us more. In fact, it shows that any subsequence $\big\{\nu^{(i_r)}\big\}_{r\geq 1} \subset \big\{\nu^{(i)}\big\}_{i\geq 1}$ must have a subsubsequence $\big\{\nu^{(i_{r(s)})}\big\}_{s\geq 1} \subset \big\{\nu^{(i_r)}\big\}_{r\geq 1}$ such that a pointwise limit $\lim\limits_{s\rightarrow\infty}{\nu^{(i_{r(s)})}} = \nu'$ exists. Then $M = M^{\nu'} = M^{\nu}$, but since the map~$\NN$ is bijective, we have $\nu = \nu'$. We therefore conclude that the sequence $\big\{\nu^{(i)}\big\}_{i\geq 1}$ itself must converge to $\nu$, f\/inishing the proof.
\end{proof}

\subsection{Relation between the Martin boundary and the (minimal) boundary}

The basic relation between the Martin and minimal boundary of the $(q, t)$-GT graph is the following statement, which actually holds in a much greater generality.

\begin{prop}[{consequence of \cite[Theorem~6.1]{OO1}}]\label{thm:martinboundary}
The following inclusion holds $\Omega_{q, t} \subseteq \Mart$. In other words, for any $M\in\Omega_{q, t}$, there exists a sequence $\{\lambda(N)\}_{N\geq 1}$, $\lambda(N)\in\GT_N$, such that
\begin{gather*}
\mathbb{P}-\lim_{N\rightarrow\infty}{M^{\lambda(N)}_m} = M_m \qquad \forall \, m = 0, 1, 2, \dots.
\end{gather*}
\end{prop}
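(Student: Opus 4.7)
The plan is to adapt the standard ergodic/martingale argument of Vershik--Kerov to our setting. Let $M \in \Omega_{q,t}$ be extreme, and let $\{M_m\}_{m \geq 0}$ be its associated $(q,t)$-coherent sequence under the bijection of Proposition~\ref{prop:bijection}. The goal is to produce a path $\tau = (\tau^{(0)} \prec \tau^{(1)} \prec \cdots) \in \Tau$ such that the cotransition probabilities $\Lambda^N_m(\tau^{(N)}, \cdot)$ converge weakly to $M_m$ as $N \to \infty$, for every $m$.

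First, I would set up the key martingale. For each fixed $m \geq 0$ and $\mu \in \GT_m$ with $M_m(\mu) > 0$, consider the function $f_{N,\mu} \colon \Tau \to \R$ defined by $f_{N,\mu}(\tau) = \Lambda^N_m(\tau^{(N)}, \mu)/M_m(\mu)$ for $N \geq m$. Using the $(q,t)$-central property of $M$ from Definition~\ref{def:qtcentral} together with the semigroup identity $\Lambda^{N+1}_m = \Lambda^{N+1}_N \Lambda^N_m$, one checks that $(f_{N,\mu})_{N \geq m}$ is a nonnegative martingale with respect to the filtration $\mathcal{F}_N = \sigma(\operatorname{Proj}_N)$ on $(\Tau, M)$. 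The martingale convergence theorem then produces, for each pair $(m, \mu)$, an $M$-almost sure limit $f_{\infty, \mu}(\tau) := \lim_{N \to \infty} \Lambda^N_m(\tau^{(N)}, \mu)/M_m(\mu)$. Since $\GT_m$ is countable, intersecting the almost-sure sets yields a single full-measure set $\Tau_0 \subseteq \Tau$ on which every such limit exists simultaneously for every $m$ and every $\mu$.

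Next, I would verify that for each $\tau \in \Tau_0$ the limits define a $(q,t)$-coherent sequence. For each $m$, set $M_m^\tau(\mu) = M_m(\mu) f_{\infty,\mu}(\tau)$. Each $M_m^\tau$ is a nonnegative function on $\GT_m$; the coherence relation $\sum_{\lambda \in \GT_{m+1}} \Lambda^{m+1}_m(\lambda, \mu) \Lambda^{N}_{m+1}(\tau^{(N)}, \lambda) = \Lambda^N_m(\tau^{(N)}, \mu)$ passes to the limit (the sum is over finitely many $\lambda$ with nonvanishing contribution for each fixed $\tau^{(N)}$, but for the limit one would use Fatou/monotone arguments controlled by the fact that $\sum_\mu \Lambda^N_m(\tau^{(N)},\mu) = 1$). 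Finally, total mass equal to one follows from the martingale property $\int f_{N,\mu}\, dM = 1$ together with uniform integrability coming from the bound $f_{N,\mu} \leq 1/M_m(\mu)$. This gives a $(q,t)$-coherent sequence $\{M_m^\tau\}_{m \geq 0}$, and by Proposition~\ref{prop:bijection} a corresponding $(q,t)$-central measure $M^\tau$. By construction $M^\tau$ belongs to $\Mart$, since $\mathbb{P}\text{-}\lim_{N \to \infty} \Lambda^N_m \delta_{\tau^{(N)}} = M_m^\tau$ for $\tau \in \Tau_0$.

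It remains to invoke extremality. A standard computation (basically the disintegration theorem applied to the $\mathcal{F}_\infty$-measurable decomposition) shows
\begin{gather*}
M_m(\mu) = \int_{\Tau} M_m^\tau(\mu)\, dM(\tau), \qquad \forall\, m \geq 0,\ \mu \in \GT_m,
\end{gather*}
so that $M = \int_\Tau M^\tau\, dM(\tau)$ as an integral of $(q,t)$-central measures. Since $M$ is extreme in the convex set $M_{\rm prob}(\Tau)$, this decomposition must be concentrated on a single point: there exists $\tau_* \in \Tau_0$ with $M = M^{\tau_*}$. Setting $\lambda(N) := \tau_*^{(N)}$ for $N \geq 1$ (and choosing $\lambda(N)$ arbitrarily for small $N$), the weak convergence $\mathbb{P}\text{-}\lim_{N \to \infty} \Lambda^N_m \delta_{\lambda(N)} = M_m$ holds for every $m$, proving $M \in \Mart$.

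The main obstacle will be the extremality-to-concentration step: one must argue carefully that $\tau \mapsto M^\tau$ is a \emph{measurable} map into $M_{\rm prob}(\Tau)$ (using that the latter is a Borel subset of a Polish space of measures under weak topology) and that the integral representation $M = \int M^\tau\, dM(\tau)$ is a genuine decomposition into central measures, which then forces $M^\tau = M$ $M$-a.s.\ by extremality. All the ingredients are standard for branching graphs but need to be checked in the $(q,t)$-setting; alternatively, one can simply cite \cite[Theorem~6.1]{OO1} which packages exactly this argument for general branching graphs with multiplicative cotransitions, and which applies here since our $\Lambda^{N+1}_N$ are obtained from the branching rule of Macdonald polynomials.
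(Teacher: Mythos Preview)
The paper does not give its own proof of this proposition; it simply records it as a consequence of \cite[Theorem~6.1]{OO1}, which is the general Vershik--Kerov ergodic theorem for branching graphs. Your sketch is precisely that argument specialized to the $(q,t)$-GT graph, so in spirit you are doing exactly what the cited reference does.

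One correction to the martingale step: the sequence $N\mapsto \Lambda^N_m(\tau^{(N)},\mu)$ is \emph{not} a forward martingale with respect to $\mathcal{F}_N=\sigma(\tau^{(0)},\dots,\tau^{(N)})$. The semigroup identity $\Lambda^{N+1}_m=\Lambda^{N+1}_N\Lambda^N_m$ combined with centrality says rather that
\[
E_M\big[\Lambda^N_m(\tau^{(N)},\mu)\,\big|\,\tau^{(N+1)},\tau^{(N+2)},\dots\big]=\Lambda^{N+1}_m(\tau^{(N+1)},\mu),
\]
i.e., it is a \emph{reverse} martingale with respect to the decreasing tail filtration $\mathcal{G}_N=\sigma(\tau^{(k)}:k\ge N)$; equivalently, $\Lambda^N_m(\tau^{(N)},\mu)=E_M[\mathbf{1}_{\{\tau^{(m)}=\mu\}}\mid\mathcal{G}_N]$. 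The reverse martingale convergence theorem then gives a.s.\ and $L^1$ convergence to $E_M[\mathbf{1}_{\{\tau^{(m)}=\mu\}}\mid\mathcal{G}_\infty]$, and extremality of $M$ is exactly the statement that the tail $\sigma$-algebra $\mathcal{G}_\infty$ is $M$-trivial, forcing the limit to be the constant $M_m(\mu)$. This simultaneously handles both convergence and the identification $M^\tau=M$, and avoids the separate disintegration/measurability step you flag as the main obstacle. With this fix, your outline is correct and matches the cited proof.
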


In many examples, especially in the context of asymptotic representation theory, it is known that the Martin boundary of a branching graph is \textit{equal} to its minimal boundary, e.g.,~\cite{G, OO1}. In our case, we will also prove that this is the case by following the ideas in \cite{G, Ol0}. The following statement, which also holds in greater generality, will be useful.

\begin{prop}[{consequence of \cite[Theorem~9.2]{OO1}}]\label{thm:simplexcons}
Let $M'$ be any probability measure on~$M_{\rm prob}(\Tau)$. There exists a unique Borel probability measure $\pi$ belonging to~$\Omega_{q, t}$ such that{\samepage
\begin{gather*}
M'(S_{\phi}) = \int_{M\in\Omega_{q, t}}{M(S_{\phi}) \pi({\rm d}M)},
\end{gather*}
for any finite path $\phi = \big(\phi^{(0)} \prec \phi^{(1)}\prec \cdots \prec \phi^{(n)}\big)$ in the $(q, t)$-GT graph.}
\end{prop}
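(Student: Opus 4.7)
The plan is to deduce the statement by verifying that the $(q,t)$-Gelfand--Tsetlin graph fits into the abstract framework of \cite{OO1} and then invoking their Theorem~9.2 directly. The key observation is that the proposition is purely a statement about the convex-geometric structure of the set $M_{\rm prob}(\Tau)$: the existence and uniqueness of a barycentric representation on the extreme boundary $\Omega_{q,t}$ is a Choquet-type theorem, tailored to branching graphs with stochastic cotransitions.

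First I would spell out the setup. The $(q,t)$-GT graph $(\GT, \{\Lambda^{N+1}_N\})$ is a branching graph in the sense of \cite{OO1}: the vertex sets $\GT_N$ are countable, and by~\eqref{eq:nonnegative}--\eqref{eq:sumone} the matrices $\big[\Lambda^{N+1}_N(\lambda,\mu)\big]$ are stochastic and coherent. The path space $\Tau$, equipped with the $\sigma$-algebra generated by cylinder sets, and the class $M_{\rm prob}(\Tau)$ of $(q,t)$-central measures (Definition~\ref{def:qtcentral}) match the definitions used in \cite{OO1}. Next, I would note that, equipped with the topology of weak convergence, $M_{\rm prob}(\Tau)$ is a compact, convex, metrizable subset of the space of Borel probability measures on $\Tau$, and that $\Omega_{q,t} = \operatorname{Ex}(M_{\rm prob}(\Tau))$ is a Borel subset of $M_{\rm prob}(\Tau)$ (this was already recorded in the introduction). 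In particular, for each cylinder set $S_\phi$ the evaluation map $M \mapsto M(S_\phi)$ is a bounded Borel function on $\Omega_{q,t}$, so the integral appearing in the statement is well-posed.

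With the framework in place, I would apply \cite[Theorem~9.2]{OO1}: it asserts that, for branching graphs of this type, $M_{\rm prob}(\Tau)$ is a Choquet simplex, and every $M' \in M_{\rm prob}(\Tau)$ admits a unique representation as a barycenter of a Borel probability measure $\pi$ concentrated on $\Omega_{q,t}$. Evaluating the barycentric identity against the indicator of a cylinder set $S_\phi$ yields
\begin{gather*}
M'(S_\phi) = \int_{M \in \Omega_{q,t}}{M(S_\phi)\, \pi(dM)}
\end{gather*}
for all finite paths $\phi$, as required. Uniqueness of $\pi$ follows from the simplex property, since a Choquet simplex admits a unique representing measure supported on the extreme points.

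The only delicate point, which is what \cite{OO1} actually proves and which I would not attempt to reprove here, is the simplex property of $M_{\rm prob}(\Tau)$; this is equivalent to the lattice structure on the cone generated by central measures and relies on the Markovian nature of the cotransitions~$\Lambda^{N+1}_N$. The verifications above are purely formal checks that our situation is a special case of the setting in \cite{OO1}, so the statement follows without any new argument.
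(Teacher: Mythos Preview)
Your proposal is correct and matches the paper's approach exactly: the paper does not give an independent proof of this proposition but simply records it as a direct consequence of \cite[Theorem~9.2]{OO1}, relying on the fact that the $(q,t)$-GT graph with its stochastic cotransition matrices $\Lambda^{N+1}_N$ is an instance of the general branching-graph framework treated there. Your added verifications (countability of levels, stochasticity of the links, Borel measurability of $\Omega_{q,t}$, well-posedness of the integral) are the routine checks implicit in the citation and add nothing new beyond what the paper intends.
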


\subsection[Characterization of the boundary of the $(q, t)$-Gelfand--Tsetlin graph]{Characterization of the boundary of the $\boldsymbol{(q, t)}$-Gelfand--Tsetlin graph}

\begin{proof}[Proof of Theorem~\ref{thm:mainapplication}]
Let us now prove the items (1), (2) stated in the theorem.

(1) Thanks to Proposition~\ref{thm:martinboundary}, we have $\Omega_{q, t} \subseteq \Mart \subseteq M_{\rm prob}(\Tau)$. We claim that $\Mart \subseteq \Omega_{q, t}$. Because of Theorem~\ref{thm:characterization}, the Martin boundary $\Mart$ (with its induced topology from $M_{\rm prob}(\Tau)$) is homeomorphic to $\Nu$, under the map~$\NN$.
The claim would then show that the minimal boundary $\Omega_{q, t}$ of the $(q, t)$-GT graph is equal to the Martin boundary~$\Mart$ and therefore homeomorphic to~$\Nu$, under the map~$\NN$.

Let us prove the claim above. Let $\nu\in\Nu$ be arbitrary and let $M^{\nu}\in\Mart$ be the corresponding element of $\Mart$. Let $\mathfrak{i} \colon \Omega_{q, t} \hookrightarrow \Mart$ be the natural inclusion, considered as a~measurable map. By Proposition~\ref{thm:simplexcons}, there exists a unique probability measure $\pi$ on $\Omega_{q, t}$ such that
\begin{gather}\label{eqn1:proofthm}
M^{\nu} = \int_{M\in\Omega_{q, t}}{M\pi({\rm d}M)} = \int_{M\in\Mart}{M(\mathfrak{i}_*\pi)({\rm d}M)}.
\end{gather}
Note that $i_*\pi$ is a probability measure on $\Mart$ with $(i_*\pi)(\Omega_{q, t}) = 1$. Let $\widetilde{\pi}$ be the pushforward of $\mathfrak{i}_*\pi$ under the homeomorphism $\NN\colon \Mart \rightarrow \Nu$, so $\widetilde{\pi}$ is a Borel probability measure on~$\Nu$. Equation~\eqref{eqn1:proofthm} can be rewritten as
\begin{gather}\label{eqn2:proofthm}
M^{\nu} = \int_{\widetilde{\nu}\in\Nu}{M^{\widetilde{\nu}}\widetilde{\pi}({\rm d}\widetilde{\nu})}.
\end{gather}

We make a subclaim: $\widetilde{\pi}$ is the delta mass at $\nu\in\Nu$. Let us f\/irst deduce $\Mart \subseteq \Omega_{q, t}$ from this latter claim. In fact, if $\widetilde{\pi}$ is the delta mass at $\nu\in\Nu$, then $\mathfrak{i}_*\pi$ is the delta mass at $M^{\nu}$. But since we had $(i_*\pi)(\Omega_{q, t}) = 1$, then $M^{\nu} \in \Omega_{q, t}$. Since $M^{\nu}$ was an arbitrary element of $\Mart$, then we conclude $\Mart \subseteq \Omega_{q, t}$.

Let us now prove the subclaim that the probability measure $\widetilde{\pi}$ on $\Nu$ satisfying~(\ref{eqn2:proofthm}) must be the delta mass at $\nu\in\Nu$.
We show f\/irst that $\widetilde{\pi}$ is supported on $\{\widetilde{\nu}\in\Nu \colon \widetilde{\nu} \geq \nu\} \myeq \{\widetilde{\nu}\in\Nu \colon \widetilde{\nu}_1 \geq \nu_1,\, \widetilde{\nu}_2 \geq \nu_2,\, \widetilde{\nu}_3 \geq \nu_3, \dots\}$.
Since $\widetilde{\pi}$ is a Borel measure, the opposite would mean the existence of $m\in\N$ and $\kappa_1 \leq \kappa_2 \leq \cdots \leq \kappa_m$, such that $\kappa_i < \nu_i$ for some $1\leq i\leq m$, and
\begin{gather*}
\widetilde{\pi}\big( \big\{\widetilde{\nu}\in\Nu \colon \widetilde{\nu}_1 = \kappa_1, \dots, \widetilde{\nu}_m = \kappa_m \big\} \big) > 0.
\end{gather*}

As a consequence of~(\ref{eqn2:proofthm}) we have, for all $m\in\N$,
\begin{gather}\label{eqn3:proofthm}
M_m^{\nu} = \int_{\widetilde{\nu}\in\Nu}{M_m^{\widetilde{\nu}}\widetilde{\pi}({\rm d}\widetilde{\nu})}.
\end{gather}

We can now apply~(\ref{eqn3:proofthm}) to $\kappa = (\kappa_m \geq \dots \geq \kappa_1)\in\GT_m$:
\begin{gather*}
M_m^{\nu}(\kappa) = \int_{\widetilde{\nu}\in\Nu}{M_m^{\widetilde{\nu}}(\kappa)\widetilde{\pi}({\rm d}\widetilde{\nu})}.
\end{gather*}
From Lemma~\ref{lem:posprob}(1), the left-hand side of the equality above vanishes, while Lemma~\ref{lem:posprob}(2) shows that the right-hand side is at least $c_m \cdot \widetilde{\pi} ( \{\widetilde{\nu}\in\Nu \colon\widetilde{\nu}_1 = \kappa_1, \dots, \widetilde{\nu}_m = \kappa_m\} ) > 0$, thus there is a~contradiction.

From the fact that $\widetilde{\pi}$ is supported on $\{\widetilde{\nu}\in\Nu \colon \widetilde{\nu} \geq \nu\} \myeq \{\widetilde{\nu}\in\Nu \colon \widetilde{\nu}_1 \geq \nu_1, \widetilde{\nu}_2 \geq \nu_2, \dots\}$, and Lemma~\ref{lem:posprob}, parts (1) (3), we have that~(\ref{eqn3:proofthm}) evaluated at $(\nu_m \geq \cdots \geq \nu_1)$ is
\begin{gather*}
M^{\nu}_m(\nu_m \geq \dots \geq \nu_1)
= \int_{\widetilde{\nu}\in\Nu}{{M_m^{\widetilde{\nu}}(\nu_m \geq \dots \geq \nu_1)\widetilde{\pi}({\rm d}\widetilde{\nu})}}
= \int_{\substack{\widetilde{\nu}\in\Nu \\ \widetilde{\nu} \geq \nu}}{{M_m^{\widetilde{\nu}}(\nu_m \geq \dots \geq \nu_1)\widetilde{\pi}({\rm d}\widetilde{\nu})}}\\
\qquad{} = \int_{\substack{\widetilde{\nu}\in\Nu,\, \widetilde{\nu} \geq \nu \\ \widetilde{\nu}_i = \nu_i \ \forall i = 1, \dots, m}}{{M_m^{\widetilde{\nu}}(\nu_m \geq \dots \geq \nu_1)\widetilde{\pi}({\rm d}\widetilde{\nu})}}\\
\qquad\quad{} + \int_{\substack{\widetilde{\nu}\in\Nu, \, \widetilde{\nu} \geq \nu \\ \widetilde{\nu}_i \neq \nu_i \textrm{ for some }i\in\{1, \dots, m\}}}{{M_m^{\widetilde{\nu}}(\nu_m \geq \dots \geq \nu_1)\widetilde{\pi}({\rm d}\widetilde{\nu})}}\\
\qquad{}= \int_{\substack{\widetilde{\nu}\in\Nu, \widetilde{\nu} \geq \nu \\ \widetilde{\nu}_i = \nu_i \ \forall\, i = 1, \dots, m}}{{M_m^{\widetilde{\nu}}(\nu_m \geq \dots \geq \nu_1)\widetilde{\pi}({\rm d}\widetilde{\nu})}}\\
\qquad \leq M_m^{\nu}(\nu_m \geq \dots \geq \nu_1)\cdot\widetilde{\pi}(\{\widetilde{\nu} \in \Nu\colon \widetilde{\nu}_1 = \nu_1, \dots, \widetilde{\nu}_m = \nu_m\}).
\end{gather*}
Next Lemma~\ref{lem:posprob}(2) says that $M_m^{\nu}(\nu_m \geq \dots \geq \nu_1) \geq c_m > 0$, so we must have $\widetilde{\pi}(\{\widetilde{\nu} \in \Nu\colon \widetilde{\nu}_1 = \nu_1, \dots, \widetilde{\nu}_m = \nu_m\}) = 1$. Since this is true for any $m\in\N$, it follows that $\widetilde{\pi}$ must be the delta mass at $\nu$, thus proving our second claim and the full characterization of $\Omega_{q, t}$.

Let us return to the statement of item (1) in Theorem~\ref{thm:mainapplication}.
By def\/inition of the map $\NN$, the relations~(\ref{eq:macdonaldgenerating}) hold.
We have already observed that the Macdonald generating function def\/ining $\PPP_{M_m^{\nu}}(x_1, \dots, x_m)$ is absolutely convergent on $\TT^m$.
The last statement that says $M^{\nu}$ is determined by the relations~(\ref{eq:macdonaldgenerating}) follows from the uniqueness statement in Proposition~\ref{prop:PhiProb}.

(2) Let $\{M_m^{\nu}\}_{m \geq 0}$, $\{M_m^{A_k \nu}\}_{m \geq 0}$, be the $(q, t)$-coherent sequences associated to $M^{\nu}$ and $M^{A_k \nu}$, respectively.
By Lemma~\ref{lem:commutingops}, we have the f\/irst statement $M^{A_k\nu}(S_{A_k \phi}) = M^{\nu}(S_{\phi})$, for any f\/inite path $\phi$.
Next by virtue of Lemma~\ref{lem:qtcoherency}, $M^{A_k\nu}_m = A_k M^{\nu}_m$ for all $m\geq 0$.
Thus by following the def\/initions, $M^{A_k\nu}_m (A_k\lambda) = A_k M^{\nu}_m(A_k\lambda) = M^{\nu}_m(A_{-k}A_k \lambda) = M^{\nu}_m(\lambda)$, for any $\lambda\in\GT_m$ and $m\in\Z_{\geq 0}$.
\end{proof}

\appendix

\section[Basics on $q$-analysis]{Basics on $\boldsymbol{q}$-analysis}\label{app:qtheory}

A good reference for the material on $q$-analysis is \cite[Chapter~10]{AAR}. Assume $|q| < 1$ is an arbitrary complex number. Most statements work if $q$ is an indeterminate too.

The \textit{$q$-numbers} and the \textit{$q$-factorial} are def\/ined by
\begin{gather*}
[n]_q \myeq \frac{1 - q^n}{1 - q}, \qquad n\in\N,\\
[n]_q! \myeq [n]_q\cdots[2]_q[1]_q, \qquad n\in\N, \qquad [0]_q! \myeq 1.
\end{gather*}
It is evident that $[n]_q \rightarrow n$ and $[n]_q!\rightarrow n!$, as $q\rightarrow 1$, for any $n\in\Z_{\geq 0}$. Observe that we can also def\/ine $[x]_q$ for any $x\in\C$, as before, and it also holds that $[x]_q \rightarrow x$ as $q\rightarrow 1$. The \textit{$q$-Gamma function} is def\/ined by
\begin{gather*}
\Gamma_q(z) \myeq (1 - q)^{1-z}\frac{(q; q)_{\infty}}{(q^z; q)_{\infty}}.
\end{gather*}
From the def\/inition, the $q$-functional equation
\begin{gather*}
\Gamma_q(z + 1) = [z]_q\Gamma_q(z), \qquad z\notin\{\dots, -2, -1, 0\},
\end{gather*}
is evident. The $q$-Gamma function is a meromorphic function with simple poles at $z = 0, -1, -2, \dots$ and all their shifts by an integral multiple of $2\pi\sqrt{-1}/\ln{q}$.
The $q$-Gamma function has no zeroes in~$\C$. Moreover, we have the following convergence to the Gamma function.

\begin{thm}[{\cite[Corollary~10.3.4]{AAR}}]\label{qtheory1}
For any $z\in\C \setminus \{\dots, -2, -1, 0\}$, we have
\begin{gather*}
\lim_{q\rightarrow 1}{\Gamma_q(z)} = \Gamma(z).
\end{gather*}
\end{thm}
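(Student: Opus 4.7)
The plan is to prove the theorem in three stages: a reduction to a fundamental strip via functional equations, treatment of the real positive case via log-convexity, and extension to the complex plane via normal families.

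First, I would exploit the functional equations on both sides. Both $\Gamma_q$ and $\Gamma$ satisfy $F(z+1) = g(z) F(z)$, with $g(z) = [z]_q = (1-q^z)/(1-q)$ and $g(z) = z$ respectively. A direct Taylor expansion of $1 - q^z = 1 - e^{z\log q}$ around $q = 1$ shows that $[z]_q \to z$ as $q \to 1$, uniformly on compact subsets of $\C$. Consequently, if the desired convergence $\Gamma_q(z)\to\Gamma(z)$ is established on any single vertical strip $S = \{z\in\C \colon a \leq \Re z \leq a+1\}$ of unit width avoiding the non-positive integers, then by finitely many applications of either $\Gamma_q(z) = [z]_q^{-1}\Gamma_q(z+1)$ or $\Gamma_q(z+1) = [z]_q \Gamma_q(z)$ (and their classical counterparts), the convergence propagates to every compact subset of $\C\setminus\{0,-1,-2,\dots\}$. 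Hence it suffices to work on, say, $S = \{1 \leq \Re z \leq 2\}$.

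For the real positive case, I would invoke the Bohr--Mollerup theorem. For each fixed $q\in (0,1)$, the restriction of $\Gamma_q$ to $(0,\infty)$ is positive, satisfies $\Gamma_q(1) = 1$ and the $q$-functional equation, and is log-convex on $(0,\infty)$ (this follows from the infinite product representation by writing $\log \Gamma_q$ as a series whose individual terms are log-convex in $x$). Using the product representation $\Gamma_q(x) = (1-q)^{1-x}(q;q)_\infty/(q^x;q)_\infty$, one can bound $\Gamma_q(x)$ uniformly in $q$ on compact subsets of $(0,\infty)$ avoiding $0$. Any subsequential limit $q_n \to 1$ then produces, along a further subsequence, a pointwise limit $F(x)$ which is positive, log-convex, satisfies $F(1) = 1$ and $F(x+1) = xF(x)$ (using that $[x]_q \to x$); by the Bohr--Mollerup theorem, $F = \Gamma$. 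Since every subsequential limit equals $\Gamma$, the full limit exists on $(0,\infty)$.

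To extend this to complex $z$, I would apply Vitali's theorem on normal families. Let $K \subset \C\setminus\{0,-1,-2,\dots\}$ be compact. Using the product representation together with the elementary estimate $|1 - q^{n+z}| \geq (1-q^{n+\Re z})$ uniformly for $|\Im z|$ bounded, one shows that the family $\{\Gamma_q\}_{q \in (q_0, 1)}$ is uniformly bounded on $K$ for some $q_0$ close to $1$; hence it is a normal family of holomorphic functions on the complement of the poles. Since we have already established pointwise convergence to $\Gamma$ on the set $(0,\infty)$, which has accumulation points in every connected component of $\C\setminus\{0,-1,-2,\dots\}$, Vitali's theorem yields locally uniform convergence $\Gamma_q \to \Gamma$ on all of $\C\setminus\{0,-1,-2,\dots\}$, completing the proof. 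The main technical obstacle is establishing the uniform-in-$q$ boundedness of $\{\Gamma_q\}$ on compact sets, since the factor $(1-q)^{1-z}$ and the product $(q;q)_\infty/(q^z;q)_\infty$ each blow up as $q \to 1$; their cancellation must be made quantitative by a careful asymptotic analysis with $q = e^{-\epsilon}$, splitting the product into a ``bulk'' part ($n \lesssim 1/\epsilon$) where Euler--Maclaurin yields the classical Gamma, and a ``tail'' part that contributes negligibly.
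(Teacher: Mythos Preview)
The paper does not actually prove this statement; it is cited as Corollary~10.3.4 from Andrews--Askey--Roy's \emph{Special Functions} and stated without argument. The only related content in the paper is Remark~\ref{qtheory1remark}, which observes that uniform convergence on compact subsets of $\C\setminus\{0,-1,-2,\dots\}$ follows from the Stieltjes--Vitali theorem, precisely the normal-families argument you use in your third step. So there is no ``paper's own proof'' to compare against.

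Your three-stage outline (functional-equation reduction, Bohr--Mollerup on $(0,\infty)$, Vitali for the complex extension) is a standard and correct route, and is in fact close in spirit to the argument in AAR. One comment: you overstate the difficulty of the ``main technical obstacle.'' On the strip $1\le \Re z\le 2$ you have $|\Gamma_q(z)|\le \Gamma_q(\Re z)$ from the elementary bound $|1-q^{n+z}|\ge 1-q^{n+\Re z}$ you already noted, and then log-convexity of $\Gamma_q$ on $(0,\infty)$ together with $\Gamma_q(1)=\Gamma_q(2)=1$ immediately gives $\Gamma_q(x)\le 1$ for all $x\in[1,2]$, uniformly in $q$. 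No Euler--Maclaurin splitting is needed; the normal-family bound is essentially free once you have log-convexity.
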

\begin{rem}\label{qtheory1remark}
As a consequence of Stieltjes--Vitali theorem, the convergence $\lim\limits_{q\rightarrow 1}{\Gamma_q(z)} = \Gamma(z)$ holds uniformly on compact subsets of $\C \setminus \{\dots, -2, -1, 0\}$.
\end{rem}

Other important identities we use in our paper are the \textit{$q$-binomial theorems}. To state them, we need to def\/ine the \textit{$q$-Pochhammer symbols} $(x; q)_n$ and $(x; q)_{\infty}$, for any $x\in\C$ and $n\in\Z_{\geq 0}$ by
\begin{gather*}
(x; q)_n \myeq
 \begin{cases}
 1 & \textrm{if } n = 0,\\
 \displaystyle \prod\limits_{i=1}^n{\big(1 - xq^{i-1}\big)} & \textrm{if } n\geq 1,\\
 \displaystyle \prod\limits_{i=1}^{\infty}{\big(1 - xq^{i-1}\big)} & \textrm{if } n = \infty.
 \end{cases}
\end{gather*}
Note that $|q| < 1$ implies that the product def\/ining $(x; q)_{\infty}$ is uniformly convergent for $x\in\C$, and thus $(x; q)_{\infty}$ is an entire function.

The $q$-binomial formula is the following

\begin{thm}[{\cite[Theorem~10.2.1]{AAR}}]\label{qtheory2}
For $|z| < 1$,
\begin{gather*}
\sum_{n=0}^{\infty}{\frac{(a; q)_n}{(q; q)_n}z^n} = \frac{(az; q)_{\infty}}{(z; q)_{\infty}}
\end{gather*}
\end{thm}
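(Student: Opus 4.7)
My plan is to prove the identity by establishing a functional equation for the right-hand side and then comparing Taylor coefficients.

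First, I would observe that $f(z) \myeq (az;q)_\infty/(z;q)_\infty$ is holomorphic on the open disk $|z|<1$: the zeros of the denominator $(z;q)_\infty = \prod_{i\geq 0}(1-zq^i)$ all satisfy $|z| = |q|^{-i} \geq 1$, so $f$ admits a Taylor expansion $f(z) = \sum_{n\geq 0} A_n z^n$ with radius of convergence at least $1$ and $A_0 = f(0) = 1$. The key identity is the functional equation
\begin{gather*}
(1-z)\,f(z) = (1-az)\,f(qz),
\end{gather*}
which follows immediately by splitting off the first factor from each infinite product: $(1-z)(z;q)_\infty^{-1} = (qz;q)_\infty^{-1}$ and $(az;q)_\infty = (1-az)(aqz;q)_\infty$.

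Next, I would substitute the Taylor series into this functional equation and match coefficients of $z^n$. The relation $(1-z)\sum A_n z^n = (1-az)\sum A_n q^n z^n$ yields, for each $n\geq 1$,
\begin{gather*}
A_n - A_{n-1} = A_n q^n - aA_{n-1}q^{n-1},
\end{gather*}
i.e., $A_n(1-q^n) = A_{n-1}(1-aq^{n-1})$, which together with $A_0 = 1$ gives by induction
\begin{gather*}
A_n = \prod_{k=1}^n \frac{1-aq^{k-1}}{1-q^k} = \frac{(a;q)_n}{(q;q)_n}.
\end{gather*}

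This establishes the stated equality as an identity of analytic functions on $|z|<1$. There is no real obstacle here — this is the classical short proof of the $q$-binomial theorem, and the only subtle point is ensuring the Taylor expansion is valid (which follows from the location of the poles of $f$) so that the coefficient comparison is legitimate. Alternatively, one could treat both sides as formal power series in $z$ with coefficients in $\mathbb{C}[a]$ and deduce the identity formally, then invoke absolute convergence for $|z|<1$ to interpret it analytically.
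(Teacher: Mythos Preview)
Your proof is correct; this is precisely the classical argument for the $q$-binomial theorem. Note that the paper itself does not supply a proof of this statement---it is quoted as a known result with a reference to \cite[Theorem~10.2.1]{AAR}---so there is no in-paper proof to compare against, but the argument you give is the standard one and matches what appears in that reference.
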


\begin{cor}\label{qtheory25}
For $z\in\C$, $m\in\N$,
\begin{gather*}
\sum_{n=0}^M{\frac{(q^{-1}; q^{-1})_M}{(q^{-1}; q^{-1})_n(q^{-1}; q^{-1})_{M-n}}(-1)^nq^{-{n \choose 2}}z^n} = \big(z; q^{-1}\big)_M.
\end{gather*}
\end{cor}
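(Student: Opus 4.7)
The plan is to derive Corollary~\ref{qtheory25} as a terminating specialization of Theorem~\ref{qtheory2}, followed by the formal substitution $q \mapsto q^{-1}$. Concretely, I would first specialize $a = q^{-M}$ in the $q$-binomial theorem. For $n \geq M+1$, the Pochhammer symbol $(q^{-M}; q)_n$ contains the vanishing factor $1 - q^{0} = 0$, so the series on the left collapses to the finite sum $\sum_{n=0}^{M} \frac{(q^{-M}; q)_n}{(q;q)_n} z^n$. On the right-hand side, after the change of variable $w = q^{-M}z$, the ratio $(q^{-M}z; q)_\infty / (z; q)_\infty = (w; q)_\infty / (q^{M}w; q)_\infty$ telescopes to the finite product $(w; q)_M$.

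Second, rewriting $(q^{-M}; q)_n$ via the factorization $1 - q^{i-M} = -q^{i-M}(1 - q^{M-i})$ yields
\[
(q^{-M}; q)_n = (-1)^n q^{\binom{n}{2} - Mn}\,\frac{(q;q)_M}{(q;q)_{M-n}},
\]
and absorbing the extra factor $q^{Mn}$ coming from $z = q^M w$ into the coefficients transforms the terminating specialization of Theorem~\ref{qtheory2} into the classical Gauss form
\[
(w; q)_M = \sum_{n=0}^M (-1)^n q^{\binom{n}{2}} \frac{(q;q)_M}{(q;q)_n (q;q)_{M-n}}\, w^n.
\]

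Finally, both sides of the latter identity are polynomials in $w$ whose coefficients are rational functions of $q$, and the identity holds for all $q$ in the punctured disc $0 < |q| < 1$. It therefore persists as an identity of rational functions in $q$ and admits the formal substitution $q \mapsto q^{-1}$, which turns $(q;q)_k$ into $(q^{-1}; q^{-1})_k$ and $q^{\binom{n}{2}}$ into $q^{-\binom{n}{2}}$. After renaming $w$ as $z$, this is precisely the identity of Corollary~\ref{qtheory25}. The only nontrivial bookkeeping is the telescoping of the infinite product on the right and the sign/power-of-$q$ conversion for $(q^{-M}; q)_n$; both are routine, and no real obstacle is expected.
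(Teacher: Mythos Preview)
Your proposal is correct and follows essentially the same route as the paper: both arguments specialize $a=q^{-M}$ in Theorem~\ref{qtheory2} to obtain a terminating identity and then pass from base $q$ to base $q^{-1}$. The only cosmetic difference is in that last conversion: the paper applies the single identity $(q;q)_n=(-1)^n q^{\binom{n+1}{2}}(q^{-1};q^{-1})_n$ directly (together with an implicit shift of the variable), whereas you first isolate the classical Gauss form $(w;q)_M=\sum_n(-1)^n q^{\binom{n}{2}}\binom{M}{n}_q w^n$ and then invoke the formal substitution $q\mapsto q^{-1}$, which is legitimate since the Gauss identity is a polynomial identity in $w$ with coefficients that are Laurent polynomials in $q$.
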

\begin{proof}
Let $a = q^{-M}$ in Theorem \ref{qtheory2}, and use $(q; q)_n = (-1)^n q^{{n + 1 \choose 2}} (q^{-1}; q^{-1})_n$. The statement is then proved for any $|z| < 1$; therefore it also holds for any $z\in\C$ because both sides are polynomials on $z$.
\end{proof}

Another application of the $q$-binomial theorem is the following limit.

\begin{thm}[{\cite[Theorem~10.2.4]{AAR}}]\label{qtheory3}
For any $a, b \in\R$ such that $b-a\notin\Z$, the following limit
\begin{gather*}
\lim_{q\rightarrow 1}{\frac{(xq^a; q)_{\infty}}{(xq^b; q)_{\infty}}} = (1 - x)^{b-a}
\end{gather*}
holds uniformly on compact subsets of $\{x\in\C \colon |x|\leq 1, x\neq 1\}$.
\end{thm}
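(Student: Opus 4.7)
The plan is to expand $\frac{(xq^a;q)_\infty}{(xq^b;q)_\infty}$ into a power series via the $q$-binomial theorem (Theorem~\ref{qtheory2}), pass to the limit $q \to 1$ termwise, and recognize the resulting series as the Taylor expansion of $(1-x)^{b-a}$; the uniformity on compact subsets will come from a $q$-uniform dominated convergence argument.

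First I would reduce to the case $b > a$ and $b > 0$. For the ordering, if instead $a > b$ one proves the theorem for the reciprocal $\frac{(xq^b;q)_\infty}{(xq^a;q)_\infty} \to (1-x)^{a-b}$ and inverts, using that $(1-x)^{a-b}$ is nonvanishing on $\{x \neq 1\}$. For positivity of $b$, use the shift identity
\begin{gather*}
\frac{(xq^a;q)_\infty}{(xq^b;q)_\infty} = \prod_{k=0}^{N-1}\frac{1-xq^{a+k}}{1-xq^{b+k}}\cdot\frac{(xq^{a+N};q)_\infty}{(xq^{b+N};q)_\infty}
\end{gather*}
with $N$ chosen so that $b+N > 0$. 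Each factor of the finite product tends to $1$ uniformly on compact subsets of $\{|x|\leq 1,\, x\neq 1\}$, and the pair $(a+N, b+N)$ satisfies the same hypothesis $(b+N)-(a+N) = b-a \notin \Z$.

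With $b > a$ and $b > 0$, the $q$-binomial theorem applied with $\alpha = q^{a-b}$ and $z = xq^b$ gives, for $|xq^b| < 1$ (which holds on $|x|\leq 1$ once $q$ is close enough to $1$),
\begin{gather*}
\frac{(xq^a;q)_\infty}{(xq^b;q)_\infty} = \sum_{n=0}^\infty \frac{(q^{a-b};q)_n}{(q;q)_n}\,(xq^b)^n.
\end{gather*}
Each coefficient satisfies $\frac{(q^{a-b};q)_n}{(q;q)_n} = \prod_{k=0}^{n-1}\frac{1-q^{a-b+k}}{1-q^{k+1}} \to \prod_{k=0}^{n-1}\frac{a-b+k}{k+1} = \frac{(a-b)_n}{n!}$ as $q \to 1$, where $(c)_n := c(c+1)\cdots(c+n-1)$ denotes the rising factorial (the limit is well-defined since $b-a\notin\Z$). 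Combined with $(xq^b)^n \to x^n$, the termwise limit of the series is $\sum_n \frac{(a-b)_n}{n!}\,x^n$, which is exactly the generalized binomial expansion of $(1-x)^{-(a-b)} = (1-x)^{b-a}$.

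The main obstacle is justifying the interchange of $\lim_{q \to 1}$ and $\sum_{n\geq 0}$, uniformly in $x$ over a compact $K \subset \{|x|\leq 1,\, x \neq 1\}$. For this I would use the $q$-Gamma representation
\begin{gather*}
\frac{(q^{a-b};q)_n}{(q;q)_n} = \frac{\Gamma_q(n+a-b)}{\Gamma_q(a-b)\,\Gamma_q(n+1)},
\end{gather*}
together with Theorem~\ref{qtheory1} (and Remark~\ref{qtheory1remark}) and a $q$-analog of Stirling's estimate, to produce constants $C>0$ and $q_0 < 1$ such that $\bigl|\tfrac{(q^{a-b};q)_n}{(q;q)_n}\bigr| \leq C\,n^{a-b-1}$ uniformly for $q \in (q_0, 1)$ and all $n \geq 1$. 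Because $b > a$, the exponent $a-b-1 < -1$ makes $\sum_n n^{a-b-1}$ convergent, and combined with the $x$-uniform bound $|(xq^b)^n| \leq 1$ for $|x|\leq 1$ and $b>0$, dominated convergence yields the desired limit uniformly on $K$. The delicate point, and the reason one cannot simply quote pointwise termwise convergence, is the large-$n$ control of the $q$-Gamma ratio: the uniformity in $q$ of the estimate does not follow immediately from Theorem~\ref{qtheory1} and requires a direct analysis of the product $\prod_{k}\tfrac{1-q^{a-b+k}}{1-q^{k+1}}$ splitting the initial (possibly sign-changing) factors where $a-b+k \leq 0$ from the tail where it is positive.
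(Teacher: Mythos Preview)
The paper does not supply its own proof here: the theorem is quoted from \cite{AAR} and introduced only with the sentence ``Another application of the $q$-binomial theorem is the following limit.'' Your strategy---expand via Theorem~\ref{qtheory2} and pass to the limit termwise---is therefore exactly the intended route.

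There is, however, a genuine gap in your dominated convergence step. The bound you propose,
\[
\left|\frac{(q^{a-b};q)_n}{(q;q)_n}\right| \leq C\,n^{a-b-1}\qquad\text{uniformly for }q\in(q_0,1),
\]
is false. For any \emph{fixed} $q\in(0,1)$ one has
\[
\frac{(q^{a-b};q)_n}{(q;q)_n}\xrightarrow[n\to\infty]{}\frac{(q^{a-b};q)_\infty}{(q;q)_\infty}\neq 0,
\]
so the coefficients approach a nonzero constant rather than decaying like $n^{a-b-1}$. No $q$-Stirling estimate can rescue this, because the claimed inequality is simply wrong in the regime $q$ fixed, $n\to\infty$. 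The summability in that regime comes entirely from the factor $q^{bn}$ hidden in $(xq^b)^n$, but you discarded it by using only the crude bound $|(xq^b)^n|\leq 1$.

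The fix is to keep that geometric factor in the majorant. One workable route is to show a bound of the form
\[
\left|\frac{(q^{a-b};q)_n}{(q;q)_n}\right| q^{bn}\ \leq\ C\,n^{a-b-1}
\]
uniformly for $q\in(q_0,1)$ and $n\geq 1$; this is plausible because for $q$ bounded away from $1$ the left side decays exponentially (beating any polynomial), while for $q\uparrow 1$ with $n$ fixed it converges to $\frac{(a-b)_n}{n!}\sim C' n^{a-b-1}$. Alternatively, split into the two regimes $n(1-q)\leq 1$ and $n(1-q)>1$ and handle each separately. Either way, the $q^{bn}$ cannot be thrown away.
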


\begin{rem}
If $b - a\in\Z$, the limit in Theorem {\rm \ref{qtheory3}} holds uniformly on compact subsets of~$\C\setminus\{1\}$.
\end{rem}

\section[Some properties of the rational functions $C^{(q, t)}_{\tau_1, \dots, \tau_n}(u_1, \dots, u_n)$]{Some properties of the rational functions $\boldsymbol{C^{(q, t)}_{\tau_1, \dots, \tau_n}(u_1, \dots, u_n)}$}\label{app:cfunctions}

\begin{lem}\label{Cprop1}
Assume $t = q^{\theta}$, for some $\theta\in\N$. Let $\tau_1, \dots, \tau_n\in\Z_{\geq 0}$ and $u_1, \dots, u_n$ be $n$ variables, then $C^{(q, t)}_{\tau_1, \dots, \tau_n}(u_1, \dots, u_n) = 0$ if some of the integers $\tau_1, \dots, \tau_n$ is strictly larger than $\theta$.
\end{lem}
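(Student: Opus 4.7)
The strategy is to fix $q$ and generic $u_1, \dots, u_n$, and regard $C^{(q,t)}_{\tau_1, \dots, \tau_n}(u_1, \dots, u_n)$ as a rational function of $t$ alone. I will show that this rational function vanishes to positive order at $t = q^\theta$ whenever some $\tau_k > \theta$, which gives the lemma by specialization. The argument reduces to counting $t$-independent zeros and poles at $t = q^\theta$ contributed by the various factors in formula~\eqref{Ccoeff}.

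First I would locate the zeros coming from the prefactor. Writing $(q/t; q)_{\tau_k} = \prod_{i=1}^{\tau_k}(1 - q^i/t)$, this factor vanishes at $t = q^\theta$ precisely when $1 \leq \theta \leq \tau_k$, i.e.\ when $\tau_k \geq \theta$, and in that case the zero is simple since $i = \theta$ is the unique index in $\{1,\dots,\tau_k\}$ achieving it. Consequently the product $\prod_{k=1}^n (q/t; q)_{\tau_k}$ vanishes at $t = q^\theta$ to order exactly $s := |\{k : \tau_k \geq \theta\}|$. All of the remaining factors in~\eqref{Ccoeff}, namely $t^{\tau_k}$, $1/(q;q)_{\tau_k}$, the three Pochhammer ratios $(qu_k;q)_{\tau_k}/(qtu_k;q)_{\tau_k}$, $(qu_i/tu_j;q)_{\tau_i}/(qu_i/u_j;q)_{\tau_i}$, $(tu_i/(q^{\tau_i}u_j);q)_{\tau_i}/(u_i/(q^{\tau_i}u_j);q)_{\tau_i}$, and the Vandermonde $1/\Delta(q^{\tau_1}u_1,\dots,q^{\tau_n}u_n)$, are either independent of $t$ or depend on $t$ only through arguments that, for generic $u_1,\dots,u_n$, stay bounded away from the singularities of the $q$-Pochhammer symbol at $t = q^\theta$. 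Thus none of them contributes a $t$-independent zero or pole at $t = q^\theta$.

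The only remaining source of $t$-behavior is the determinant. The $(i,j)$-entry contains the product $\prod_{k=1}^n (u_k - q^{\tau_i}u_i)/(tu_k - q^{\tau_i}u_i)$; for $k \neq i$ the individual factor is regular in $t$ near $t = q^\theta$, while the $k = i$ factor simplifies (the $u_i$ cancels) to $(1 - q^{\tau_i})/(t - q^{\tau_i})$. This has a simple pole at $t = q^{\tau_i}$, which coincides with $t = q^\theta$ exactly when $\tau_i = \theta$. Hence row $i$ of the matrix has a simple pole at $t = q^\theta$ when $\tau_i = \theta$, and is regular otherwise. By multilinearity of the determinant in its rows (expanding along the finitely many rows with a pole), the determinant itself has a pole at $t = q^\theta$ of order at most $r := |\{i : \tau_i = \theta\}|$.

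Combining the three steps, the order of vanishing of $C^{(q,t)}_{\tau_1,\dots,\tau_n}(u_1,\dots,u_n)$ at $t = q^\theta$ is at least $s - r = |\{k : \tau_k > \theta\}|$. When some $\tau_k > \theta$, this difference is strictly positive, so the rational function vanishes at $t = q^\theta$, which is the claim. (As a by-product the calculation also shows, since always $r \leq s$, that the specialization $t = q^\theta$ is well-posed regardless of the values of $\tau_1,\dots,\tau_n$, so the nonvanishing terms in the Jacobi--Trudi expansion of Theorem~\ref{jacobitrudi} are exactly those indexed by matrices in $M^{(m)}_\theta$.) The only delicate step is the upper bound on the pole order of the determinant; I expect it to be the main point to write out carefully, but once multilinearity is invoked it is essentially immediate.
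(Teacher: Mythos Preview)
Your argument is correct and takes a genuinely different route from the paper's. The paper does not work with formula~\eqref{Ccoeff} directly; instead it imports a rewriting of $C^{(q,t)}_{\tau}$ from Lassalle--Schlosser \cite[Section~6]{LS} in which the prefactor $(q/t;q)_{\tau_k}$ is replaced by $(q/t;q)_{\tau_k-1}$ and the determinant is replaced by a sum $F_\tau(u;q,t)$ that is manifestly regular at $t=q^\theta$. Since $(q/t;q)_{\tau_k-1}$ already vanishes at $t=q^\theta$ precisely when $\tau_k>\theta$ (and no longer when $\tau_k=\theta$), the lemma is then a one-line observation with no cancellation to track. Your approach avoids the external identity at the cost of having to balance the zero of order $s=|\{k:\tau_k\ge\theta\}|$ from $\prod_k(q/t;q)_{\tau_k}$ against the pole of order at most $r=|\{k:\tau_k=\theta\}|$ coming from the $k=i$ factor $(1-q^{\tau_i})/(t-q^{\tau_i})$ inside the determinant; the net $s-r=|\{k:\tau_k>\theta\}|$ gives exactly the same conclusion. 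Your version is more self-contained and, as you note, also yields for free that the specialization $t=q^\theta$ is everywhere well-posed; the paper's version is shorter once one is willing to cite the Lassalle--Schlosser reformulation.
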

\begin{proof}
Let us f\/irst rewrite the expression for $C^{(q, t)}_{\tau_1, \dots, \tau_n}(u_1, \dots, u_n)$, as it was done in \cite[Sec\-tion~6]{LS}. Let $v_i = q^{\tau_i}u_i$ for $i = 1, \dots, n$. Also, given $\tau = (\tau_1, \dots, \tau_n)$, let $T = T_{\tau} \myeq \{k\in\{1, 2, \dots, n\}\colon \tau_k \neq 0\}$. Then
\begin{gather}
C^{(q, t)}_{\tau}(u_1, \dots, u_n) = \prod_{k\in T}{t^{\tau_k-1}\frac{(q/t; q)_{\tau_k-1}}{(q; q)_{\tau_k-1}}\frac{(qu_k; q)_{\tau_k}}{(qtu_k; q)_{\tau_k}}} \nonumber\\
 \hphantom{C^{(q, t)}_{\tau}(u_1, \dots, u_n) =}{}\times \prod_{1\leq i<j\leq n}{\frac{(qu_i/tu_j; q)_{\tau_i}}{(qu_i/u_j; q)_{\tau_i}}\frac{(tu_i/v_j; q)_{\tau_i}}{(u_i/v_j; q)_{\tau_i}}} F_{\tau}(u; q, t),\label{Cnewform}
\end{gather}
where
\begin{gather*}
F_{\tau}(u; q, t) \myeq \sum_{K\subset T} (-1)^{|K|}(1/t)^{{|K| \choose 2}}\prod_{j\in T - K}{\frac{t - q^{\tau_j}}{1 - q^{\tau_j}}}\\
\hphantom{F_{\tau}(u; q, t) \myeq}{}\times \prod_{\substack{k\in K \\ j\in T - K}}{\frac{v_j - v_k/t}{v_j - v_k}}\prod_{k\in K}{\left( \frac{1 - tv_k}{1-v_k}\prod_{\substack{i\in T \\ i\neq k}}{\frac{u_i - v_k}{u_i - v_k/t}} \right)}.
\end{gather*}
Due to the factor $\prod\limits_{k\in T}{(q/t; q)_{\tau_k - 1}}$ in \eqref{Cnewform}, it follows that $\tau_k - \theta\in\N = \{1, 2, \dots\}$, for some $k$, implies $C^{(q, t)}_{\tau_1, \dots, \tau_n}(u_1, \dots, u_n) = 0$, as desired.
\end{proof}

\begin{lem}[{\cite[Lemma 6.1]{LS}}]\label{Cprop2}
Assume $t = q$. Let $\tau_1, \dots, \tau_n\in\Z_{\geq 0}$ and $u_1, \dots, u_n$ be $n$ variables, then $C^{(q, t)}_{\tau_1, \dots, \tau_n}(u_1, \dots, u_n) = 0$ if some of the integers $\tau_1, \dots, \tau_n$ is strictly larger than~$1$. If all $\tau_1, \dots, \tau_n\in\{0, 1\}$, then $C^{(q, t)}_{\tau_1, \dots, \tau_n}(u_1, \dots, u_n)$ does not depend on the variables $u_1, \dots, u_n$ and
\begin{gather*}
C_{\tau_1, \dots, \tau_n}^{(q, t)} = (-1)^{\tau_1 + \dots + \tau_n}.
\end{gather*}
\end{lem}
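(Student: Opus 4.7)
The plan is to verify both assertions by direct computation from the explicit definition \eqref{Ccoeff}, working with the convenient repackaging \eqref{Cnewform} that was already extracted in the proof of Lemma~\ref{Cprop1}; recall this uses $T = T_\tau = \{k : \tau_k \neq 0\}$ and $v_i = q^{\tau_i} u_i$.

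For the vanishing claim, I would observe that the prefactor in \eqref{Cnewform} contains $\prod_{k \in T}(q/t;q)_{\tau_k - 1}$. If some $\tau_k \geq 2$, this includes the factor $(1 - q/t)$, which vanishes to first order as $t \to q$. It then remains to check that no other piece of \eqref{Cnewform} develops a compensating pole at $t = q$ (for $u_i$ in general position): one inspects in turn the scalar prefactor $t^{\tau_k - 1}/(q;q)_{\tau_k - 1}$, the ratios $(qu_k;q)_{\tau_k}/(qtu_k;q)_{\tau_k}$, the double product over $1 \leq i < j \leq n$, and the sum $F_\tau(u;q,t)$. Each is manifestly a rational function in $t$ that is regular at $t = q$ for generic $u_i$, so the product vanishes and $C^{(q,q)}_\tau = 0$.

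For the second claim, assume all $\tau_k \in \{0,1\}$ and set $T = \{k : \tau_k = 1\}$, so $|T| = \tau_1 + \cdots + \tau_n$. For $k \in T$ the factor $(q/t;q)_{\tau_k - 1} = (q/t;q)_0 = 1$, so those factors disappear entirely. The crucial simplification now happens in $F_\tau(u;q,t)$: for every $j \in T$ we have $q^{\tau_j} = q$, so $\frac{t - q^{\tau_j}}{1 - q^{\tau_j}} \to 0$ as $t \to q$. Hence in $F_\tau(u;q,q)$ every term with $T \setminus K \neq \varnothing$ vanishes, and only $K = T$ survives, collapsing $F_\tau(u;q,q)$ to the single explicit product
\[
(-1)^{|T|} q^{-\binom{|T|}{2}} \prod_{k \in T}\left( \frac{1 - q v_k}{1 - v_k} \prod_{\substack{i \in T \\ i \neq k}} \frac{u_i - v_k}{u_i - v_k/q} \right).
\]
Substituting $v_k = q u_k$ for $k \in T$ and combining with the surviving cross-term products in \eqref{Cnewform}, a sequence of cancellations (each ratio $\frac{1 - qu_k}{1 - q^2 u_k}$ cancelled by its counterpart from $F_\tau$, and the products $\prod_{i \neq k}(u_i - qu_k)/(u_i - u_k)$ cancelled against $\prod_{i < j}(tu_i/v_j;q)_1/(u_i/v_j;q)_1$ specialised at $t=q$) should leave precisely $(-1)^{|T|}$.

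The main obstacle is the bookkeeping in this last step: tracking which factor cancels which, and confirming that all $u$-dependence indeed drops out. One shortcut that makes this essentially painless is the following observation: once the first part of the lemma is in hand, one knows $C^{(q,q)}_\tau$ is a rational function of the $u_i$ whose only potential poles are along the diagonals $u_i = q^a u_j$ for small integers $a$; a direct check using the explicit cross-term factors shows these are in fact regular. Thus $C^{(q,q)}_\tau$ is a constant, and it suffices to evaluate it at a single convenient specialization — for example, by letting one variable tend to infinity and reducing to a computation with fewer variables, which gives an induction on $|T|$. This reduces the full verification to the base case $|T| = 1$, where the value $-1$ was effectively computed already (it amounts to the $n=1$, $\tau_1 = 1$ instance of \eqref{Ccoeff}).
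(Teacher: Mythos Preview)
The paper does not actually give a proof of this lemma: it is quoted verbatim from \cite[Lemma 6.1]{LS}. Your first assertion is literally Lemma~\ref{Cprop1} at $\theta=1$, and your argument via the prefactor $(q/t;q)_{\tau_k-1}$ is identical to the proof the paper gives for that lemma.

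For the second assertion, your approach is correct, and the bookkeeping you flag as the main obstacle is in fact short enough to carry out directly rather than via the induction shortcut. After your key observation that only $K=T$ survives in $F_\tau(u;q,q)$, the factor $\prod_{k\in T}\frac{1-q^2u_k}{1-qu_k}$ from $F_\tau$ cancels exactly against $\prod_{k\in T}\frac{(qu_k;q)_1}{(q^2u_k;q)_1}$ from the prefactor. In the double product over $i<j$, the $j\notin T$ terms telescope to $1$; the $i,j\in T$ terms combine with the corresponding pair of factors $\frac{u_i-qu_j}{u_i-u_j}\cdot\frac{u_j-qu_i}{u_j-u_i}$ coming from $\prod_{k\in T}\prod_{i\in T,\,i\neq k}\frac{u_i-qu_k}{u_i-u_k}$ in $F_\tau$ to give exactly $q$ per unordered pair, for a total of $q^{\binom{|T|}{2}}$. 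This cancels the $q^{-\binom{|T|}{2}}$ already present, leaving $(-1)^{|T|}$. So the direct route closes cleanly and your proposed induction detour, while valid, is unnecessary.
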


\begin{lem}\label{Cprop3}Assume $\theta\in\N$. If we let $a_n^{(\theta)} = \frac{C_n^{(q, q^{\theta})}(x_2^{-1}x_1q^{-\theta})}{\prod\limits_{i=0}^{\theta-1}{(x_1 - q^ix_2)}}$, for all $0\leq n\leq \theta$, then these expressions satisfy the relations
\begin{gather*}
a_n^{(\theta)} = \frac{1}{x_1 - x_2} \big( T_{q, x_2} a_n^{(\theta - 1)} - T_{q, x_1} a_{n-1}^{(\theta - 1)} \big), \qquad 1\leq n\leq \theta-1,\\
a_0^{(\theta)} = \frac{1}{\prod\limits_{i=0}^{\theta - 1}{(x_1 - q^i x_2)}}, \qquad a_{\theta}^{(\theta)} = \frac{1}{\prod\limits_{i=0}^{\theta - 1}{(x_2 - q^i x_1)}}.
\end{gather*}
\end{lem}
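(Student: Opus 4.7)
The plan is to prove the lemma by a direct, if somewhat bookkeeping-heavy, computation starting from the defining formula~\eqref{Ccoeff} for $C^{(q,t)}_{\tau}(u)$ in the one-variable case $n=1$. When $n=1$ the Vandermonde denominator $\Delta(q^\tau u)$ is trivial and the $1\times 1$ determinantal entry simplifies, after expansion, to $(t-1)(1-q^{2\tau}u)/[(1-q^\tau u)(t-q^\tau)]$. Multiplying through and cancelling the $(t-q^\tau)$ factor against one factor of $(q/t;q)_\tau$ (which is exactly the rewriting carried out in the proof of Lemma~\ref{Cprop1} in order to kill the apparent singularity at $\tau=\theta$), I would arrive at the closed form
\begin{gather*}
C_\tau^{(q,t)}(u) \;=\; t^{\tau-1}\,\frac{(q/t;q)_{\tau-1}}{(q;q)_\tau}\,\frac{(qu;q)_{\tau-1}}{(qtu;q)_\tau}\,(t-1)(1-q^{2\tau}u), \qquad \tau\geq 1,
\end{gather*}
together with $C_0^{(q,t)}(u)=1$.

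Next I would substitute $t=q^{\theta}$ and $u=x_1q^{-\theta}/x_2$, rewriting each $q$-Pochhammer factor as a product of linear terms in $x_1$ and $x_2$: $(qtu;q)_{\tau}$ becomes $x_2^{-\tau}\prod_{i=1}^{\tau}(x_2-q^ix_1)$, $(qu;q)_{\tau-1}$ becomes $x_2^{-(\tau-1)}\prod_{i=0}^{\tau-2}(x_2-q^{1-\theta+i}x_1)$, and so on. Dividing by $\prod_{i=0}^{\theta-1}(x_1-q^ix_2)$ then yields an explicit rational expression for $a_n^{(\theta)}$ for each $0\leq n\leq\theta$. The two boundary identities are immediate: for $n=0$, $C_0^{(q,q^{\theta})}=1$ gives $a_0^{(\theta)}$ on the nose, while for $n=\theta$ the $q$-Pochhammer factors conspire, after cancellations, to leave exactly $\prod_{i=0}^{\theta-1}(x_2-q^ix_1)$ in the denominator with unit numerator, which yields $a_\theta^{(\theta)}$.

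The substantive step is the middle recursion for $1\leq n\leq\theta-1$. I would compute $T_{q,x_2}a_n^{(\theta-1)}$ and $T_{q,x_1}a_{n-1}^{(\theta-1)}$ by applying the shifts to the closed form, form the difference, and divide by $(x_1-x_2)$. Each shift merely nudges the exponents in the linear factors; after pulling out a common product, which will precisely reproduce the denominator of $a_n^{(\theta)}$, the equation collapses to a scalar identity of $q$-Pascal type, relating $\binom{\theta}{n}_q$ (or rather the rational factor that plays its role here) to a weighted sum of $\binom{\theta-1}{n}_q$ and $\binom{\theta-1}{n-1}_q$.

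\textbf{Main obstacle.} The algebra itself is routine, but tracking signs, powers of $q$, and the precise range of indices in each $q$-Pochhammer is delicate. In particular one must verify that the shifts $T_{q,x_i}$ applied to the two terms produce \emph{exactly} the linear factors missing to match the denominator of $a_n^{(\theta)}$, and that the leftover scalar piece combines, via a $q$-Pascal identity, to the correct $q$-factorial coefficient. A sanity check I would carry out early is the case $\theta=2$, where the recursion gives $a_1^{(2)}=(1+q)/[(x_1-qx_2)(x_2-qx_1)]$, matching the direct evaluation of $C_1^{(q,q^2)}(x_1q^{-2}/x_2)$ divided by $(x_1-x_2)(x_1-qx_2)$; successful agreement there strongly indicates the general bookkeeping is correctly set up.
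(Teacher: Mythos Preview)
Your proposal is correct and follows essentially the same route as the paper: the paper quotes the Jing--J\'ozef\/iak closed form $C_n^{(q,t)}(u) = t^n\frac{(1/t;q)_n}{(q;q)_n}\frac{(u;q)_n}{(qtu;q)_n}\frac{1-q^{2n}u}{1-u}$ (which is algebraically equivalent to the expression you derive from~\eqref{Ccoeff}), substitutes $t=q^{\theta}$ and $u=x_1/(q^{\theta}x_2)$, and then declares the three identities to follow by tedious computation. The only difference is that you rederive the one-variable closed form rather than citing~\cite{JJ}; your $\theta=2$ sanity check is correct and the bookkeeping plan you outline for the recursion is exactly what the paper leaves implicit.
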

\begin{proof}
The expression $C_n^{(q, t)}(u)$, for $n\in\Z_{\geq 0}$, is much simpler than the general expression (\ref{Ccoeff}). It was f\/irst found by Jing and Jo\'zef\/iak in~\cite{JJ} and it reads $C_n^{(q, t)}(u) = t^n\frac{(1/t; q)_n}{(q; q)_n}\frac{(u; q)_n}{(qtu; q)_n}\frac{1 - q^{2n}u}{1-u}$. Then, for $0\leq n\leq \theta$:
\begin{gather}\label{simplifiedC}
C_n^{(q, q^{\theta})}\big(x_1/\big(q^{\theta}x_2\big)\big) = \frac{q^{\theta}x_2 - q^{2n}x_1}{q^{\theta}x_2 - x_1}\frac{1}{t^n}\prod_{i=1}^n{\left\{\frac{q^{\theta} - q^{i-1}}{1 - q^i}\frac{q^{\theta}x_2 - q^{i-1}x_1}{x_2 - q^ix_1}\right\}}.
\end{gather}
From (\ref{simplifiedC}), it is only a matter of tedious computation to check the three identities given in the lemma.
\end{proof}

\subsection*{Acknowledgements}

It is my pleasure to thank Alexei Borodin for his generous sharing of time and ideas. I am equally indebted to Vadim Gorin, for his interest in my work, many helpful discussions and for sharing some of his notes on the $q$-GT graph. This work would not exist without them. I would also like to thank Jiaoyang Huang, for being an excellent sounding board at the beginning stage of this project, Konstantin Matveev for his expert help with the software Mathematica, and Grigori Olshanski for comments in a previous draft of this paper and for asking a question that led to my proof of Theorem~\ref{thm:mainapplication}.
The suggestions of the referees helped improved this text greatly; many thanks are due to them.

\pdfbookmark[1]{References}{ref}
\LastPageEnding

\end{document}